\documentclass[11pt]{amsart}
\usepackage{txfonts}
\usepackage{amscd,amssymb,amsmath}
\usepackage[foot]{amsaddr}
\usepackage[colorlinks,plainpages,urlcolor=blue]{hyperref}
\usepackage{array}
\usepackage{enumitem}
\usepackage{tikz}
\usepackage{braids}
\usepackage{verbatim}
\usetikzlibrary{cd}
\usepackage{longtable}

\usepackage[margin=1.07in]{geometry}

\newtheorem{theorem}{Theorem}[section]
\newtheorem{lemma}[theorem]{Lemma}
\newtheorem{corollary}[theorem]{Corollary}
\newtheorem{proposition}[theorem]{Proposition}
\theoremstyle{definition}
\newtheorem{definition}[theorem]{Definition}
\newtheorem{example}[theorem]{Example}
\newtheorem{remark}[theorem]{Remark}

\DeclareMathOperator{\id}{id}

\DeclareMathOperator{\Hom}{Hom}
\DeclareMathOperator{\gr}{gr}
\DeclareMathOperator{\im}{im}
\DeclareMathOperator{\coker}{coker}

\DeclareMathOperator{\ab}{ab}

\DeclareMathOperator{\Map}{Map}

\DeclareMathOperator{\cts}{cts}

\DeclareMathOperator{\Tors}{Tors}

\DeclareMathOperator{\BK}{BK}
\DeclareMathOperator{\Heis}{Heis}

\newcommand{\mc}[1]{\mathcal{#1}}

\DeclareMathSymbol{\Gamma}{\mathalpha}{operators}{0}
\newcommand{\G}{\Gamma}

\newcommand{\Z}{\mathbb{Z}}
\newcommand{\Q}{\mathbb{Q}}
\newcommand{\R}{\mathbb{R}}

\newcommand{\F}{\mathbb{F}}

\newcommand{\T}{\mathsf{T}}

\newcommand{\bz}{\mathbf{0}}

\newcommand{\bbX}{\mathbf{X}}

\newcommand{\bY}{\mathbf{Y}}

\newcommand{\mcm}{\mc{M}}

\newcommand{\wht}[1]{\widehat{#1}}

\newcommand{\la}{\langle}
\newcommand{\ra}{\rangle}

\newcommand{\GGR}[2]{{#1}/\G^{R}_{#2}({#1})}

\newcommand{\ba}{\mathbf{a}}
\newcommand{\bb}{\mathbf{b}}
\newcommand{\bc}{\mathbf{c}}

\newcommand{\ot}{\otimes}

\DeclareMathAlphabet{\pazocal}{OMS}{zplm}{m}{n}

 \newcommand{\surj}{\twoheadrightarrow}
\newcommand{\inj}{\hookrightarrow}
\newcommand\isom{\xrightarrow{ \,\smash{\raisebox{-0.6ex}{\ensuremath{\scriptstyle\simeq}}}\,}}
\newcommand{\longisom}{\xrightarrow{\hspace{4pt}\smash{\raisebox{-0.6ex}{\ensuremath{\scriptstyle\simeq}}}\hspace{4pt}}}
\newcommand{\longsurj}{\relbar\joinrel\twoheadrightarrow}

\newcommand{\bincupdga}{{\mathrm{bin.}~\!\cup_1\text{-}\mathrm{dga}}}
\newcommand{\bincupalg}{{\mathrm{bin.}~\!\cup_1\text{-}\mathrm{alg}}}
\newcommand{\Deltaset}{{\Delta\text{-}\mathrm{set}}}

\makeatletter
\newsavebox{\@brx}
\newcommand{\llangle}[1][]{\savebox{\@brx}{\(\m@th{#1\langle}\)}%
  \mathopen{\copy\@brx\kern-0.5\wd\@brx\usebox{\@brx}}}
\newcommand{\rrangle}[1][]{\savebox{\@brx}{\(\m@th{#1\rangle}\)}%
  \mathclose{\copy\@brx\kern-0.5\wd\@brx\usebox{\@brx}}}
\makeatother

\newcommand{\arxiv}[1]
{\texttt{\href{http://arxiv.org/abs/#1}{arXiv:#1}}}

\numberwithin{table}{section}
\numberwithin{equation}{section}
\def\norms#1#2{\left\| #1 \right\|_{\lower 1ex\hbox{$\scriptstyle #2 $}}}

\makeatletter
\def\namedlabel#1#2{\begingroup
    #2%
    \def\@currentlabel{#2}%
    \phantomsection\label{#1}.\endgroup
}
\makeatother
\makeatletter
\def\@tocline#1#2#3#4#5#6#7{\relax
  \ifnum #1>\c@tocdepth 
  \else
    \par \addpenalty\@secpenalty\addvspace{#2}%
    \begingroup \hyphenpenalty\@M
    \@ifempty{#4}{%
      \@tempdima\csname r@tocindent\number#1\endcsname\relax
    }{%
      \@tempdima#4\relax
    }%
    \parindent\z@ \leftskip#3\relax \advance\leftskip\@tempdima\relax
    \rightskip\@pnumwidth plus4em \parfillskip-\@pnumwidth
    #5\leavevmode\hskip-\@tempdima
      \ifcase #1
       \or\or \hskip 1em \or \hskip 2em \else \hskip 3em \fi%
      #6\nobreak\relax
    \dotfill\hbox to\@pnumwidth{\@tocpagenum{#7}}\par
    \nobreak
    \endgroup
  \fi}
\makeatother

\begin{document}

    
\title[Groups associated to $1$-minimal models]%
{Groups associated to $1$-minimal models for \\ binomial $\cup_1$-algebras}

\author[R.~D.~Porter]{Richard~D.~Porter$^1$}
\author[A.~I.~Suciu]{Alexander~I.~Suciu$^{1,2}$}
\address{$^1$Department of Mathematics,
Northeastern University,
Boston, MA 02115, USA}
\email{\href{mailto:r.porter@northeastern.edu}{r.porter@northeastern.edu}}
\email{\href{mailto:a.suciu@northeastern.edu}{a.suciu@northeastern.edu}}
\thanks{$^2$Partially supported by the project ``Singularities and Applications" - CF 132/31.07.2023 funded by the European Union - NextGenerationEU - through Romania's National Recovery and Resilience Plan.}

\begin{abstract}
We give an explicit, cochain-level algebraic model for the pronilpotent 
completion of a group with finitely generated first cohomology. 
To each binomial $\cup_1$-dga $(A,d_A)$ over $R=\Z$ or $\F_p$ ($p$ prime)---a 
differential graded algebra endowed with a Steenrod $\cup_1$-product and a 
compatible binomial operation---we associate a pronilpotent group $G(A)$ 
that depends only on the $1$-quasi-isomorphism type of~$A$, provided 
$H^0(A)=R$ and $H^1(A)$ is a finitely generated free $R$-module. 
This group arises functorially from the $1$-minimal model of~$A$, which 
is unique up to isomorphism. When $A=C^*(X;R)$ is the cochain algebra 
of a connected CW-complex~$X$ with $H^1(X;R)$ finitely generated, the 
group $G(A)$ recovers the Bousfield--Kan $R$-completion of $\pi_1(X)$ 
when $R=\F_p$, and its pro-torsion-free-nilpotent completion when $R=\Z$.

Moreover, the group $G(A)$ comes equipped with a natural inverse system 
$\{G_n(A)\}_{n\ge 1}$ whose structure maps $G_{n+1}(A)\to G_n(A)$ 
are surjective. If $A=C^*(X;R)$, then $G_n(A)$ is the quotient of 
$\pi_1(X)$ by the $(n+1)$th term of the fastest descending central 
series whose successive quotients are free $R$-modules. We give a 
purely algebraic necessary and sufficient criterion that, given an 
isomorphism $G_n(A)\cong G_n(B)$, determines whether 
$G_{n+1}(A)\cong G_{n+1}(B)$, and we illustrate the use of this 
criterion with examples distinguishing spaces with isomorphic 
cohomology rings.
\end{abstract}

\subjclass[2020]{Primary
16E45, 
20F18. 
Secondary 
13F20, 
20J05, 
55P62, 
55S05. 
}

\keywords{Binomial $\cup_1$-dga, Steenrod $\cup_1$-product, 
$1$-minimal model, pronilpotent completion, 
Bousfield--Kan completion, lower central series, nilpotent group}

\maketitle

\setcounter{tocdepth}{1}
\tableofcontents

\section{Introduction}
\label{sect:intro}

\subsection{The group associated to a binomial $\cup_1$-dga}
\label{subsec:intro-group} 

The study of nilpotent completions of fundamental groups via
algebraic topology has a long history, going back to Malcev's
embedding of torsion-free nilpotent groups in real unipotent Lie
groups~\cite{Malcev} and Quillen's rational homotopy theory~\cite{Quillen-1969}.
In the integral and mod~$p$ settings, Sullivan's minimal
models~\cite{Sullivan} and the work of Stallings~\cite{Stallings}
and Bousfield--Kan~\cite{BK} provide the classical framework.
The present paper works in the setting of $R$-binomial $\cup_1$-dgas,
which encode the Steenrod cup-$i$ product structure on cochain 
algebras~\cite{Steenrod} together with a binomial operation
compatible with the integral lift.
We build on the theory of $1$-minimal models developed in
\cite{Porter-Suciu-2021, Porter-Suciu-2023}, which gives a unified
treatment over $\Z$ and $\F_p$.

We associate to each binomial $\cup_1$-dga $(A,d_A)$ over $R=\Z$ 
or $\F_p$ ($p$ prime), with $H^0(A)=R$ and $H^1(A)$ a finitely 
generated free $R$-module, a pronilpotent group $G(A)$.
This group arises as the inverse limit
\[
G(A) = \varprojlim_n\, G(\mcm_n(A))
\]
of a tower of nilpotent groups, one for each step $\mcm_n(A)$
in the $1$-minimal model of $A$.
We show in Theorem~\ref{thm:group-well-defined} that the
isomorphism type of $G(A)$ depends only on the
$1$-quasi-isomorphism type of $A$.

When $A = C^\ast(X;R)$ is the cochain algebra of a connected
$\Delta$-complex $X$ with fundamental group $G = \pi_1(X)$,
the groups $G(\mcm_n(A))$ can be identified with successive 
nilpotent quotients of $G$ by terms of the fastest descending 
central series whose graded pieces are free $R$-modules. 
More precisely, we prove the following.

\begin{theorem}[See Theorem~\ref{thm:mnequalsquotient}]
\label{thm:A}
Let $X$ be a connected $\Delta$-complex with $H^1(X;R)$
finitely generated, and let $G=\pi_1(X)$.
For each $n\ge 1$ there is a natural isomorphism
\[
G\bigl(\mcm_n(C^\ast(X;R))\bigr) \cong \GGR{G}{n+1},
\]
compatible with the tower maps on both sides.
\end{theorem}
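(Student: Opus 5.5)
We argue by induction on $n$, comparing both towers through their universal properties; the main input is the construction of $\mcm_n(A)$ from earlier work \cite{Porter-Suciu-2021, Porter-Suciu-2023}. There, $\mcm_n(A)$ is built as an iterated Hirsch extension: $\mcm_1$ is generated in degree $1$ by a basis of $H^1(A)$, and $\mcm_{n+1}$ is obtained from $\mcm_n$ by adjoining degree-one generators $V_{n+1}$ that kill $\ker\bigl(H^2(\mcm_n)\to H^2(A)\bigr)$. The goal is to show that $G(\mcm_n)$ is a central extension of $G(\mcm_{n-1})$ by the free $R$-module $V_n^\vee$, and to identify this extension with
\[
1\to \G^R_n(G)/\G^R_{n+1}(G)\to \GGR{G}{n+1}\to \GGR{G}{n}\to 1 .
\]

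\textbf{Step 1 (the comparison map).} The $1$-minimal model comes with a $1$-quasi-isomorphism $\rho_n\colon \mcm_n\to C^*(X;R)$. For each $n$ we construct a natural homomorphism $\phi_n\colon G\to G(\mcm_n)$. A loop $\gamma$, viewed as a map of $\Delta$-sets $S^1\to X$, pulls back along $\rho_n$ to a binomial $\cup_1$-dga map $\mcm_n\to C^*(S^1;R)$. We expect $G(\mcm_n)$ to be describable via such maps, i.e.\ as $\Hom_{\bincupdga}(\mcm_n, C^*(\text{interval}))$ with the natural composition law. Functoriality gives compatibility with the tower maps $\mcm_{n-1}\inj\mcm_n$.

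\textbf{Step 2 (kernel and surjectivity).} We prove by induction that $\phi_n$ is surjective with kernel $\G^R_{n+1}(G)$. For $n=1$, $G(\mcm_1)\cong\Hom(H^1(X;R),R)$. By the universal coefficient theorem and the definition of $\G^R_2$, this equals $G/\G^R_2(G)$ (the abelianization modulo torsion when $R=\Z$, or $H_1\otimes\F_p$ when $R=\F_p$), since $H^1$ is finitely generated. For the inductive step, the Hirsch extension $\mcm_{n-1}\subset\mcm_n$ yields a central extension
\[
0\to V_n^\vee\to G(\mcm_n)\to G(\mcm_{n-1})\to 1
\]
whose extension class corresponds, under the description of $H^2$ of a nilpotent group via the model, to the map $d\colon V_n\to H^2(\mcm_{n-1})$. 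Since $V_n$ is chosen to be $\ker\bigl(H^2(\mcm_{n-1})\to H^2(X;R)\bigr)$, and $H^2(\mcm_{n-1})\cong H^2(G/\G^R_n;R)$ by induction, the five-term exact sequence for $1\to\G^R_n\to G\to G/\G^R_n\to1$ identifies $V_n^\vee$ with the largest free $R$-module quotient of $\G^R_n/[G,\G^R_n]$. This is precisely $\G^R_n/\G^R_{n+1}$. The five lemma then gives $G(\mcm_n)\cong G/\G^R_{n+1}$.

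\textbf{Main obstacle.} The hardest step is identifying $H^*(\mcm_{n-1})$ in degrees $\le 2$ with the group cohomology of $G(\mcm_{n-1})$ over $R$, together with matching the extension class. Over $\F_p$ and $\Z$, degree-$2$ cup products alone do not detect the Massey and Bockstein-type data; this is exactly where the $\cup_1$-product and the binomial operations enter. We would first try to realize $G(\mcm_n)$ geometrically, as $\pi_1$ of the realization of the $\Delta$-set dual to $\mcm_n$, and show that $\mcm_n\to C^*$ of that realization is a $1$-quasi-isomorphism using the uniqueness of $1$-minimal models. The identification then reduces to Hopf's formula together with freeness of $V_n$. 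Naturality in $X$ follows because every map in the construction is natural once the minimal model maps are fixed up to homotopy.
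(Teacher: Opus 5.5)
Your route is genuinely different from the paper's, and it can be completed, but the inductive step as written has a hole at exactly the point you flag.

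\textbf{How the routes differ.} The paper never writes down a homomorphism $G\to G(\mcm_n)$. At each stage it notes that the Hirsch $k$-invariant (Lemma~\ref{lem:Hirsch-central}\eqref{pt:three}--\eqref{pt:four}) and the Stallings map of Lemma~\ref{lem:3-term-seq} are injections with the same image in $H^2$. It then obtains $g_{n+1}$ by lifting the previously chosen $g_n$ over the two central extensions (Lemmas~\ref{lem:bij-chi-i} and~\ref{lem:eG1G2}). Since $g_n$ is a choice, the composite $\wht{\rho}_n=h_{n+1}^\sharp\circ g_n^\sharp\circ\psi_n$ need not be the given $\rho_n$, so Lemma~\ref{lem:classify} is needed to know that $\mcm_{n+1}$ is still the Hirsch extension with $h$-invariant $\ker H^2(\wht{\rho}_n)$.

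\textbf{What your canonical map buys.} It makes Lemma~\ref{lem:classify} unnecessary, once it is defined properly. Set $\phi_n=\pi_1\bigl(|r(\rho_n)|\bigr)$, where $r(\rho_n)\colon X\to BG(\mcm_n)$ is the $\Delta$-set map of Theorem~\ref{thm:strict-representability}. That theorem is proved independently of the main theorem. This replaces your loop description, since edge loops are not in general $\Delta$-maps out of the one-edge circle. The triangle~\eqref{dgm:adjunction} then gives $\rho_n=|r(\rho_n)|^\sharp\circ\psi_n$ on the nose. Hence $H^2(\rho_{n-1})$ matches $H^2(h_n)$ under $\phi_{n-1}$ automatically, and naturality of $r$ in $\mcm$ gives compatibility with the tower. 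It also makes naturality in $X$ a consequence rather than a construction, modulo checking that homotopic dga maps induce conjugate homomorphisms. By contrast, a lift of $g_n$ over a central extension is determined only up to $\Hom\bigl(\GGR{G}{n+1},\gr^R_{n+1}(G)\bigr)$.

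\textbf{The hole in Step 2.} An abstract isomorphism $V_n^\vee\cong\gr^R_n(G)$ cannot be fed into the five lemma. You need the specific fiber map $\lambda\colon\gr^R_n(G)\to V_n^\vee$ induced by $\phi_n$ to be bijective. This follows from naturality of extension classes:
\begin{itemize}
\item $\phi_n$ is a morphism of central extensions over the isomorphism $\phi_{n-1}$.
\item After identifying $H^2(\mcm_{n-1})$ with $H^2(\GGR{G}{n};R)$ via $\psi_{n-1}$ and $\phi_{n-1}$, the Hirsch $k$-invariant on $V_n$ equals $i_n\circ\lambda^*$.
\item Both it and $i_n$ are injective with image $\ker\bigl(H^2(\GGR{G}{n};R)\to H^2(X;R)\bigr)$, so $\lambda^*$, and hence $\lambda$, is an isomorphism.
\end{itemize}

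\textbf{The missing input.} This argument needs $H^2(\psi_{n-1})$ to be an isomorphism, not merely injective. A $1$-quasi-isomorphism would only place the image of $V_n$ inside that kernel. Your plan to obtain it from uniqueness of $1$-minimal models is circular: uniqueness applies only once $\psi_{n-1}$ is already known to exhibit $\mcm_{n-1}$ as a model of $C^*(BG(\mcm_{n-1});R)$. The paper does not prove this either. It imports both the cohomology isomorphism $\psi_n$ and the identification of the $k$-invariant with $x\mapsto[dx]$ from \cite{Porter-Suciu-2023}, as Lemma~\ref{lem:Hirsch-central}.

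A smaller error: $\rho_n$ itself is not a $1$-quasi-isomorphism for finite $n$. Its $H^2$-kernel is exactly what the next stage kills.
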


Here $\GGR{G}{n+1}$ denotes the $(n+1)$th quotient in the
torsion-free lower central series (when $R=\Z$) or the
$p$-lower central series (when $R=\F_p$); see
Section~\ref{sec:group-tower} for precise definitions.
Theorem~\ref{thm:A} gives an explicit, purely algebraic 
construction---starting from $C^\ast(X;R)$ alone---of the
Stallings central series filtrations of $\pi_1(X)$.

Passing to the inverse limit, Theorem~\ref{thm:A} identifies 
$G(C^\ast(X;R))$ with a classical completion of $G$.
When $R=\F_p$, the group $G(C^\ast(X;\F_p))$ is 
the Bousfield--Kan $\F_p$-completion $\widehat{G}_{\F_p}$.
When $R=\Z$, it is the pro-torsion-free-nilpotent completion of 
$G$, which fits into a short exact sequence
\[
\begin{tikzcd}[column sep=20pt]
1 \ar[r]& \widehat{T}(G) \ar[r]&
   \widehat{G}_\Z^{\BK}\ar[r]& G(C^*(X;\Z)) \ar[r]& 1,
\end{tikzcd}
\]
relating it to the Bousfield--Kan $\Z$-completion 
(see Theorem~\ref{thm:kernel-formula}).

The key feature of the $\cup_1$ framework is that it retains 
enough integral information to recover the integral Stallings filtrations.
Classical minimal models over $\Q$ (Sullivan~\cite{Sullivan}) or $\F_p$
(Mandell~\cite{Mandell}) do not directly give the integral Stallings
series; the binomial $\cup_1$ structure provides the missing bridge.
In particular, the differential in the Hirsch extension encodes the
$k$-invariant of the Postnikov tower at the integral level, allowing
the tower $\{G(\mcm_n)\}$ to recover the torsion-free descending 
central series $\{\Gamma_n^0\}$ rather than only its rationalization.

\subsection{Isomorphisms between nilpotent quotients}
\label{subsec:intro-extend}

Our second group of results addresses the following question.
Given connected spaces $X_a$ and $X_b$ with 
$\GGR{G_a}{n} \cong \GGR{G_b}{n}$ (where $G_a = \pi_1(X_a)$
and $G_b = \pi_1(X_b)$), is there an
isomorphism $\GGR{G_a}{n+1} \cong \GGR{G_b}{n+1}$?

The first ingredient is a representability result, which identifies
dga morphisms out of $\mcm(N)$ with maps of spaces into $K(N,1)$.
In the following theorem, $BG(\mcm)$ denotes the bar construction
applied to $G(\mcm)$. 
See Definitions~\ref{def:structural-morphism} and~\ref{def:classifying} 
for structural morphisms and classifying maps, respectively.

\begin{theorem}[See Theorem~\ref{thm:strict-representability}
and Lemma~\ref{lem:correspondence}]
\label{thm:C}
Let $\mcm$ be a colimit of Hirsch extensions, and let $Y$ be a
$\Delta$-set.  There is a natural bijection
\[
\Hom_{\Deltaset}(Y,\, BG(\mcm)) \cong
\Hom_{\bincupdga} (\mcm,\,C^\ast(|Y|;R)).
\]
In particular, with $N = \GGR{G}{n}$ and $\mcm = \mcm(N)$
the $1$-minimal model for $C^\ast(K(N,1);R)$, this bijection
restricts to a bijection between classifying maps
$Y\to K(N,1)$ and structural morphisms
$\mcm(N)\to C^\ast(Y;R)$.
\end{theorem}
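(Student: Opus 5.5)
The approach is to build the bijection by induction along the Hirsch-extension filtration of $\mcm$. The aim is to show that $BG(\mcm)$ is a simplicial-set-level representing object for the functor $Y\mapsto \Hom_{\bincupdga}(\mcm, C^*(|Y|;R))$. Since $C^*(|Y|;R)$ is quasi-isomorphic to the simplicial cochains $C^*(Y;R)$ of the $\Delta$-set, the first step is to replace the target by the latter. This is compatible with the $\cup_1$ and binomial structures, so naturality is preserved.

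Write $\mcm=\varinjlim \mcm_n$, with $\mcm_{n+1}=\mcm_n\otimes_\xi \Lambda(V_{n+1})$ a Hirsch extension whose generators sit in degree $1$ and have differential $d v=\xi(v)\in \mcm_n^2$. A binomial $\cup_1$-dga map out of a Hirsch extension is determined freely by where the new generators go. So a morphism $\mcm_{n+1}\to C^*(Y)$ is the same as a morphism $\varphi\colon \mcm_n\to C^*(Y)$ together with a linear map $c\colon V_{n+1}\to C^1(Y;R)$ satisfying $\delta c=\varphi\circ\xi$. Here I would invoke the earlier freeness/universal property of Hirsch extensions in the binomial $\cup_1$ category.

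On the group side, $G(\mcm_{n+1})$ is a central extension of $G(\mcm_n)$ by $V_{n+1}^\vee$, with cocycle determined by $\xi$ (via the explicit group law built from $\cup_1$ and binomials). Hence $BG(\mcm_{n+1})\to BG(\mcm_n)$ is a principal fibration of $\Delta$-sets with fiber $B(V_{n+1}^\vee)$. A lift of a $\Delta$-map $f\colon Y\to BG(\mcm_n)$ amounts to assigning a $V_{n+1}^\vee$-valued label to each $1$-simplex of $Y$ (a $1$-cochain $c$), subject to the condition on every $2$-simplex. That condition says the group law on the labels closes up, i.e.\ $\delta c$ equals the pullback of the extension cocycle. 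The key step is to identify this pullback cocycle with $\varphi\circ\xi$ under the inductive identification of $f$ with $\varphi$.

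Concretely, I would check on the universal example. A $2$-simplex $(g,h)$ of $BG(\mcm_n)$ corresponds to an element of $G(\mcm_n)$ evaluated on the standard $2$-simplex. The formula for multiplication in $G(\mcm_{n+1})$ must match evaluation of $\xi(v)$, built from cup, $\cup_1$ and binomial operations on lower generators, on that $2$-simplex. I expect this to be the main obstacle: one must verify that the cochain-level operations on $\Delta^2$ reproduce exactly the cocycle defining the group law. This includes the binomial terms $\binom{x}{k}$ that handle integrality. I would first try to prove it by matching the defining formulas of $G(\mcm)$, which were presumably designed from this evaluation.

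Once the matching is established, induction gives compatible bijections at every stage $n$. Both sides convert the colimit into a limit, since $BG(\mcm)=\varprojlim BG(\mcm_n)$ and $\Hom(\varinjlim\mcm_n,-)=\varprojlim\Hom(\mcm_n,-)$, so the stagewise bijections assemble into the bijection for $\mcm$. Naturality in $Y$ is clear because every construction is pullback along face maps.

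For the restriction statement, take $N=\GGR{G}{n}$ and $\mcm(N)$. By Theorem~\ref{thm:A} applied to $K(N,1)$, we have $G(\mcm(N))\cong N$, so $BG(\mcm(N))$ is a model of $K(N,1)$. Classifying maps are those inducing the prescribed map on $\pi_1$ (Definition~\ref{def:classifying}). Structural morphisms are those compatible with the given identification on $H^1$ and the tower (Definition~\ref{def:structural-morphism}). Under the bijection, $\pi_1$ of a $\Delta$-map corresponds to the restriction to $\mcm_1$ and the induced map on $G(\mcm_k)$. Matching the two definitions then yields Lemma~\ref{lem:correspondence}.
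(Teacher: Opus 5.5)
Apart from the opening reduction (see below), your argument for the bijection is correct, but it is organized differently from the paper's. The paper never inducts over the Hirsch filtration:
\begin{itemize}
\item Lemma~\ref{lem:Delta1} uses freeness of $\T_R(\bbX)$ on all of $\bbX$ at once. It identifies graded-algebra maps $f\colon\mcm\to C^*(|Y|;R)$ with labelings $Y_1\to G(\mcm)=M(\bbX;R)$, $e\mapsto\bigl(x\mapsto f(x)(e)\bigr)$.
\item Lemma~\ref{lem:rfygroup} shows, one generator at a time, that $df(x)=f(dx)$ on a $2$-simplex is exactly the $x$-coordinate of $\ba\cdot\bb=\bc$ in \eqref{eq:product}.
\item A $\Delta$-map into the nerve $BG(\mcm)$ is the same as a labeling that is multiplicative on $2$-simplices.
\end{itemize}
Your lifting through $BG(\mcm_{n+1})\to BG(\mcm_n)$ is the same computation sorted by layers: at stage $n+1$ only the coordinates $x\in\bbX_{n+1}$ are new. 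It is followed by a $\varprojlim$ interchange. That interchange is valid because the bar construction commutes levelwise with inverse limits and $\Hom(\varinjlim_n\mcm_n,-)=\varprojlim_n\Hom(\mcm_n,-)$.

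Your version makes the obstruction-theoretic content visible: solving $\delta c=\varphi\circ\xi$ is lifting against the pulled-back extension class. The direct version is shorter, needs no limit argument, and produces a closed formula for $r$. That formula makes naturality in $\mcm$ a one-line check; you do not address naturality in $\mcm$ at all.

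The step you single out as the main obstacle is in fact short. On a $1$-simplex $e$, $\cup_1$ of $1$-cochains is the pointwise product and the binomial operations are pointwise. Hence $\varphi(p)(e)=p(\ba_e)$ for all $p\in\T^1_R(\bbX)$, where $\ba_e(x)=\varphi(x)(e)$. Combined with Alexander--Whitney and $(du)(0,1,2)=u(0,1)+u(1,2)-u(0,2)$, this gives exactly the cocycle in \eqref{eq:product}, signs included.

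Your opening reduction is justified incorrectly. A quasi-isomorphism of targets does not induce a bijection of strict $\Hom$-sets, so you cannot trade $C^*(|Y|;R)$ for a quasi-isomorphic algebra. The step is simply unnecessary: in this paper $C^*(|Y|;R)$ \emph{is} the simplicial cochain algebra of the $\Delta$-set $Y$ (Appendix~\ref{subsec:delta}). With singular cochains of $|Y|$, your argument would classify maps out of the singular complex of $|Y|$, not out of $Y$.

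The restriction statement is where the proposal has a genuine gap. Both definitions are misstated:
\begin{itemize}
\item Definition~\ref{def:classifying} prescribes no map on $\pi_1$. A classifying map is any $f\colon Y\to K(N,1)$ whose composite $f_1=p\circ f$ with the Postnikov projection $p\colon K(N,1)\to K(\GGR{N}{2},1)$ induces an isomorphism on $H^1(-;R)$.
\item Definition~\ref{def:structural-morphism} involves no given identification. It only requires that the restriction $s_1$ of $s$ to $\mcm_1$ be an isomorphism on $H^0$ and $H^1$.
\end{itemize}
Matching these needs two facts you never state:
\begin{itemize}
\item[(a)] $r(s_1)=p\circ r(s)$, i.e.\ the bijection intertwines restriction along $\mcm_1\hookrightarrow\mcm(N)$ with composition with $p$. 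This is automatic in your inductive set-up, since each stage extends the previous one.
\item[(b)] The factorization $s=r(s)^\sharp\circ\psi$ of the triangle \eqref{dgm:adjunction}. This follows from naturality in $Y$ once you check that $\psi$ corresponds to $\id_{BG(\mcm)}$.
\end{itemize}
Together these give $s_1=f_1^\sharp\circ\psi_1$ with $f=r(s)$. Since $\psi_1$ is a cohomology isomorphism (Lemma~\ref{lem:Hirsch-central}\eqref{pt:four}), $H^1(s_1)$ is an isomorphism if and only if $H^1(f_1)$ is. That is the content of the paper's proof of Lemma~\ref{lem:correspondence}; ``matching the two definitions'' does not supply it.
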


At the level of $1$-minimal models, this question has a precise answer.
Theorem~\ref{thm:triangle} gives a necessary and sufficient condition
for an isomorphism between the $n$th steps of two binomial
$\cup_1$-dgas to extend to an isomorphism between their $(n+1)$th steps.
Through the identification of Theorem~\ref{thm:A} and the correspondence
of Theorem~\ref{thm:C}, this algebraic criterion takes the following
topological form.

\begin{theorem}[See Theorem~\ref{thm:trianglespaces}]
\label{thm:B}
Let $X_a$ and $X_b$ be connected spaces with $H^1(X_a;R)$ and
$H^1(X_b;R)$ finitely generated, let $G_a=\pi_1(X_a)$ and
$G_b=\pi_1(X_b)$, and suppose $\GGR{G_a}{n}\cong\GGR{G_b}{n}$
for some $n\ge 2$, with common quotient $N$.
Let $q_b\colon X_b\to K(N,1)$ be a classifying map.
Then $\GGR{G_a}{n+1}\cong\GGR{G_b}{n+1}$ if and only if
there exists a classifying map $q_a\colon X_a\to K(N,1)$ and
an isomorphism of graded $R$-algebras
\[
g^{\le 2}\colon
  \im\bigl(H^{\le 2}(q_a^\#)\bigr)
  \longisom
  \im\bigl(H^{\le 2}(q_b^\#)\bigr)
\]
such that $g^2\circ H^2(q_a^\#) = H^2(q_b^\#)$.
\end{theorem}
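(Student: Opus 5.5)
The plan is to reduce Theorem~\ref{thm:B} to the purely algebraic criterion of Theorem~\ref{thm:triangle}, using Theorem~\ref{thm:A} to translate nilpotent quotients into steps of $1$-minimal models and Theorem~\ref{thm:C} to translate classifying maps into structural morphisms. Put $A=C^\ast(X_a;R)$, $B=C^\ast(X_b;R)$ and $N=\GGR{G_a}{n}\cong\GGR{G_b}{n}$. By Theorem~\ref{thm:A}, $G(\mcm_{n-1}(A))\cong N\cong G(\mcm_{n-1}(B))$, so $\GGR{G_a}{n+1}\cong\GGR{G_b}{n+1}$ holds if and only if $G(\mcm_n(A))\cong G(\mcm_n(B))$. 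Because the steps of a $1$-minimal model are unique up to isomorphism and $G(-)$ is functorial, this is in turn equivalent to $\mcm_n(A)\cong\mcm_n(B)$, by an isomorphism extending a given isomorphism of the $(n-1)$th steps. The reverse implication needs the fact that $\mcm_n$ is recovered from the tower: $\mcm_n(A)$ is the $n$th step of the $1$-minimal model of $C^\ast(K(\GGR{G_a}{n+1},1);R)$. I will obtain this from the naturality statements attached to Theorem~\ref{thm:A}, together with the identification $\mcm(N)=\mcm_{n-1}$.

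Next I translate the data. A classifying map $q_a\colon X_a\to K(N,1)$ corresponds, by Theorem~\ref{thm:C} and Lemma~\ref{lem:correspondence}, to a structural morphism $\mcm(N)\to C^\ast(X_a;R)$. Such a morphism is a model of the composite $\mcm_{n-1}(A)\to A$, after identifying $\mcm(N)$ with $\mcm_{n-1}(A)$ via the isomorphism $G(\mcm_{n-1}(A))\cong N$. Hence $H^{\le2}(q_a^\#)$ is the map on $H^{\le 2}$ induced by the $(n-1)$th step of the $1$-minimal model, and its image is the subalgebra appearing in Theorem~\ref{thm:triangle}. Different choices of $q_a$ correspond to precomposing with automorphisms of $N$, that is, to different identifications $\mcm_{n-1}(A)\cong\mcm_{n-1}(B)$. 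The existential quantifier over $q_a$ (with $q_b$ fixed) therefore matches the existential quantifier over isomorphisms of $(n-1)$th steps in Theorem~\ref{thm:triangle}. The triangle condition $g^2\circ H^2(q_a^\#)=H^2(q_b^\#)$ is exactly the commutativity required there. Applying Theorem~\ref{thm:triangle} then finishes both directions.

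The main obstacle is this bookkeeping. I must check three things:
\begin{itemize}
\item Every isomorphism $\mcm_{n-1}(A)\cong\mcm_{n-1}(B)$ is realized by some classifying map $q_a$. This uses surjectivity of the bijection in Theorem~\ref{thm:C} onto structural morphisms, together with the fact that $\mathrm{Aut}(N)$ acts transitively on classifying maps up to homotopy.
\item Passing to homotopy classes does not change the images on $H^{\le 2}$.
\item The hypothesis $n\ge2$ is what guarantees that $N$ is determined by $H^1$ and the $(n-1)$th step, so that the indexing shift is consistent.
\end{itemize}
I would handle the first point first, by writing the classifying map explicitly through the bar construction $BG(\mcm)$.
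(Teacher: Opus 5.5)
Your proposal is correct and follows essentially the paper's route. You identify $\mcm(N)$ with the common $(n-1)$th step and $\GGR{G_a}{n+1}$, $\GGR{G_b}{n+1}$ with the $n$th steps via Theorem~\ref{thm:mnequalsquotient}. You pass between classifying maps and structural morphisms by Lemma~\ref{lem:correspondence} ($\ell_a=q_a^\#\circ\psi$ one way, $q_a=r(\ell_a)$ the other, with $H^2(\psi)$ an isomorphism so images and triangles match), and then apply Theorem~\ref{thm:triangle}.

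The extra bookkeeping you plan (transitivity of $\mathrm{Aut}(N)$ on classifying maps, passage to homotopy classes) is not needed. Theorem~\ref{thm:triangle} quantifies directly over structural morphisms $\ell_a\colon\mcm_{n-1}\to A_a$, not over isomorphisms of $(n-1)$th steps as you describe it. Lemma~\ref{lem:correspondence} is already a strict bijection between all classifying maps and all structural morphisms.
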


The condition on $g^{\le 2}$ is the precise algebraic avatar of the
$k$-invariant obstruction: the isomorphism $\GGR{G_a}{n+1}\cong
\GGR{G_b}{n+1}$ exists if and only if the $k$-invariants of the two
Postnikov stages can be matched by an isomorphism of cohomology rings.
When the classifying maps $q_a^\#$ and $q_b^\#$ are surjective in
$H^2$, the condition reduces to~\cite[Thm.~6.5]{Porter-Suciu-2020}, 
which in turn relates, in the case $n=3$, to the invariant used in
the work of Rybnikov \cite{Rybnikov-1,Rybnikov-2}
to distinguish the fundamental groups of hyperplane arrangements
with the same incidence structure. Theorem~\ref{thm:trianglespaces} 
applies without any surjectivity hypothesis on the classifying maps. 

A third result, proved alongside the criterion in
Section~\ref{sec:extend}, extracts an integral invariant from the same
degree-$2$ data. When the lower central series quotients of $G$ are
torsion-free through stage~$n$, the kernel and cokernel of the structural
morphism in degree~$2$ recover, respectively, the free part and the
torsion part of the next graded piece of the lower central series.

\begin{theorem}[See Theorem~\ref{thm:coker-torsion}]
\label{thm:D}
Let $X$ be a connected $\Delta$-complex with $H^1(X;\Z)$ and
$H^2(X;\Z)$ finitely generated, let $G=\pi_1(X)$, and let
$\rho_n\colon\mcm_n\to C^\ast(X;\Z)$ be a structural morphism for the
integral $1$-minimal model. Suppose $\gr_i(G)$ is torsion-free for
$1\le i\le n$. Then $\ker H^2(\rho_n)\cong\gr_{n+1}^0(G)$ is free of
rank equal to that of $\gr_{n+1}(G)$; and if moreover
$H_2(G/\Gamma_{n+1}(G);\Z)$ is torsion-free, then
\[
\Tors\bigl(\coker H^2(\rho_n)\bigr) \cong \Tors\bigl(\gr_{n+1}(G)\bigr),
\quad\text{and}\quad
\gr_{n+1}(G) \cong \gr_{n+1}^0(G)\oplus\Tors\bigl(\gr_{n+1}(G)\bigr).
\]
\end{theorem}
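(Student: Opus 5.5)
Since $\gr_i(G)$ is torsion-free for $i\le n$, the torsion-free lower central series agrees with the ordinary one through stage $n$: $\G^{\Z}_{i}(G)=\G_i(G)$ for $i\le n+1$. So $N:=\GGR{G}{n+1}=G/\G_{n+1}(G)$, and $\G^{\Z}_{n+2}(G)/\G_{n+2}(G)$ is the torsion of $\gr_{n+1}(G)$, giving $\gr^0_{n+1}(G)=\gr_{n+1}(G)/\Tors$. By Theorem~\ref{thm:A} and the correspondence of Theorem~\ref{thm:C}, the structural morphism $\rho_n$ is realized by a classifying map $q\colon X\to K(N,1)$ inducing $\pi_1(X)=G\surj N$. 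Moreover, in degree $\le 2$ the cohomology of $\mcm_n$ is identified with $H^{\le 2}(N;\Z)$, so that $H^2(\rho_n)=H^2(q)\colon H^2(N;\Z)\to H^2(X;\Z)$. Since $H^2(K(G,1))\inj H^2(X)$ (a $K(G,1)$ is obtained from $X$ by attaching cells of dimension $\ge 3$), I may replace $X$ by $K(G,1)$. Kernels are unchanged; cokernels change only by a summand $H^2(X)/H^2(G)$, which I must control separately. I would first check that this summand is torsion-free; if it is not, I expect the intended statement is really about the image of $H^2(G)$.

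Next I apply the five-term exact sequence of the extension $1\to \G_{n+1}\to G\to N\to 1$:
\[
H_2(G)\to H_2(N)\to \G_{n+1}/[G,\G_{n+1}]=\gr_{n+1}(G)\to H_1(G)\to H_1(N)\to 0.
\]
The last map is an isomorphism for $n\ge 1$. Hence $\gr_{n+1}(G)\cong \coker\bigl(H_2(G)\to H_2(N)\bigr)$. Dually, in cohomology I use the universal coefficient theorem. $H_1(N)=H_1(G)$ is free (as $\gr_1$ is torsion-free), so $H^2(N)\cong\Hom(H_2(N),\Z)$, and $H^2(G)\cong\Hom(H_2(G),\Z)\oplus\mathrm{Ext}(H_1(G),\Z)=\Hom(H_2(G),\Z)$. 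Writing $f\colon H_2(G)\to H_2(N)$ and $C=\coker f\cong \gr_{n+1}(G)$, the map $H^2(\rho_n)$ becomes $f^*=\Hom(f,\Z)$. Left exactness of $\Hom(-,\Z)$ gives
\[
\ker f^*=\Hom(C,\Z)\cong \Hom(\gr_{n+1}(G),\Z),
\]
which is free of rank $\operatorname{rank}\gr_{n+1}(G)$ and abstractly isomorphic to $\gr^0_{n+1}(G)$, a free group of that same rank. This proves the first claim.

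For the cokernel, I use that $H_2(N)$ is torsion-free by hypothesis, so $H_2(N)$ is free, being finitely generated. This finite generation follows because $N$ is finitely generated nilpotent, and that in turn uses $H^1$ finitely generated. Put $I=\im f\subseteq H_2(N)$, which is free, and split $0\to K\to H_2(G)\to I\to 0$. Then $f^*$ factors as $\Hom(H_2(N),\Z)\to\Hom(I,\Z)\inj\Hom(H_2(G),\Z)$, where the second map is a split injection with free cokernel $\Hom(K,\Z)$. Thus $\Tors\coker f^*=\Tors\coker\bigl(\Hom(H_2N,\Z)\to\Hom(I,\Z)\bigr)=\mathrm{Ext}(C,\Z)$, the latter coming from the long exact $\mathrm{Ext}$ sequence of $0\to I\to H_2N\to C\to0$, since $\mathrm{Ext}(H_2N,\Z)=0$. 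Finally $\mathrm{Ext}(C,\Z)\cong\Tors(C)=\Tors\gr_{n+1}(G)$. The splitting $\gr_{n+1}(G)\cong\gr^0_{n+1}(G)\oplus\Tors$ is just the structure theorem for the finitely generated abelian group $\gr_{n+1}(G)$, noting $\gr^0_{n+1}(G)=\gr_{n+1}(G)/\Tors$.

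The main obstacle is the first step: rigorously identifying $H^2(\rho_n)$ with $H^2(q)$ on $H^2(N;\Z)$. This needs $H^{\le2}(\mcm_n)\cong H^{\le2}(K(N,1);\Z)$ compatibly with the structural morphism, which I would extract from Theorem~\ref{thm:C} together with the $1$-quasi-isomorphism $\mcm(N)\to C^*(K(N,1);\Z)$. The other delicate point is the comparison of $H^2(X)$ with $H^2(G)$ in the cokernel.
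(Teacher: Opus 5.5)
Your proof is correct. For (b) and (c) it is the paper's argument. The identification you flag as the main obstacle is made exactly as you propose, via the adjunction triangle of Theorem~\ref{thm:strict-representability}: $\rho_n=|r(\rho_n)|^\#\circ\psi$, with $r(\rho_n)$ a classifying map by Lemma~\ref{lem:correspondence} and $\psi$ a quasi-isomorphism by Lemma~\ref{lem:Hirsch-central}. The paper then dualizes Stallings' sequence $H_2(X)\to H_2(N)\to\gr_{n+1}(G)\to 0$. Your $\mathrm{Ext}$ computation is a more careful version of its ``transposition preserves the torsion of a cokernel'' step.

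The passage to $K(G,1)$ is unnecessary. Its loose end closes as you hoped, so the statement needs no reformulation. The cells of $K(G,1)$ outside $X$ have dimension $\ge 3$, so $H_2(K(G,1),X)=0$. Hence the cokernel of $H^2(G)\inj H^2(X)$ embeds in $H^3(K(G,1),X)\cong\Hom(H_3(K(G,1),X),\Z)$, which is torsion-free. Simpler still, keep $X$ throughout. Hopf's surjection $H_2(X)\surj H_2(G)$ gives $\coker\bigl(H_2(X)\to H_2(N)\bigr)\cong\gr_{n+1}(G)$. Also $H^2(X)=\Hom(H_2(X),\Z)$ since $H_1(X)$ is free, and your splitting argument never needs the source to be finitely generated.

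Part (a) is where the routes differ. The paper reads $\ker H^2(\rho_n)$ off the Hirsch extension $\mcm_n\inj\mcm_{n+1}$ (Lemma~\ref{lem:classify}(iii)). It then matches the new generators with a basis of $\gr^0_{n+1}(G)$ via Theorem~\ref{thm:mnequalsquotient}. This ties the kernel to the model, and uses torsion-freeness of $\gr_{\le n}(G)$ only to compare ranks with $\gr_{n+1}(G)$. You get $\ker H^2(\rho_n)\cong\Hom(\gr_{n+1}(G),\Z)$ from left exactness of $\Hom(-,\Z)$. This is elementary and gives freeness and rank at once. However, it uses the torsion-free hypothesis from the outset and produces only an abstract isomorphism with $\gr^0_{n+1}(G)$. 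The two descriptions are consistent. By Lemma~\ref{lem:Hirsch-central}, the kernel of $G(\mcm_{n+1})\to G(\mcm_n)$ is $\Z^{\bbX_{n+1}}$, dual to the span of the new generators, and that span maps isomorphically onto $\ker H^2(\rho_n)$.
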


Thus a single degree-$2$ map detects both the rank and the torsion of
$\gr_{n+1}(G)$: its kernel gives the free part $\gr_{n+1}^0(G)$, while the
torsion of its cokernel gives $\Tors\gr_{n+1}(G)$. The latter is
invisible to the rational $1$-minimal model and to the torsion-free
Stallings series; it coincides with the invariant $\kappa_n$
of~\cite{Porter-Suciu-2023}, which depends only on the isomorphism
type of $\pi_1(X)$, and it is the mechanism behind the second
example below.

\subsection{Examples}
\label{subsec:intro-example}

The theory can be applied to distinguish spaces $X$ and $Y$ by 
showing that, for suitable $R$ and $n$, the groups 
$\pi_1(X)/\Gamma_{n}^R(\pi_1(Y))$ and 
$\pi_1(X)/\Gamma_{n}^R(\pi_1(Y))$ are not isomorphic. 
We illustrate this in Section~\ref{sec:examples} with two examples, in
each of which the spaces involved have pairwise isomorphic cohomology
rings yet are distinguished by integral nilpotent data.

The first is a pair of $2$-complexes 
$X_0$ and $X_1$ with the same cohomology ring over $\Z$; their 
mod~$2$ nilpotent quotients $\pi_1(X_i)/\Gamma_4^2(\pi_1(X_i))$ 
are distinguished by Theorem~\ref{thm:B}, via an $\F_2$-dimension count 
on the third graded piece $\gr_3^2(\pi_1(X_i))$, detected by a Massey 
product computation in $H^\ast(X_i;\F_2)$.

The second is the infinite family of 
complements $Y(k) = S^3\setminus C(k)$ of generalized 
Borromean rings ($k\ge 1$). All pairwise linking numbers 
vanish, so the integral cohomology rings $H^\ast(Y(k);\Z)$ are 
pairwise isomorphic, and the torsion-free nilpotent quotients 
$\pi_1(Y(k))/\Gamma_s^0(\pi_1(Y(k)))$ have $k$-independent graded 
ranks for $2\le s\le 4$. Nonetheless the links are distinguished by torsion 
in the \emph{ordinary} lower central series: $\gr_3(\pi_1(Y(k))) \cong 
\Z^6 \oplus (\Z/k\Z)^2$, so $\pi_1(Y(k)) \not\cong \pi_1(Y(\ell))$ for 
$|k|\ne|\ell|$. Here, by Theorem~\ref{thm:D}, the integer Milnor 
invariant $k$ enters as the torsion of the cokernel of the structural 
morphism $H^2(\rho_2)$, a feature of the \emph{integral} $1$-minimal 
model invisible to the cohomology ring, the cup product, and the 
rational $1$-minimal model.

The companion paper \cite{Porter-Suciu-GMP} introduces a generalization 
of Massey products with smaller indeterminacy than the usual 
ones. This allows for 
stronger applications of Theorem~\ref{thm:B} in geometric contexts 
such as configuration spaces, complements of links, complements of 
toric arrangements, and complements of hyperplane arrangements.
In particular, \cite{Porter-Suciu-GMP} gives examples where generalized
Massey products tell apart spaces that are not distinguished by the 
corresponding argument using the usual Massey products.

\subsection{Organization of the paper}
\label{subsec:intro-org}

Section~\ref{sec:group-tower} reviews the relevant group theory, 
including central extensions, the torsion-free and $p$-power 
lower central series, and the Postnikov tower and Stallings sequence.
In Section \ref{sec:binomial} we recall the framework of $R$-binomial 
$\cup_1$-dgas and their $1$-minimal models, following 
\cite{Porter-Suciu-2021, Porter-Suciu-2023}.
The main objects---Hirsch extensions, colimits thereof, and 
$1$-minimal models---are defined in 
\S\S\ref{subsec:Hirsch-extensions}--\ref{subsec:1min}.
Section~\ref{sec:groups-from-hirsch} recalls the construction of 
the pronilpotent group $G(\mcm)$ associated to a colimit of Hirsch 
extensions, establishes its basic properties, and defines $G(A)$ 
for a binomial $\cup_1$-dga $A$.
Section~\ref{sect:BK} identifies $G(\mcm)$ with the 
Bousfield--Kan $R$-completion of $\pi_1(X)$ and describes 
the kernel of the natural map from the classical pronilpotent 
completion to $G(C^*(X;\Z))$.
Section~\ref{sec:main} contains the proof of Theorem~\ref{thm:A}
(Theorem~\ref{thm:mnequalsquotient} in full generality).
Section~\ref{sect:functorial} proves the strict representability 
result and the classifying map correspondence underlying
Theorem~\ref{thm:C} (Theorem~\ref{thm:strict-representability} and 
Lemma~\ref{lem:correspondence}). Section~\ref{sec:extend} 
proves Theorems~\ref{thm:B} and~\ref{thm:D}
(Theorems~\ref{thm:triangle}, \ref{thm:trianglespaces},
and~\ref{thm:coker-torsion}).
Section~\ref{sec:examples} illustrates the theory with two families
of spaces whose fundamental groups have isomorphic cohomology rings
but non-isomorphic nilpotent quotients, distinguished using
Theorem~\ref{thm:triangle} and the Massey triple product structure.

This paper is the fourth in a series; the preceding works are
\cite{Porter-Suciu-2020, Porter-Suciu-2021, Porter-Suciu-2023}.
Background results from those papers that are used here---%
particularly the existence and uniqueness of $1$-minimal models
and the homotopy lifting theorem---are collected in the Appendix
for the reader's convenience.

\section{Central series filtrations and Postnikov towers}
\label{sec:group-tower}

This section collects the group-theoretic background used throughout
the paper. We review three descending central series filtrations
of a group \(G\) (\S\ref{subsec:descending}) and the corresponding
Postnikov towers over $R$ (\S\ref{subsec:postnikov-R}). The central
extension machinery of \S\ref{subsec:central-extensions}
underlies the Hirsch extension construction in the next section.

Throughout the paper, $R$ denotes either $\Z$ or a prime field $\F_p$. 
All spaces are simplicial complexes, or more generally $\Delta$-complexes, 
as recalled in Appendix~\ref{subsec:delta}.

\subsection{Central extensions and the map $i_\chi$}
\label{subsec:central-extensions}

Let $G$ be a group and $A$ a finitely generated free $R$-module. An element 
$\chi\in H^2(G;A)$ classifies a central extension~\cite[Ch.~IV]{Brown}
\[
\begin{tikzcd}[column sep=18pt]
0 \ar[r] & A \ar[r] & \bar{G} \ar[r] & G \ar[r] & 0 .
\end{tikzcd}
\]
Geometrically, $\bar G$ is the fundamental group of the pull-back of the path-space 
fibration over $K(A,2)$ along the map $K(G,1)\to K(A,2)$ representing $\chi$.

The class $\chi$ induces a linear map
\begin{equation}
\label{eq:ichi}
i_\chi \colon H^1(A;R) \longrightarrow H^2(G;R)
\end{equation}
defined as the composition
\begin{equation}
\label{eq:ichi-comp}
\begin{tikzcd}[column sep=20pt]
H^1(A;R) \ar[r, "\cong"]
    & H^2(K(A,2);R) \ar[r, "\chi^\ast"]
    & H^2(G;R)
\end{tikzcd}
\end{equation}
This map is precisely the $d_2$-differential in the Serre spectral sequence of the 
fibration obtained from $\chi$, see e.g.~\cite[\S 5.2]{McCleary}.

\begin{lemma}
\label{lem:bij-chi-i}
The correspondence $\chi\mapsto i_\chi$ is a bijection
\[
H^2(G;A) \longisom \Hom_R\bigl(H^1(A;R),H^2(G;R)\bigr).
\]
\end{lemma}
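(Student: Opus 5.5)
The plan is to reduce the statement to the universal coefficient theorem, using that $A$ is a finitely generated free $R$-module, say $A\cong R^r$. Fix a basis $a_1,\dots,a_r$ of $A$. On one side, $H^2(G;A)\cong H^2(G;R)^{\oplus r}\cong \Hom_R(A^\vee, H^2(G;R))$, where $A^\vee=\Hom_R(A,R)$. On the other side, the universal coefficient theorem identifies $H^1(A;R)=H^1(K(A,1);R)$ with $\Hom(H_1(A),R)=\Hom_R(A,R)=A^\vee$, because $H_1(A;\Z)=A$ when $A$ is viewed as an abelian group. When $R=\F_p$ the group $A$ is an elementary abelian $p$-group, and we still have $H^1(A;\F_p)=\Hom(A,\F_p)=A^\vee$. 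Likewise, $H^2(K(A,2);R)\cong\Hom(H_2(K(A,2)),R)=\Hom(A,R)$, since $H_1(K(A,2))=0$ kills the Ext term. So both sides are abstractly isomorphic to $\Hom_R(A^\vee,H^2(G;R))$, and what remains is to check that $\chi\mapsto i_\chi$ realizes this identification.

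\textbf{Step 1: describe $i_\chi$ in terms of $\chi$.} Represent $\chi$ by a map $f\colon K(G,1)\to K(A,2)$, which is possible since $H^2(G;A)=[K(G,1),K(A,2)]$. By the Hurewicz and universal coefficient theorems, $H^2(K(A,2);A)\cong\Hom(A,A)$, and the fundamental class $\iota$ corresponds to $\id_A$; by definition $\chi=f^*\iota$. For $\phi\in A^\vee\cong H^1(A;R)\cong H^2(K(A,2);R)$, the corresponding class is $\phi_*\iota$, the image of $\iota$ under the coefficient change $\phi\colon A\to R$. By naturality of coefficient change under pullback,
\[
i_\chi(\phi)=f^*(\phi_*\iota)=\phi_*(f^*\iota)=\phi_*\chi .
\]
So $i_\chi$ is the map $\phi\mapsto\phi_*\chi$, which is manifestly linear in $\chi$ and in $\phi$.

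\textbf{Step 2: show that $\chi\mapsto(\phi\mapsto\phi_*\chi)$ is bijective.} Use the dual basis $\phi_1,\dots,\phi_r$ of $A^\vee$. The coefficient decomposition $A=\bigoplus_j Ra_j$ gives an isomorphism $H^2(G;A)\cong\bigoplus_j H^2(G;R)$, $\chi\mapsto(\phi_{j*}\chi)_j$, because cohomology commutes with finite direct sums of coefficients. A homomorphism $A^\vee\to H^2(G;R)$ is determined freely by its values on the $\phi_j$. Hence the map $\chi\mapsto i_\chi$ is exactly this isomorphism, read through the basis, and is therefore a bijection.

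\textbf{Main obstacle.} The only delicate point is Step 1. There one must make sure that the identification $H^1(A;R)\cong H^2(K(A,2);R)$ used in \eqref{eq:ichi-comp} agrees, up to an automorphism that does not affect bijectivity, with $\phi\mapsto\phi_*\iota$. This identification is the transgression/suspension isomorphism. For $\phi\in\Hom(A,R)$, both $H^1(K(A,1);R)$ and $H^2(K(A,2);R)$ correspond to $\phi$ under Hurewicz and universal coefficients, and the cohomology suspension commutes with these identifications. Even if it differed by a sign or an automorphism of $A^\vee$, composing a bijection with an automorphism would still give a bijection, so the conclusion is unaffected.
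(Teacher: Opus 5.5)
Your proof is correct and follows the same route as the paper: split $H^2(G;A)\cong\bigoplus_j H^2(G;R)$ along a basis of $A$ and observe that $i_\chi$ sends the dual basis element $a_j^\vee$ to the component $\chi_j$, so $\chi\mapsto i_\chi$ is just this splitting read through the dual basis. Your Step~1 identity $i_\chi(\phi)=\phi_*\chi$, which comes from naturality of coefficient change under $f^*$, justifies the equality $i_\chi(a_j^\vee)=\chi_j$ that the paper only asserts, and your remark that a different choice of the isomorphism $H^1(A;R)\cong H^2(K(A,2);R)$ merely precomposes with an automorphism correctly removes the remaining ambiguity.
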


\begin{proof}
Choose an $R$-basis $\{a_1,\dots,a_r\}$ for $A$, and let
$\{a_1^\vee,\dots,a_r^\vee\}$ be the corresponding dual basis
of $H^1(A;R) \cong \Hom_R(A,R)$. By the universal coefficient
theorem, $H^2(G;A) \cong \bigoplus_{j=1}^r H^2(G;R)$.

Given $\chi \in H^2(G;A)$, write $\chi = \sum_j \chi_j \otimes a_j$
with $\chi_j \in H^2(G;R)$. The map $i_\chi \colon H^1(A;R) \to H^2(G;R)$ 
sends $a_j^\vee$ to $\chi_j$, so $i_\chi$ is precisely the $R$-linear map
determined by $i_\chi(a_j^\vee)=\chi_j$.

Conversely, given an $R$-linear map $i \colon H^1(A;R) \to H^2(G;R)$,
set $\chi_j = i(a_j^\vee)$ and $\chi = \sum_j \chi_j \otimes a_j \in H^2(G;A)$. 
Then $i_\chi = i$ by construction. The two assignments $\chi \mapsto i_\chi$ 
and $i \mapsto \chi$ are clearly mutually inverse, establishing the bijection.
\end{proof}

\begin{lemma}
\label{lem:eG1G2}
Let $R=\Z$ or $\F_p$ with $p$ a prime, let $A$ be a finitely generated
free $R$-module, let $e \colon G_1 \to G_2$ be a homomorphism of
groups, let $\chi \in H^2(G_2;A)$, let $\bar{G}_2$ and $\bar{G}_1$
be the central extensions given by $\chi$ and $H^2(e)(\chi)$ respectively.
Then the homomorphism $e$ extends to a homomorphism $\bar{e}$
such that the following diagram commutes.
\[
\begin{tikzcd}[row sep=20pt, column sep=24pt]
0 \ar[r]
		& A \ar[r] \ar[d,  "\id" '] 
		& \bar{G}_1 \ar[r] \ar[d, "\bar{e}" '] 
		& G_1 \ar[r]  \ar[d, "{e}" '] 
		& 0\\
0 \ar[r]
		& A \ar[r] 
		& \bar{G}_2 \ar[r] 
		& G_2 \ar[r]
		& 0
\end{tikzcd}
\]
\end{lemma}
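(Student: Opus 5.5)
The plan is to realize $\bar G_1$ as a pullback. Let $\pi_2\colon \bar G_2\to G_2$ be the given extension, and form the fibered product
\[
P = G_1\times_{G_2}\bar G_2 = \{(g,h)\in G_1\times \bar G_2 : e(g)=\pi_2(h)\}.
\]
The map $A\to P$, $a\mapsto (1,a)$, together with the projection $P\to G_1$, $(g,h)\mapsto g$, gives a sequence $0\to A\to P\to G_1\to 0$. I will check three things in turn.

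First, the sequence is exact. Surjectivity of the projection holds because $\pi_2$ is surjective. The kernel of the projection is $\{(1,h): \pi_2(h)=1\}$, which is the image of $A$.

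Second, $A$ is central in $P$. This holds because $(1,a)$ commutes with $(g,h)$: $a$ is central in $\bar G_2$, and the first coordinate is the identity.

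Third, and this is the main point, the extension $P$ is classified by $H^2(e)(\chi)$. I would prove this at the cocycle level. Choose a set-theoretic section $s\colon G_2\to\bar G_2$ with $s(1)=1$, and write the cocycle as
\[
f(x,y)=s(x)s(y)s(xy)^{-1}\in A .
\]
This cocycle represents $\chi$ under the standard identification of $H^2(G_2;A)$ with central extensions (Brown, Ch.~IV). Next define a section $t\colon G_1\to P$ by $t(g)=(g,s(e(g)))$. A direct computation gives
\[
t(g)t(g')t(gg')^{-1}=(1,f(e(g),e(g'))),
\]
so the cocycle of $P$ is $e^*f$. This represents $H^2(e)(\chi)$. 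Since central extensions with the same class are equivalent, $P\cong\bar G_1$ through an isomorphism that is the identity on $A$ and covers the identity on $G_1$.

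Finally, set $\bar e$ to be the composite $\bar G_1\cong P\to\bar G_2$, where the second map is $(g,h)\mapsto h$. The left square commutes because $(1,a)\mapsto a$. The right square commutes because $\pi_2(h)=e(g)$ by definition of $P$.

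The only step requiring care is the identification of the class of the pullback. The concern is that the paper describes $\chi$ topologically, via the path-space fibration over $K(A,2)$. To handle this I would note the following. Pulling back the fibration along $K(G_1,1)\to K(G_2,1)\to K(A,2)$ gives a space that maps to the pulled-back fibration over $K(G_2,1)$. On $\pi_1$, the long exact sequences of the two fibrations show that this space has fundamental group $P$. This is consistent with the algebraic computation above, so either viewpoint yields $\bar e$.
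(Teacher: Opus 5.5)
Your proof is correct, but it takes a different route from the paper's. The paper works directly in cocycle models. It writes $\bar G_i$ as $G_i\times A$ with multiplication $(g,a)(h,b)=(gh,a+b-c_i(g,h))$, takes for $\bar G_1$ the pulled-back cocycle $c_\chi\circ(e\times e)$, and checks in one line that $\bar e(g,a)=(e(g),a)$ is a homomorphism. You instead build the extension canonically as the fibre product $P=G_1\times_{G_2}\bar G_2$, verify that it is central, and compute its class with the section $t(g)=(g,s(e(g)))$. You then invoke the classification of central extensions to identify $P$ with $\bar G_1$. The essential computation is the same in both: your identity $t(g)t(g')t(gg')^{-1}=(1,f(e(g),e(g')))$ is exactly what makes the paper's formula for $\bar e$ multiplicative. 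Both arguments also use the classification theorem to replace an arbitrary extension with class $H^2(e)(\chi)$ by a specific model; the paper does this tacitly when it declares that $\bar G_1$ is built from the pulled-back cocycle. The paper's route is shorter and gives an explicit formula. Yours is coordinate-free and exhibits the extension classified by $H^2(e)(\chi)$ as the pullback of $\bar G_2$, which matches the path-fibration description, as your last paragraph observes. The universal property of $P$ also shows that lifts $\bar e$ correspond to equivalences $\bar G_1\to P$, which form a torsor under $\Hom(G_1,A)$. So $\bar e$ is not unique: two lifts differ by a homomorphism $G_1\to A$. This is worth keeping in mind, since the proof of Theorem~\ref{thm:mnequalsquotient} cites this lemma as if it gave uniqueness.
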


\begin{proof}
Elements of $\bar{G}_i$ are pairs $(g, a) \in G_i \times A$,
with multiplication
\[
(g,a)(h,b)=(gh, a+b-c_i(g,h)),
\]
where $c_i \colon G_i \times G_i \to A$ is a cocycle representing the
extension class. For $\bar{G}_2$, the cocycle is $c_\chi$, while 
for $\bar G_1$ it is the pullback cocycle
$c_{H^2(e)(\chi)} = c_\chi \circ (e \times e)$.
Define $\bar{e} \colon \bar{G}_1 \to \bar{G}_2$ by
$\bar{e}(g, a) = (e(g), a)$.
Then
\begin{align*}
\bar{e}\bigl((g,a)(h,b)\bigr)
&= \bar{e}(gh,\; a+b-c_1(g,h)) 
= \bar{e}(gh,\; a+b-c_\chi(e(g),e(h))) \\
&= (e(g)e(h),\; a+b-c_\chi(e(g),e(h)))
= \bar{e}(g,a)\cdot\bar{e}(h,b),
\end{align*}
so $\bar{e}$ is a homomorphism.
Commutativity of the diagram is immediate from the definition.
\end{proof}

\subsection{Descending central series filtrations}
\label{subsec:descending}

We work with three related filtrations on a group $G$:

\begin{enumerate}[label=(\roman*)]
\item The {\em lower central series} (LCS):
\[
\Gamma_1(G)=G,\qquad \Gamma_{n+1}(G)=[G,\Gamma_n(G)].
\]
The quotients $\gr_n(G)=\Gamma_n(G)/\Gamma_{n+1}(G)$ are abelian groups, 
but generally have torsion.

\item For a prime $p$, the {\em $p$-descending central series} (Stallings, \cite{Stallings}):
\[
\Gamma_1^p(G)=G,\qquad
\Gamma_{n+1}^p(G)=\langle gug^{-1}u^{-1}v^p\mid g\in G,\ u,v\in\Gamma_n^p(G)\rangle.
\]
The quotients $\gr_n^p(G)=\Gamma_n^p(G)/\Gamma_{n+1}^p(G)$ are elementary abelian 
$p$-groups (i.e., $\F_p$-vector spaces).

\item The {\em torsion-free descending central series} (Stallings, \cite[\S 7]{Stallings}):
\[
\Gamma^0_1(G)=G,
\]
and $\Gamma^0_{n+1}(G)$ is the subgroup generated by
\begin{itemize}
\item all commutators $xux^{-1}u^{-1}$ with $x\in G$, $u\in\Gamma^0_n(G)$, and
\item all $w\in\Gamma^0_n(G)$ such that some nonzero integer power $w^k$ 
$(k\neq 0)$ is a product of such commutators.
\end{itemize}
Equivalently, $\Gamma^0_{n+1}(G)$ is the smallest subgroup of $G$ such that
$\Gamma^0_n(G)/\Gamma^0_{n+1}(G)$ 
is central in $G/\Gamma^0_{n+1}(G)$ and torsion-free as an abelian group.
This is the fastest descending central series whose successive quotients
\[
\gr^0_n(G)=\Gamma^0_n(G)/\Gamma^0_{n+1}(G)
\]
are torsion-free abelian groups. 
When $H_1(G;\Z)$ is finitely generated, each $\gr^0_n(G)$ is a 
finitely generated free abelian group (hence a free $\Z$-module).
\end{enumerate}
\begin{remark}
\label{rmk:grequal}
Note that $\Gamma_n(G) \subseteq \Gamma_n^p(G)$ and
$\Gamma_n(G) \subseteq \Gamma_n^{0}(G)$ for all $n$,
since each of $\Gamma_n^p(G)$ and $\Gamma_n^0(G)$ is built using
additional generators beyond commutators (the $p$th powers in
$\Gamma_n^p(G)$, the torsion roots of commutator products in
$\Gamma_n^0(G)$). The series $\Gamma_n^p(G)$ and $\Gamma_n^0(G)$
are, however, incomparable in general: for example,
$\Gamma_2^p(\Z)=p\Z$ while $\Gamma_2^0(\Z)=0$, whereas
$\Gamma_2^p(\Z/p)=0$ while $\Gamma_2^0(\Z/p)=\Z/p$.

It follows from $\Gamma_n(G) \subseteq \Gamma_n^{0}(G)$ that if
$\gr_i(G)$ is torsion-free for $1 \le i \le n$, then
$\gr_i(G) = \gr_i^0(G)$ for $1 \le i \le n$, and hence
$\Gamma_j(G) = \Gamma_j^0(G)$ for $1 \le j \le n+1$.
\end{remark}

In the rest of the paper, $\Gamma_n(G)$ denotes the ordinary LCS,
$\Gamma_n^p(G)$ the $p$-descending central series for a prime $p$,
and $\Gamma_n^0(G)$ the torsion-free descending central series.
The corresponding graded quotients are denoted $\gr_n(G)$,
$\gr_n^p(G)$, and $\gr_n^0(G)$.

The series defined above are examples of linear-central filtrations in the sense of 
Bass and Lubotzky \cite{BL}, where the successive quotients are 
central and admit faithful linear representations over $\Z$ or $\F_p$.

\subsection{The Postnikov tower over $R$}
\label{subsec:postnikov-R}

\textbf{Convention.} Throughout the paper, $\Gamma_n^R(G)$ denotes
$\Gamma_n^0(G)$ when $R=\Z$ and $\Gamma_n^p(G)$ when $R=\F_p$.
Since the groups $\gr_n^R(G)$ are finitely generated free 
$R$-modules, the following lemma is a direct consequence 
of Stallings' $5$-term exact sequence \cite{Stallings}.

\begin{lemma}
\label{lem:3-term-seq}
Let $R=\Z$ or $\F_p$, let $G$ be a group with
$H_1(G;R)$ finitely generated, and let $\{ \Gamma_n^R(G)\}_{n\ge 1}$ 
be the fastest descending central series whose successive quotients
are free $R$-modules. Then for $Y$ any path-connected
$\Delta$-complex with $\pi_1(Y) \cong G$,
there is an exact sequence
\begin{equation}
\label{eq:St-3-term}
\begin{tikzcd}[column sep=20pt]
   0 \ar[r]    
      & H^1(\gr_n^R(G);R) \ar[r, "i_n" ]
 			& H^2(\GGR{G}{n};R) \ar[r]
 			& H^2(Y;R),
\end{tikzcd}
\end{equation}
where the central extension $\GGR{G}{n+1}$ of $\GGR{G}{n}$
is classified by $i_n$ via Lemma~\ref{lem:bij-chi-i}. 
\end{lemma}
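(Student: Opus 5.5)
We will derive the sequence from the five-term exact sequence in cohomology with $R$-coefficients, applied to the central extension
\[
\begin{tikzcd}[column sep=18pt]
1 \ar[r] & \gr_n^R(G) \ar[r] & \GGR{G}{n+1} \ar[r] & \GGR{G}{n} \ar[r] & 1,
\end{tikzcd}
\]
and then compare it with the extension $1\to\Gamma_n^R(G)\to G\to\GGR{G}{n}\to 1$.

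First, observe that $\gr_n^R(G)$ is a finitely generated free $R$-module. Over $\Z$ this holds because $H_1(G;\Z)$ is finitely generated and each $\gr_n^0$ is a torsion-free quotient of a finitely generated abelian group; over $\F_p$ it follows similarly from the finite generation of $H_1(G;\F_p)$. The group is central in $\GGR{G}{n+1}$, so $\GGR{G}{n}$ acts trivially on $H^1(\gr_n^R(G);R)=\Hom(\gr_n^R(G),R)$. The inflation–restriction (five-term) sequence for the extension $1\to\Gamma_n^R(G)\to G\to \GGR{G}{n}\to 1$ is
\[
0\to H^1(\GGR{G}{n};R)\to H^1(G;R)\to H^1(\Gamma_n^R(G);R)^{G}\xrightarrow{\ \tau\ } H^2(\GGR{G}{n};R)\to H^2(G;R).
\]
For $n\ge 2$ the first map is an isomorphism, since both groups equal $\Hom(H_1(G),R)$ by the definition of $\Gamma_2^R$. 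Hence $\tau$ is injective on $H^1(\Gamma_n^R(G);R)^G$.

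The key identification is
\[
H^1(\Gamma_n^R(G);R)^G=\Hom_G(\Gamma_n^R(G),R)=\Hom(\gr_n^R(G),R).
\]
A $G$-invariant homomorphism $\Gamma_n^R\to R$ kills $[G,\Gamma_n^R]$. When $R=\F_p$ it also kills $p$-th powers, so it kills $\Gamma_{n+1}^p$. When $R=\Z$, its image is torsion-free, so it kills every $w$ having a power that is a product of commutators, and therefore kills $\Gamma_{n+1}^0$. This last step is where the "fastest series with free quotients" property enters, and it is the main point to check carefully. Together with the previous paragraph, this gives exactness at the first two terms of \eqref{eq:St-3-term}.

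Finally, we replace $H^2(G;R)$ by $H^2(Y;R)$. The map $H^2(G;R)=H^2(K(G,1);R)\to H^2(Y;R)$, induced by the classifying map $Y\to K(G,1)$, is injective, because $K(G,1)$ is obtained from $Y$ by attaching cells of dimension $\ge 3$. Composing with it therefore preserves exactness at $H^2(\GGR{G}{n};R)$.

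It remains to check that $i_n=\tau$ is the map $i_\chi$ of \eqref{eq:ichi} for the class $\chi$ of the extension $\GGR{G}{n+1}$. The transgression $\tau$ is natural with respect to the map of extensions from $(\Gamma_n^R\to G\to \GGR{G}{n})$ to $(\gr_n^R\to \GGR{G}{n+1}\to \GGR{G}{n})$. For a central extension with kernel $A$, the transgression $H^1(A;R)\to H^2$ is the Serre $d_2$, which equals $i_\chi$ as noted after \eqref{eq:ichi-comp}. By Lemma~\ref{lem:bij-chi-i}, $i_n$ therefore determines the extension class.

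The main obstacle is this compatibility of the transgression with $i_\chi$, including the sign convention, together with the case $n=1$, where the statement degenerates and must be handled separately.
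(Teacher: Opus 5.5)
Your proposal is correct for $n\ge 2$ and is essentially the paper's argument: the paper only remarks that the lemma is a direct consequence of Stallings' five-term exact sequence, and you make this precise using four ingredients, namely the cohomological inflation--restriction sequence for $1\to\Gamma_n^R(G)\to G\to\GGR{G}{n}\to 1$, the identification $\Hom(\Gamma_n^R(G),R)^G=\Hom(\gr_n^R(G),R)$, the injectivity of $H^2(G;R)\to H^2(Y;R)$, and naturality of the transgression $\tau$ (the sign you worry about is harmless, since you may simply take $i_n:=i_\chi=\pm\tau$, which has the same kernel and image as $\tau$). The only correction is that $n=1$ cannot be ``handled separately'': there $\GGR{G}{1}$ is trivial, so exactness would force $H^1(G;R)\cong H^1(\gr_1^R(G);R)=0$, i.e.\ the lemma is false for $n=1$ and must be read with $n\ge 2$, which is the only range in which the paper uses it (in the proof of Theorem~\ref{thm:mnequalsquotient}).
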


Let $X$ be a path-connected space. Set $G=\pi_1(X)$.
The projections $G \surj \GGR{G}{n}$ and
$\GGR{G}{n+1} \surj \GGR{G}{n}$ give rise to the Postnikov tower
\eqref{eq:postnikov-3}
\begin{equation}
\label{eq:postnikov-3}
\begin{tikzcd}[row sep=24pt, column sep=60pt]
&\ar[d, dotted] \\
&  K(\GGR{G}{4},1) \ar[d, "\bar{q}_3"]\\
&  K(\GGR{G}{3},1) \ar[d, "\bar{q}_2"]\\
X \ar[r, "h_2" '] \ar[ur, "h_3" '] \ar[uur, "h_4"]
&  K(\GGR{G}{2},1)
\end{tikzcd}
\end{equation}
where the maps $h_n \colon X \to K(\GGR{G}{n},1)$ correspond to
the projection of $\pi_1(X)=G$ onto $\GGR{G}{n}$, and the
maps $\bar{q}_n$ correspond to the projection $q_n$ of 
$\GGR{G}{n+1}$ onto $\GGR{G}{n}$.
The maps $\bar{q}_n$ are fibrations with fiber 
$K(\gr_n^R (G),1)$ obtained as the pullback of the pathspace 
fibration with base $K(\gr_n^R (G),2)$ via a $k$-invariant 
\[
\chi_n \colon K(\GGR{G}{n},1) \longrightarrow K(\gr_n^R(G),2)
\]
for the central extension
\[
\begin{tikzcd}[column sep=20pt]
0\ar[r]
	&\gr_{n}^R(G) \ar[r]
	& \GGR{G}{n+1}\ar[r, "q_n"]
	& \GGR{G}{n}\ar[r]
	& 0 .
\end{tikzcd}
\]


\section{Hirsch extensions and $1$-minimal models}
\label{sec:binomial}

The main objects---Hirsch extensions, their colimits, and $1$-minimal 
models---are defined in \S\S\ref{subsec:Hirsch-extensions}--\ref{subsec:1min}.
The group associated to a colimit of Hirsch extensions is constructed in the 
following section (\S\ref{sec:groups-from-hirsch}).

\subsection{Binomial $\cup_1$-dgas}
\label{subsec:binomial-cup1}
We work in the category of \emph{$R$-binomial $\cup_1$-dgas}
as defined in \cite{Porter-Suciu-2021, Porter-Suciu-2023}.
Recall that such a dga $(A, d)$ is a graded $R$-algebra
equipped with $R$-linear cup-one operations
$\cup_1 \colon A^1 \otimes_R A^1 \to A^1$
satisfying the Hirsch identity~\cite{Hirsch}
\[
(a \cup b) \cup_1 c = a \cup (b \cup_1 c) + (a \cup_1 c) \cup b,
\qquad a,b,c \in A^1.
\]
It is also equipped with binomial operations $\zeta_n \colon A^1 \to A^1$
(with $\zeta_0 = 1$ and $\zeta_1 = \id$), where $\zeta_n$ is defined for
$n \ge 0$ in the case $R=\Z$ and for $0\le n \le p-1$ in the case
$R=\F_p$, making $R \oplus A^1$ an $R$-binomial ring. 
Finally, the differential $d$ satisfies the Leibniz rule and the
$\cup_1$-$d$ formula; see \cite[Sections~3--5]{Porter-Suciu-2023}
for the precise axioms.

The \emph{free $R$-binomial $\cup_1$-dga} on a set $\bbX$,
denoted $(\T_R(\bbX), d_{\bbX})$, is characterized by the
following two universal properties \cite{Porter-Suciu-2023}.

\begin{lemma}[{\cite[Lem.~6.2]{Porter-Suciu-2023}}]
\label{lem:extend}
Let $\bbX$ be a set, and let $A$ be an $R$-binomial $\cup_1$-graded algebra. 
Then a map of sets $\bbX \to A^1$ extends uniquely to a morphism 
$\T_R(\bbX) \to A$ of $R$-binomial $\cup_1$-graded algebras.
\end{lemma}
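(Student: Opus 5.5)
The plan is to build $\T_R(\bbX)$ by the free-algebra construction and then extend the set map one structural layer at a time. Each layer's universal property gives uniqueness at that stage, and existence will hold because the relations imposed in $\T_R(\bbX)$ are exactly the axioms that $A$ satisfies.

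Concretely, I would describe $\T_R(\bbX)$ as follows. First form the free $R$-binomial ring on $\bbX$, which is the ring $\mathrm{Int}(\Z^{\bbX})$ of integer-valued polynomials when $R=\Z$, or its mod $p$ analogue with $\zeta_n$, $n\le p-1$. Then close the augmentation part $T^1$ under a formal $\cup_1$ satisfying the required bilinearity and binomial compatibility axioms. Finally take the tensor algebra on $T^1$ modulo the Hirsch identity and the product relations of \cite[\S\S3--5]{Porter-Suciu-2023}.

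With this description, the extension of a set map $f\colon\bbX\to A^1$ goes in three steps.

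First, $R\oplus A^1$ is an $R$-binomial ring, so $f$ extends uniquely to a binomial ring map $R\oplus T^1_{\mathrm{bin}}\to R\oplus A^1$. This is the universal property of free binomial rings (Elliott's theorem), and it preserves augmentation because generators go to $A^1$.

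Second, define the map on formal $\cup_1$-words recursively, using the $\cup_1$ on $A^1$. The relations imposed on $T^1$ hold in $A^1$ by the axioms of an $R$-binomial $\cup_1$-algebra, so the map descends. Uniqueness holds because every element of $T^1$ is generated from $\bbX$ by the operations $\zeta_n$, $+$, scalars, products and $\cup_1$, and a morphism must commute with all of them.

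Third, extend multiplicatively to the tensor algebra on $T^1$. This map kills the Hirsch relations and the other degree-$\ge2$ relations because $A$ satisfies them, so it descends to $\T_R(\bbX)$. Uniqueness again follows from generation in degree one.

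The main obstacle is the second step, in the interaction between $\cup_1$ and the binomial operations. The axioms of \cite{Porter-Suciu-2023} specify formulas such as $\zeta_n(a)\cup_1 b$ and $\zeta_n(a\cup_1 b)$ in terms of lower operations. I must check that the presentation of $T^1$ uses only relations forced by these axioms, so that the recursive definition is well-defined and does not depend on the chosen word. I would handle this by defining $\T_R(\bbX)$ outright as the quotient of the absolutely free algebra (all operations, no relations) by the congruence generated by the axioms. The lemma then becomes the standard universal-algebra fact that free objects exist in any variety defined by identities. Since binomial $\cup_1$-graded algebras form such a variety (with $\zeta_n$ restricted to $n\le p-1$ over $\F_p$), existence and uniqueness follow formally from Birkhoff's theorem. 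The only remaining point is to identify this free object with the paper's $\T_R(\bbX)$.
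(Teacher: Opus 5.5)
Your final step does not prove the lemma; it only restates it. The statement concerns the specific algebra $\T_R(\bbX)$ of \cite{Porter-Suciu-2023}, and the paper relies on its concrete shape. Concretely, $\T^2_R(\bbX)=\T^1_R(\bbX)\ot\T^1_R(\bbX)$, and elements of $\T^1_R(\bbX)$ are binomial polynomials that can be evaluated at points of $M(\bbX)$ (formula \eqref{eq:product}, Example~\ref{ex:3-step}). The tensor-length filtration in Lemma~\ref{lem:Hirsch-iso} uses the same structure. Birkhoff's theorem only gives an abstract free object $F(\bbX)$ with the universal property. The identification $F(\bbX)\cong\T_R(\bbX)$, which you defer as ``the only remaining point,'' amounts to showing that $\T_R(\bbX)$ is an object of the category with the extension property, i.e., to the lemma itself. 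Even the claim that these algebras form a variety needs justification. For $R=\Z$, torsion-freeness in Hall's definition of a binomial ring is not an identity, so you would need Elliott's equational characterization. That is the same input that identifies the free binomial ring with $\mathrm{Int}(\Z^{\bbX})$.

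What pushed you to this detour is a misreading of the degree-one structure. In an $R$-binomial $\cup_1$-algebra, the multiplication of the binomial ring $R\oplus A^1$ \emph{is} the $\cup_1$-product; on cochains, $(u\cup_1 v)(e)=u(e)v(e)$ and $\zeta_n(u)(e)=\binom{u(e)}{n}$. Hence $\T^1_R(\bbX)$ is simply the augmentation ideal of the free $R$-binomial ring on $\bbX$. There is no second, ``formal'' $\cup_1$ to adjoin, so the compatibility problem you single out in your second step never arises. Carried out literally, with $\cup_1$ distinct from the ring product, that step would produce the wrong object. Likewise, $\T_R(\bbX)$ is the full tensor algebra on $\T^1_R(\bbX)$, with no quotient. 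The Hirsch identity defines $\cup_1$ on (decomposable) degree-two elements from degree-one data; it imposes no relations.

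With this description, your first and third steps already constitute the proof:
\begin{enumerate}
\item The universal property of the free binomial ring gives a unique binomial ring map $R\oplus\T^1_R(\bbX)\to R\oplus A^1$ extending $\bbX\to A^1$. It carries $\T^1_R(\bbX)$ into $A^1$: its composite with the augmentation of $R\oplus A^1$ kills $\bbX$, hence by uniqueness is the augmentation. This is the same as a unique linear map $\T^1_R(\bbX)\to A^1$ preserving $\cup_1$ and the $\zeta_n$.
\item The universal property of the tensor algebra extends this uniquely to a graded algebra map $\T_R(\bbX)\to A$.
\item This map respects the degree-two $\cup_1$-products, since on both sides these are given by the Hirsch identity.
\end{enumerate}
The genuine existence work is checking once that the Hirsch-formula extension makes the tensor algebra an object of the category, and that is exactly what the Birkhoff argument leaves undone.
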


\begin{lemma}[{\cite[Lem.~6.3]{Porter-Suciu-2023}}]
\label{lem:commutes}
Let $(\T_R(\bbX), d_{\bbX})$ and $(A, d_{A})$ be
$R$-binomial $\cup_1$-dgas, and let $f \colon \T_R(\bbX) \to A$
be a morphism of $R$-binomial $\cup_1$-graded algebras.
Then $f$ commutes with the differentials
if and only if $d_{A}f(x) = f(d_{\bbX}x)$ for all $x \in \bbX$.
\end{lemma}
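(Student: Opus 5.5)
The statement is Lemma~\ref{lem:commutes}: a morphism of $R$-binomial $\cup_1$-graded algebras $f\colon \T_R(\bbX)\to A$ commutes with differentials if and only if it does so on the generators $x\in\bbX$. The ``only if'' direction is trivial. For ``if'', I will compare two maps on all of $\T_R(\bbX)$, namely $d_A\circ f$ and $f\circ d_{\bbX}$. These are not algebra morphisms, so Lemma~\ref{lem:extend} cannot be applied directly. Instead I will show that the subset
\[
S=\{\,u\in \T_R(\bbX) : d_A f(u)=f(d_{\bbX}u)\,\}
\]
contains $\bbX$ and is closed under every operation used to build $\T_R(\bbX)$ from $\bbX$. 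Then $S=\T_R(\bbX)$. Concretely, I will use the explicit description of $\T_R(\bbX)$ from \cite{Porter-Suciu-2023}: degree~$1$ is generated as an $R$-module by iterated $\cup_1$-products and binomial operations $\zeta_n$ applied to elements of $\bbX$, and higher degrees are generated by cup products of degree-$1$ elements.

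\textbf{Closure steps, in order.}
\begin{enumerate}
\item $S$ is an $R$-submodule, since both composites are $R$-linear.
\item $S$ is closed under cup products. If $u,v\in S$, the Leibniz rule in both dgas and multiplicativity of $f$ give
\[
d_Af(uv)=d_Af(u)\,f(v)\pm f(u)\,d_Af(v)=f(d u)\,f(v)\pm f(u)\,f(dv)=f\bigl(d(uv)\bigr).
\]
\item $S\cap \T^1$ is closed under $\cup_1$. The $\cup_1$-$d$ formula expresses $d(u\cup_1 v)$ universally in terms of $du$, $dv$, $u$, $v$, cup products, and $\cup_1$-type operations. Because $f$ preserves all of these operations, the same computation as in step~2 applies.
\item $S\cap\T^1$ is closed under each $\zeta_n$. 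The axioms of \cite[Sections~3--5]{Porter-Suciu-2023} specify $d(\zeta_n(u))$ by a universal formula in $u$, $du$, and lower binomial operations, built from cup and $\cup_1$ products. Induction on $n$, with the $\zeta_k$ for $k<n$ already handled, then gives closure.
\end{enumerate}

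\textbf{Expected main obstacle.} The hard step is step~4, together with the part of step~3 where the $\cup_1$-$d$ formula involves terms whose degrees fall outside the range where $\cup_1$ is defined. I need the precise axioms to confirm that $d(\zeta_n u)$ and $d(u\cup_1 v)$ are expressed using only operations that $f$ preserves, with $du$ entering only through cup products.

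My first approach is to read off these formulas from the axiom list. If they involve the extended $\cup_1$ on $A^2\otimes A^1$ or similar, I will rely on the fact that a morphism of $\cup_1$-graded algebras preserves those extensions as well. If the axioms are not in a recursive form, the fallback is to use the construction of $\T_R(\bbX)$ as a quotient of a free object on $\bbX$. There $d_{\bbX}$ is defined as the unique extension of its values on $\bbX$ compatible with the structure, and uniqueness of such extensions into $A$ then forces $d_Af=f d_{\bbX}$.
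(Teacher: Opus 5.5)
You can't compare your argument with a proof in this paper, because there isn't one: the lemma is quoted from \cite[Lem.~6.3]{Porter-Suciu-2023}. Your strategy is the natural one: show that the agreement set $S$ contains $\bbX$ and is closed under the operations that generate $\T_R(\bbX)$. Steps~1--2 are fine. But all of the content lies in Steps~3--4, which you leave conditional, and the form you expect them to take is not available over $R=\Z$.

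Step~4 presupposes that $d(\zeta_n u)$ is a universal expression in $u$ and $du$ built from $\cup$, $\cup_1$ and the $\zeta_k$. Test this in $C^*(X;\Z)$, where $\cup_1$ and $\zeta_n$ on $1$-cochains act edgewise. Put $a=u(0,1)$, $b=u(1,2)$ and $c=(du)(0,1,2)=a+b-u(0,2)$. Then
\[
d\tbinom{u}{2}(0,1,2)=\tbinom{b}{2}-\tbinom{u(0,2)}{2}+\tbinom{a}{2}=-ab+(a+b)\,c-\tbinom{c+1}{2}.
\]
On a simplex with $a=b=0$ this equals $-c(c+1)/2$. Every $\Z$-linear expression in $u$ and $du$ built from $\cup$, $\cup_1$ (even allowing $\cup_1\colon A^1\otimes A^2\to A^2$) and the $\zeta_k$ on $A^1$ takes there a value $c\,h(c)$ with $h$ an integer-valued polynomial. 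Matching $c\,h(c)=-c(c+1)/2$ forces $h(0)=-1/2$, which is impossible.

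The $\cup_1$ case has the same problem. Suppose the only $\cup_1$ in degree $2$ is the Hirsch-extended one on $A^2\otimes A^1$. Then $d(u\cup_1 v)$ contains the product of the values of $du$ and $dv$ on the simplex. Such expressions cannot produce it, because they are linear in $du$ and $dv$ with coefficients depending only on the values of $u,v$ on $(0,1)$ and $(1,2)$. So closure of $S$ cannot be proved with $du$ treated as an opaque element of $A^2$.

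The missing idea is that $\T_R(\bbX)$ is the tensor algebra on $\T^1_R(\bbX)$. For $u\in S$ one has $d_{\bbX}u=\sum_i p_i\otimes q_i$, hence $d_Af(u)=\sum_i f(p_i)\,f(q_i)$: a decomposition into cup products that $f$ carries into $A$. Relative to such a decomposition, the offending terms become expressible through operations that $f$ preserves. With $c=\sum_i p_i(0,1)\,q_i(1,2)$, the quantity $\binom{c+1}{2}$ is an integer-valued polynomial in the values of the $p_i$ on $(0,1)$ and of the $q_i$ on $(1,2)$. It vanishes when either family of values is zero. Hence it is a $\Z$-combination of cup products of $\cup_1$-monomials in the $\zeta_k(p_i)$ and the $\zeta_l(q_i)$. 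The product of the values of $du$ and $dv$ is handled the same way.

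So the compatibility of $d$ with $\zeta_n$ (and with $\cup_1$ on non-cocycles) has to be invoked relative to a decomposition of the coboundary. Your closure step must carry that decomposition through $f$. Restricting to $\zeta_n$ applied to generators does not avoid this, since generators of a Hirsch extension are not cocycles.

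Your fallback does not fill the gap either. The maps $d_Af$ and $f d_{\bbX}$ are not algebra maps, so the uniqueness in Lemma~\ref{lem:extend} does not apply to them. A uniqueness statement for derivations along $f$ compatible with $\cup_1$ and $\zeta_n$ would require exactly the computation above.
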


Further background appears in Appendix~\ref{app:background}.

\subsection{Hirsch extensions}
\label{subsec:Hirsch-extensions}
We recall the definition and basic properties of Hirsch extensions
from \cite{Porter-Suciu-2023}.

\begin{lemma}[{\cite[Thm.~8.2]{Porter-Suciu-2023}}]
\label{lem:Hirsch-extension}
Let $(\T_R(\bbX),d_\bbX)$ be a binomial $\cup_1$-dga and let
$f\colon\bY\to Z^2(\T_R(\bbX))$ be a set map.
There exists a unique binomial $\cup_1$-differential
$d_{\bbX\cup\bY}$ on $\T_R(\bbX\cup\bY)$ extending $d_\bbX$
and satisfying $d_{\bbX\cup\bY}(y)=f(y)$ for all $y\in\bY$.
\end{lemma}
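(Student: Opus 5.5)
Uniqueness is the easier half. Suppose $d$ and $d'$ are two binomial $\cup_1$-differentials on $\T_R(\bbX\cup\bY)$ that both extend $d_\bbX$ and send each $y\in\bY$ to $f(y)$. Then the identity map $\T_R(\bbX\cup\bY)\to\T_R(\bbX\cup\bY)$ is a morphism of binomial $\cup_1$-graded algebras. Viewed as a map from $(\T_R(\bbX\cup\bY),d)$ to $(\T_R(\bbX\cup\bY),d')$, it satisfies $d'(z)=d(z)$ for every generator $z\in\bbX\cup\bY$. Lemma~\ref{lem:commutes} then says the identity commutes with the differentials, so $d=d'$.

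For existence, I construct $d$ on $\T_R(\bbX\cup\bY)$ using the universal property of the free object, that is, the structure underlying Lemma~\ref{lem:extend}. The plan has three steps.

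\textbf{Step 1.} I prescribe $d$ on generators by $d(x)=d_\bbX(x)$ for $x\in\bbX$ and $d(y)=f(y)$ for $y\in\bY$. Both values lie in $\T_R(\bbX)^2\subset\T_R(\bbX\cup\bY)^2$, using the inclusion $\T_R(\bbX)\to\T_R(\bbX\cup\bY)$ supplied by Lemma~\ref{lem:extend}.

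\textbf{Step 2.} I extend $d$ to all of $\T_R(\bbX\cup\bY)$, and this is where the axioms force the formulas. In degree $1$, the element $\zeta_n(z)$ is determined by the binomial compatibility axiom for $d$, and $z\cup_1 w$ is determined by the $\cup_1$-$d$ formula. In higher degrees, $d$ of a product is determined by the Leibniz rule. Because the free algebra is built freely from generators under these operations, modulo the binomial and Hirsch relations, the recursive definition is well defined provided the prescribed formulas respect those relations.

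\textbf{Step 3.} I check $d^2=0$. On $\bbX$ this holds because $d_\bbX$ is a differential. On $\bY$ we have $d(d(y))=d_\bbX(f(y))=0$, since $f(y)\in Z^2(\T_R(\bbX))$. Closure under the operations then propagates $d^2=0$ to everything: Leibniz for products, and the $\cup_1$-$d$ and binomial formulas for the degree-$1$ operations.

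The main obstacle is Step~2, namely showing that the recursively defined $d$ is compatible with the relations in the free binomial $\cup_1$-algebra: the binomial ring identities among the $\zeta_n$, the Hirsch identity, and, over $\F_p$, the truncation at $n\le p-1$. I would first try to avoid a relation-by-relation check by using the explicit structure of $\T_R(\bbX)$ from \cite{Porter-Suciu-2023}. There, $R\oplus\T_R(\bbX)^1$ is the free binomial ring on $\bbX$, i.e.\ integer-valued polynomials (or their mod-$p$ analogue). The differential on degree $1$ can then be written as a universal formula depending only on the values $d(z)$ of the generators. Once that formula is fixed, extending $\bbX$ to $\bbX\cup\bY$ is a formal change of variables, and the compatibility verified for $d_\bbX$ carries over verbatim. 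If this does not go through cleanly, the fallback is to verify each defining relation directly against the axioms in \cite[Sections~3--5]{Porter-Suciu-2023}.
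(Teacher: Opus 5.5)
The paper does not prove this lemma; it quotes it from \cite[Thm.~8.2]{Porter-Suciu-2023}, so I assess your argument on its own. Your uniqueness argument is correct: you apply Lemma~\ref{lem:commutes} to the identity map from $(\T_R(\bbX\cup\bY),d)$ to $(\T_R(\bbX\cup\bY),d')$.

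The existence half, however, has a genuine gap: its only nontrivial step is announced rather than carried out. In Step~2 you must show that the values the axioms force on composite elements of $\T_R^1(\bbX\cup\bY)$ do not depend on how an element is written. That is, the recursion must respect the binomial-ring identities (e.g.\ $\zeta_n(u+v)=\sum_{i=0}^{n}\zeta_i(u)\zeta_{n-i}(v)$, and the universal polynomials expressing $\zeta_n(u\cup_1 v)$ and $\zeta_m(\zeta_n(u))$) and the Hirsch identity. You identify this as the main obstacle but do not resolve it. The proposed shortcut also does not work as stated. The hypothesis that $d_\bbX$ is a binomial $\cup_1$-differential is a fact about one particular assignment $x\mapsto d_\bbX(x)$. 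It says nothing about the enlarged assignment with $y\mapsto f(y)$, and no relabeling of variables turns the one into the other. What you would need is that the universal formula is well defined and satisfies the axioms for \emph{arbitrary} prescribed values in degree~$2$, which is exactly what has to be proved.

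Step~3 has the same defect. Computing $d^2$ of $y\cup_1 z$ or of $\zeta_n(y)$ via the $\cup_1$-$d$ and binomial formulas requires a real computation with the degree-$2$ terms those formulas produce (built from $f(y)$ and the other generators). It does not follow by a formal closure argument from $d^2=0$ on generators. In your outline the cocycle hypothesis enters only through $d^2(y)=0$, whereas it has to control $d^2$ of every element involving $y$.

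One way to close the gap, handling both steps at once, is the dual group law.
\begin{itemize}
\item Regard $R\oplus\T_R^1(\bbX\cup\bY)$ as a ring of $R$-valued polynomial functions on $M(\bbX\cup\bY)$ (integer-valued polynomials when $R=\Z$), and $\T^2=\T^1\otimes\T^1$ as functions of two variables.
\item Extend the product $\nu$ of Theorem~\ref{thm:product} by $\nu(\ba,\bb)(y)=\ba(y)+\bb(y)-f(y)(\ba,\bb)$.
\item For $u\in\T^1$ put $du(\ba,\bb)=u(\ba)+u(\bb)-u(\nu(\ba,\bb))$, and extend by the Leibniz rule.
\end{itemize}
Since $u\mapsto u\circ\nu$ is a substitution, it is a binomial-ring homomorphism determined by its values on generators. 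So $d$ is automatically well defined and takes the prescribed values on $\bbX\cup\bY$. It also satisfies the compatibilities with $\zeta_n$ and $\cup_1$ that hold in cochain algebras; it is the cochain-level formula underlying Lemma~\ref{lem:rfygroup}.

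The Leibniz extension is the group-cochain coboundary, so $d^2=0$ reduces to associativity of $\nu$. Since a point of $M(\bbX\cup\bY)$ is determined by its coordinates, associativity can be checked one coordinate at a time. For $x\in\bbX$ it is Theorem~\ref{thm:product} (equivalently $d_\bbX^2=0$). For $y\in\bY$, writing $\ba\bb$ for $\nu(\ba,\bb)$, it is exactly the $2$-cocycle identity $f(y)(\ba,\bb)+f(y)(\ba\bb,\bc)=f(y)(\bb,\bc)+f(y)(\ba,\bb\bc)$, i.e.\ $d_\bbX f(y)=0$. This is where the hypothesis $f(\bY)\subset Z^2(\T_R(\bbX))$ does its work, just as in the cocycle description of central extensions used in Lemma~\ref{lem:eG1G2}.
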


The resulting object $(\T_R(\bbX\cup\bY),d_{\bbX\cup\bY})$
is called a \emph{Hirsch extension} of $(\T_R(\bbX),d_\bbX)$.

Given $f\colon\bY\to Z^2(\T_R(\bbX))$, set $R^{\bY}$ equal
to the free $R$-module with basis $\bY$, and let
$h \colon R^{\bY} \to H^2(\T_R(\bbX))$ be the map
sending $y \in \bY$ to the cohomology class of $f(y)$.
Then $h$ is called the \textit{$h$-invariant} of the Hirsch
extension.

\begin{lemma}
\label{lem:Hirsch-iso}
Two Hirsch extensions of $(\T_R(\bbX), d_{\bbX})$ with
isomorphic $h$-invariants are isomorphic.
More precisely, let $h_i \colon R^{\bY_i} \to H^2(\T_R(\bbX))$
be $h$-invariants of Hirsch extensions given by maps
$f_i \colon \bY_i \to Z^2(\T_R(\bbX))$ for $i=1,2$,
and let $e \colon R^{\bY_1} \isom R^{\bY_2}$
be an isomorphism of $R$-modules with $h_2 \circ e = h_1$.
Then the two Hirsch extensions are isomorphic.
\end{lemma}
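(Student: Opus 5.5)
We build the isomorphism directly, using the universal properties of free binomial $\cup_1$-dgas, namely Lemmas~\ref{lem:extend} and~\ref{lem:commutes}. Let $\bbX$ be the generating set of the base, and write $\mathcal{H}_i=(\T_R(\bbX\cup\bY_i),d_i)$ for the Hirsch extensions determined by $f_i$. The aim is a morphism $\phi\colon\mathcal{H}_1\to\mathcal{H}_2$ that is the identity on $\T_R(\bbX)$ and carries $y\in\bY_1$ to $e(y)$, suitably corrected.

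First, we produce the corrections. Since $h_2\circ e=h_1$, for each $y\in\bY_1$ the cocycles $f_1(y)$ and $\sum_{y'}e_{y'y}f_2(y')$ have the same class in $H^2(\T_R(\bbX))$. Here $e(y)=\sum_{y'\in\bY_2}e_{y'y}\,y'$ is the expansion of $e(y)$ in the basis $\bY_2$. We may therefore choose $c_y\in\T_R(\bbX)^1$ with
\[
f_1(y)=\sum_{y'}e_{y'y}f_2(y')+d_\bbX c_y .
\]
We then define the set map $\bbX\cup\bY_1\to\T_R(\bbX\cup\bY_2)^1$ by $x\mapsto x$ and $y\mapsto e(y)+c_y$, where $e(y)$ is read as an $R$-linear combination of generators of degree~$1$. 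Lemma~\ref{lem:extend} extends this to a morphism $\phi$ of binomial $\cup_1$-graded algebras.

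Second, we check compatibility with differentials using Lemma~\ref{lem:commutes}. It suffices to check generators. On $x\in\bbX$ this holds because both differentials extend $d_\bbX$. On $y\in\bY_1$ we compute
\[
d_2\phi(y)=\sum e_{y'y}f_2(y')+d_\bbX c_y=f_1(y)=\phi(f_1(y)),
\]
where the last equality uses that $\phi$ restricts to the identity on $\T_R(\bbX)$. Thus $\phi$ is a dga morphism.

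Third, we show $\phi$ is an isomorphism. The symmetric construction, using $e^{-1}$ and $c'_{y'}=-\sum_y (e^{-1})_{yy'}c_y$, gives $\psi\colon\mathcal{H}_2\to\mathcal{H}_1$; one checks directly that this is a valid choice of correction. The composite $\psi\phi$ fixes $\bbX$ and sends
\[
y\mapsto \psi(e(y))+c_y=\sum e_{y'y}\Bigl(\sum_{y''}(e^{-1})_{y''y'}y''+c'_{y'}\Bigr)+c_y=y .
\]
Hence $\psi\phi$ agrees with the identity on generators, and by the uniqueness in Lemma~\ref{lem:extend} it is the identity. The same argument applies to $\phi\psi$.

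The main obstacle is a subtlety in the free binomial $\cup_1$-algebra. The generators span a free $R$-submodule of degree~$1$, but $\phi$ is determined only by its values on generators, and the linear combination $e(y)$ is sent through the binomial and $\cup_1$ operations nonlinearly. This causes no difficulty: Lemma~\ref{lem:extend} allows arbitrary degree-$1$ targets, and the uniqueness clause handles the inverse check, which only needs agreement on generators. The remaining point is to confirm that the inverse corrections $c'$ make the computation above close up. If $\bY_i$ is infinite, $e$ still has finite columns, so all sums are finite.
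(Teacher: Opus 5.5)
Your proof is correct. The construction of the map and the check against the differentials are the same as in the paper: choose $c_y$ with $d_\bbX c_y=f_1(y)-\sum_{y'}e_{y'y}f_2(y')$, send $y\mapsto e(y)+c_y$, and apply Lemmas~\ref{lem:extend} and~\ref{lem:commutes}. The routes diverge at bijectivity.

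The paper filters $\T_R(\bbX\cup\bY_i)$ by the number of tensor factors not lying in $\T^1_R(\bbX)$. It then argues that the induced maps on associated graded pieces are those determined by $y\mapsto e(y)$, hence isomorphisms. You instead exhibit an explicit two-sided inverse, built from $e^{-1}$ and the corrections $c'_{y'}=-\sum_y(e^{-1})_{yy'}c_y$, and conclude from the uniqueness clause of Lemma~\ref{lem:extend}.

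Your argument uses only the universal property and $R$-linearity, so it never has to look inside $\T_R$. In the filtration argument, by contrast, the behaviour of $\overline{e}$ on the nonlinear binomial and $\cup_1$ operations is the delicate point. For example, $\overline{e}(y\cup_1 y)$ picks up cross terms $e(y)\cup_1 c_y+c_y\cup_1 e(y)$. These are not of lower weight for the coarse weight used there, so identifying the graded map with the one induced by $y\mapsto e(y)$ needs a finer (e.g.\ polynomial-degree) filtration or an extra argument. The filtration method is the standard template for recognizing isomorphisms of Hirsch extensions without writing down an inverse. Here, though, the base map is the identity, so your explicit inverse is shorter and more transparent.

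One simplification: you need not verify that $c'$ is an admissible correction. A graded-algebra inverse $\psi$ of the dga morphism $\phi$ automatically commutes with differentials, since $d_1\psi=\psi\phi d_1\psi=\psi d_2\phi\psi=\psi d_2$.
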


\begin{proof}
From the assumptions it follows that for each $y \in \bY_1$, 
the cocycle $ f_2 \circ e (y) $ is cohomologous to $f_1(y)$ in $\T_R(\bbX)$.
For each $y \in \bY_1$, let $c(y)$ be an element in $\T_R(\bbX)$ with 
$d_{\bbX}c(y) = f_1(y) - f_2 \circ e (y)$.
Then
\begin{equation}
\label{eq:bare}
d_{\bbX \cup \bY_2}(e(y) + c(y) ) = d_{\bbX \cup \bY_1}(y)
\end{equation}
Define a map of sets 
$\overline{e}\colon \bbX \cup \bY_1 \to \bbX \cup \bY_2$ by
$x \mapsto x$ for $x \in \bbX$ and $y \mapsto e(y) + c(y)$ for
$y \in \bY_1$. 
By Lemma \ref{lem:extend} the map $\overline{e}$ extends
uniquely to a morphism
$\overline{e} \colon \T_R(\bbX \cup \bY_1) \to \T_R(\bbX \cup \bY_2)$
of $R$-binomial $\cup_1$-graded algebras.
By Lemma \ref{lem:commutes}, it follows from equation \eqref{eq:bare}
that $\overline{e}$ commutes with the differentials, and hence, is
a morphism of $R$-binomial $\cup_1$-dgas.

The remaining step is to show that $\overline{e}$ is an isomorphism.
Define the weight $|u|$ of a basis element $u \in \T_R^1(\bbX \cup \bY_1)$
to be $0$ if $u \in \T_R^1(\bbX)$ and $|u|=1$ otherwise.
Define an increasing filtration $F_1^i$,
$i \ge 0$, on  $\T_R(\bbX \cup \bY_1)$ by setting 
$F_1^i\bigl( \T_R(\bbX \cup \bY_1) \bigr)$ to be the $R$-submodule 
spanned by the tensors $u_1 \ot \cdots \ot u_\ell$ with
$\sum_{j=1}^{\ell}|u_j| \le i$.
Similarly, define weights and a filtration $F_2^i$
on $ \T_R(\bbX \cup \bY_2)$,
and note that the morphism $\overline{e}$ is filtration preserving.
Since $c(y) \in \T_R(\bbX)$, it follows that
the resulting map of quotients 
$F_1^i/F_1^{i-1}\to F_2^i/F_2^{i-1}$ 
is induced by
$x \mapsto x$ for $x \in \bbX$ and $y \mapsto e(y)$ for $y \in \bY_1$.
Since $e \colon R^{\bY_1} \to R^{\bY_2}$ is an isomorphism
it follows that the induced maps $F_1^i/F_1^{i-1}\to F_2^i/F_2^{i-1}$ 
are isomorphisms. Hence, $\overline{e}$ is an isomorphism.
\end{proof}

\begin{definition}[\cite{Porter-Suciu-2023}]
\label{def:colimit-Hirsch}
A {\em sequence of Hirsch extensions}\/ is a direct system
\begin{equation*}
\begin{tikzcd}[column sep=22pt]
\T_R(\bbX^1) \ar[r, "j_1" ] 
		& \T_R(\bbX^2) \ar[r, "j_2" ] 
		& \cdots \ar[r]
		& \T_R(\bbX^n) \ar[r, "j_n" ] 
		& \cdots ,
\end{tikzcd}
\end{equation*}
where each step is a Hirsch extension and the initial differential 
$d_{\bz}$ on $\T_R(\bbX^1)$ is given by 
$d_{\bz}x=0$ for all $x \in \bbX^1$.  
The colimit 
\[
(\T_R(\bbX),d_\bbX)=\varinjlim_n \,(\T_R(\bbX^n),d_n)
\] 
is again a binomial $\cup_1$-dga.  
If $\bbX^k=\emptyset$ for $k>N$, the colimit is \emph{finite}.
\end{definition}

\subsection{$1$-minimal models}
\label{subsec:1min}
For the rest of this section, $(A,d_A)$ will be a binomial $\cup_1$-dga 
such that  $H^0(A)=R$ and $H^1(A)$ is a finitely generated free $R$-module. 

\begin{definition}[{\cite[Def.~9.1]{Porter-Suciu-2023}}]
\label{def:1-min-model}
A \emph{$1$-minimal model} for $(A,d_A)$ consists of a 
colimit of Hirsch extensions
\[
(\mcm,d)=\varinjlim_{n}\, (\mcm_n,d_n),\qquad \mcm_n=\T_R(\bbX^n),
\]
together with compatible morphisms $\rho_n\colon\mcm_n\to A$ such that
\begin{enumerate}[itemsep=2pt]
\item \label{min1}
$H^i(\rho_1)$ is an isomorphism for $i=0,1$;
\item \label{min2}
for each $n$, the kernel $\ker H^2(\rho_n)\subset H^2(\mcm_n)$ is free with 
basis consisting of the cohomology classes of the cocycles 
$\{d_{n+1}(x)\mid x\in\bbX^{n+1}\}$.
\end{enumerate}
If $((\mcm_i, d_i), \rho_i)_{1 \le i \le n}$ satisfies properties
\eqref{min1} and \eqref{min2} for $1 \le i \le n$, then
$((\mcm_i, d_i), \rho_i)_{1 \le i \le n}$ is called 
\textit{an $n$th step in the $1$-minimal model for $A$}. 
\end{definition}

The following theorem refers to $1$-quasi-isomorphisms and 
homotopic maps. A morphism $f\colon A \to B$ of $R$-binomial 
$\cup_1$-dgas is a \emph{$1$-quasi-isomorphism} if $H^i(f)$ is 
an isomorphism for $i=0,1$ and $H^2(f)$ is a monomorphism.
Homotopy of dga maps is recalled in 
Appendix~\ref{subsec:dga-homotopy} (Definition~\ref{def:homotopy}).

\begin{theorem}[{\cite[Thm.~1.3]{Porter-Suciu-2023}}]
\label{thm:existence-uniqueness-1min}
Let $(A, d_A)$ be an $R$-binomial $\cup_1$-dga with
$H^0(A)=R$ and $H^1(A)$ a finitely generated free $R$-module.
Then
\begin{enumerate}
\item
There is a $1$-minimal model $((\mcm(A), d), \rho)$ for $A$ with
$\rho \colon \mcm(A) \to A$ a $1$-quasi-isomorphism.
\item 
Given $1$-minimal models $\rho \colon  \mcm(A) \to A$
and $\rho^\prime \colon  \mcm^\prime (A) \to A$, there is an
isomorphism $f \colon \mcm(A) \to \mcm^\prime (A)$ such that
the map
$\rho$  is homotopic to $\rho^\prime \circ f$.
\end{enumerate}
\end{theorem}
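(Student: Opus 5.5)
The existence and uniqueness theorem is cited from \cite[Thm.~1.3]{Porter-Suciu-2023}, but a self-contained plan runs as follows. For existence, I would build $\mcm_n$ inductively. Start with $\bbX^1$ an $R$-basis of $H^1(A)$, which is finitely generated free by hypothesis, and $d_1=0$. Define $\rho_1$ by sending each $x\in\bbX^1$ to a chosen cocycle representative; by Lemma~\ref{lem:extend} and Lemma~\ref{lem:commutes} this extends uniquely to a dga morphism. Since $H^1(\T_R(\bbX^1))=R^{\bbX^1}$, condition \eqref{min1} holds.

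Given $\rho_n\colon\mcm_n\to A$, I would next show that $\ker H^2(\rho_n)$ is a free $R$-module. Over $\F_p$ this is automatic. Over $\Z$ I would prove it is a submodule of a free module whose cokernel relates to torsion-freeness, using the fact that $H^2(\mcm_n)$ is controlled by the nilpotent group $G(\mcm_n)$; this is the delicate point. Then:
\begin{enumerate}
\item Choose $\bbX^{n+1}$ together with cocycles $f(x)$ whose classes form a basis of this kernel.
\item Form the Hirsch extension via Lemma~\ref{lem:Hirsch-extension}.
\item Since $\rho_n(f(x))=d_A(a_x)$ for some $a_x\in A^1$, extend $\rho_n$ to $\rho_{n+1}$ by sending $x\mapsto a_x$, using Lemmas~\ref{lem:extend} and~\ref{lem:commutes}.
\end{enumerate}
The new generators have nonzero differential and so add nothing to $H^1$; this keeps $H^1(\rho_{n+1})$ an isomorphism. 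Passing to the colimit, $H^2(\rho)$ is injective because any class in $\ker H^2(\rho_n)$ becomes a coboundary in $\mcm_{n+1}$, and classes are supported at finite stages.

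For uniqueness, I would construct $f_n\colon\mcm_n\to\mcm'_n$ inductively with $\rho'_n f_n\simeq\rho_n$. At stage $1$, both $H^1$'s are identified with $H^1(A)$, which gives $f_1$. At the inductive step, $H^2(f_n)$ carries $\ker H^2(\rho_n)$ isomorphically onto $\ker H^2(\rho'_n)$, because homotopic maps agree on cohomology. This produces an isomorphism $e$ of the $h$-invariants, and the argument of Lemma~\ref{lem:Hirsch-iso} extends $f_n$ to an isomorphism $f_{n+1}$, with the same filtration argument giving bijectivity.

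The main obstacle is upgrading $\rho'_{n+1}f_{n+1}$ to be homotopic to $\rho_{n+1}$. The extension is only determined up to a choice of primitives, and the homotopy on $\mcm_n$ must be extended across the new generators. Here I would invoke the homotopy lifting theorem from the Appendix. Concretely, the homotopy on $\mcm_n$ gives a path-object map, and the discrepancy on each $x\in\bbX^{n+1}$ is a $1$-cocycle in $A$. I would adjust $f_{n+1}(x)$ by an element of $\T^1_R(\bbX'^1)$ representing that cocycle's class; this is possible because $H^1(\rho'_1)$ is surjective. The binomial $\cup_1$-compatibility of this correction is exactly what the lifting theorem guarantees.
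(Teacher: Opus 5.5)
The paper does not prove this statement; it imports it from \cite[Thm.~1.3]{Porter-Suciu-2023}. Within the paper's framework, part (2) is Theorem~\ref{thm:1-min-lift}(ii) applied to $\varphi=\id_A$: Lemma~\ref{lem:H1-homotopy} at stage $1$, followed by iterating Lemma~\ref{lem:homotopy-lift}. Your uniqueness plan reconstructs that lemma and is fine; correcting $f_{n+1}(x)$ by a cocycle of $\mcm'_1$ does not disturb the filtration argument of Lemma~\ref{lem:Hirsch-iso}. Your inductive existence construction is also the standard one.

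The genuine gap is the step you flag yourself. Over $\Z$, condition~\eqref{min2} needs $\ker H^2(\rho_n)$ to be free, and ``a submodule of a free module'' is not available, because $H^2(\mcm_n;\Z)$ need not be free. For example, let $A=C^*(X;\Z)$ with $X=K(G,1)$ for the group $G$ of Example~\ref{ex:heisenberg-euler-k}. Then $u_1u_2$ has order $k$ in $H^2(X;\Z)$, $\mcm_2$ adjoins $x_{12}$ with $[dx_{12}]=k[x_1x_2]$, and $H^2(\mcm_2)\cong H^2(X;\Z)\cong\Z^2\oplus\Z/k$. What must be shown is that $H^2(\rho_n)$ is injective on $\Tors H^2(\mcm_n)$; otherwise the induction can stall.

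This follows by induction from the spectral sequence of the Hirsch extension $j_{n-1}\colon\mcm_{n-1}\hookrightarrow\mcm_n$. Equivalently, use the Lyndon--Hochschild--Serre spectral sequence of the central extension in Lemma~\ref{lem:Hirsch-central}\eqref{pt:three}, transported by the quasi-isomorphism $\psi_n$.
\begin{enumerate}
\item Since $H^1(\mcm_{n-1})\cong H^1(A)$ and $H^q(\T_R(\bbX_n),d_{\bz})$ are free, $E_\infty^{1,1}$ and $E_\infty^{0,2}$ are submodules of free modules, hence free.
\item Therefore $\Tors H^2(\mcm_n)$ lies in $E_\infty^{2,0}=\im H^2(j_{n-1})$.
\item Because $\rho_n\circ j_{n-1}=\rho_{n-1}$, one has $\ker H^2(j_{n-1})\subseteq\ker H^2(\rho_{n-1})$. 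The reverse inclusion holds because each $dx$ with $x\in\bbX_n$ becomes exact. So $H^2(\rho_n)$ is injective on $\im H^2(j_{n-1})$, hence on torsion.
\item As $H^2(\mcm_n)$ is finitely generated, $\ker H^2(\rho_n)$ is free. The base case holds since $H^2(\mcm_1)$ is free, and no torsion-freeness of $A$ is needed.
\end{enumerate}

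A smaller point: ``the new generators have nonzero differential'' does not by itself leave $H^1$ unchanged. Two generators with the same differential would produce a new cocycle. What you need is injectivity of the $h$-invariant, which gives $H^1(\mcm_n)\cong H^1(\mcm_{n+1})$ through the five-term exact sequence.
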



\section{Groups associated to colimits of Hirsch extensions}
\label{sec:groups-from-hirsch}

Given a colimit of Hirsch extensions $\mcm = \varinjlim_n \mcm_n$,
we construct in this section a pronilpotent group $G(\mcm)$
as the inverse limit of a compatible tower of nilpotent groups $G(\mcm_n)$.
The key structural facts---that each $G(\mcm_n)$ is nilpotent,
that the transition maps are central extensions, and that the
colimit $\mcm$ is a $1$-minimal model for the cochains of
$G(\mcm)$---are recalled from \cite{Porter-Suciu-2023}
(Lemmas~\ref{lem:Hirsch-central}--\ref{lem:dual-group} and
Theorem~\ref{thm:pronilpotent-model}).
We then define the group $G(A)$ of a binomial $\cup_1$-dga $A$
via its $1$-minimal model and show in
Theorem~\ref{thm:group-well-defined} that this is independent
of the choice of model.

When $A=C^*(BG;R)$ is the cochain algebra of the
bar construction of a finitely generated nilpotent group $G$ 
(torsion-free if $R=\Z$), it is shown in \cite[Thm.~13.1]{Porter-Suciu-2023} 
that $G(A)\cong G$.  The results above extend this canonically to the pronilpotent setting 
and to $\F_p$-coefficients. In particular, the $1$-minimal model of the cochain algebra 
$C^*(X;R)$ yields an explicit, purely algebraic description of the Stallings descending 
central series filtrations of $\pi_1(X)$ (torsion-free when $R=\Z$ and $p$-power when $R=\F_p$).

\subsection{The group of a Hirsch extension}
\label{subsec:group-hirsch}
We begin with the group structure on the set $M(\bbX; R)$
of functions $\bbX \to R$.

\begin{theorem}[{\cite[Thm. 13.1]{Porter-Suciu-2023}}]
\label{thm:product}
Let $(\mcm,d)= (\T_R(\bbX),d_{\bbX})$ be a colimit of Hirsch extensions.
Then $(M(\bbX), \nu)$ is a group, where $M(\bbX) = M(\bbX;R)$ denotes
the $R$-module of functions $\bbX \to R$ and the product
$\nu \colon M(\bbX) \times M(\bbX) \to M(\bbX)$ is defined by
\begin{equation}
\label{eq:product}
\nu(\ba, \bb)(x) = \ba(x) + \bb(x) 
						- \sum_{i=1}^{s_x}p_{x,i}(\ba)\cdot q_{x,i}(\bb)
\end{equation}
where $d_{\bbX}(x) = \sum_{i=1}^{s_x}p_{x,i}\ot q_{x,i}$
with $p_{x,i}, q_{x,i} \in \T_R^1 (\bbX)$.
\end{theorem}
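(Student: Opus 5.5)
To show that $(M(\bbX),\nu)$ is a group, I would induct along the sequence of Hirsch extensions and use the structure of $\nu$ as an iterated central extension. Write $\bbX=\bigcup_n \bbX^n$ with $\bbX^1\subset\bbX^2\subset\cdots$. For $x\in\bbX^{n+1}\setminus\bbX^n$, the cocycle $d_{\bbX}(x)$ lies in $\T_R(\bbX^n)$. Hence the elements $p_{x,i},q_{x,i}$ may be taken in $\T^1_R(\bbX^n)$. Here an element $u\in\T^1_R(\bbX)$ is evaluated on $\ba\in M(\bbX)$ through the unique binomial $\cup_1$-algebra morphism of Lemma~\ref{lem:extend}, with target $R$ and $\cup_1$ and $\zeta_k$ acting as the binomial ring operations. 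Consequently $\nu(\ba,\bb)(x)$ depends only on the restrictions of $\ba,\bb$ to $\bbX^n$.

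The first step is to check that $\nu$ is well defined, i.e.\ independent of how $d_{\bbX}(x)$ is written as $\sum_i p_{x,i}\ot q_{x,i}$. The expression $\sum_i p_{x,i}(\ba)q_{x,i}(\bb)$ is the image of $d_{\bbX}(x)$ under the bilinear evaluation $\T^1\ot\T^1\to R$, $u\ot v\mapsto u(\ba)v(\bb)$, so it depends only on the tensor.

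With this in hand, the restriction maps $M(\bbX^{n+1})\to M(\bbX^n)$ are compatible with $\nu$, and $M(\bbX)=\varprojlim_n M(\bbX^n)$. It therefore suffices to prove that each $(M(\bbX^n),\nu)$ is a group, with $M(\bbX^{n+1})$ a central extension of $M(\bbX^n)$ by $M(\bbX^{n+1}\setminus\bbX^n)$. For $n=1$ the differential vanishes, so $\nu$ is addition. For the inductive step, the formula has exactly the shape used in the proof of Lemma~\ref{lem:eG1G2}, namely $(g,a)(h,b)=(gh,\,a+b-c(g,h))$ with
\[
c(g,h)(x)=\sum_i p_{x,i}(g)\,q_{x,i}(h).
\]
The identity is $\bz$, since $c(\bz,h)=0=c(g,\bz)$: elements of $\T^1_R(\bbX^n)$ evaluate to $0$ at $\bz$, because the binomial operations satisfy $\zeta_k(0)=0$ for $k\ge1$. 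Once associativity holds, inverses can be solved for coordinatewise by induction on $n$.

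The main obstacle is associativity, which is equivalent to $c$ being a normalized $2$-cocycle on the group $M(\bbX^n)$:
\[
c(h,k)-c(gh,k)+c(g,hk)-c(g,h)=0.
\]
Here I would use the condition $d_{\bbX}d_{\bbX}(x)=0$ together with the fact that evaluation converts the coproduct-like structure into group multiplication. The key sub-lemma is that for $u\in\T^1_R(\bbX^n)$,
\[
u(\nu(g,h))=u(g)+u(h)-\sum_j p_{u,j}(g)\,q_{u,j}(h),\qquad d_{\bbX}u=\sum_j p_{u,j}\ot q_{u,j}.
\]
This says $u$ is a ``crossed'' homomorphism. I would verify the sub-lemma first on generators, where it holds by definition. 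I would then extend it through products, $\cup_1$ and $\zeta_k$, using the Leibniz rule, the $\cup_1$-$d$ formula, the Hirsch identity, and the binomial axioms; this is where \cite[Sections~3--5]{Porter-Suciu-2023} is needed. Substituting the sub-lemma into $c(gh,k)$ and $c(g,hk)$, the cocycle identity reduces to the evaluation of $(d\ot1-1\ot d)d(x)$ on the triple $(g,h,k)$, which vanishes because $d^2=0$. Passing to the inverse limit then gives the group $(M(\bbX),\nu)$.
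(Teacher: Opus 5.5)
Your proof is correct. The paper does not prove this statement itself; it quotes it from \cite[Thm.~13.1]{Porter-Suciu-2023}. So there is no in-text argument to match, and your reduction should be judged on its own terms, where it holds up. After substituting your crossed-homomorphism identity, the expression $c(h,k)-c(gh,k)+c(g,hk)-c(g,h)$ becomes the evaluation on $(g,h,k)$ of $d(dx)=\sum_i\bigl(dp_{x,i}\otimes q_{x,i}-p_{x,i}\otimes dq_{x,i}\bigr)=0$. Normalization holds because every element of $\T^1_R(\bbX)$ evaluates to $0$ at $\bz$. Associativity together with right inverses then gives a group at each stage, and hence in the limit.

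You propose more work than necessary for the sub-lemma. You do not need to induct through $\cup_1$ and $\zeta_k$ using the $\cup_1$-$d$ formula and the binomial axioms; that amounts to re-checking that $C^\ast(\Delta^2;R)$ satisfies those axioms. Instead:
\begin{enumerate}
\item Label the edges $(0,1)$, $(1,2)$, $(0,2)$ of $\Delta^2$ by $g$, $h$, $\nu(g,h)$. Lemma~\ref{lem:extend} gives a morphism $f\colon\T_R(\bbX^n)\to C^\ast(\Delta^2;R)$ of binomial $\cup_1$-graded algebras.
\item The defining formula for $\nu$ says precisely that $df(x)=f(dx)$ on generators. Hence $f$ is a dga map by Lemma~\ref{lem:commutes}.
\item Evaluating $df(u)=f(du)$ on $(0,1,2)$ gives your sub-lemma.
\end{enumerate}
This is the content of Lemma~\ref{lem:rfygroup}, which uses only the formula for $\nu$ and not the group axioms, so it is not circular.

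The same device fixes a small imprecision. The evaluation $u(\ba)$ should be defined through a genuine binomial $\cup_1$-graded algebra, for example $C^\ast(\Delta^1;R)$ via restriction to an edge, rather than through bare $R$.

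Associativity can also be read off a labeled $\Delta^3$. Label the edge $(0,3)$ by $(gh)k$. The $2$-cocycle $df(x)-f(dx)$ vanishes on three faces, hence on the fourth.

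Your cocycle formulation does buy something the simplicial one hides. It exhibits $M(\bbX^{n+1})\to M(\bbX^n)$ directly as a central extension whose cocycle is obtained by evaluating $dx$, which is Lemma~\ref{lem:Hirsch-central}\eqref{pt:two}--\eqref{pt:three}. The simplicial version is shorter, and it is the form the paper uses later in Theorem~\ref{thm:strict-representability}.
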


The group $(M(\bbX), \nu)$ is denoted $G(\mcm)$.

\begin{definition}
\label{def:psi}
Let $(\mcm, d) = (\T_R(\bbX), d_{\bbX})$ be a colimit of Hirsch
extensions with stages $\mcm_n = \T_R(\bbX^n)$.
For each $n \ge 1$, define a morphism of binomial $\cup_1$-dgas
\[
\psi_n \colon \mcm_n \longrightarrow C^*\bigl(K(G(\mcm_n), 1); R\bigr)
\]
by setting $\psi_n(x)$ equal to the cochain in $M(\bbX^n; R)$
that sends $x$ to $1 \in R$ and all other elements of $\bbX^n$ to $0$.
These maps are compatible with the Hirsch inclusions:
$\psi_{n+1} \circ j_n = \bar{\jmath}_n^\sharp \circ \psi_n$,
and they assemble in the colimit to a morphism
\[
\psi \colon \mcm \longrightarrow C^*_{\cts}\bigl(BG(\mcm); R\bigr).
\]
\end{definition}

\begin{remark}
\label{rem:BG-vs-KG1}
For a discrete group $G$, the bar construction $B(G)$
(Appendix~\ref{subsec:delta}) is a $\Delta$-set whose geometric
realization $|B(G)|$ is a $K(G,1)$.
We write $BG$ for $B(G)$ when working at the level of
$\Delta$-sets and cochain algebras, and $K(G,1)$ when working
topologically (classifying maps, Postnikov towers, $k$-invariants).
The two are identified throughout via $|B(G)| = K(G,1)$.
\end{remark}

\begin{lemma}[{\cite[Lem.~8.2 and Lem.~8.4]{Porter-Suciu-2023}}]
\label{lem:Hirsch-central}
Let $(\mcm_n, d_n)_{n \ge 1}$ be a colimit of Hirsch extensions.
Then for each $n\ge 1$:
\begin{enumerate}[itemsep=2pt]
\item 
\label{pt:one}
$G(\mcm_n)$ is a nilpotent group (of class $\le n$).
\item 
\label{pt:two}
The inclusion $j_n\colon\mcm_n\inj\mcm_{n+1}$ induces 
a surjective homomorphism
\[
\bar{\jmath}_n\colon G(\mcm_{n+1}) \longsurj G(\mcm_n)
\]
whose kernel is central in $G(\mcm_{n+1})$ and isomorphic to $R^{\bbX_{n+1}}$.
\item 
\label{pt:three}
The $k$-invariant of the central extension
\begin{equation*}
\begin{tikzcd}[column sep=20pt]
0 \ar[r] & R^{\bbX_{n+1}} \ar[r] 
			& G(\mcm_{n+1}) \ar[r, "\bar{\jmath}_n"]
			&  G(\mcm_n) \ar[r]
			& 1 
\end{tikzcd}
\end{equation*} 
is given by the map $R^{\bbX_{n+1}} \to H^2(\mcm_n;R)$ sending $x\mapsto [d_{n+1}x]$.
\item 
\label{pt:four}
The natural map
\[
\psi_n\colon \mcm_n \longrightarrow C^*(K(G(\mcm_n),1);R)
\]
is a cohomology isomorphism and satisfies 
$\psi_{n+1}\circ j_n = \bar{\jmath}_n^{\,\sharp} \circ \psi_n$, where
for $x \in \bbX^{n+1}$, the map $\psi_{n+1}(x)$ is the map from
$\bbX^{n+1}$ to $R$ that sends $x$ to $1$ and all other elements
of $\bbX^{n+1}$ to $0$.
\end{enumerate}
\end{lemma}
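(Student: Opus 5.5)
We prove the four parts in order, working directly with the explicit group law \eqref{eq:product} of Theorem~\ref{thm:product}, with the weight filtration $\bbX^1\subset\bbX^2\subset\cdots$ and $\bbX_{k}=\bbX^k\setminus\bbX^{k-1}$. The basic structural fact is that for $x\in\bbX_{k}$ the differential $d(x)$ lies in $\T_R(\bbX^{k-1})$. Hence each $p_{x,i},q_{x,i}$ is built from variables of weight $<k$, and the correction term $\sum_i p_{x,i}(\ba)q_{x,i}(\bb)$ in $\nu(\ba,\bb)(x)$ depends only on the restrictions of $\ba,\bb$ to $\bbX^{k-1}$.

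\textbf{Parts (2) and (3).} Define $\bar{\jmath}_n$ to be restriction of functions $M(\bbX^{n+1})\to M(\bbX^n)$. By the observation above, restriction is a homomorphism for $\nu$, and it is clearly surjective. Its kernel consists of the functions vanishing on $\bbX^n$, which we identify with $R^{\bbX_{n+1}}$.

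On this kernel, all correction terms vanish whenever either argument lies in the kernel. Indeed, every term $p_{x,i}(\ba)q_{x,i}(\bb)$ involves evaluating $\ba$ or $\bb$ at elements of $\T^1_R(\bbX^n)$. Here we need the convention that an element of $\T^1$ in the variables of $\bbX^n$ evaluates to $0$ on a function vanishing on those variables; this holds for binomial expressions because $\zeta_m(0)=0$ for $m\ge1$, and for $\cup_1$-products because they are multilinear. Consequently $\nu(\ba,\bb)=\ba+\bb=\nu(\bb,\ba)$ for $\ba$ in the kernel and arbitrary $\bb$. This shows that the kernel is central and that its group law is addition.

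To identify the extension class, take the set-theoretic section $s$ given by extending a function by zero on $\bbX_{n+1}$. The associated $2$-cocycle is
\[
(\ba,\bb)\mapsto s(\ba)s(\bb)s(\ba\bb)^{-1},
\]
whose $x$-component, for $x\in\bbX_{n+1}$, is $-\sum_i p_{x,i}(\ba)q_{x,i}(\bb)$. This is exactly the image of the cochain $d_{n+1}x$ under $\psi_n$, up to the sign convention used in Lemma~\ref{lem:eG1G2}. Therefore the $k$-invariant is $x\mapsto[\psi_n(d_{n+1}x)]$, which is identified with $[d_{n+1}x]$ once part (4) is known.

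\textbf{Part (1).} This follows from part (2) by induction on $n$. The group $G(\mcm_1)=R^{\bbX^1}$ is abelian, because $d(x)=0$ on $\bbX^1$. A central extension of a group of class $\le n-1$ has class $\le n$.

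\textbf{Part (4).} This is the main obstacle. The compatibility $\psi_{n+1}\circ j_n=\bar{\jmath}_n^{\sharp}\circ\psi_n$ is immediate from the restriction description. That $\psi_n$ is a chain map follows from Lemma~\ref{lem:commutes}, because on generators it is precisely the cocycle identity just computed.

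For the cohomology isomorphism we induct on $n$.

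\emph{Base case.} $G(\mcm_1)$ is the free $R$-module $R^{\bbX^1}$, and $\mcm_1=\T_R(\bbX^1)$ has zero differential. We need the known computation that the free binomial $\cup_1$-algebra on $\bbX^1$ computes $H^*(K(R^r,1);R)$. This should be the torus case, taken from the earlier series \cite{Porter-Suciu-2023}.

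\emph{Inductive step.} We compare two spectral sequences:
\begin{itemize}
\item[(a)] the Serre (Lyndon–Hochschild–Serre) spectral sequence of the central extension in part (2), with $E_2=H^*(G(\mcm_n);H^*(R^{\bbX_{n+1}}))$ and transgression $d_2=i_\chi$ of Lemma~\ref{lem:bij-chi-i};
\item[(b)] the spectral sequence of the weight filtration on $\mcm_{n+1}$ by $\bY=\bbX_{n+1}$-degree, as in the proof of Lemma~\ref{lem:Hirsch-iso}, whose $E_1$-term is $\mcm_n\otimes\T_R(\bY)$ with trivial differential in $\bY$.
\end{itemize}
The map $\psi_{n+1}$ is filtration preserving. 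On $E_2$ it is the tensor product of $H^*(\psi_n)$, an isomorphism by induction, with the base-case isomorphism for the fiber $R^{\bY}$. Comparison then gives a cohomology isomorphism. The delicate point is justifying the $E_2$ identification for the algebraic filtration over $\Z$ and $\F_p$, which requires freeness and finite generation; we would check this carefully or cite the corresponding argument of \cite{Porter-Suciu-2023}.
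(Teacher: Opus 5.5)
The paper does not prove this lemma; it imports it from Lemmas~8.2 and~8.4 of the earlier Porter--Suciu paper. Your arguments for parts (1)--(3) are correct and are the natural ones from the group law~\eqref{eq:product}:
\begin{itemize}
\item Since $d$ is triangular with respect to $\bbX^1\subset\bbX^2\subset\cdots$, restriction is a surjective homomorphism.
\item Evaluating an element of $\T^1_R(\bbX^n)$ on a function vanishing on $\bbX^n$ gives $0$ (it is the image under the morphism extending the zero assignment). Hence the kernel is central, with additive law.
\item The zero-extension section yields the cocycle $(\ba,\bb)\mapsto\psi_n(d_{n+1}x)[\ba\,|\,\bb]$ in the convention of Lemma~\ref{lem:eG1G2}. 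So the $k$-invariant is $H^2(\psi_n)[d_{n+1}x]$.
\end{itemize}
The chain-map property of $\psi_n$ (which is Lemma~\ref{lem:rfygroup} on the $2$-simplex $[\ba\,|\,\bb]$) and the compatibility with $j_n$ are also fine.

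The genuine gap is the claim that $\psi_n$ is a cohomology isomorphism, which is the substance of part~(4).

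(i) $d_{\bz}$ is not the zero differential. It vanishes only on generators: up to sign convention $d_{\bz}(x\cup_1 x')=-(x\otimes x'+x'\otimes x)$, and over $\Z$ one has $d_{\bz}\zeta_2(x)=-x\otimes x$. With zero differential, $H^1(\mcm_1)$ would be all of $\T^1_R(\bbX^1)$ rather than the free module on $\bbX^1$ (as used in Lemma~\ref{lem:classify}), and your base case would be false. Likewise, the fiber term in your inductive step must be $(\T_R(\bY),d_{\bz})$, not $\T_R(\bY)$ with trivial differential. Over $\F_p$ the base case is $K((\Z/p)^r,1)$, not a torus.

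(ii) The weight filtration from the proof of Lemma~\ref{lem:Hirsch-iso} is not compatible with $d$ in either direction. For $y\in\bY$ of weight $1$, $dy=f(y)$ has weight $0$, whereas over $\Z$ the element $d(y\cup_1 y)$ contains $\pm2\,y\otimes y$, of weight $2$. So it does not define your spectral sequence~(b).

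(iii) Take instead a legitimate decreasing filtration, e.g.\ by the number of tensor factors involving $\bbX^n$; $\psi_{n+1}$ would then have to carry it into the Hochschild--Serre filtration of the bar cochains. Even so, the associated graded is not $\mcm_n\otimes\T_R(\bY)$. The algebra $\T_R(\bbX^n\cup\bY)$ is a tensor algebra on a module containing mixed elements such as $x\cup_1 y$, and a tensor algebra on a direct sum is not the tensor product of the tensor algebras. Identifying the $E_2$-term with $H^*(\mcm_n)\otimes H^*(\T_R(\bY),d_{\bz})$, compatibly with the Lyndon--Hochschild--Serre $E_2$-term, requires an Eilenberg--Zilber type comparison.

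That identification, together with the base case $H^*(\T_R(\bbX^1),d_{\bz})\cong H^*(K(R^{r},1);R)$, is exactly what you defer to the literature. As written, the isomorphism claim in part~(4) is cited rather than proved.
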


\begin{lemma}[{\cite[Lem.~10.1]{Porter-Suciu-2023}}]
\label{lem:dual-group}
Let $(\T_R(\bbX), d_{\bbX})$ be a colimit of Hirsch extensions
with $\bbX = \bbX_1 \cup \cdots \cup \bbX_n \cup \cdots$ and 
$\bbX^n = \bbX_1 \cup \cdots \cup \bbX_n$. Then:
\begin{enumerate}
\item the inclusions $\T_R(\bbX^n) \to \T_R(\bbX^{n+1})$ 
induce surjective 
homomorphisms $M(\bbX^{n+1})\twoheadrightarrow M(\bbX^n)$.  
Thus, $M(\bbX)$ is a pronilpotent group (nilpotent if the colimit is finite).
\item Every morphism of colimits of Hirsch extensions induces a homomorphism 
of the associated pronilpotent groups, compatible with the inverse systems.
\end{enumerate}
\end{lemma}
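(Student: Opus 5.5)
The approach is to realize every map in the statement as \emph{restriction or pull-back of functions along generators}, so that compatibility with the product $\nu$ of Theorem~\ref{thm:product} reduces to the fact that differentials of generators only involve earlier generators. Throughout, for $\ba\in M(\bbX)$ I write $\ba$ also for its unique extension to an evaluation $\T^1_R(\bbX)\to R$. This is the map used implicitly in \eqref{eq:product} to make sense of $p_{x,i}(\ba)$. It is obtained from Lemma~\ref{lem:extend} by viewing $\ba$ as a point, i.e.\ as a morphism to the cochains of a point/1-simplex.

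\emph{Part (1).} I will define $r_n\colon M(\bbX^{n+1})\to M(\bbX^n)$ by $\ba\mapsto \ba|_{\bbX^n}$. Two facts are needed.
\begin{itemize}
\item The evaluation of an element $u\in\T^1_R(\bbX^n)\subset\T^1_R(\bbX^{n+1})$ at $\ba$ depends only on $\ba|_{\bbX^n}$. This holds because the extension in Lemma~\ref{lem:extend} is determined generator by generator, and $\T_R(\bbX^n)$ is the sub-binomial $\cup_1$-algebra generated by $\bbX^n$.
\item For $x\in\bbX^n$ the differential $d_{n+1}x=d_nx$ lies in $\T_R(\bbX^n)$, since each step is a Hirsch extension.
\end{itemize}
Together these show that the formula \eqref{eq:product} for $\nu(\ba,\bb)(x)$ with $x\in\bbX^n$ involves only $\ba|_{\bbX^n}$ and $\bb|_{\bbX^n}$. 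Hence $r_n\bigl(\nu(\ba,\bb)\bigr)=\nu\bigl(r_n\ba,r_n\bb\bigr)$, so $r_n$ is a homomorphism.

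Surjectivity is immediate: extend a function on $\bbX^n$ by zero on $\bbX_{n+1}$. This agrees with Lemma~\ref{lem:Hirsch-central}\eqref{pt:two}, which also identifies the kernel of $r_n$ with the central subgroup $R^{\bbX_{n+1}}$.

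Since $\bbX=\bigcup_n\bbX^n$, a function on $\bbX$ is exactly a compatible family of functions on the $\bbX^n$. So $M(\bbX)=\varprojlim_n M(\bbX^n)$ as sets, and by the same locality argument also as groups. Each $M(\bbX^n)$ is nilpotent of class $\le n$ by Lemma~\ref{lem:Hirsch-central}\eqref{pt:one}. Hence $M(\bbX)$ is pronilpotent, and it is nilpotent when the tower stabilizes, i.e.\ when the colimit is finite.

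\emph{Part (2).} Let $f\colon\T_R(\bbX)\to\T_R(\bbX')$ be a morphism of colimits of Hirsch extensions. I take this to mean that $f$ sends $\T_R(\bbX^n)$ into $\T_R(\bbX'^n)$; if the definition is weaker, I would first reduce to this case. I will define
\[
f^*\colon M(\bbX')\to M(\bbX),\qquad f^*(\ba')(x)=\ba'\bigl(f(x)\bigr),
\]
using the evaluation of $f(x)\in\T^1_R(\bbX')$ at $\ba'$. Equivalently, $f^*(\ba')$ is the composite of $f$ with the point-morphism corresponding to $\ba'$. Compatibility with the inverse systems, $r_n\circ f^*=f^*\circ r'_n$, follows from the stage-preservation of $f$ together with the locality fact from Part~(1).

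\emph{Main obstacle.} The hard step is showing that $f^*$ is a homomorphism, i.e.\ that
\[
f^*\nu'(\ba',\bb')(x)=\nu\bigl(f^*\ba',f^*\bb'\bigr)(x)\quad\text{for all }x.
\]
The formula \eqref{eq:product} is written in terms of $d x$, while $f(x)$ is a general element of $\T^1_R(\bbX')$, not a generator. My plan is to reformulate $\nu$ intrinsically. A pair $(\ba,\bb)$ determines a morphism of binomial $\cup_1$-dgas $\T_R(\bbX)\to C^*(\Delta^2;R)$, with edge values $\ba$, $\bb$ and third edge $\nu(\ba,\bb)$. By Lemma~\ref{lem:commutes}, the defining condition for such a morphism is exactly the equation $d(\text{edge value of }x)=\text{value of }dx$, which reproduces \eqref{eq:product}. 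This is essentially how $\psi_n$ in Definition~\ref{def:psi} and Lemma~\ref{lem:Hirsch-central}\eqref{pt:four} work.

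With this description, composing the morphism $\T_R(\bbX')\to C^*(\Delta^2;R)$ attached to $(\ba',\bb')$ with $f$ gives the morphism $\T_R(\bbX)\to C^*(\Delta^2;R)$ attached to $(f^*\ba',f^*\bb')$. Its third edge is then on one hand $f^*\nu'(\ba',\bb')$ and on the other $\nu(f^*\ba',f^*\bb')$, so the two agree. The remaining point to verify is that the evaluation of $u\in\T^1$ on a 2-simplex morphism is additive along edges in the appropriate binomial sense; this is where the $\cup_1$ and binomial axioms enter.
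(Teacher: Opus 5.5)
The paper itself gives no proof of this lemma (it is quoted from \cite[Lem.~10.1]{Porter-Suciu-2023}), but your argument is correct and is exactly the mechanism the paper relies on later: Lemma~\ref{lem:rfygroup} reads the product off from dga maps into $C^\ast(\Delta^2;R)$, and Step~5 in the proof of Theorem~\ref{thm:strict-representability} describes the induced map as your $f^*$, namely $\ba'\mapsto\bigl(x\mapsto \ba'(f(x))\bigr)$. The ``remaining point'' you flag needs no additivity: restriction along each edge $\Delta^1\hookrightarrow\Delta^2$ preserves $\cup$, $\cup_1$ and the binomial operations, so by the uniqueness in Lemma~\ref{lem:extend} the three edge restrictions of your $2$-simplex morphism are the evaluations at $\ba$, $\bb$ and $\nu(\ba,\bb)$ (this is also why evaluation should be defined via $C^\ast(\Delta^1;R)$ rather than via a point, which has no $1$-cochains).
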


\begin{theorem}
[{\cite[Thm.~8.7 and Cor.~8.8]{Porter-Suciu-2023}}]
\label{thm:central-to-hirsch}
Let $(\T_R(\bbX),d)$ be a (possibly infinite) colimit of Hirsch extensions,
and assume 
$\psi\colon\T_R(\bbX)\to C^*(M(\bbX);R)$ is a cohomology isomorphism, 
where $C^*(M(\bbX);R)$ denotes the cochains
on the bar construction  of the group $M(\bbX)$ with $R$ coefficients.

Then, if $\bar M$ is a central extension of $M(\bbX)$ 
by a finitely generated free 
$R$-module $B$, then there exists a Hirsch extension 
$(\T_R(\bbX\cup\bY),\bar d)$ 
and a map $\bar\psi\colon\T_R(\bbX\cup\bY)\to C^*(\bar M;R)$ 
that is a cohomology isomorphism.
\end{theorem}

\subsection{The pronilpotent group $G(\mcm)$}
\label{subsec:pronilp}

Applying Lemma~\ref{lem:Hirsch-central}\eqref{pt:two} 
at each stage, we obtain an inverse system of nilpotent groups
\[
\begin{tikzcd}[column sep=22pt]
G(\mcm_1) 
		& G(\mcm_2) \ar[l, "\:\bar{\jmath}_1"' ] 
		& \cdots \ar[l]
		& G(\mcm_n) \ar[l] 
		& G(\mcm_{n+1}) \ar[l, "\:\bar{\jmath}_n"' ] 
		& \ar[l]\cdots .
\end{tikzcd}
\]

\begin{definition}
\label{def:pronilp-group}
The pronilpotent group
\[
G(\mcm) \coloneqq \varprojlim_n G(\mcm_n)
\]
is called the \emph{dual group} of the colimit of Hirsch extensions 
$\mcm=\varinjlim_n \mcm_n$.  When the colimit is finite (i.e., 
$\bbX^k=\emptyset$ for $k\gg 0$), the group $G(\mcm)$ is nilpotent.
\end{definition}

We regard $G(\mcm)$ as a topological group via the inverse-limit
topology coming from the discrete topology on each $G(\mcm_n)$;
see Remark~\ref{rem:BG-vs-KG1} for the relationship between
$BG(\mcm)$ and $K(G(\mcm),1)$.

\begin{theorem}[{\cite[Thm.~8.10 and Lem.~8.11]{Porter-Suciu-2023}}]
\label{thm:pronilpotent-model}
Let $(\mcm_n,d_n)_{n\ge 1}$ be a (possibly infinite) sequence of Hirsch extensions 
with $H^1(\mcm_1;R)$ finitely generated and free. 
Then the colimit $\mcm=\varinjlim_n \mcm_n$ equipped with the natural map
\[
\psi\colon \mcm \longrightarrow C^*_{\cts} \bigl( B G(\mcm);R\bigr)
\]
is a $1$-minimal model (in the augmented sense) for the algebra 
of continuous cochains (with respect to the inverse-limit topology)
of the pronilpotent group $G(\mcm)$. In particular, $\psi$ is a 
$1$-quasi-isomorphism.
\end{theorem}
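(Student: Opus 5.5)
The plan is to reduce everything to the finite stages $\mcm_n$, where Lemma~\ref{lem:Hirsch-central} already supplies the needed facts, and then pass to the colimit. Set $G_n=G(\mcm_n)$ and $G=G(\mcm)=\varprojlim_n G_n$.

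\textbf{Step 1: continuous cochains are a colimit.} Since each $G_n$ is discrete and the maps $\bar{\jmath}_n$ are surjective (Lemma~\ref{lem:dual-group}), a continuous cochain $G^k\to R$ (with $R$ discrete) factors through some finite quotient $G_n^k$. I would make this precise and conclude that
\[
C^*_{\cts}\bigl(BG;R\bigr)=\varinjlim_n C^*\bigl(BG_n;R\bigr)
\]
as binomial $\cup_1$-dgas, with structure maps $\bar{\jmath}_n^{\,\sharp}$. By the compatibility $\psi_{n+1}\circ j_n=\bar{\jmath}_n^{\,\sharp}\circ\psi_n$ in Lemma~\ref{lem:Hirsch-central}\eqref{pt:four}, the map $\psi$ is then exactly $\varinjlim_n\psi_n$.

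\textbf{Step 2: $\psi$ is a cohomology isomorphism.} Cohomology commutes with filtered colimits of cochain complexes, so $H^*(\psi)=\varinjlim_n H^*(\psi_n)$. Each $H^*(\psi_n)$ is an isomorphism by Lemma~\ref{lem:Hirsch-central}\eqref{pt:four}, hence so is $H^*(\psi)$. In particular $\psi$ is a $1$-quasi-isomorphism, which gives the final assertion of the theorem.

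\textbf{Step 3: the minimal-model axioms.} Put $\rho_n=\psi\circ\iota_n$, where $\iota_n\colon\mcm_n\to\mcm$ is the canonical map. Through the isomorphisms $\psi_m$, the map $H^2(\rho_n)$ becomes the composite $H^2(G_n;R)\to\varinjlim_m H^2(G_m;R)$.
\begin{itemize}
\item For axiom~\eqref{min2} at a single stage, I would use the exact sequence of the central extension $R^{\bbX_{n+1}}\to G_{n+1}\to G_n$ (the low-degree Serre/Stallings sequence, as in Lemma~\ref{lem:3-term-seq}). It gives
\[
\ker\bigl(H^2(G_n)\to H^2(G_{n+1})\bigr)=\im\, i_{\chi},
\]
and by Lemma~\ref{lem:Hirsch-central}\eqref{pt:three} this image is spanned by the classes $[d_{n+1}x]$ for $x\in\bbX_{n+1}$.
\item For axiom~\eqref{min1}, the same sequence identifies $H^1(\mcm_1)=R^{\bbX_1}$ with $H^1(G)$, provided each $H^1(G_n)\to H^1(G_{n+1})$ is an isomorphism. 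Degree $0$ is immediate.
\end{itemize}

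\textbf{Main obstacle.} The hard part is controlling the whole colimit, not just one step. I need two things:
\begin{itemize}
\item classes killed at stage $m>n+1$ do not enlarge $\ker H^2(\rho_n)$ beyond the span of $\{[d_{n+1}x]\}$;
\item the classes $[d_{n+1}x]$ are linearly independent, equivalently the transgression $i_\chi$ is injective.
\end{itemize}
Injectivity of $i_\chi$ is exactly what makes $H^1(G_n)\to H^1(G_{n+1})$ an isomorphism and the kernel free on the stated basis. This is presumably what ``in the augmented sense'' accommodates. My first approach is to filter $\mcm$ by the Hirsch stages and compare $H^2(\mcm_n)\to H^2(\mcm_{n+1})$ directly with the five-term sequence. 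If some $h$-invariant has a kernel, I would replace $\bbX_{n+1}$ by a basis of a complement of that kernel; this is possible because $R$ is a PID and the relevant modules are free, and it does not change the groups, by Lemma~\ref{lem:Hirsch-iso}. I would then check that the kernels stabilize, using that each later extension only kills classes pulled back from $G_{m}$ with $m>n$.
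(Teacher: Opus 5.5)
\textbf{The gap is in Step~3.} Your proof breaks at the repair proposed in the ``main obstacle'' paragraph. It cannot be fixed there, because the strict axioms of Definition~\ref{def:1-min-model} are false for an arbitrary sequence of Hirsch extensions.

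By the five-term sequence you quote, $\ker h$ is exactly the cokernel of $H^1(G(\mcm_n);R)\to H^1(G(\mcm_{n+1});R)$. So replacing $\bbX_{n+1}$ by a complement of $\ker h$ changes the group. Lemma~\ref{lem:Hirsch-iso} needs an isomorphism $R^{\bY_1}\cong R^{\bY_2}$ compatible with the $h$-invariants, and says nothing when $\bY_2$ is smaller.

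For instance, $\bbX_1=\{x_1\}$, $\bbX_2=\{y\}$ with $dy=0$ is allowed by Lemma~\ref{lem:Hirsch-extension}. Then $G(\mcm)=R^2$, not $R$. Since $H^1(\mcm_1)=R$ cannot map isomorphically onto $H^1(BG(\mcm);R)=R^2$, axiom~\eqref{min1} fails for every choice of $\rho_1$. Over $\Z$ there is a further problem: $\ker h$ need not be a direct summand, since $H^2(\mcm_n)$ may have torsion, e.g.\ after adjoining $x_{12}$ with $dx_{12}=k\,x_1\otimes x_2$.

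Your second obstacle, kernel stabilization, is also a real failure rather than a technicality. Take $\bbX_1=\{x_1,x_2,x_3\}$, $\bbX_2=\{y\}$ with $dy=x_1\otimes x_2$, and $\bbX_3=\{z\}$ with $dz=x_1\otimes x_3$. Both $h$-invariants are injective: $[x_1x_3]$ is a nonzero class in $H^2(\mcm_2)\cong H^2(\Heis(\Z)\times\Z;\Z)\cong\Z^4$. Yet $\ker H^2(\rho_1)$ contains both $[x_1x_2]$ and $[x_1x_3]$. So it is not spanned by $[dy]$, and axiom~\eqref{min2} fails at $n=1$.

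\textbf{What your argument does establish.} Steps~1--2 show that $\psi$ is a quasi-isomorphism, which gives the ``in particular'' clause. Combined with the five-term sequence, they show that $(\mcm,\psi)$ satisfies Definition~\ref{def:1-min-model} if and only if two conditions hold: every $h$-invariant $h_{n+1}$ is injective, and $\ker\bigl(H^2(\mcm_n)\to H^2(\mcm_m)\bigr)=\im h_{n+1}$ for all $m>n$. These hold, e.g., whenever $\mcm$ is itself the $1$-minimal model of some $A$. So the statement must be read either with that hypothesis, or with ``augmented sense'' as a genuine weakening of Definition~\ref{def:1-min-model}. You should state this explicitly rather than try to remove it by re-choosing generators. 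The paper gives no argument of its own here; it imports the result from \cite{Porter-Suciu-2023}.

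\textbf{A smaller issue in Step~1.} Your justification that continuous cochains factor through a finite stage only works when the $G(\mcm_n)$ are finite, e.g.\ $R=\F_p$ with each $\bbX^n$ finite. For $R=\Z$ and an infinite colimit, $G(\mcm)$ is not compact. A locally constant function on $G(\mcm)^k$ can require finer and finer levels on different cosets of a fixed open subgroup, so it need not factor through any single $G(\mcm_n)^k$. Either take $\varinjlim_n C^*(BG(\mcm_n);R)$ as the definition of $C^*_{\cts}$, as the paper implicitly does when it says the $\psi_n$ ``assemble in the colimit'', or supply a separate comparison argument.
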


When $R=\Z$ and the colimit is finite, it is 
shown in \cite[Cor.~8.9]{Porter-Suciu-2023} 
that $G(\mcm)$ is a (finitely generated) torsion-free nilpotent group. 

\begin{example}
\label{ex:3-step}
Let $R=\Z$ and let $\bbX=\{x_1,x_2,x_{12},x_{112}\}$ 
with differential
\begin{align*}
dx_1& =dx_2=0, \\
dx_{12}&=x_1\otimes x_2, \\
dx_{112}&=x_1\otimes x_{12}-\binom{x_1}{2}\otimes x_2 .
\end{align*}
Then $\mcm=(\T_R(\bbX),d)$ is a finite colimit of Hirsch extensions, with
$\bbX_1=\{x_1,x_2\}$, $\bbX_2=\{x_{12}\}$, and $\bbX_3=\{x_{112}\}$. 
The associated group $G=G(\mcm)$ is a $3$-step nilpotent group whose underlying
set is $\Z^4$, with multiplication (in coordinates
$(a_1,a_2,a_{12},a_{112})$) given by
\begin{align*}
\mu(a,b)_1 &= a_1+b_1, \\
\mu(a,b)_2 &= a_2+b_2, \\
\mu(a,b)_{12} &= a_{12}+b_{12}-a_1 b_2, \\
\mu(a,b)_{112} &= a_{112}+b_{112}-a_1 b_{12}+\binom{a_1}{2} b_2 .
\end{align*}
The lower central series ranks of $G$ are $(2,1,1)$; in particular,
$\Gamma_3(G)/\Gamma_4(G)\cong \Z$.

The group $G$ is a proper quotient of the free $3$-step nilpotent group
$F_{2,3}=F_2/\Gamma_4(F_2)$.
Indeed, while $F_{2,3}$ has two independent weight-$3$ commutators,
$[[x,y],x]$ and $[[x,y],y]$, the Hirsch extension above encodes only the former.
Equivalently, $G$ is obtained from $F_{2,3}$ by imposing the additional relation
$[[x,y],y]=1$.

The quotient $G/\Gamma_3(G)$ is the integral Heisenberg group, and hence $G$
fits into a central extension
\[
\begin{tikzcd}[column sep=20pt]
1\ar[r]& \Z \ar[r]& G \ar[r]& \Heis(\Z) \ar[r]& 1 .
\end{tikzcd}
\]
Accordingly, $G$ is the fundamental group of a compact nilmanifold $N$ which is
a principal $S^1$-bundle over the $3$-dimensional Heisenberg nilmanifold 
$\Heis(\R)/\Heis(\Z)$.
The extension class is nontrivial in one direction and trivial in the other,
corresponding (after a choice of basis) to an Euler class of type $(1,0)$.
\end{example}

Finally, for $\mcm=\T_R(\bbX)$ with $\bbX=\{x_1, \ldots , x_\ell\}$
a finite colimit of Hirsch extensions, the elements of $G(\mcm)$ are
$\ell$-tuples $(a_1, \ldots , a_\ell) \in R^\ell$ with multiplication
\begin{equation}
\label{eq:product-2}
(\ba \cdot \bb)_i = a_i + b_i
  - \sum_{j=1}^{s_{x_i}}p_{x_i,j}(\ba)\cdot q_{x_i,j}(\bb),
\end{equation}
where $dx_i = \sum_{j} p_{x_i,j}\otimes q_{x_i,j}$.

\begin{remark}
\label{rem:product-infinite}
When $\mcm = \varinjlim_n \mcm_n$ is an infinite colimit, 
with $\bbX = \bigcup_n \bbX^n$ and each $\bbX^n$ finite, 
the finite-stage formula~\eqref{eq:product-2} applies to each 
$G(\mcm_n)$, giving a nilpotent group on the free $R$-module 
$M(\bbX^n; R)$ of functions $\bbX^n \to R$.
The pronilpotent group $G(\mcm) = \varprojlim_n G(\mcm_n)$ 
consists of all compatible families $(\ba_n)_{n \ge 1}$ with 
$\ba_n \in G(\mcm_n)$ and $\bar{\jmath}_{n}(\ba_{n+1}) = \ba_n$,
equivalently, of all functions $\ba \colon \bbX \to R$ 
whose restriction to each $\bbX^n$ lies in $G(\mcm_n)$.
Multiplication in $G(\mcm)$ is computed stagewise: 
$(\ba \cdot \bb)|_{\bbX^n} = \ba|_{\bbX^n} \cdot \bb|_{\bbX^n}$ 
in $G(\mcm_n)$ for each $n$, using formula~\eqref{eq:product-2}.
\end{remark}

\subsection{The group associated to a binomial $\cup_1$-dga}
\label{subsec:group-dga}
We now apply the construction of $G(\mcm)$ to the 
$1$-minimal model of a binomial $\cup_1$-dga.
As before, let $(A,d_A)$ be a binomial $\cup_1$-dga 
over $R=\Z$ or $\F_p$ with $H^0(A)=R$ and $H^1(A)$ 
a finitely generated free $R$-module.
 
\begin{definition}
\label{def:group-of-dga}  
Let $(\mcm(A),d)=\varinjlim_n(\mcm_n,d_n)$ be a $1$-minimal model of $A$, 
and let $\rho\colon \mcm(A)\to A$ be the associated morphism. Define
\[
G(A) \coloneqq G(\mcm(A)) = \varprojlim_n G(\mcm_n),
\]
the pronilpotent (or nilpotent, if finite) dual group of the $1$-minimal model.
\end{definition}

The canonical projections fit into the commutative diagram
\begin{equation*}
\begin{tikzcd}[column sep=2.5em, row sep=3.5em]
& & G(A) \ar[dll, two heads, "\bar\rho_1"'] \ar[dl, two heads, "\bar\rho_2"] 
\ar[d, two heads, "\bar\rho_3"] 
\ar[drr, two heads, "\bar\rho_n"] & & \\
G(\mcm_1) & G(\mcm_2)  \ar[l, two heads, pos=0.4, "\bar\jmath_1"'] 
& G(\mcm_3) \ar[l, two heads, pos=0.4, "\bar\jmath_2"'] & \cdots \ar[l, two heads] 
&  G(\mcm_n) \ar[l, two heads, pos=0.45, "\bar\jmath_{n-1}"'] & \cdots \ar[l]
\end{tikzcd}
\end{equation*}
where each $\bar\rho_n\colon G(A)\twoheadrightarrow G(\mcm_n)$ is the natural projection 
and each $\bar\jmath_k\colon G(\mcm_{k+1})\twoheadrightarrow G(\mcm_k)$ is the surjection 
induced by the Hirsch inclusion $\mcm_k\hookrightarrow\mcm_{k+1}$, 
with inverse limit isomorphism $G(A)\cong \varprojlim_n G(\mcm_n)$.

\begin{theorem}
\label{thm:group-well-defined}
The isomorphism type of $G(A)$ is independent of the choice 
of $1$-minimal model of $A$.
\end{theorem}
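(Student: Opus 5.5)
The plan is to combine the uniqueness part of Theorem~\ref{thm:existence-uniqueness-1min} with the functoriality of the dual-group construction in Lemma~\ref{lem:dual-group}. Let $\rho\colon\mcm(A)\to A$ and $\rho'\colon\mcm'(A)\to A$ be two $1$-minimal models, with stages $\mcm_n=\T_R(\bbX^n)$ and $\mcm'_n=\T_R(\bbY^n)$. By Theorem~\ref{thm:existence-uniqueness-1min}(2) there is an isomorphism of binomial $\cup_1$-dgas $f\colon\mcm(A)\to\mcm'(A)$ with $\rho\simeq\rho'\circ f$. The homotopy condition is irrelevant for the isomorphism type of the group; all we need is that $f$ is an isomorphism of colimits of Hirsch extensions.

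By Lemma~\ref{lem:dual-group}(2), $f$ induces a homomorphism $f^\ast\colon G(\mcm'(A))\to G(\mcm(A))$ compatible with the inverse systems. Likewise $f^{-1}$ induces $(f^{-1})^\ast$. The assignment should be contravariantly functorial: on functions it is precomposition with the restriction of $f$ to generators, followed by evaluation through the binomial polynomial structure. Hence $(f^{-1})^\ast\circ f^\ast=(f\circ f^{-1})^\ast=\id$, and similarly in the other order. So $f^\ast$ is an isomorphism $G(\mcm'(A))\cong G(\mcm(A))$, which is the claim.

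The main obstacle is that $f$ need not respect the chosen filtrations, i.e.\ it need not send $\mcm_n$ into $\mcm'_n$. Then "compatible with the inverse systems'' must be interpreted carefully. I would first try to show that any isomorphism between $1$-minimal models is filtration preserving. The key input is that $\mcm_n$ can be characterized intrinsically: it is generated by $\mcm_{n-1}$ together with primitives of cocycles representing the kernel of $H^2(\mcm_{n-1})\to H^2(\mcm)$. Alternatively, one can use Definition~\ref{def:1-min-model}\eqref{min2}, which characterizes $\mcm_n$ via $\ker H^2(\rho_{n-1})$, and $\rho\simeq\rho' f$ gives equal maps on cohomology. An induction on $n$ should then give $f(\mcm_n)\subseteq\mcm'_n$, and symmetrically for $f^{-1}$, so that $f$ restricts to isomorphisms $\mcm_n\cong\mcm'_n$.

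If this fails, I would instead define $G(\mcm)$ intrinsically as the group $M(\bbX;R)$ of Theorem~\ref{thm:product}, identified with binomial $\cup_1$-algebra maps $\mcm\to R$. Under this identification the product is dual to a coproduct, so it is manifestly functorial in isomorphisms of $\mcm$. The identification $M(\bbX)=\varprojlim_n M(\bbX^n)$ from Remark~\ref{rem:product-infinite} then yields the isomorphism of inverse limits without needing the stages to match.
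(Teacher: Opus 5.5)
Your proposal is correct and follows the paper's proof. The uniqueness part of Theorem~\ref{thm:existence-uniqueness-1min} gives an isomorphism $f$, which restricts to isomorphisms $\mcm_n\cong\mcm_n'$ because each step is determined inductively by $\ker H^2(\rho_{n-1})$, and Lemma~\ref{lem:dual-group} then yields compatible isomorphisms of the towers, hence of their limits.

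The paper merely asserts the filtration-preservation step that you flag as the main obstacle. Your induction settles it once you add two facts: a primitive of $f(dx)$ is determined only up to a degree-$1$ cocycle, and $Z^1(\mcm')=H^1(\mcm')$ already lies in $\mcm_1'$ because the $h$-invariants are injective.
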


\begin{proof}
Let $\mcm \xrightarrow{\rho} A$ and $\mcm' \xrightarrow{\rho'} A$ be two $1$-minimal models.  
By Theorem~\ref{thm:existence-uniqueness-1min} 
there exists an isomorphism 
$f\colon\mcm \to \mcm'$ such that $\rho \simeq \rho'\circ f$.  
Since the $n$th step of a $1$-minimal model is determined inductively 
by conditions \eqref{min1} and \eqref{min2} of 
Definition~\ref{def:1-min-model}, the map $f$ restricts to isomorphisms 
$f_n\colon\mcm_n \to \mcm_n'$ at each finite stage.
By Lemma~\ref{lem:dual-group}, these induce isomorphisms 
$\bar{f}_n\colon G(\mcm_n')\to G(\mcm_n)$ that are compatible 
with the inverse systems, hence an isomorphism 
$G(\mcm')\cong G(\mcm)$.
\end{proof}


\section{Relation to Bousfield--Kan $R$-completion}
\label{sect:BK}

Having constructed the pronilpotent group $G(\mcm)$ and established
its basic properties, we place it in the context of classical completions.
We show that $G(C^\ast(X;R))$ coincides with the Bousfield--Kan
$R$-completion of $\pi_1(X)$ when $R=\F_p$, and with the
pro-torsion-free-nilpotent completion when $R=\Z$;
the gap between these two cases is measured by
Theorem~\ref{thm:kernel-formula}.

The classical Bousfield--Kan $R$-completion of a group $G$ is
the pronilpotent group
\[
\widehat{G}_R = 
\begin{cases}
\varprojlim_n G/\Gamma_n^p(G) & \text{when }R=\F_p, \\[4pt]
\varprojlim_n G/\Gamma_n(G) & \text{when }R=\Z.
\end{cases}
\]
The first is the pro-$p$ completion in the Lazard--Serre sense, 
while the second is the pronilpotent completion built from 
the ordinary lower central series (whose graded pieces may 
have torsion).

As noted in \cite[Rem.~8.12]{Porter-Suciu-2023}, the group $G(\mcm)$ 
associated to the $1$-minimal model $\mcm$ of $C^*(X;R)$ is an 
$R$-free pronilpotent group: it is the pro-torsion-free-nilpotent 
completion of $G=\pi_1(X)$ when $R=\Z$,
and equals $\widehat{G}_{\F_p}$ when $R=\F_p$.
Theorem~\ref{thm:mnequalsquotient} makes this precise at the level of towers:
the Hirsch tower $\{G(\mcm_n)\}$ is canonically isomorphic to the Stallings tower
$\{\GGR{G}{n+1}\}$ whose successive quotients are free $R$-modules.
Taking inverse limits therefore yields
\[
G(C^*(X;R)) \cong
\begin{cases}
\,\widehat{\pi_1(X)}_{\F_p} & \text{when }R=\F_p, \\[4pt]
\,\varprojlim_n \pi_1(X)/\Gamma_n^0(\pi_1(X)) & \text{when }R=\Z,
\end{cases}
\]
where $\Gamma_n^0(-)$ denotes the torsion-free Stallings series.
This gives an explicit construction, purely in terms of the cochain 
algebra $C^*(X;R)$, of the fastest descending central series filtration of 
$\pi_1(X)$ whose graded pieces are free $R$-modules.

\begin{theorem}
\label{thm:kernel-formula}
Let $X$ be a connected CW-complex with $H_1(X;\Z)$ finitely generated,
and set $G=\pi_1(X)$.
Then there is a canonical short exact sequence
\[
\begin{tikzcd}[column sep=22pt]
1 \ar[r]& \widehat{T}(G) \ar[r]&
   \widehat{G}_\Z^{\BK}\ar[r]& G(C^*(X;\Z)) \ar[r]& 1,
\end{tikzcd}
\]
where $\widehat{T}(G)$ denotes the inverse limit of the torsion 
subgroups of the finite-stage LCS quotients $G/\Gamma_n(G)$.
\end{theorem}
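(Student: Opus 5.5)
We will work at finite stages and then pass to inverse limits. By Theorem~\ref{thm:mnequalsquotient} (Theorem~\ref{thm:A}), $G(C^*(X;\Z))\cong\varprojlim_n G/\Gamma^0_{n}(G)$, compatibly with the tower maps. Since $\Gamma_n(G)\subseteq\Gamma_n^0(G)$ (Remark~\ref{rmk:grequal}), there are canonical surjections $\pi_n\colon G/\Gamma_n(G)\surj G/\Gamma_n^0(G)$, compatible with the projections in both towers. Set $K_n=\ker\pi_n=\Gamma_n^0(G)/\Gamma_n(G)$. This gives an inverse system of short exact sequences
\[
1\to K_n\to G/\Gamma_n(G)\to G/\Gamma^0_n(G)\to 1 .
\]

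\textbf{Step 1: identify $K_n$ with the torsion of $G/\Gamma_n(G)$.} For $N=G/\Gamma_n(G)$, a finitely generated nilpotent group (since $H_1(G;\Z)$ is finitely generated), the torsion elements form a finite normal subgroup $T_n$, and $N/T_n$ is torsion-free nilpotent. First, $K_n\subseteq T_n$: by induction on $k$ one shows that each element of $\Gamma^0_k(G)$ has a power lying in $\Gamma_k(G)\cdot\Gamma_n(G)$. This uses the definition of $\Gamma^0_{k+1}$ through roots of commutators, together with the standard fact that in a nilpotent group the elements having a power in a given subgroup form a subgroup (the isolator, which is a subgroup in nilpotent groups). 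Conversely, $T_n\subseteq K_n$: since $G/\Gamma^0_n(G)$ has a filtration with torsion-free graded pieces, it is torsion-free, so $T_n$ maps trivially into it. Hence $K_n=T_n=T(G/\Gamma_n(G))$ canonically.

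\textbf{Step 2: pass to the limit.} Apply $\varprojlim$ to the exact sequences. It is left exact, so we get $1\to\widehat T(G)\to\widehat G^{\BK}_\Z\to\varprojlim_n G/\Gamma^0_n(G)$. Surjectivity on the right follows from the Mittag-Leffler condition on $\{K_n\}$, which holds because each $K_n=T_n$ is finite (a finitely generated nilpotent torsion group is finite). Therefore $\varprojlim^1 K_n$ vanishes, in the nonabelian sense that a compatible family in the quotient tower lifts. Concretely, at each stage the set of lifts is a nonempty finite set, and an inverse limit of nonempty finite sets is nonempty. Combining with Theorem~\ref{thm:A} gives the sequence. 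Canonicity follows because all maps are induced by the identity of $G$.

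\textbf{Main obstacle.} The main difficulty is the inclusion $K_n\subseteq T_n$ in Step 1, that is, that every element of $\Gamma^0_n(G)$ is torsion modulo $\Gamma_n(G)$. I would argue it cleanly by characterizing $\Gamma^0_n(G)/\Gamma_n(G)$ as the isolator of the trivial subgroup. Equivalently, I would show that the preimage in $G$ of $T_n$ defines a central series with torsion-free quotients and is contained in every such series, using the minimality of $\Gamma^0$ stated in \S\ref{subsec:descending}.
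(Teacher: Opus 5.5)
Your argument is correct and has the same skeleton as the paper's proof. Both use the finite-stage exact sequences $1\to\Tors(G/\Gamma_n(G))\to G/\Gamma_n(G)\to G/\Gamma^0_n(G)\to 1$ and then pass to the limit using vanishing of $\varprojlim^1$ of the kernels. You differ in the justifications, and in both places yours is the more careful one.

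The paper simply asserts that the kernels are the torsion subgroups; you actually argue it. For the commutator generators $[x,u]$ of $\Gamma^0_{k+1}(G)$, however, the fact that isolators in nilpotent groups are subgroups is not quite enough. You also need that in the torsion-free nilpotent group $(G/\Gamma_{k+1}(G))/\Tors$ a root of a central element is central. That is what makes the series $\{g\in G : g^m\in\Gamma_k(G)\text{ for some } m\ge 1\}$ central.

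For the limit step, the paper derives the Mittag-Leffler condition from surjectivity of $G/\Gamma_{n+1}(G)\to G/\Gamma_n(G)$. What is required is Mittag-Leffler for the tower of \emph{kernels}, whose transition maps need not be surjective: in Example~\ref{ex:heisenberg-euler-k} the map $\Tors(G/\Gamma_3)\to\Tors(G/\Gamma_2)$ is zero. Surjectivity of the middle tower alone does not force surjectivity on limits. The operative reason is your observation that each $\Tors(G/\Gamma_n(G))$ is finite, since $G/\Gamma_n(G)$ is finitely generated nilpotent when $H_1(G;\Z)$ is finitely generated. Your lifting argument, via inverse limits of nonempty finite sets, also covers the case where these torsion subgroups are nonabelian. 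There the cited abelian $\varprojlim^1$ result does not literally apply.
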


\begin{proof}
The ordinary LCS quotients $G/\Gamma_n(G)$ contain the full torsion of
$\gr_k(G)$ for $k\le n$. The torsion-free series quotients $G/\Gamma_n^0(G)$
are obtained from $G/\Gamma_n(G)$ by killing precisely the torsion subgroups
of $\gr_k(G)$ for $k\le n$, giving a compatible family of surjections
$G/\Gamma_n(G) \surj G/\Gamma_n^0(G)$
whose kernels are the torsion subgroups of $G/\Gamma_n(G)$.
Since the transition maps $G/\Gamma_{n+1}(G)\surj G/\Gamma_n(G)$
are surjective, the Mittag-Leffler condition is satisfied and
$\varprojlim^1$ of the torsion kernels vanishes, see \cite[Prop.~3.5.7]{Weibel}.
Passing to the inverse limit, the kernel of the natural surjection
\begin{equation}
\label{eq:vp-vp}
\widehat{G}_\Z^{\BK} = \varprojlim_n G/\Gamma_n(G) \longsurj 
\varprojlim_n G/\Gamma_n^0(G) = G(C^*(X;\Z))
\end{equation}
is therefore $\varprojlim_n \Tors(G/\Gamma_n(G)) = \widehat{T}(G)$,
as claimed.
\end{proof}

The kernel $\widehat{T}(G)$ in Theorem~\ref{thm:kernel-formula}
measures the gap between the Bousfield--Kan $\Z$-completion of
$\pi_1(X)$ and the algebraic group $G(C^*(X;\Z))$. It detects only
torsion that survives into the pronilpotent limit, not torsion that
is killed at a finite stage of the torsion-free Stallings series.
In particular, $\widehat{T}(G)$ may be trivial even when some
graded pieces $\gr_n(G)$ contain torsion, as the following
example shows.

\begin{example}
\label{ex:heisenberg-euler-k}
Let
\[
G = \langle a,b,c \mid [a,c]=[b,c]=1,\,[a,b]=c^k \rangle
\]
be the integral Heisenberg group with Euler class $k\ge 2$.
Then $G$ is torsion-free (it is a lattice in the $3$-dimensional Heisenberg
Lie group), yet $\gr_1(G) = G/\Gamma_2(G) \cong \Z^2 \oplus \Z_k$,
since the relation $[a,b]=c^k$ forces $c^k=1$ in $G_{\ab}$,
making $\bar c$ a torsion element of order $k$.
However, this torsion does not persist into $\widehat{T}(G)$: since 
$G$ is nilpotent of class~$2$, we have $\Gamma_n(G)=1$ for $n\ge 3$, 
hence $G/\Gamma_n(G) = G$ is torsion-free for $n\ge 3$. The transition 
map $\Tors(G/\Gamma_3)\to\Tors(G/\Gamma_2)$ is then zero, so 
$\widehat{T}(G)=\varprojlim_n\Tors(G/\Gamma_n)=1$.
\end{example}

By contrast, the Klein bottle provides an example where the kernel
$\widehat{T}(G)$ is a nontrivial pro-$2$ group, namely the $2$-adic
integers $\widehat{\Z}_2$. Here the torsion in the LCS graded pieces
persists at every stage, so the gap between the two completions is
substantial.

\begin{example}
\label{ex:klein}
Let $X$ be the Klein bottle, with fundamental group
$G = \pi_1(X) = \langle a,b \mid aba^{-1} = b^{-1}\rangle$.  
The ordinary lower central series is
\[
\Gamma_1(G)=G,\quad \Gamma_2(G)=\langle b^2\rangle \cong \Z,\quad
\Gamma_n(G)=\langle b^{2^{n-1}}\rangle \cong \Z \quad(n\ge 2),
\]
with graded pieces $\gr_1(G) = \Z \oplus \Z/2\Z$ and
$\gr_n(G) = \Z/2$ for $n\ge 2$.
The quotients $G/\Gamma_n(G) \cong \Z/2^{n-1}\Z \rtimes \Z$ for $n\ge 2$,
so $\widehat{G}_\Z^{\BK} \cong \widehat{\Z}_2 \rtimes \Z$,
where $\widehat{\Z}_2=\varprojlim_n \Z/2^{n}\Z$ denotes the $2$-adic integers.

The torsion-free Stallings series kills the $\Z/2$ torsion in $\gr_1(G)$
at the first step and then stabilizes:
\[
\Gamma_1^0(G)=G,\qquad \Gamma_n^0(G)=\langle b\rangle \cong \Z 
\quad \text{for all } n \ge 2.
\]
Indeed, every commutator $[a, b^j] = b^{-2j}$ lies in $\langle b^2\rangle$, 
and every element of $\langle b\rangle$ has a positive integer power in 
$\langle b^2\rangle$, so $\Gamma_3^0 = \Gamma_2^0 = \langle b\rangle$,
and similarly at higher stages.
Consequently $G/\Gamma_n^0(G) = G/\langle b\rangle \cong \Z$ for all $n\ge 2$,
so $G(C^*(X;\Z)) \cong \Z$.
The canonical surjection~\eqref{eq:vp-vp} then has kernel $\widehat{\Z}_2$,
yielding the short exact sequence
\[
\begin{tikzcd}[column sep=22pt]
1 \ar[r]& \widehat{\Z}_2 \ar[r]& \widehat{\Z}_2 \rtimes \Z \ar[r]& \Z \ar[r]& 1.
\end{tikzcd}
\]
\end{example}


\section{Structural morphisms and the main theorem}
\label{sec:main}

The central result of this paper is Theorem~\ref{thm:mnequalsquotient}, which 
identifies the tower of nilpotent groups $\{G(\mcm_n)\}_{n \ge 1}$ 
arising from the $1$-minimal model of $C^*(X;R)$ with the tower 
$\{\GGR{\pi_1(X)}{n+1}\}_{n \ge 1}$ of nilpotent quotients 
of $\pi_1(X)$. 
The proof proceeds by induction on $n$, with Lemma~\ref{lem:classify} 
as the key technical input. We establish Lemma~\ref{lem:classify} in 
\S\ref{subsec:structural-morphisms}, then prove 
Theorem~\ref{thm:mnequalsquotient} in \S\ref{subsec:proof-main}.

\subsection{Structural morphisms}
\label{subsec:structural-morphisms}

We introduce the notion of a structural morphism, which generalizes
the structural map $\rho_n \colon \mcm_n \to A$ of the $1$-minimal
model to maps that need not arise from the model itself.

\begin{definition}
\label{def:structural-morphism}
Let $n \ge 1$, and let $\bigl( (\mcm_i,d_i), \rho_i \bigr)_{1 \le i \le n}$ 
be an $n$th step in the $1$-minimal model for $A$. A morphism 
$\rho_n^\prime \colon \mcm_n  \to A$
is called a \textit{structural morphism from $\mcm_n$ to $A$}
if $H^i(\rho_1^\prime)$ is an isomorphism for $i=0,1$, where,
as indicated in diagram \eqref{eq:classify},
$\rho_1^\prime = \rho_n^\prime\circ j_{1,n}$ with
$j_{1,n}$ equal to the composition of the inclusion maps
$j_{n-1} \circ \cdots \circ j_2 \circ j_1$.
\begin{equation}
\label{eq:classify}
\begin{tikzcd}[row sep=24pt, column sep=30pt]
  	& \mcm_{n}
  		\ar[dl, "\rho_n^\prime" ']      \\
A  & \mcm_1 
		\ar[l, "\rho_1^\prime" ] 
		\ar[u, "j_{1,n}" '] 
\end{tikzcd}
\end{equation}
\end{definition}

The base case of the induction uses the following 
bijection from \cite[Cor.~7.2]{Porter-Suciu-2023}.

\begin{lemma}[{\cite[Cor.~7.2]{Porter-Suciu-2023}}]
\label{lem:H1}
If $(A,d_A)$ is an $R$-binomial $\cup_1$-dga with $H^0(A)=R$, then
there is a bijection between $R$-binomial $\cup_1$-dga maps from
$(\T_R(\bbX), d_{\bz})$ to $(A, d_A)$ and maps of sets from
$\bbX$ to $Z^1(A)$.
\end{lemma}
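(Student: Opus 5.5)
This result is quoted from \cite[Cor.~7.2]{Porter-Suciu-2023}, but it can be rebuilt from the two universal properties of the free object recalled in \S\ref{subsec:binomial-cup1}. The plan is to show that restricting a dga map $f\colon(\T_R(\bbX),d_{\bz})\to(A,d_A)$ to the generating set $\bbX\subset\T^1_R(\bbX)$ gives the bijection onto maps of sets $\bbX\to Z^1(A)$.

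\emph{Well-definedness.} Let $f$ be a morphism of $R$-binomial $\cup_1$-dgas. For each $x\in\bbX$ we have $d_{\bz}x=0$, so
\[
d_A f(x)=f(d_{\bz}x)=0 .
\]
Hence $f(x)\in Z^1(A)$, and restriction does define a map $\bbX\to Z^1(A)$.

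\emph{Injectivity.} By Lemma~\ref{lem:extend}, a morphism of binomial $\cup_1$-graded algebras out of $\T_R(\bbX)$ is uniquely determined by its values on $\bbX$. In particular two dga maps with the same restriction to $\bbX$ coincide.

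\emph{Surjectivity.} Start from a set map $\phi\colon\bbX\to Z^1(A)\subset A^1$.
\begin{itemize}
\item Lemma~\ref{lem:extend} extends $\phi$ to a morphism $f\colon\T_R(\bbX)\to A$ of $R$-binomial $\cup_1$-graded algebras.
\item For every $x\in\bbX$,
\[
d_A f(x)=d_A\phi(x)=0=f(0)=f(d_{\bz}x),
\]
using $\phi(x)\in Z^1(A)$.
\item Lemma~\ref{lem:commutes} then says $f$ commutes with the differentials, so $f$ is a morphism of binomial $\cup_1$-dgas restricting to $\phi$.
\end{itemize}

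\emph{Where the real content lies.} The only nontrivial input is Lemma~\ref{lem:commutes}. Its substance is that the differential on $\T_R(\bbX)$ is forced on cup products, $\cup_1$-products and binomial operations by the Leibniz rule, the $\cup_1$-$d$ formula and the binomial axioms. Given that lemma, which we may assume, the argument is purely formal.

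\emph{Role of $H^0(A)=R$.} This hypothesis is not needed for the bijection as stated. Its role is in applications: there it identifies $Z^1(A)$-data with $H^1$-data up to coboundaries, via $H^1(\rho_1)$ being an isomorphism.
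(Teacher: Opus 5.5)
Your argument is correct and is essentially the paper's route: the paper gives no proof of its own, only citing \cite[Cor.~7.2]{Porter-Suciu-2023}, but it recalls Lemmas~\ref{lem:extend} and~\ref{lem:commutes} just beforehand, and your restriction-to-$\bbX$ argument is the direct derivation from them (well defined since $d_Af(x)=f(d_{\bz}x)=0$, injective by uniqueness in Lemma~\ref{lem:extend}, surjective by existence there plus Lemma~\ref{lem:commutes}). You are also right that, with those two lemmas as stated, the hypothesis $H^0(A)=R$ is never used.
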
 

We can now state and prove the key lemma.

\begin{lemma}
\label{lem:classify}
Let $\bigl((\mcm_n, d_n), \rho_n\bigr)_{n \ge 1}$ be a $1$-minimal
model for $A$, and let $\rho_n' \colon \mcm_n \to A$ be a 
structural morphism. Then, 
\begin{enumerate}[label=(\roman*), itemsep=2pt]
\item
\label{cl:i}
$( (\mcm_j,d_j), \rho_j^\prime)_{1 \le j \le n}$ is an $n$th step in
a $1$-minimal model for $A$, where $\rho_j^\prime$ denotes
the restriction of $\rho_n^\prime$ to $\mcm_j$ for $1 \le j <n$.
\item\label{cl:ii}
There is an isomorphism $f_n \colon \mcm_n \isom \mcm_n$
and a homotopy $\Phi_n$ between $\rho_n'$ and $\rho_n \circ f_n$.
\item
\label{cl:iii}
$\mcm_{n+1}$ is the Hirsch extension of $\mcm_n$ with
$h$-invariant $\ker ( H^2(\rho_n^\prime) \hookrightarrow H^2(\mcm_n))$.
\end{enumerate}
\end{lemma}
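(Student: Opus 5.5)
I would prove the three parts together by induction on $n$. Part~\ref{cl:i} is the heart of the matter; parts~\ref{cl:ii} and~\ref{cl:iii} then follow from the uniqueness theorem and from Definition~\ref{def:1-min-model}.

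\emph{Base case $n=1$.} Here $\mcm_1=(\T_R(\bbX^1),d_\bz)$. By Lemma~\ref{lem:H1}, the maps $\rho_1$ and $\rho_1'$ correspond to set maps $\bbX^1\to Z^1(A)$. Both induce isomorphisms on $H^1$, so they differ by an automorphism $e$ of $R^{\bbX^1}\cong H^1(\mcm_1)$ together with coboundaries. Realize $e$ by an automorphism $f_1$ of $\T_R(\bbX^1)$, sending generators to linear combinations of generators, using Lemma~\ref{lem:extend}. The difference $\rho_1'-\rho_1 f_1$ on generators consists of coboundaries $d_A c(x)$, and these give the homotopy $\Phi_1$ in the sense of Definition~\ref{def:homotopy}. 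Condition~\eqref{min1} for $\rho_1'$ holds by hypothesis. This settles~\ref{cl:i} and~\ref{cl:ii} for $n=1$.

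\emph{Inductive step.} Assume the claims for $n-1$, applied to the restriction $\rho_{n-1}'$, which is still structural. This gives $f_{n-1}$ and a homotopy $\rho_{n-1}'\simeq\rho_{n-1}f_{n-1}$. Homotopic maps induce the same map on cohomology, so
\[
\ker H^2(\rho_{n-1}')=H^2(f_{n-1})^{-1}\ker H^2(\rho_{n-1}).
\]
By condition~\eqref{min2} for the original model, $\ker H^2(\rho_{n-1})$ is freely spanned by the classes $[d_n x]$, $x\in\bbX_n$. The homomorphism $\rho_n'$ extends $\rho_{n-1}'$ and kills $[d_nx]$, since $d_nx=d\,x$ maps to $d_A\rho_n'(x)$. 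Hence each $[d_nx]$ lies in $\ker H^2(\rho_{n-1}')$.

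To see that these classes in fact form a basis of that kernel, I compare $h$-invariants. Transport the Hirsch extension $\mcm_{n-1}\subset\mcm_n$ along $f_{n-1}$. The $h$-invariant $x\mapsto[d_nx]$ and the $h$-invariant $x\mapsto H^2(f_{n-1})^{-1}[d_nx]$ then both have image inside the relevant kernels. I will show that the map
\[
R^{\bbX_n}\to\ker H^2(\rho'_{n-1})
\]
is an isomorphism. This is the step I expect to be the main obstacle: the naive argument only gives an inclusion of spans. I would first try to use the weight filtration from the proof of Lemma~\ref{lem:Hirsch-iso}. From it, $f_{n-1}$ induces an isomorphism of $H^2(\mcm_{n-1})$ preserving the image of lower stages. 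Then the free rank of the span of the $[d_nx]$ matches the free rank of $\ker H^2(\rho_{n-1}')\cong\ker H^2(\rho_{n-1})$. Finally, one rules out a proper finite-index inclusion by testing the kernel under $H^2$ of a map into $C^*(K(G(\mcm_n),1))$, via Lemma~\ref{lem:Hirsch-central}\eqref{pt:three} and the five-term sequence of Lemma~\ref{lem:3-term-seq}. Once the basis property holds, condition~\eqref{min2} holds at stage $n-1$ for the primed maps, which proves~\ref{cl:i}.

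\emph{Parts~\ref{cl:ii} and~\ref{cl:iii}.} Given~\ref{cl:i}, extend $(\mcm_j,\rho_j')_{j\le n}$ to a full $1$-minimal model using Theorem~\ref{thm:existence-uniqueness-1min}(1). The uniqueness part, Theorem~\ref{thm:existence-uniqueness-1min}(2), restricted stagewise as in the proof of Theorem~\ref{thm:group-well-defined}, then gives $f_n$ and the homotopy $\Phi_n$, proving~\ref{cl:ii}. For~\ref{cl:iii}, homotopic maps agree on $H^2$, so $\ker H^2(\rho_n')=H^2(f_n)^{-1}\ker H^2(\rho_n)$. By~\eqref{min2} this kernel is the $h$-invariant of $\mcm_n\subset\mcm_{n+1}$ transported along $f_n$. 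Lemma~\ref{lem:Hirsch-iso} then identifies $\mcm_{n+1}$, up to isomorphism extending $f_n$, with the Hirsch extension of $\mcm_n$ whose $h$-invariant is $\ker H^2(\rho_n')$.
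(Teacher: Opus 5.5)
Your inclusion argument is correct. Since $\rho'_{n-1}$ is the restriction of $\rho'_n$, each $[d_nx]$ lies in $\ker H^2(\rho'_{n-1})$. So with $\theta=H^2(f_{n-1})$ and $K=\ker H^2(\rho_{n-1})=\mathrm{span}\{[d_nx]\}$ you get $K\subseteq\theta^{-1}(K)=\ker H^2(\rho'_{n-1})$.

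The gap is the reverse inclusion. You rightly single it out as the crux, but you do not prove it, and the route you sketch does not work. Matching ranks only reduces the problem to excluding a finite-index inclusion over $\Z$, and your tool for that does not apply:
\begin{itemize}
\item Lemma~\ref{lem:3-term-seq} is about a group and a space with that fundamental group, whereas $A$ is an arbitrary binomial $\cup_1$-dga.
\item Testing through $\mcm_n$ or $C^*(K(G(\mcm_n),1);R)$ only recovers the inclusion you already have. Indeed $\rho'_{n-1}=\rho'_n\circ j_{n-1}$, and nothing controls $H^2(\rho'_n)$ on the image of $H^2(j_{n-1})$.
\end{itemize}

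The missing idea is a finiteness argument for the automorphism $\theta$. The ascending chain $K\subseteq\theta^{-1}(K)\subseteq\theta^{-2}(K)\subseteq\cdots$ lies in $H^2(\mcm_{n-1})\cong H^2(G(\mcm_{n-1});R)$ (Lemma~\ref{lem:Hirsch-central}\eqref{pt:four}). This module is finitely generated over $R$: $G(\mcm_{n-1})$ is built from finitely many central extensions by finitely generated free $R$-modules, so it is a finitely generated nilpotent group, hence finitely presented. The chain therefore stabilizes, $\theta^{-m}(K)=\theta^{-m-1}(K)$, and applying the bijection $\theta^{m}$ gives $K=\theta^{-1}(K)$. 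Over $\F_p$ a dimension count already suffices. This yields $\ker H^2(\rho'_{n-1})=\ker H^2(\rho_{n-1})$ as submodules, hence \eqref{min2} for the primed maps with the original generators $\bbX_n$.

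For comparison, the paper organizes the induction differently:
\begin{itemize}
\item it deduces \ref{cl:i} at level $n+1$ from \ref{cl:i} and \ref{cl:iii} at level $n$;
\item it obtains \ref{cl:iii} from the isomorphism $v\colon\ker H^2(\rho'_n)\to\ker H^2(\rho_n)$ induced by $H^2(f_n)$, together with Lemma~\ref{lem:Hirsch-iso};
\item it proves \ref{cl:ii} with the Homotopy Lifting Lemma~\ref{lem:homotopy-lift} for $\varphi=\id_A$, whose part~\ref{hl2} also makes $f_{n+1}$ an isomorphism.
\end{itemize}

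Your route to \ref{cl:ii} (extend the $n$th step to a full model and invoke global uniqueness) needs more than Theorem~\ref{thm:existence-uniqueness-1min}(1) says. That result only asserts that some $1$-minimal model exists, not that a prescribed $n$th step extends to one. Either rerun its inductive construction from stage $n$, or simply lift stagewise from $(f_{n-1},\Phi_{n-1})$ with Lemma~\ref{lem:homotopy-lift}, as the paper does.

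With these two repairs your argument goes through. Your direct treatment of \ref{cl:i} then has the advantage of verifying \eqref{min2} for the given generators, rather than only up to an isomorphism of Hirsch extensions.
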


\begin{proof}
We prove statements~\ref{cl:i}, \ref{cl:ii}, and \ref{cl:iii} simultaneously
by induction on $n$. 

\medskip
\noindent\textit{Base case $(n=1)$.}
From the definition of structural morphism, $H^{i}(\rho_1^\prime)$
is an isomorphism for $i=0,1$. 
Thus, $((\mcm_1, d_1), \rho_1^\prime)$ is a first
step in the $1$-minimal model for $A$ and statement \ref{cl:i}
holds for $n=1$.

Now consider statement \ref{cl:ii}.
Since $\mcm_1 = \T_R(\bbX_1)$ with $d_1 = 0$ on generators, we have
$H^1(\mcm_1) = Z^1(\mcm_1)$, the free $R$-module on $\bbX_1$.
Both $\rho_1$ and $\rho_1'$ induce isomorphisms on $H^1$, so there
is an isomorphism $e \colon H^1(\mcm_1) \to H^1(\mcm_1)$ with
$H^1(\rho_1') = e \circ H^1(\rho_1)$.
By Lemma~\ref{lem:H1}, there is a unique dga morphism
$f_1 \colon \mcm_1 \to \mcm_1$ with $f_1(x) = e(x)$ for all
$x \in \bbX_1$. Since $e$ is an isomorphism and $\mcm_1$ is free 
on $\bbX_1$, $f_1$ is an isomorphism, with $H^1(f_1) = e$.
Hence, $H^1(\rho_1') = H^1(\rho_1 \circ f_1)$, and
Lemma~\ref{lem:H1-homotopy} provides a homotopy $\Phi_1$ between
$\rho_1'$ and $\rho_1 \circ f_1$. Thus, statement \ref{cl:ii} holds for
$n=1$.

Now consider statement \ref{cl:iii}.
$(\mcm_1, d_1) = (\T_R(\bbX_1), d_{\bz})$ for some finite set
$\bbX_1$, and from the argument above we have the following
commutative diagram with exact rows
\begin{equation}
\label{eq:cd-22}
\begin{tikzcd}[column sep=30pt]
0 \ar[r] & H^1(\T_R(\bbX_{1}), d_{\bz}) \ar[r, "e_1"]
    & H^2(\mcm_1) \ar[r, "H^2(\rho_1)"]
    & H^2(A) \\
0 \ar[r] & \ker H^2(\rho_1')
    \ar[r, "g_1"]
    & H^2(\mcm_1)
        \ar[u, "\cong"', "H^2(f_1)"]
        \ar[r, "H^2(\rho_1')"]
    & H^2(A), \ar[u, "\id"]
\end{tikzcd}
\end{equation}
where by the definition of $1$-minimal model it follows that
 $e_1$ is the $h$-invariant of the Hirsch extension
$\mcm_1 \hookrightarrow \mcm_{2}$. 

Since the diagram~\eqref{eq:cd-22} commutes and $H^2(f_1)$ is an
isomorphism, $H^2(f_1)$ restricts to an isomorphism
\[
v_1 \colon \ker H^2(\rho_1') \longisom \ker H^2(\rho_1)
\]
with $e_1 \circ v_1 = H^2(f_1) \circ g_1$.
It follows from Lemma~\ref{lem:Hirsch-iso} that the Hirsch extension
with $h$-invariant $e_1$ is isomorphic to that with $h$-invariant $g_1$,
so $\mcm_2$ is the Hirsch extension of $\mcm_1$ with $h$-invariant
$\ker H^2(\rho_1') \hookrightarrow H^2(\mcm_1)$.
Thus statement~\ref{cl:iii} holds for $n=1$.

\medskip
\noindent\textit{Inductive step.}
Assume statements~\ref{cl:i}, \ref{cl:ii}, and~\ref{cl:iii} hold for 
a given value of $n\ge 1$, and let 
$\rho_{n+1}^\prime \colon \mcm_{n+1} \to A$ be a structural
morphism. We need to show that the statements~\ref{cl:i}, 
\ref{cl:ii} and~\ref{cl:iii} then hold for $n+1$.

Consider statement \ref{cl:i}.
Given $\rho_{n+1}^\prime$, set $\rho_j^\prime$ equal to the
restriction of $\rho_{n+1}^\prime $ to $\mcm_j$ for $j \le n$.
By induction, from statement \ref{cl:i} it follows that
$((\mcm_j, d_j), \rho_j^\prime)_{1 \le j \le n}$ is an $n$th step
in the $1$-minimal model for $A$ and from statement \ref{cl:iii}
$\mcm_{n+1}$ is the Hirsch extension of $\mcm_n$ with
$h$-invariant 
$\ker H^2(\rho_n^\prime) \hookrightarrow H^2(\mcm_n)$.
Thus, $((\mcm_j, d_j), \rho_j^\prime)_{1 \le j \le n+1}$ is an
$(n+1)$th step in the $1$-minimal model for $A$, and statement
\ref{cl:i} holds for $n+1$. 

Consider statement~\ref{cl:ii}.
By statement~\ref{cl:ii} for $n$, there is a homotopy $\Phi_n$ between 
$\rho_n'$ and $\rho_n \circ f_n$.
Applying Lemma~\ref{lem:homotopy-lift} with 
$\varphi = \mathrm{id}_A$, $f_n$ as above, and $\Phi_n$,
there exists an isomorphism $f_{n+1} \colon \mcm_{n+1} \to \mcm_{n+1}$
extending $f_n$, and a homotopy $\Phi_{n+1}$ between 
$\rho_{n+1}^\prime$ and $\rho_{n+1} \circ f_{n+1}$.
Thus statement~\ref{cl:ii} holds for $n+1$.
 
Since $\rho_n' \simeq \rho_n \circ f_n$ by statement~\ref{cl:ii}
for $n$, and homotopic maps induce the same map on cohomology, 
we have $H^2(\rho_n') = H^2(\rho_n) \circ H^2(f_n)$, and since 
$f_n$ is an isomorphism, 
$\ker H^2(\rho_n') = H^2(f_n)^{-1}(\ker H^2(\rho_n))$.
Consider the diagram with exact rows
\begin{equation}
\label{eq:cd-2}
\begin{tikzcd}[column sep=30pt]
0 \ar[r] &[-6pt] H^1(\T_R(\bbX_{n+1}), d_{\mathbf{0}}) \ar[r, "e_n"]
    &[-2pt] H^2(\mcm_n) \ar[r, "H^2(\rho_n)"]
    &[4pt] H^2(A) \\
0 \ar[r] & \ker H^2(\rho_n')
    \ar[r, "g_n"]
    & H^2(\mcm_n)
        \ar[u, "\cong"', "H^2(f_n)"]
        \ar[r, "H^2(\rho_n')"]
    & H^2(A), \ar[u, "\id"]
\end{tikzcd}
\end{equation}
where $e_n$ is the $h$-invariant of the Hirsch extension
$\mcm_n \hookrightarrow \mcm_{n+1}$, and 
$g_n$ is the inclusion of $\ker H^2(\rho_n')$.
Since $\rho_n' \simeq \rho_n \circ f_n$, the diagram commutes, and
$H^2(f_n)$ induces an isomorphism
\[
\begin{tikzcd}[column sep=22pt]
v \colon  \ker H^2(\rho_n') \arrow[r, "\simeq"] & \ker H^2(\rho_n)
\end{tikzcd}
\]
with $e_n \circ v = H^2(f_n) \circ g_n$.
Hence, the diagram \eqref{eq:cd-2} with the map $v$ added commutes,
and it follows from Lemma \ref{lem:Hirsch-iso} that the Hirsch
extension with $h$-invariant $e_n$ is isomorphic to the Hirsch
extension with $h$-invariant $g_n$. Since the Hirsch extension
with $h$-invariant $e_n$ is $\mcm_{n+1}$, statement \ref{cl:iii}
holds for $n+1$, and the proof is complete.
\end{proof}

\subsection{Proof of the main theorem}
\label{subsec:proof-main}
The following theorem makes 
Theorem~\ref{thm:A} from the
Introduction precise.

\begin{theorem}
\label{thm:mnequalsquotient}
Let $R=\Z$ or $\F_p$ ($p$ prime), let $X$ be a connected $\Delta$-complex with
$H^1(X;R)$ finitely generated, and let 
$((\mcm_n,d_n), \rho_n)_{n \ge 1}$
be a $1$-minimal model for $A=C^*(X;R)$.
Set $(\mcm,d)=\varinjlim_n (\mcm_n,d_n)$, and set $G=\pi_1(X)$.
Then the towers $\{\GGR{G}{n+1}\}_{n\ge 1}$ and 
$\{G(\mcm_n)\}_{n\ge 1}$
are compatibly isomorphic: there exist isomorphisms $g_n\colon \GGR{G}{n+1} \to G(\mcm_n)$
such that the ladder
\[
\begin{tikzcd}[column sep=26pt]
\GGR{G}{2} \ar[d, "g_1"] & \GGR{G}{3} \ar[l, swap, "q_2"] \ar[d, "g_2"] & \cdots \ar[l]
& \GGR{G}{n+1} \ar[l] \ar[d, "g_n"] & \GGR{G}{n+2} \ar[l, swap, "q_{n+1}"] \ar[d, "g_{n+1}"] & \cdots \ar[l] \\
G(\mcm_1) & G(\mcm_2) \ar[l, swap, "\bar{\jmath}_1"] & \cdots \ar[l] & G(\mcm_n) \ar[l]
& G(\mcm_{n+1}) \ar[l, swap, "\bar{\jmath}_n"] & \cdots \ar[l]
\end{tikzcd}
\]
commutes.

Moreover, this isomorphism of towers is natural in $G=\pi_1(X)$:
given a basepoint-preserving map $\phi\colon X\to X'$ of connected
$\Delta$-complexes (with $G'=\pi_1(X')$, $1$-minimal model
$((\mcm',d'), \rho_n^\prime)$ for $C^*(X';R)$, and 
isomorphisms $g_n'$ for $X'$),
let $\phi_*^{(n)}\colon \GGR{G}{n+1}\to \GGR{G'}{n+1}$
be the map induced by $\phi_*\colon G\to G'$ on $R$-nilpotent quotients,
and let $\widehat\phi_n\colon G(\mcm_n)\to G(\mcm_n')$ be the group
homomorphism induced by the $1$-minimal model lift
$\widehat\phi\colon \mcm(A)\to\mcm(A')$ of
$\phi^*\colon C^*(X';R)\to C^*(X;R)$.
Then for each $n\ge 1$ the square
\begin{equation}
\label{dgm:naturality}
\begin{tikzcd}[column sep=36pt, row sep=24pt]
\GGR{G}{n+1} \ar[r, "\phi_*^{(n)}"] \ar[d, "g_n"', "\cong"]
  & \GGR{G'}{n+1} \ar[d, "\cong"', "g_n'"] \\
G(\mcm_n)       \ar[r, "\widehat\phi_n"']
  & G(\mcm_n')
\end{tikzcd}
\end{equation}
commutes, and the squares \eqref{dgm:naturality} are compatible with the
ladder maps $q_{n+1}$, $q_{n+1}'$, $\bar{\jmath}_n$, and $\bar{\jmath}_n'$.
\end{theorem}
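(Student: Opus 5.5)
We argue by induction on $n$, building at each stage a classifying map $X \to K(G(\mcm_n),1)$ that realizes $\rho_n$ up to homotopy, and comparing central extensions through their $k$-invariants.

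\textbf{Base case $n=1$.} Here $G(\mcm_1)$ is the free $R$-module $R^{\bbX_1}$, since $d_1=0$ on generators makes the product \eqref{eq:product} plain addition. Condition \eqref{min1} gives $H^1(\mcm_1)\cong H^1(X;R)\cong\Hom(G,R)$. Dualizing, we obtain an isomorphism $g_1\colon \GGR{G}{2}\to R^{\bbX_1}$, because $\GGR{G}{2}$ is the maximal quotient of $G$ that is a free $R$-module. In addition, $\rho_1$ composed with $\psi_1$ corresponds, via the structural-morphism/classifying-map dictionary, to a map $h_2\colon X\to K(\GGR{G}{2},1)$ inducing the projection on $\pi_1$.

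\textbf{Inductive step.} Suppose $g_n$ has been constructed, together with a classifying map $h_{n+1}\colon X\to K(\GGR{G}{n+1},1)$ such that, under $g_n$ and the cohomology isomorphism $\psi_n$ of Lemma~\ref{lem:Hirsch-central}\eqref{pt:four}, the map $H^2(h_{n+1}^\#)$ corresponds to $H^2(\rho_n)$. The argument then proceeds in three steps.

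\begin{enumerate}
\item By Lemma~\ref{lem:3-term-seq}, the extension $\GGR{G}{n+2}\to\GGR{G}{n+1}$ is classified by the inclusion $i_{n+1}$ of $H^1(\gr^R_{n+1}(G);R)$ onto $\ker\bigl(H^2(\GGR{G}{n+1};R)\to H^2(X;R)\bigr)$.
\item By Lemma~\ref{lem:Hirsch-central}\eqref{pt:three}, the extension $G(\mcm_{n+1})\to G(\mcm_n)$ has $k$-invariant $x\mapsto[d_{n+1}x]$. By condition \eqref{min2}, this is an isomorphism of $R^{\bbX_{n+1}}$ onto $\ker H^2(\rho_n)$.
\item Transporting through $g_n$ and $\psi_n$, both extensions correspond, via Lemma~\ref{lem:bij-chi-i}, to injections with the same image. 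They therefore differ by an automorphism of the kernel module, and so are equivalent as central extensions after reidentifying the kernel. This equivalence produces $g_{n+1}$ lifting $g_n$, which gives commutativity of the ladder.
\end{enumerate}

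To continue the induction we also need a new classifying map $h_{n+2}$ lifting $h_{n+1}$ that is compatible with $\rho_{n+1}$. Lifts exist because the $k$-invariant pulls back to zero on $X$. To match $\rho_{n+1}$ on the nose up to homotopy, we would use Lemma~\ref{lem:classify}: the pullback $h_{n+2}^\#\circ\psi_{n+1}\circ(g_{n+1}\text{-identification})$ is a structural morphism $\mcm_{n+1}\to C^*(X;R)$, so by Lemma~\ref{lem:classify}\ref{cl:ii} it agrees with $\rho_{n+1}$ up to an automorphism $f_{n+1}$ and a homotopy. We then absorb $f_{n+1}$ into $g_{n+1}$ via Lemma~\ref{lem:dual-group}.

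\textbf{Main obstacle.} The delicate point is this bookkeeping. We must check that absorbing $f_{n+1}$ does not break compatibility with $g_n$. The automorphism $f_{n+1}$ restricts to $f_n$, which may be nontrivial, so we must arrange $f_n=\id$ inductively. We would try first to take $f_n=\id$ by choosing the structural morphism to be $\rho_n$ itself, which makes the correction only at the new generators $\bbX_{n+1}$. That correction is a kernel automorphism, and it is harmless for the ladder.

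\textbf{Naturality.} The lift $\widehat\phi$ exists by Lemma~\ref{lem:homotopy-lift}. The square \eqref{dgm:naturality} commutes on $H^1$-level data at $n=1$. Inductively, two homomorphisms $\GGR{G}{n+2}\to G(\mcm'_{n+1})$ that agree modulo the central kernel differ by a homomorphism into $R^{\bbX'_{n+2}}$. Any such homomorphism factors through $\GGR{G}{2}$ and is detected in $H^1$, where agreement follows from $\rho'_{n}\circ\widehat\phi\simeq\phi^*\circ\rho_n$. We therefore expect the naturality assertion to hold up to this choice, possibly requiring $\widehat\phi$ to be adjusted by a homotopy.
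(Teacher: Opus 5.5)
Your construction of the tower isomorphism follows the paper's proof. You dualize $H^1(\rho_1)$ for $n=1$, match the $k$-invariant of $\GGR{G}{n+2}\to\GGR{G}{n+1}$ (Lemma~\ref{lem:3-term-seq}) with that of $G(\mcm_{n+1})\to G(\mcm_n)$ (Lemma~\ref{lem:Hirsch-central}\eqref{pt:three}) along $g_n$ and $\psi_n$, lift $g_n$ by Lemma~\ref{lem:eG1G2}, and track $\wht\rho_n=h_{n+1}^\#\circ g_n^\#\circ\psi_n$ through Lemma~\ref{lem:classify}.

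Your stronger inductive hypothesis is the right one, and it handles a point the paper passes over. The paper carries only ``$\wht\rho_n$ is structural.'' Exactness of its top row, whose first map $e_n$ has image $\ker H^2(\rho_n)$, needs $\ker H^2(\wht\rho_n)=\ker H^2(\rho_n)$. Lemma~\ref{lem:classify} only gives $H^2(f_n)^{-1}\ker H^2(\rho_n)$, and since lifts of $g_n$ are not unique, a careless $g_{n+1}$ can break the equality at the next stage. Your fix works once made precise:
\begin{itemize}
\item Choosing $g_1$ dual to $H^1(\rho_1)$ gives $\wht\rho_1\simeq\rho_1$ by Lemma~\ref{lem:H1-homotopy}.
\item If $\wht\rho_n\simeq\rho_n$, Lemma~\ref{lem:homotopy-lift} with $\varphi=\id$ and $f_n=\id$ gives an automorphism $f_{n+1}$ with $f_{n+1}|_{\mcm_n}=\id$ and $\wht\rho_{n+1}\simeq\rho_{n+1}\circ f_{n+1}$.
\item Naturality of $\psi$ (Theorem~\ref{thm:strict-representability}) gives $\psi_{n+1}\circ f_{n+1}^{-1}=\bigl(B(f_{n+1}^{-1})_*\bigr)^\#\circ\psi_{n+1}$. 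So replacing $g_{n+1}$ by $(f_{n+1}^{-1})_*\circ g_{n+1}$ restores $\wht\rho_{n+1}\simeq\rho_{n+1}$ and keeps $\bar\jmath_n\circ g_{n+1}=g_n\circ q_{n+1}$.
\end{itemize}
This correction is not a ``kernel automorphism.'' Since $f_{n+1}$ moves each new generator by a $1$-cocycle, $(f_{n+1})_*$ is the identity on both $R^{\bbX_{n+1}}$ and $G(\mcm_n)$; it has the form $\ba\mapsto\ba\cdot\tau(\ba)$ with $\tau$ a homomorphism into the kernel.

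The naturality part has a genuine gap. The discrepancy between $g'_{n+1}\circ\phi_*^{(n+1)}$ and $\wht\phi_{n+1}\circ g_{n+1}$ is a homomorphism into the central kernel $R^{\bbX'_{n+1}}$ (not $R^{\bbX'_{n+2}}$). But the homotopy $\rho\circ\wht\phi\simeq\phi^*\circ\rho'$ (with $\wht\phi\colon\mcm(A')\to\mcm(A)$) does not force it to vanish, because homotopies of dga maps can realize inner automorphisms.

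For example, take $\T_\Z(x_1,x_2,x_{12})$ with $dx_{12}=x_1\otimes x_2$, and let $F$ fix $x_1,x_2$ with $F(x_{12})=x_{12}+c_2x_1-c_1x_2$. Then $F$ is homotopic to the identity via $x_i\mapsto x_i\otimes 1-c_i\otimes u$ and $x_{12}\mapsto x_{12}\otimes 1+(c_2x_1-c_1x_2)\otimes t_1$. Yet $F_*$ is conjugation by $(c_1,c_2,0)$ in $G(\mcm_2)=\Heis(\Z)$. Now take $X=X'=S^1\vee S^1$, $\phi=\id$, and $\wht\phi$ obtained by extending $F$ with Lemma~\ref{lem:homotopy-lift}, which is a legitimate lift. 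Then the square \eqref{dgm:naturality} fails at $n=2$ whenever $(c_1,c_2)\neq 0$.

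So strict commutativity can hold only for a suitably chosen representative of the homotopy class of $\wht\phi$, or up to inner automorphisms. The missing step is to show that the discrepancy can always be absorbed by such a re-choice, or else to weaken the claim to commutativity up to conjugation. Your final sentence points in this direction but does not supply the argument. The paper does not close this either: it appeals to uniqueness of lifts ``by Lemma~\ref{lem:eG1G2},'' which that lemma does not assert, and lifts are unique only up to homomorphisms into the kernel.
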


\begin{proof}
To incorporate naturality, fix a basepoint-preserving map 
$\phi\colon X\to X'$
of connected $\Delta$-complexes, with $G'=\pi_1(X')$ and $1$-minimal model $((\mcm',d'), \rho_n^\prime)$ for $C^*(X';R)$. 
By Theorem~\ref{thm:1-min-lift}, the cochain
map $\phi^*\colon C^*(X';R)\to C^*(X;R)$ lifts to a morphism
$\widehat\phi\colon \mcm(A)\to\mcm(A')$, unique up to homotopy;
write $\widehat\phi_n$ for the induced homomorphism $G(\mcm_n)\to G(\mcm_n')$.

We prove by induction on $n\ge 1$ that the following four properties hold:
\begin{enumerate}[label=(\roman*), itemsep=2pt]
\item\label{ind:iso} $g_n\colon \GGR{G}{n+1} \isom G(\mcm_n)$ is an isomorphism;
\item\label{ind:ladder} 
$\bar{\jmath}_{n-1} \circ g_n = g_{n-1} \circ q_n$ for $n\ge 2$;
\item\label{ind:structural} the map
$\wht{\rho}_n$ defined by
$\wht{\rho}_n = h_{n+1}^\sharp \circ g_n^\sharp \circ \psi_n
      \colon \mcm_n \to C^*(X;R)$ is a structural morphism;
\item\label{ind:natural} the square \eqref{dgm:naturality} 
commutes, 
where $\phi_*^{(n)}\colon\GGR{G}{n+1}\to\GGR{G'}{n+1}$ is induced
by $\phi_*\colon G\to G'$ and $\widehat\phi_n\colon G(\mcm_n)\to G(\mcm_n')$
is induced by $\widehat\phi$.
\end{enumerate}

\medskip
\noindent\textit{Base case $(n=1)$.}
Since $H^1(X;R)$ is finitely generated, $\GGR{G}{2} = H_1(G;R)$
is a finitely generated free $R$-module. The $1$-minimal model 
gives $\mcm_1 = \T_R(\bbX_1)$ with $\bbX_1$ a basis for 
$H^1(X;R) \cong \Hom_R(\GGR{G}{2},R)$. The dual group 
$G(\mcm_1) = R^{\bbX_1}$ with multiplication $\mu(f,g)=f+g$
(since $d_1=0$ on generators). Setting $|\bbX_1|=k$, define 
$g_1\colon \GGR{G}{2} \to G(\mcm_1)$ by 
$g_1(a_1,\dots,a_k)\colon \bbX_1 \to R$, $x_i \mapsto a_i$. 
This is an isomorphism of abelian groups, giving~\ref{ind:iso}. 
Property~\ref{ind:ladder} is vacuous.
Each of the maps $\psi_1$, $g_1^\#$, and $h_2^\#$ in the diagram
\[
\begin{tikzcd}[column sep=24pt, row sep=26pt]
\mcm_{1}\ar[r, "\psi_{1}" ]
& C^\ast(K(G(\mcm_{1}),1);R) \ar[r, "g_{1}^{\#}" ]
& C^\ast(K(\GGR{G}{2},1);R) \ar[r, "h_{2}^{\#}" ]
& C^\ast(X;R)
\end{tikzcd}
\] 
induces isomorphisms on $H^0$ and $H^1$.
Thus, the composition
$\wht{\rho}_1 = h_2^\# \circ g_1^\# \circ \psi_1$ 
induces isomorphisms on $H^0$ and $H^1$, and it follows that
$\wht{\rho}_1 $ is a structural morphism giving~\ref{ind:structural}.

For~\ref{ind:natural}, both $G(\mcm_1)=R^{\bbX_1}$ and $\GGR{G}{2}=H_1(G;R)$ are
identified with $\Hom_R(H^1(X;R),R)$ via $g_1$, and likewise for $X'$.
The map $\phi^*\colon H^1(X';R)\to H^1(X;R)$ induces both
$\phi_*^{(1)}\colon H_1(G;R)\to H_1(G';R)$ and
$\widehat\phi_1\colon G(\mcm_1)\to G(\mcm_1')$
as the dual of $\phi^*$ on $H^1$, so $g_1'\circ \phi_*^{(1)}=\widehat\phi_1\circ g_1$.

\medskip
\noindent\textit{Inductive step.}
Assume properties~\ref{ind:iso}--\ref{ind:natural} hold for $n$.
Property~\ref{ind:structural} gives the commutative diagram
\[
\begin{tikzcd}[column sep=24pt, row sep=26pt]
\mcm_{n}\ar[r, "\psi_{n}" ]
\ar[rrr, bend right=15, "\wht{\rho}_n" ]
& C^\ast(K(G(\mcm_{n}),1);R) \ar[r, "g_{n}^{\#}" ]
& C^\ast(K(\GGR{G}{n+1},1);R) \ar[r, "h_{n+1}^{\#}" ]
& C^\ast(X;R)
\end{tikzcd}
\]
inducing on cohomology
\[
\begin{tikzcd}[column sep=36pt]
0 \ar[r] & H^1(\T_R(\bbX_{n+1}), d_\bz) \ar[r, "e_n"]
\ar[d, "\cong" '] & H^2(\mcm_{n})
\ar[r, "H^2(\wht{\rho}_{n})"]
\ar[d, "H^2(g_{n}^{\#} \circ \psi_n)", "\cong" '] & H^2(X;R) \phantom{.}
\ar[d, "\id", "\cong" '] \\
0 \ar[r] & H^1(\gr_{n+1}^R (G);R) \ar[r, "f_{n+1}"]
& H^2(\GGR{G}{n+1};R)
\ar[r, "H^2(h_{n+1}^{\#})"]
& H^2(X;R) .
\end{tikzcd}
\]
The top row is exact by Lemma~\ref{lem:classify} 
since $\wht{\rho}_n$ is a structural morphism and
$\mcm_{n+1}$ is the Hirsch extension of $\mcm_n$
with $h$-invariant  $\ker H^2(\wht{\rho}_n) \hookrightarrow H^2(\mcm_n)$.  
The bottom row is exact by Lemma~\ref{lem:3-term-seq}.  
Commutativity follows from 
$\wht{\rho}_n =  h_{n+1}^\sharp \circ g_n^\sharp \circ \psi_n$.  
The vertical map on $H^2$ is an isomorphism by 
Lemma~\ref{lem:Hirsch-central} and~\ref{ind:iso}.

Thus $e_n$ and $f_{n+1}$ define isomorphic central extensions: $G(\mcm_{n+1})$
of $G(\mcm_n)$ and $\GGR{G}{n+2}$ of $\GGR{G}{n+1}$.  
By Lemma~\ref{lem:eG1G2}, $g_n$ extends uniquely to an isomorphism
$g_{n+1}\colon \GGR{G}{n+2} \to G(\mcm_{n+1})$ with
$\bar{\jmath}_n \circ g_{n+1} = g_n \circ q_{n+1}$,
giving~\ref{ind:iso} and~\ref{ind:ladder} at level $n+1$.

For~\ref{ind:structural} at level $n+1$, the diagram
\[
\begin{tikzcd}[column sep=22pt, row sep=28pt]
\mcm_{n+1}\ar[r, "\psi_{n+1}" '] 
\ar[rrr, bend left=14, "\wht{\rho}_{n+1}" ']
& C^\ast(K(G(\mcm_{n+1}),1);R) \ar[r, "g_{n+1}^{\#}" ']
& C^\ast(K(\GGR{G}{n+2},1);R) \ar[r, "h_{n+2}^{\#}" ']
& C^\ast(X;R) \\
\mcm_{n}\ar[r, "\psi_{n}" ] \ar[u , "{j_n}" '] 
\ar[rrr, bend right=14, "\wht{\rho}_n" ]
& C^\ast(K(G(\mcm_{n}),1);R) \ar[r, "g_{n}^{\#}" ] \ar[u , "{\bar{\jmath}_{n}}^{\,\sharp}" ']
& C^\ast(K(\GGR{G}{n+1},1);R) \ar[r, "h_{n+1}^{\#}" ] \ar[u, "{q}_{n+1}^{\#}" ]
& C^\ast(X;R) \ar[u, "\id" ']
\end{tikzcd}
\]
commutes by Lemma~\ref{lem:Hirsch-central}, the extension property,
and the Postnikov tower \eqref{eq:postnikov-3}.  
Since $j_{1,n+1} = j_n \circ j_{1,n}$, we have 
\[
\wht{\rho}_{n+1} \circ j_{1,n+1} 
= \wht{\rho}_{n+1} \circ j_n \circ j_{1,n}
= \wht{\rho}_n \circ j_{1,n} = \wht{\rho}_1,
\]
which induces isomorphisms on $H^0$ and $H^1$ by the base case.
Hence $\wht{\rho}_{n+1}$ is a structural morphism.

For~\ref{ind:natural} at level $n+1$, consider the commutative diagram on cohomology
\[
\begin{tikzcd}[column sep=30pt, row sep=26pt]
0 \ar[r] & H^1(\T_R(\bbX_{n+1}), d_\bz) \ar[r, "e_n"] \ar[d]
  & H^2(\mcm_{n}) 
  \ar[r, "H^2(\wht{\rho}_n)"] 
  \ar[d, "H^2(\widehat\phi_n^{\,\sharp})"']
  & H^2(X;R) \ar[d, "H^2(\phi^*)"] \\
0 \ar[r] & H^1(\T_R(\bbX_{n+1}'), d_\bz) \ar[r, "e_n'"]
  & H^2(\mcm_{n}') 
  \ar[r, "H^2(\wht{\rho}_n')"]
  & H^2(X';R),
\end{tikzcd}
\]
where the left vertical arrow is the restriction of $H^2(\widehat\phi_n^\sharp)$
to the respective kernels. Commutativity of the right square follows from
the naturality of the $1$-minimal model lift (Theorem~\ref{thm:1-min-lift}).
Commutativity of the left square follows from~\ref{ind:natural} at level $n$,
together with the functoriality of $\psi_n$ recorded in
Lemma~\ref{lem:Hirsch-central}\eqref{pt:four}.
Now both composites $g_{n+1}'\circ \phi_*^{(n+1)}$ and
$\widehat\phi_{n+1}\circ g_{n+1}$ are extensions of
$g_n'\circ\phi_*^{(n)} = \widehat\phi_n\circ g_n$
(by~\ref{ind:natural} at level $n$) to the central extensions
$\GGR{G}{n+2}$ and $G(\mcm_{n+1})$.
Since such an extension is unique by Lemma~\ref{lem:eG1G2},
diagram~\eqref{dgm:naturality} commutes at level $n+1$.
This completes the induction.
\end{proof}

\begin{corollary}
\label{cor:postnikov-hirsch}
With notation and assumptions as above, the following hold:
\begin{enumerate}[itemsep=2pt]
\item The space $K(\GGR{G}{k+1},1)$ in the Postnikov system for $X$ 
is the bar construction applied to the group $G(\mcm_{k}(X))$.
\item For the map $h_k \colon X \to K(\GGR{G}{k},1)$, we have that
$\mcm_{k+1}(X)$ is the Hirsch extension given by the
inclusion of $\ker H^2(h_k)$ in $H^2(\mcm_k(X))$.
\end{enumerate}
\end{corollary}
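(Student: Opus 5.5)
Both parts will be read off from Theorem~\ref{thm:mnequalsquotient} and its proof, with a shift of index. For part (1), Theorem~\ref{thm:mnequalsquotient} supplies an isomorphism $g_k\colon \GGR{G}{k+1}\isom G(\mcm_k)$. Applying the bar construction gives an isomorphism of $\Delta$-sets $B(\GGR{G}{k+1})\cong BG(\mcm_k)$. By Remark~\ref{rem:BG-vs-KG1}, the realization $|B(G(\mcm_k))|$ is a $K(G(\mcm_k),1)\cong K(\GGR{G}{k+1},1)$, which proves (1).

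The ladder condition~\ref{ind:ladder} shows that the isomorphisms $g_k$ intertwine the Postnikov projections $q_{k+1}$ with the maps $\bar\jmath_k$. Hence the whole tower~\eqref{eq:postnikov-3}, not just each stage, is realized by the tower $\{BG(\mcm_k)\}$ with maps $B\bar\jmath_k$. The $k$-invariants are identified as well: by Lemma~\ref{lem:Hirsch-central}\eqref{pt:three}, the $k$-invariant of $G(\mcm_{k+1})\to G(\mcm_k)$ is the $h$-invariant of the Hirsch extension.

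For part (2), I will use the structural morphism $\wht\rho_k=h_{k+1}^\sharp\circ g_k^\sharp\circ\psi_k$ from property~\ref{ind:structural}. By Lemma~\ref{lem:classify}\ref{cl:iii}, $\mcm_{k+1}$ is the Hirsch extension of $\mcm_k$ with $h$-invariant the inclusion $\ker H^2(\wht\rho_k)\inj H^2(\mcm_k)$. Next I factor $H^2(\wht\rho_k)$ as $H^2(h_{k+1}^\sharp)\circ H^2(g_k^\sharp\circ\psi_k)$. The second factor is an isomorphism by Lemma~\ref{lem:Hirsch-central}\eqref{pt:four} together with the fact that $g_k$ is an isomorphism.

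Hence $\ker H^2(\wht\rho_k)$ corresponds, under the identification $H^2(\mcm_k)\cong H^2(K(\GGR{G}{k+1},1);R)$, to the kernel of $H^2$ of the classifying map $X\to K(\GGR{G}{k+1},1)$. Under the notation of part (2), this classifying map is $h_{k+1}$, the map landing in the space that part (1) identifies with $BG(\mcm_k)$. So the statement should be read with this identification, i.e.\ the kernel is that of $H^2(h_{k+1})$ transported to $H^2(\mcm_k)$.

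The main obstacle is exactly this indexing: the statement writes $h_k$, whereas $h_k$ maps to $K(\GGR{G}{k},1)\cong BG(\mcm_{k-1})$. I will make the convention explicit, namely that $h$ denotes the map to the stage identified with $BG(\mcm_k)$ in part (1), and then verify that pulling the kernel back along the isomorphism $H^2(g_k^\sharp\circ\psi_k)$ is compatible with Lemma~\ref{lem:Hirsch-iso}. Lemma~\ref{lem:Hirsch-iso} then guarantees that the resulting Hirsch extension is well defined up to isomorphism.
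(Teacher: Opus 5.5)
Your proposal is correct and is essentially the paper's own argument made explicit. Part (1) comes from the isomorphisms $g_k$ of Theorem~\ref{thm:mnequalsquotient} together with Remark~\ref{rem:BG-vs-KG1}, and part (2) from Lemma~\ref{lem:classify}\ref{cl:iii} applied to $\wht\rho_k=h_{k+1}^\sharp\circ g_k^\sharp\circ\psi_k$, with kernels transported along the isomorphism $H^2(g_k^\sharp\circ\psi_k)$. Your index correction is also right: $\ker H^2(h_k)$ lies in $H^2(K(\GGR{G}{k},1);R)\cong H^2(\mcm_{k-1})$, so part (2) must be read with $h_{k+1}$ (equivalently, as describing $\mcm_k$ over $\mcm_{k-1}$), a point the paper's one-line proof passes over.
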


\begin{proof}
Both statements follow directly from 
Theorem~\ref{thm:mnequalsquotient} and 
Lemma~\ref{lem:classify}\ref{cl:iii}, together with the 
identification of the Postnikov stages with bar constructions
in Remark~\ref{rem:BG-vs-KG1}.
\end{proof}


\section{Strict representability and classifying maps}
\label{sect:functorial}

The central result of this section, Theorem~\ref{thm:strict-representability},
establishes a natural bijection between dga maps out of 
a colimit of Hirsch extensions $\mcm$ and $\Delta$-set maps into 
the bar construction $BG(\mcm)$.  Informally, this says that $\mcm$ 
\emph{represents} the functor $Y \mapsto \Hom_{\Deltaset}(Y, BG(\mcm))$ 
on the category of $\Delta$-sets, with cochains as the representing object.
We prove the bijection is natural in both $Y$ and $\mcm$.

We begin with two key lemmas (\S\ref{subsec:key-lemmas})
and conclude with the representability theorem and the
correspondence between classifying maps and structural morphisms
(\S\S\ref{subsec:strict-rep}--\ref{subsec:classifying-structural}).
Background on $\Delta$-sets, including face operators on $BG$ 
used in the proofs, is collected in Appendix~\ref{subsec:delta}.

\subsection{Algebra maps, group labelings, and the differential condition}
\label{subsec:key-lemmas}

For a topological space $Z$ and a topological group $H$, we write
$\Map_{\cts}(Z, H)$ for the set of continuous maps from $Z$ to $H$.
When $H = G(\mcm) = \varprojlim_n G(\mcm_n)$ carries the inverse-limit
topology, the identification
\begin{equation}
\label{eq:map-cts-gm}
\Map_{\cts}(Z,\, G(\mcm)) = \varprojlim_n\, \Map_{\cts}(Z,\, G(\mcm_n))
\end{equation}
holds for any $Z$, since a continuous map into an inverse limit of
topological groups is the same as a compatible family of continuous
maps into the terms.

In the remainder of this section, $Z = Y_1$ is the set of
$1$-simplices of a $\Delta$-set $Y$, equipped with the discrete
topology; since every map from a discrete space is continuous,
\[
\Map_{\mathrm{cts}}(Y_1, G(\mcm)) = \Map(Y_1, G(\mcm)),
\]
and we write simply $\Map(Y_1, G(\mcm))$ throughout.

The two lemmas of this subsection are the technical heart of the
representability theorem.  The first identifies morphisms of
$R$-binomial cup-one graded algebras with labelings of the
$1$-simplices of a $\Delta$-set by elements of $G(\mcm)$.
The second characterizes, in purely group-theoretic terms, when such
a labeling extends to a dga morphism.  Taken together, they reduce
the representability problem to a condition that can be checked
simplex-by-simplex.

\begin{lemma}
\label{lem:Delta1}
Let $\mcm = \T_R(\bbX)$ be a colimit of Hirsch extensions,
and let $Y$ be a $\Delta$-set.
Then there is a bijection, natural in $Y$,
\begin{equation}
\label{eq:bij-Delta1}
r \colon \Hom_{\bincupalg}(\mcm,\, C^\ast(|Y|;R))
\longisom \Map(Y_1,\, G(\mcm)),
\end{equation}
between morphisms $f \colon \mcm \to C^\ast(|Y|;R)$ of $R$-binomial
cup-one graded algebras and maps of sets $Y_1 \to G(\mcm)$.
Naturality means that for every map of $\Delta$-sets $t \colon Y \to Z$,
the diagram
\begin{equation}
\label{dgm:natural-Delta1}
\begin{tikzcd}[row sep=24pt, column sep=48pt]
\Hom(\mcm,\, C^\ast(|Z|;R))
    \ar[r, "r"]
    \ar[d, "{(|t|^\#)_*}"']
& \Map(Z_1,\, G(\mcm))
    \ar[d, "{(t_1)^*}"] \\
\Hom(\mcm,\, C^\ast(|Y|;R))
    \ar[r, "r"]
& \Map(Y_1,\, G(\mcm))
\end{tikzcd}
\end{equation}
commutes, where $(|t|^\#)_*$ denotes precomposition with
$|t|^\# \colon C^\ast(|Z|;R)\to C^\ast(|Y|;R)$,
and $(t_1)^*$ denotes precomposition with $t_1\colon Y_1\to Z_1$.
\end{lemma}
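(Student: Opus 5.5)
The plan is to construct $r$ by combining the universal property of Lemma~\ref{lem:extend} with the description of $G(\mcm)$ as functions $\bbX\to R$ (Theorem~\ref{thm:product} and Remark~\ref{rem:product-infinite}).

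By Lemma~\ref{lem:extend}, a morphism $f\colon \mcm=\T_R(\bbX)\to C^\ast(|Y|;R)$ of $R$-binomial cup-one graded algebras is determined by its restriction to generators, which is an arbitrary set map $\bbX\to C^1(|Y|;R)$. Throughout I take $C^\ast(|Y|;R)$ to mean the cochains of the $\Delta$-set $Y$, as in Appendix~\ref{subsec:delta}, so that
\[
C^1(|Y|;R)=\Map(Y_1,R).
\]
Hence
\[
\Hom_{\bincupalg}(\mcm, C^\ast(|Y|;R)) \cong \Map\bigl(\bbX,\Map(Y_1,R)\bigr)\cong \Map\bigl(Y_1,\Map(\bbX,R)\bigr)
\]
by currying. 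Accordingly, I define
\[
r(f)(\sigma)=\bigl(x\mapsto f(x)(\sigma)\bigr)\quad\text{for } \sigma\in Y_1 .
\]
As sets, $G(\mcm)=M(\bbX;R)=\Map(\bbX,R)$. In the infinite case, Remark~\ref{rem:product-infinite} says an element of $G(\mcm)$ is exactly a function $\bbX\to R$ whose restrictions to the $\bbX^n$ are compatible, and any function $\bbX\to R$ has this property automatically. So $r$ lands in $\Map(Y_1,G(\mcm))$. Its inverse sends $\lambda\colon Y_1\to G(\mcm)$ to the unique algebra morphism extending $x\mapsto(\sigma\mapsto\lambda(\sigma)(x))$. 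Since both directions are given by currying and uncurrying, they are mutually inverse, which gives the bijection~\eqref{eq:bij-Delta1}.

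For naturality, let $t\colon Y\to Z$ be a map of $\Delta$-sets. Then $|t|^\#$ is a morphism of binomial cup-one algebras, and in degree $1$ it is precomposition with $t_1$. Thus $(|t|^\#\circ f)(x)=f(x)\circ t_1$ for every $x\in\bbX$, and therefore
\[
r(|t|^\#\circ f)(\sigma)(x)=f(x)(t_1\sigma)=r(f)(t_1\sigma)(x).
\]
Since both sides of~\eqref{dgm:natural-Delta1} are algebra morphisms determined by their values on generators, this identity on generators gives commutativity of the square.

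The main obstacle is to make sure the identification of degree-$1$ cochains on $|Y|$ with $\Map(Y_1,R)$ is compatible with the cup-one and binomial operations. If $C^\ast(|Y|;R)$ were singular cochains of the realization, I would pass to simplicial cochains of $Y$ via the convention of Appendix~\ref{subsec:delta}. On simplicial cochains the binomial operations act pointwise on $\Map(Y_1,R)$, which is consistent with Lemma~\ref{lem:extend}. The proof therefore needs no group structure at all: the multiplication of $G(\mcm)$ enters only in the differential condition treated in the next lemma.
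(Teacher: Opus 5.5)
Your proposal is correct and follows the paper's proof essentially step for step. The paper likewise defines $r(f)(y)(x)=f(x)|_{|y|}$, builds the inverse by extending generator values via freeness (Lemma~\ref{lem:extend}), checks that the two composites are identities, and verifies naturality by the same pullback computation. One small remark: the identity $r(|t|^\#\circ f)(\sigma)(x)=r(f)(t_1\sigma)(x)$ is by itself the commutativity of \eqref{dgm:natural-Delta1}, because the corners of that square are sets of maps $Y_1\to G(\mcm)$. So your closing appeal to algebra morphisms being determined by their values on generators is unnecessary there.
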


\begin{proof}
Recall that $G(\mcm) = M(\bbX;R)$ is the $R$-module of functions
$\bbX \to R$.  We construct mutually inverse maps between the two
sets, then verify naturality.

\smallskip\noindent\textit{Step 1: From algebra maps to set maps.}
Given $f \colon \mcm \to C^\ast(|Y|;R)$, define
$r(f) \colon Y_1 \to G(\mcm)$ by setting, for each $y \in Y_1$ and
$x \in \bbX$,
\[
  r(f)(y)(x) = f(x)\big|_{|y|} \in R,
\]
where $f(x) \in C^1(|Y|;R)$ is evaluated on the image of $(0,1)$
under the characteristic map $|y| \colon |\Delta^1| \to |Y|$.
The assignment $x \mapsto f(x)|_{|y|}$ defines an element of
$G(\mcm)$, so $r(f)$ is a well-defined map of sets.

\smallskip\noindent\textit{Step 2: From set maps to algebra maps.}
Given $g \colon Y_1 \to G(\mcm)$, define
$f_g \colon \mcm \to C^\ast(|Y|;R)$ as follows.
On degree $0$, send the unit of $\mcm^0$ to the unit cochain.
On degree $1$, since $\mcm$ is freely generated as an $R$-binomial
cup-one graded algebra by $\bbX \subset \mcm^1$, any such map is
uniquely determined by its values on $\bbX$; define
$f_g(x) \in C^1(|Y|;R)$ by $f_g(x)|_{|y|} = g(y)(x)$ for each
$y \in Y_1$.  By freeness this extends uniquely to a morphism of
$R$-binomial cup-one graded algebras.

\smallskip\noindent\textit{Step 3: Mutual inverses.}
For any $f$: $f_{r(f)}(x)|_{|y|} = r(f)(y)(x) = f(x)|_{|y|}$,
so $f_{r(f)} = f$.
For any $g$: $r(f_g)(y)(x) = f_g(x)|_{|y|} = g(y)(x)$,
so $r(f_g) = g$.

\smallskip\noindent\textit{Step 4: Naturality.}
For $t\colon Y\to Z$, $f\colon \mcm\to C^\ast(|Z|;R)$,
$y\in Y_1$, and $x\in\bbX$:
\[
  r(|t|^\#\circ f)(y)(x) = f(x)\big|_{|t(y)|}
  = r(f)(t_1(y))(x) = (r(f)\circ t_1)(y)(x),
\]
where the first equality uses that $|t|^\#$ pulls back along $|t|$,
and the second that $|t|$ maps the characteristic simplex of $y$ to
that of $t_1(y)$.
\end{proof}

The next lemma shows that the differential condition on $f$ is
equivalent, via the labeling $r(f)$, to a multiplicativity condition
on the group $G(\mcm)$.

\begin{lemma}
\label{lem:rfygroup}
Let $\mcm = \T_R(\bbX)$ be a colimit of Hirsch extensions,
let $f \colon \mcm \to C^\ast(\Delta^2;R)$ be a morphism of
$R$-binomial cup-one graded algebras,
and let $r(f)$ be the labeling of $1$-simplices in $\Delta^2$
by elements of $G(\mcm)$ defined above.
Then $f$ commutes with the differentials if and only if
\begin{equation}
\label{eq:rfygroup}
r(f)(0,1) \cdot r(f)(1,2) = r(f)(0,2),
\end{equation}
where $\,\cdot\,$ denotes the product in $G(\mcm)$.
\end{lemma}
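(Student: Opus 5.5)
By Lemma~\ref{lem:commutes}, $f$ commutes with the differentials if and only if $d f(x) = f(d_{\bbX}x)$ in $C^2(\Delta^2;R)$ for every generator $x\in\bbX$. Both sides are $2$-cochains on $\Delta^2$, which has a single nondegenerate $2$-simplex $(0,1,2)$, so each such identity is one scalar equation in $R$. I will compute both sides on $(0,1,2)$ and show that, for each $x$, the resulting equation is exactly the $x$-coordinate of \eqref{eq:rfygroup}. Since an element of $G(\mcm)=M(\bbX;R)$ is determined by its values on the generators, this proves the equivalence.

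Set $\ba=r(f)(0,1)$, $\bb=r(f)(1,2)$ and $\bc=r(f)(0,2)$. For the left-hand side, the simplicial coboundary gives
\[
(d f(x))(0,1,2)=f(x)(1,2)-f(x)(0,2)+f(x)(0,1)=\ba(x)+\bb(x)-\bc(x).
\]
For the right-hand side, write $d_{\bbX}x=\sum_i p_{x,i}\ot q_{x,i}$ with $p_{x,i},q_{x,i}\in\T^1_R(\bbX)$. Because $f$ is an algebra map it sends $\ot$ to the cup product, and the Alexander--Whitney formula gives
\[
f(d_{\bbX}x)(0,1,2)=\sum_i f(p_{x,i})(0,1)\cdot f(q_{x,i})(1,2).
\]
It then remains to identify $f(p)(0,1)$ with $p(\ba)$ for every $p\in\T^1_R(\bbX)$, where $p(\ba)$ is the evaluation used in formula~\eqref{eq:product}.

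This identification is the one step that needs care, and I expect it to be the main obstacle. The elements $p$ of $\T^1_R(\bbX)$ are built from the generators using $R$-linear combinations, $\cup_1$-products and binomial operations $\zeta_n$. Evaluating a $1$-cochain on a single edge $(0,1)$ is a ring-type homomorphism $C^1(\Delta^2;R)\to R$. On the cochains of a $\Delta$-set, the $\cup_1$-product of $1$-cochains is the pointwise product on edges, and $\zeta_n$ acts pointwise as $\binom{\cdot}{n}$; I will check this against the conventions in \cite{Porter-Suciu-2023}. The evaluation $p\mapsto p(\ba)$ appearing in Theorem~\ref{thm:product} is defined by exactly the same rules starting from $x\mapsto\ba(x)$. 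So I will prove $f(p)(0,1)=p(r(f)(0,1))$ by induction on how $p$ is built: the base case is the definition of $r$ in Lemma~\ref{lem:Delta1}, and each inductive step uses that $f$ preserves $\cup_1$, the $\zeta_n$ and linear combinations. The same argument applied to the edge $(1,2)$ gives $f(q)(1,2)=q(\bb)$.

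Putting these together, $df(x)=f(d_{\bbX}x)$ holds if and only if
\[
\ba(x)+\bb(x)-\bc(x)=\sum_i p_{x,i}(\ba)\,q_{x,i}(\bb).
\]
This rearranges to $\bc(x)=\nu(\ba,\bb)(x)$, which is the $x$-coordinate of \eqref{eq:rfygroup}, and quantifying over all $x\in\bbX$ finishes the proof. I will also double-check that the sign of the face $d_1$ and the Alexander--Whitney orientation (front face $(0,1)$, back face $(1,2)$) match the sign of the correction term in $\nu$; with these conventions they do, which is consistent with the cocycle sign in the proof of Lemma~\ref{lem:eG1G2}.
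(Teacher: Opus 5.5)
Your proposal is correct and follows the paper's proof. Both arguments evaluate $df(x)$ and $f(d_{\bbX}x)$ on the single $2$-simplex $(0,1,2)$, using the coboundary and Alexander--Whitney formulas, and identify the resulting scalar equation with the $x$-coordinate of $\ba\cdot\bb=\bc$. You are more careful than the paper, which uses Lemma~\ref{lem:commutes} and the identity $f(p)(0,1)=p(\ba)$ without comment; your induction, based on $\cup_1$ and $\zeta_n$ acting pointwise on edges, supplies exactly the justification that is omitted there.
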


\begin{proof}
Set $\ba = r(f)(0,1)$, $\bb = r(f)(1,2)$, $\bc = r(f)(0,2)$
in $R^\ell$.
For each generator $x_i \in \bbX$ with
$d_\mcm x_i = \sum_j p_{x_i,j} \otimes q_{x_i,j}$,
the commutativity condition $f\circ d_\mcm(x_i) = d_\Delta\circ f(x_i)$
reads on the left:
\[
f\circ d_\mcm(x_i)
  = \sum_j f(p_{x_i,j})\cup f(q_{x_i,j})
  = \sum_j p_{x_i,j}(\ba)\,q_{x_i,j}(\bb),
\]
where the last equality uses the Alexander--Whitney formula
for the cup product (Appendix~\ref{subsec:delta},
equation~\eqref{eq:cup-product}) applied to the unique nontrivial
pairing on $\Delta^2$.
On the right, since $d_\Delta(u) = u(0,1)+u(1,2)-u(0,2)$ for
$u\in C^1(\Delta^2;R)$:
\[
d_\Delta\circ f(x_i) = a_i + b_i - c_i.
\]
Equating these gives $c_i = a_i + b_i - \sum_j p_{x_i,j}(\ba)\,q_{x_i,j}(\bb)$,
which is precisely the multiplication formula \eqref{eq:product}
for $\ba\cdot\bb = \bc$ in $G(\mcm)$.
\end{proof}

\begin{remark}
\label{rem:map-label}
Combining the two lemmas: a morphism $\mcm \to C^\ast(|Y|;R)$ of
graded algebras corresponds to a labeling of the $1$-simplices of
$|Y|$ by elements of $G(\mcm)$, and it is a dga morphism if and only
if, for every $2$-simplex of $|Y|$, the product of the labels on the
two forward faces equals the label on the remaining face.
This is the local, simplex-by-simplex content of the global bijection
proved in Theorem~\ref{thm:strict-representability} below.
\end{remark}

\subsection{Strict representability}
\label{subsec:strict-rep}

We now assemble the preceding lemmas into the main result of this
section.  The theorem says that there is a bijection between
$\Hom_{\Deltaset}\bigl(Y,\, BG(\mcm)\bigr)$ and
$\Hom_{\bincupdga}\bigl(\mcm,\, C^*(|Y|;R)\bigr)$, natural in the
$\Delta$-set $Y$, with the representing isomorphism given explicitly
by the map $r$ of Definition~\ref{def:rf-y}.
In categorical terms, the functor
$Y \mapsto \Hom_{\bincupdga}(\mcm, C^*(|Y|;R))$
on $\Delta$-sets is represented by $BG(\mcm)$.

\begin{definition}
\label{def:rf-y}
Given a colimit of Hirsch extensions $\mcm$, a $\Delta$-set
$Y$, and a morphism $f \colon \mcm \to C^\ast(|Y|;R)$ of $R$-binomial
cup-one graded algebras, define maps
$r(f)_n\colon Y_n \to BG(\mcm)_n$ for $n \ge 1$ by
\begin{equation}
\label{eq:rf-y}
r(f)_n(y) = [r(f)(y)(0,1)\mid r(f)(y)(1,2)\mid
             \cdots \mid r(f)(y)(n-1,n)],
\end{equation}
where $r(f)(y)(j,j+1)$ is the element of $G(\mcm)$ mapping $x_i$
to $f(x_i)$ evaluated on the image of $(j,j+1)$ in $|Y|$.
\end{definition}

\begin{theorem}
\label{thm:strict-representability}
Let $\mcm = \T_R(\bbX)$ be a colimit of Hirsch extensions,
and let $Y$ be a $\Delta$-set. There is a bijection, natural in both
$Y$ and $\mcm$,
\begin{equation}
\label{eq:strict-rep}
\Hom_{\Deltaset}(Y,\, BG(\mcm)) \cong
\Hom_{\bincupdga}(\mcm,\, C^\ast(|Y|;R)).
\end{equation}
Under this bijection a morphism $f \colon \mcm \to C^\ast(|Y|;R)$
corresponds to the map $r(f) \colon Y \to BG(\mcm)$ of
Definition~\ref{def:rf-y}, and the triangle
\begin{equation}
\label{dgm:adjunction}
\begin{tikzcd}[row sep=28pt, column sep=38pt]
  & \mcm \ar[d, "\psi"'] \ar[dl, "f"'] \\
C^\ast(|Y|;R) & C^\ast(|BG(\mcm)|;R) \ar[l, "{|r(f)|^{\#}}"]
\end{tikzcd}
\end{equation}
commutes.  Naturality in $Y$ means that for every map
$t \colon Y \to Z$ of $\Delta$-sets the square
\begin{equation}
\label{dgm:nat-Y}
\begin{tikzcd}[row sep=24pt, column sep=52pt]
\Hom_{\Deltaset}(Z,\,BG(\mcm)) \ar[r,"\cong"] \ar[d,"t^*"']
  & \Hom_{\bincupdga}(\mcm,\,C^*(|Z|;R)) \ar[d,"{(|t|^\#)_*}"] \\
\Hom_{\Deltaset}(Y,\,BG(\mcm)) \ar[r,"\cong"]
  & \Hom_{\bincupdga}(\mcm,\,C^*(|Y|;R))
\end{tikzcd}
\end{equation}
commutes.  Naturality in $\mcm$ means that for every morphism
$\alpha \colon \mcm \to \mcm'$ of binomial $\cup_1$-dgas the square
\begin{equation}
\label{dgm:nat-mcm}
\begin{tikzcd}[row sep=24pt, column sep=52pt]
\Hom_{\Deltaset}(Y,\,BG(\mcm')) \ar[r,"\cong"] \ar[d,"(B\alpha_*)^*"']
  & \Hom_{\bincupdga}(\mcm',\,C^*(|Y|;R)) \ar[d,"\alpha^*"] \\
\Hom_{\Deltaset}(Y,\,BG(\mcm)) \ar[r,"\cong"]
  & \Hom_{\bincupdga}(\mcm,\,C^*(|Y|;R))
\end{tikzcd}
\end{equation}
commutes, where $\alpha_*\colon G(\mcm)\to G(\mcm')$ is induced by
$\alpha$ and $B\alpha_*$ is its classifying map.
\end{theorem}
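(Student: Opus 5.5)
The plan is to build the bijection from Lemmas~\ref{lem:Delta1} and~\ref{lem:rfygroup}, using the standard description of $\Delta$-set maps into a bar construction. A $\Delta$-set map $Y\to BG(\mcm)$ is determined by its effect on $1$-simplices. Indeed, the $n$-simplices of $BG(\mcm)$ are bar symbols $[g_1\mid\cdots\mid g_n]$, and the face maps $d_0,d_n$ delete the outer entries while $d_i$ multiplies adjacent entries (Appendix~\ref{subsec:delta}). Iterating faces therefore recovers every entry $g_j$ as the image of the edge $(j-1,j)$. Hence a $\Delta$-set map $F\colon Y\to BG(\mcm)$ is the same thing as a labeling $\ell=F_1\colon Y_1\to G(\mcm)$ such that, for every $2$-simplex $\sigma$, $\ell(d_2\sigma)\cdot\ell(d_0\sigma)=\ell(d_1\sigma)$. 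Given such a labeling, $F_n$ is defined by formula~\eqref{eq:rf-y} on the edges $(j-1,j)$. Compatibility with all face maps then follows from the $2$-simplex condition applied to the $2$-dimensional faces. This uses that in a $\Delta$-set every edge $(i,k)$ of an $n$-simplex equals the product of the consecutive edges between $i$ and $k$, which one proves by induction on $k-i$ using $2$-faces. I would prove this as a preliminary lemma; $0$-simplices cause no issue since $BG(\mcm)_0$ is a point.

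Next I would translate the dga condition. By Lemma~\ref{lem:Delta1}, graded-algebra maps $f\colon\mcm\to C^*(|Y|;R)$ correspond bijectively to labelings $r(f)\colon Y_1\to G(\mcm)$. By Lemma~\ref{lem:commutes}, $f$ is a dga map if and only if $d f(x)=f(dx)$ for all $x\in\bbX$. Both sides are $2$-cochains, so this identity can be checked on each $2$-simplex $\sigma$. Pulling back along the characteristic map $\Delta^2\to|Y|$ of $\sigma$ gives a morphism $\mcm\to C^*(\Delta^2;R)$ whose labeling is $r(f)$ restricted to the faces of $\sigma$; this is the naturality in Lemma~\ref{lem:Delta1}. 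Lemma~\ref{lem:rfygroup} then says the identity holds on $\sigma$ exactly when the multiplicativity condition~\eqref{eq:rfygroup} holds for $\sigma$. Combining with the first paragraph, $r$ restricts to a bijection between dga maps and $\Delta$-set maps, and it sends $f$ to the map $r(f)$ of Definition~\ref{def:rf-y}.

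Commutativity of triangle~\eqref{dgm:adjunction} is checked on generators $x\in\bbX$, which suffices by Lemma~\ref{lem:extend}. By Definition~\ref{def:psi}, $\psi(x)$ is the $1$-cochain on $BG(\mcm)$ whose value on $[\mathbf g]$ is $\mathbf g(x)$. Its pullback along $r(f)$, evaluated on $y\in Y_1$, is therefore $r(f)(y)(x)=f(x)|_{|y|}$, as required. Naturality in $Y$ is inherited directly from diagram~\eqref{dgm:natural-Delta1}. Naturality in $\mcm$ reduces to the identity $r(f\circ\alpha)(y)=\alpha_*(r(f)(y))$. Here $\alpha_*$ is the homomorphism of Lemma~\ref{lem:dual-group}, which is given on the coordinate of each generator $x'$ by evaluating $\alpha(x')$, a binomial-$\cup_1$ polynomial, on the labels. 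This is exactly how $f\circ\alpha$ evaluates on a $1$-simplex, since cup products and binomial operations on $1$-cochains restricted to a single edge act pointwise.

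I expect the main obstacle to be the last identification, together with the infinite-colimit bookkeeping. One has to confirm that evaluating a degree-$1$ element of $\T_R(\bbX)$ (cup products of $1$-cochains are degree $2$, so only $\zeta_n$ and $\cup_1$ terms matter) on an edge agrees with the formula defining $\alpha_*$. One also has to handle the continuity issues when $G(\mcm)$ is an inverse limit. For the latter I would work stagewise via~\eqref{eq:map-cts-gm} and pass to the limit: every $\bbX$-coordinate is already seen at a finite stage, and compatible families of stagewise bijections assemble into a bijection.
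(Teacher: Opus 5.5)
Your proposal is correct and follows essentially the same route as the paper: you identify $1$-simplex labelings with graded-algebra maps via Lemma~\ref{lem:Delta1}, use the $2$-simplex multiplicativity criterion of Lemma~\ref{lem:rfygroup} in both directions of the bijection, check the triangle on generators, and get naturality in $\mcm$ from the identity $r(f\circ\alpha)(y)=\alpha_*(r(f)(y))$. Your preliminary lemma that a $\Delta$-set map into $BG(\mcm)$ is determined by its values on $1$-simplices, subject to the $2$-simplex condition, makes explicit what the paper uses tacitly when it deduces $r(f_g)=g$ from Lemma~\ref{lem:Delta1}.
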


\begin{proof}
\noindent\textit{Step 1: $r(f)$ is a map of $\Delta$-sets.}
We check $r(f)_{n-1}(d_i y) = d_i(r(f)_n(y))$ for all $y\in Y_n$.
For $1\le i\le n-1$, the face operator $d_i$ on $BG(\mcm)$ multiplies
the $i$-th and $(i+1)$-th entries; on $Y$ it omits vertex $i$.
Applying Lemma~\ref{lem:rfygroup} to the $2$-simplex spanned by
vertices $i-1$, $i$, $i+1$ in the image of $y$ gives exactly the
group multiplication condition \eqref{eq:rfygroup} needed for
commutativity.  The cases $i=0$ and $i=n$ follow directly from
the definitions \eqref{eq:faceBG} and \eqref{eq:faceDelta}.

\medskip\noindent\textit{Step 2: The triangle \eqref{dgm:adjunction} commutes.}
Since $\mcm$ is freely generated in degree $1$, it suffices to check
on generators.  For $y \in Y_1$, set $b_i = f(x_i)|_{|y|}$, so
$r(f)(y) = (b_1,\ldots,b_\ell)$.  By definition of $\psi$, we have
$\psi(x_i)|_{|(b_1,\ldots,b_\ell)|} = b_i$.  Thus
$|r(f)|^\#\circ\psi(x_i)$ and $f(x_i)$ agree on every $1$-simplex,
giving commutativity.

\medskip\noindent\textit{Step 3: Construction of the inverse.}
Given $g\colon Y\to BG(\mcm)$, define $f_g$ via Lemma~\ref{lem:Delta1}
from $g_1 = g|_{Y_1}$.  For any $2$-simplex $y\in Y_2$, writing
$g_2(y) = [u\mid v]$, the $\Delta$-set condition on $g$ gives
$g_1(d_2 y) = u$, $g_1(d_0 y) = v$, $g_1(d_1 y) = uv$,
so the labels satisfy $g_1(d_2 y)\cdot g_1(d_0 y) = g_1(d_1 y)$.
By Lemma~\ref{lem:rfygroup} this is exactly the condition for $f_g$
to commute with the differential; since $y$ was arbitrary, $f_g$ is
a dga morphism.  The identities $f_{r(f)} = f$ and $r(f_g) = g$
from Lemma~\ref{lem:Delta1} show the two maps are mutually inverse.

\medskip\noindent\textit{Step 4: Naturality in $Y$.}
This follows from Lemma~\ref{lem:Delta1}: the maps $f\mapsto r(f)$ and
$g\mapsto f_g$ are defined by formulas independent of $Y$.

\medskip\noindent\textit{Step 5: Naturality in $\mcm$.}
For $\alpha\colon\mcm\to\mcm'$ and $f'\colon\mcm'\to C^*(|Y|;R)$,
the induced map satisfies
$r(f'\circ\alpha)(y)(x) = f'(\alpha(x))|_{|y|} = r(f')(y)(\alpha(x))
= (\alpha_*\circ r(f'))(y)(x)$,
which is precomposition by $\alpha$ on the dga side and postcomposition
by $\alpha_*$ (hence precomposition by $B\alpha_*$) on the group side.
This gives the commutativity of \eqref{dgm:nat-mcm}.
\end{proof}

\subsection{Classifying maps and structural morphisms}
\label{subsec:classifying-structural}

In this section we define classifying maps 
and then show that the bijection in Theorem 
\ref{thm:strict-representability} restricts to a bijection
between classifying maps $f\colon Y \to K(N,1)$ and 
structural morphisms from $\mcm(N)$ to $C^\ast(Y;R)$.

\begin{definition}
\label{def:classifying}
Given $Y$, let $G=\pi_1(Y)$ and consider the Postnikov tower
of fibrations of the spaces $K(\GGR{G}{n},1)$.
A map $f_n \colon Y \to K(\GGR{G}{n},1)$ is called a 
\textit{classifying map} if the diagram
\begin{equation}
\label{dgm:cm}
\begin{tikzcd}
&  K(\GGR{G}{n},1) \ar[d,  "p_{n,1}" ]\\
Y \ar[ur, "f_n" ] \ar[r, "f_1" '] & K(\GGR{G}{2},1)	
\end{tikzcd}
\end{equation}
commutes with 
$H^1(f_1)\colon H^1(K(\GGR{G}{2},1);R) \to H^1(Y;R)$
an isomorphism.
\end{definition}

\begin{lemma}
\label{lem:correspondence}
Let $N$ be a nilpotent group with $\Gamma_n^R(N) = 1$ (equivalently,
$N = \GGR{N}{n}$), let $\mcm(N)$ be the $1$-minimal model
for $C^\ast(K(N,1);R)$, and let $Y$ be a $\Delta$-complex.
Then the bijection of Theorem~\ref{thm:strict-representability} 
restricts to a bijection between classifying maps $f\colon Y \to K(N,1)$ 
and structural morphisms $s\colon\mcm(N) \to C^\ast(Y;R)$.
\end{lemma}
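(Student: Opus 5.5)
The plan is to check that, under the bijection $g\leftrightarrow f_g$ of Theorem~\ref{thm:strict-representability} (with $\mcm=\mcm(N)$), the two defining conditions correspond. Those conditions are the $H^1$-isomorphism condition on $\rho_1'=s\circ j_{1,n}$ for structural morphisms, and the condition on $H^1(f_1)$ with $f_1=p_{n,1}\circ f$ for classifying maps. By Theorem~\ref{thm:mnequalsquotient} applied to $X=K(N,1)$, and using $\Gamma^R_n(N)=1$, we identify $G(\mcm(N))$ with $N$ and $BG(\mcm(N))$ with $K(N,1)$ (Remark~\ref{rem:BG-vs-KG1}). Thus maps of $\Delta$-sets $Y\to BG(\mcm(N))$ are maps $Y\to K(N,1)$, and the bijection sends $f$ to $s=|f|^\#\circ\psi$ by the commuting triangle~\eqref{dgm:adjunction}.

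\emph{Step 1: the first stage.} Use naturality in $\mcm$ (diagram~\eqref{dgm:nat-mcm}) with $\alpha=j_{1,n}\colon\mcm_1\to\mcm(N)$. Here $\alpha_*$ is the tower projection $G(\mcm(N))\to G(\mcm_1)$, which under the ladder of Theorem~\ref{thm:mnequalsquotient} is $N\to\GGR{N}{2}$, i.e.\ $p_{n,1}$. Hence the morphism corresponding to $f_1=p_{n,1}\circ f$ is $s\circ j_{1,n}=\rho_1'$. It follows that
\[
\rho_1'=|f_1|^\#\circ\psi_1 .
\]

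\emph{Step 2: matching the $H^1$ conditions.} By Lemma~\ref{lem:Hirsch-central}\eqref{pt:four}, $\psi_1$ is a cohomology isomorphism. Therefore $H^1(\rho_1')$ is an isomorphism if and only if $H^1(f_1)$ is. In degree $0$, both $H^0$ maps are isomorphisms automatically, since $Y$ is connected and $H^0(\mcm_1)=R$. So $s$ is structural exactly when $f$ is classifying.

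\emph{Main obstacle.} The hard part is the bookkeeping of identifications. The classifying-map definition refers to the Postnikov tower of $\pi_1(Y)$, whereas here the target is $K(N,1)$ with its own tower $K(\GGR{N}{k},1)$. One must read Definition~\ref{def:classifying} with $N$'s tower and check that $p_{n,1}$ agrees with $B$ of the Hirsch projection $\bar\jmath$ composite. This uses the ladder compatibility (property~\ref{ind:ladder}) in the proof of Theorem~\ref{thm:mnequalsquotient}. One must also check that the isomorphism $g_{n-1}$ used to identify $K(N,1)$ with $BG(\mcm(N))$ is fixed once and for all, so that the bijection is well defined rather than merely up to automorphism.
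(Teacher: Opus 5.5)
Your proposal is correct and is essentially the paper's argument: naturality of the bijection in $\mcm$ for the inclusion $\mcm_1\hookrightarrow\mcm(N)$ identifies $s\circ j_{1,n}$ with $|f_1|^\#\circ\psi_1$, where $f_1=p_{n,1}\circ f$. Since $\psi_1$ is a cohomology isomorphism, this matches the two $H^1$ conditions. The differences are cosmetic: you fold the paper's two directions into one biconditional, and you are more explicit than the paper about identifying $BG(\mcm(N))$ with $K(N,1)$ via the ladder of Theorem~\ref{thm:mnequalsquotient}, and about the $H^0$ condition, which needs $Y$ connected.
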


\begin{proof}
Recall that $G(\mcm(N))=N$ (since $\mcm(N)$ is the $1$-minimal model
for $C^*(K(N,1);R)$ and $N$ is nilpotent), so $K(N,1)=BG(\mcm(N))$
as $\Delta$-sets.  Write $\mcm = \mcm(N)$ and let
\begin{equation}
\label{eq:Phi-map}
\begin{tikzcd}[column sep=22pt]
\Phi \colon  \Hom_{\bincupdga}\bigl(\mcm,\, C^\ast(|Y|;R)\bigr)
  \arrow[r, "\simeq"] &
  \Hom_{\Deltaset}\bigl(Y,\, K(N,1)\bigr)
\end{tikzcd}
\end{equation}
denote the bijection of Theorem~\ref{thm:strict-representability}
(with $\Phi(s)=r(s)$, the map of Definition~\ref{def:rf-y}).
We show that under $\Phi$, structural morphisms correspond exactly
to classifying maps.

\medskip
\noindent\textit{Step 1: Notation for the Hirsch filtration.}
Write $\mcm = \varinjlim_k \mcm_k$ with $\mcm_1 = \T_R(\bbX_1)$
and inclusions $i_{1,k}\colon\mcm_1\hookrightarrow\mcm_k$.
By Lemma~\ref{lem:Hirsch-central}\eqref{pt:four}, the natural map
$\psi\colon\mcm\to C^*(K(N,1);R)$ satisfies
$\psi \circ i_{1,k} = p_{k,1}^\sharp \circ \psi_1$,
where $p_{k,1}\colon K(N,1)\to K(N/\Gamma_{k+1}(N),1)$ is the
Postnikov projection and $\psi_1\colon\mcm_1\to C^*(K(N/\Gamma_2(N),1);R)$
is the first-stage quasi-isomorphism.
In particular, the Postnikov projection $p_{n-1,1}\colon K(N,1)\to K(N^{\ab}_R,1)$
(corresponding to $k=1$) is identified on the dga side
with the inclusion $i_{1,1} = \id_{\mcm_1}$ composed with~$\psi_1$.

\medskip
\noindent\textit{Step 2: Structural morphisms map to classifying maps.}
Since $\Gamma_n^R(N)=1$, the $1$-minimal model stabilizes at stage
$n-1$: $\mcm=\mcm_{n-1}$. We apply Definition~\ref{def:structural-morphism}
at this level.
Let $s\colon\mcm\to C^\ast(|Y|;R)$ be a structural morphism,
so there exists $s_1\colon\mcm_1\to C^\ast(|Y|;R)$ with $H^1(s_1)$
an isomorphism and $s \circ i_{1,n-1} = s_1$. Set $f = r(s)\colon Y\to K(N,1)$. 
We verify that $f$ is a classifying map, i.e., that diagram~\eqref{dgm:cm}
commutes and $H^1(f_1)$ is an isomorphism.

By the naturality in $\mcm$ of $\Phi$ (diagram~\eqref{dgm:nat-mcm}),
applied to the inclusion $\alpha = i_{1,n-1}\colon\mcm_1\inj \mcm$,
we have
\begin{equation}
\label{eq:nat-alpha}
r(s \circ i_{1,n-1}) = B(\bar{\jmath}_{1,n-1})_\ast \circ r(s),
\end{equation}
where $\bar{\jmath}_{1,n-1}\colon G(\mcm)\to G(\mcm_1)$ is the
projection induced by $i_{1,n-1}$, and
$B(\bar{\jmath}_{1,n-1})\colon K(N,1)\to K(N/\Gamma_2(N),1)$
is the induced map of classifying spaces, which is precisely $p_{n-1,1}$.
Since $s\circ i_{1,n-1} = s_1$, equation~\eqref{eq:nat-alpha} gives
\[
r(s_1) = p_{n-1,1} \circ r(s) = p_{n-1,1} \circ f,
\]
so with $f_1 = r(s_1)\colon Y\to K(N/\Gamma_2(N),1) = K(N^{\ab}_R,1)$,
diagram~\eqref{dgm:cm} commutes.

It remains to check that $H^1(f_1)$ is an isomorphism.
Since $s_1$ is a structural morphism for $\mcm_1$ and
$\mcm_1\xrightarrow{\psi_1} C^*(K(N^{\ab}_R,1);R)$ is a
quasi-isomorphism (Theorem~\ref{thm:pronilpotent-model}),
the adjunction triangle~\eqref{dgm:adjunction} at stage $1$ gives
\[
s_1 = f_1^\sharp \circ \psi_1.
\]
Hence $H^1(s_1) = H^1(f_1)^\sharp \circ H^1(\psi_1)$.
Since both $H^1(s_1)$ and $H^1(\psi_1)$ are isomorphisms,
so is $H^1(f_1)$.  Thus $f = r(s)$ is a classifying map.

\medskip
\noindent\textit{Step 3: Classifying maps map to structural morphisms.}
Let $f\colon Y\to K(N,1)$ be a classifying map.
Set $s = f^\sharp \circ \psi\colon\mcm\to C^\ast(|Y|;R)$,
where $\psi\colon\mcm\to C^*(K(N,1);R)$ is the quasi-isomorphism
of Theorem~\ref{thm:pronilpotent-model}.
By the adjunction triangle~\eqref{dgm:adjunction}, $r(s)=f$,
so $s$ is the preimage of $f$ under~$\Phi$.
We verify that $s$ is a structural morphism.

Set $s_1 = s\circ i_{1,n-1} = (f\circ p_{n-1,1})^\sharp\circ\psi_1$.
Since $f$ is a classifying map, diagram~\eqref{dgm:cm} gives
$f\circ p_{n-1,1} = f_1\colon Y\to K(N^{\ab}_R,1)$,
so $s_1 = f_1^\sharp\circ\psi_1$.
Then $H^1(s_1) = H^1(f_1)^\sharp\circ H^1(\psi_1)$,
and both factors are isomorphisms (by hypothesis on $f_1$ and by
Theorem~\ref{thm:pronilpotent-model}), so $H^1(s_1)$ is an isomorphism.
Moreover, $s\circ i_{1,n-1} = s_1$ by construction,
so diagram~\eqref{eq:classify} commutes.  Thus $s$ is a structural morphism.

\medskip
\noindent\textit{Step 4: The two assignments are mutually inverse.}
Steps~2 and~3 show that the bijection $\Phi$ sends structural morphisms
to classifying maps and that the inverse bijection $\Phi^{-1}$ sends
classifying maps to structural morphisms.
Since $\Phi$ is already a bijection (Theorem~\ref{thm:strict-representability}),
these two restricted maps are mutually inverse bijections.
\end{proof}

\begin{remark}
\label{rem:GmcmN}
The proof uses the identification $G(\mcm(N))=N$.
This holds because $\mcm(N)$ is by definition the $1$-minimal model
for $C^*(K(N,1);R)$, and Theorem~\ref{thm:pronilpotent-model}
identifies the dual group of any $1$-minimal model with the
(pro)nilpotent group it models; since $N$ is already nilpotent
the inverse limit stabilizes and gives $G(\mcm(N))=N$.
\end{remark}

\begin{remark}
\label{rem:adjunction-triangle}
Step~3 uses the implication
$s = f^\sharp\circ\psi \Rightarrow r(s)=f$.
This is immediate from the adjunction triangle~\eqref{dgm:adjunction}:
since $\Phi(s)=r(s)$ and the triangle gives $r(g^\sharp\circ\psi)=g$
for every classifying map $g$, taking $g=f$ yields
$r(f^\sharp\circ\psi)=f$ directly.
The point requiring care is that~\eqref{dgm:adjunction} is asserted for
maps into a \emph{general} $\Delta$-complex $|Y|$, not only for
$Y=K(N,1)$; this is part of the statement of
Theorem~\ref{thm:strict-representability}, which is natural in $Y$.
\end{remark}

\section{Isomorphisms between nilpotent quotients}
\label{sec:extend}

This section has two parts. Theorems~\ref{thm:triangle}
and~\ref{thm:trianglespaces} give a necessary and sufficient condition
for an isomorphism $\GGR{G_a}{n}\cong\GGR{G_b}{n}$ between nilpotent
quotients to extend one step further, to
$\GGR{G_a}{n+1}\cong\GGR{G_b}{n+1}$: the first works algebraically, at
the level of $1$-minimal models, the second topologically, in terms of
classifying maps into $K(N,1)$. Theorem~\ref{thm:coker-torsion} then
identifies the integral invariant underlying this condition: the
cokernel of the structural morphism in degree~$2$ recovers the torsion
of the next graded piece $\gr_{n+1}$ of the lower central series,
complementing the torsion-free part recovered by its kernel through
Theorem~\ref{thm:mnequalsquotient}.

\subsection{The triangle theorems}
\label{subsec:triangle}

Let $X_a$ and $X_b$ be path-connected $\Delta$-complexes with
fundamental groups $G_a = \pi_1(X_a)$ and $G_b = \pi_1(X_b)$,
and assume $H^1(X_a; R)$ and $H^1(X_b; R)$ are finitely generated.
We address the following question: given an isomorphism
$\GGR{G_a}{n} \cong \GGR{G_b}{n}$, under what conditions 
is there an isomorphism $\GGR{G_a}{n+1} \cong \GGR{G_b}{n+1}$?

The condition of Theorem~\ref{thm:trianglespaces}, phrased in terms of
classifying maps into $K(N,1)$, is deduced from the purely algebraic
Theorem~\ref{thm:triangle}, which works at the level of $1$-minimal
models, via the correspondence given by
Theorems~\ref{thm:mnequalsquotient} and~\ref{thm:strict-representability}
and Lemma~\ref{lem:correspondence}.

Given a morphism $f\colon A\to B$ of cochain algebras, 
we write $\im(H^i(f)) \subset H^i(B)$ for the image of the 
induced map on $i$th cohomology.

\begin{theorem}
\label{thm:triangle}
Let $A_a$ and $A_b$ be $R$-binomial $\cup_1$-dgas with
first cohomology groups finitely generated free $R$-modules.
Suppose $\mcm_{n-1}$ is the $(n-1)$th step in the $1$-minimal
model for both $A_a$ and $A_b$, and that
$\ell_b \colon \mcm_{n-1} \to A_b$ is a structural morphism.
Then the $n$th steps in the $1$-minimal models for $A_a$ and $A_b$
are isomorphic if and only if there is a structural morphism
$\ell_a \colon \mcm_{n-1} \to A_a$ and 
an isomorphism of graded $R$-algebras
$g^{\le 2} \colon \im\bigl(H^{\le 2}(\ell_a)\bigr) \to
\im\bigl(H^{\le 2}(\ell_b)\bigr)$
such that the following diagram commutes:
\begin{equation}
\label{dgm:triangle-2}
\begin{gathered}
\begin{tikzcd}[row sep=1.9pc]
   & H^2(\mcm_{n-1}) \\
 \im\bigl(H^2(\ell_a)\bigr)\ar[rr, "g^2", "\cong" ']
        \ar[ur,<-, "H^2(\ell_a)" ]
        & & \im\bigl(H^2(\ell_b) \bigr)
        \ar[ul, <-,"H^2(\ell_b)" ']
\end{tikzcd}
\end{gathered}
\end{equation}
\end{theorem}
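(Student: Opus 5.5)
The argument reduces everything to Lemma~\ref{lem:classify}\ref{cl:iii}: for any structural morphism $\ell\colon\mcm_{n-1}\to A$, the $n$th step $\mcm_n$ of the $1$-minimal model of $A$ is the Hirsch extension of $\mcm_{n-1}$ with $h$-invariant the inclusion $\ker H^2(\ell)\hookrightarrow H^2(\mcm_{n-1})$. Both directions will therefore be statements about kernels of degree-$2$ maps out of the common object $H^2(\mcm_{n-1})$. Combined with Lemma~\ref{lem:Hirsch-iso} and the uniqueness statement of Lemma~\ref{lem:classify}\ref{cl:ii}, the question becomes whether we can choose $\ell_a$ so that $\ker H^2(\ell_a)=\ker H^2(\ell_b)$, compatibly with the images.

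\emph{Sufficiency.} Suppose $\ell_a$ and $g^{\le 2}$ are given with $g^2\circ H^2(\ell_a)=H^2(\ell_b)$. Since $g^2$ is injective, $\ker H^2(\ell_a)=\ker H^2(\ell_b)$ as submodules of $H^2(\mcm_{n-1})$. By Lemma~\ref{lem:classify}\ref{cl:iii}, applied to $A_a$ with $\ell_a$ and to $A_b$ with $\ell_b$, the $n$th steps of both models are Hirsch extensions of $\mcm_{n-1}$ with the same $h$-invariant. By Lemma~\ref{lem:Hirsch-iso}, with $e=\id$, they are isomorphic. Only the degree-$2$ component of $g^{\le 2}$ is used here.

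\emph{Necessity.} Suppose the $n$th steps are isomorphic, say both equal to $\mcm_n$ with structural morphisms $\rho^a_n\colon\mcm_n\to A_a$ and $\rho^b_n\colon\mcm_n\to A_b$. I plan the following steps.
\begin{enumerate}
\item Fix the restriction $\rho^a_{n-1}$, which is structural by Lemma~\ref{lem:classify}\ref{cl:i}.
\item Choose an automorphism $f$ of $\mcm_n$, restricting to $f_{n-1}$ on $\mcm_{n-1}$, so that the Hirsch generators $\bbX_n$ are matched. By condition \eqref{min2} of Definition~\ref{def:1-min-model}, $\ker H^2(\rho^b_{n-1})$ is the span of $\{[d_nx] : x\in\bbX_n\}$, where $\rho^b_{n-1}$ is the restriction. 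By Lemma~\ref{lem:classify}\ref{cl:ii}, $\ell_b\simeq\rho^b_{n-1}\circ f'$ for some automorphism $f'$, so $\ker H^2(\ell_b)=H^2(f')^{-1}\ker H^2(\rho^b_{n-1})$.
\item Set $\ell_a=\rho^a_{n-1}\circ\phi$ for a suitable automorphism $\phi$ of $\mcm_{n-1}$ (built from $f'$ and the identification of the two $n$th steps) with $H^2(\phi)^{-1}\ker H^2(\rho^a_{n-1})=\ker H^2(\ell_b)$. A structural morphism precomposed with an automorphism is still structural, since $H^1$ is unchanged up to isomorphism. Then $\ker H^2(\ell_a)=\ker H^2(\ell_b)=:K$.
\item Define $g^2$ as the composite $\im H^2(\ell_a)\cong H^2(\mcm_{n-1})/K\cong\im H^2(\ell_b)$. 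This makes \eqref{dgm:triangle-2} commute by construction.
\item In degrees $0$ and $1$, $H^i(\ell_a)$ and $H^i(\ell_b)$ are isomorphisms, because $\mcm_{n-1}\to\mcm_1$ is an isomorphism on $H^{\le 1}$ for Hirsch extensions. Set $g^i=H^i(\ell_b)\circ H^i(\ell_a)^{-1}$ for $i=0,1$.
\item Multiplicativity: all three components are induced from the single ring $H^{\le 2}(\mcm_{n-1})$ through the surjections $H^{\le2}(\mcm_{n-1})\to\im H^{\le 2}(\ell_\bullet)$. Cup products of degree-$1$ classes in the images lift to $H^1(\mcm_{n-1})$, so $g^1(u)g^1(v)=g^2(uv)$. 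Hence $g^{\le2}$ is a ring isomorphism.
\end{enumerate}

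The main obstacle is step~3: arranging $\ker H^2(\ell_a)=\ker H^2(\ell_b)$ exactly, rather than only up to an abstract isomorphism. An isomorphism of $n$th steps need not restrict to the identity on $\mcm_{n-1}$. So one must show that any isomorphism of Hirsch extensions of $\mcm_{n-1}$ restricts to an automorphism $\phi$ of $\mcm_{n-1}$ carrying one $h$-invariant submodule onto the other. This should follow from the weight-filtration argument in the proof of Lemma~\ref{lem:Hirsch-iso}, and I would then absorb $\phi$ into $\ell_a$.
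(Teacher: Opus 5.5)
Your sufficiency argument is the paper's. For necessity you are more careful than the paper. The paper takes whatever structural $\ell_a$ Lemma~\ref{lem:classify} supplies and simply asserts that $\mcm_n^a\cong\mcm_n^b$ yields $e$ with $i_b\circ e=i_a$, i.e.\ $K_a=K_b$. That tacitly treats the isomorphism as the identity on $\mcm_{n-1}$ and ignores the choice of $\ell_a$. For an arbitrary $\ell_a$ the conclusion can fail: take $A_a=A_b$ and $\ell_a=\ell_b\circ\phi$ with $H^2(\phi)(K_b)\neq K_b$. Meanwhile Theorem~\ref{thm:trianglespaces} applies the result to an abstract isomorphism. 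Absorbing an automorphism into $\ell_a$ is the right repair. Your steps 3--6 are correct once you have an automorphism $\theta$ of $\mcm_{n-1}$ with $H^2(\theta)(K^a)=K^b$, where $K^\bullet=\ker H^2(\rho^\bullet_{n-1})$. Then $\ell_a=\rho^a_{n-1}\circ\theta^{-1}\circ f'$ is structural with $\ker H^2(\ell_a)=\ker H^2(\ell_b)$.

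The gap is exactly that sublemma. As you phrase it (any isomorphism of Hirsch extensions of $\mcm_{n-1}$ restricts to $\mcm_{n-1}$) it is false. The weight-filtration argument of Lemma~\ref{lem:Hirsch-iso} cannot supply it: that argument shows a filtration-preserving map is an isomorphism, but nothing forces an arbitrary isomorphism to preserve weight $0$. For instance, over $\F_2$ extend $\T_{\F_2}(x_1,x_2)$ by $x_{12}$ with $dx_{12}=x_1\otimes x_2$, then by $z$ with $dz=x_1\otimes x_1$. The involution fixing $x_1,z$ with $x_2\mapsto x_1+x_2$, $x_{12}\mapsto x_{12}+z$ is a dga automorphism that does not preserve $\T_{\F_2}(x_1,x_2,x_{12})$.

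What rescues the claim is minimality, i.e.\ condition~\eqref{min2} at every stage of both models.
\begin{itemize}
\item Since $\rho_{k-1}=\rho_k\circ j_{k-1}$, one has $\ker H^2(j_{k-1})=\ker H^2(\rho_{k-1})$, with basis $\{[dy]\}_{y\in\bbX_k}$. Also $Z^1=H^1$ of every stage is the span of $\bbX_1$.
\item Hence if $u\in\T^1_R(\bbX^m)$ has $du\in\T_R(\bbX^{m-1})$, then $u=v+\sum c_y y$ with $v\in\T^1_R(\bbX^{m-1})$ and $y\in\bbX_m$.
\item If in fact $du\in\T_R(\bbX^{k-1})$ with $k<m$, then $\sum c_y[dy]=[du]$ comes from a class of $H^2(\mcm_{k-1})$ killed by $H^2(\rho_{k-1})$. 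That class dies in $H^2(\mcm_k)$, so all $c_y=0$.
\item Inducting on $k$, starting from $F(\bbX_1)\subseteq Z^1$, any isomorphism $F\colon\mcm_n^a\to\mcm_n^b$ preserves $\T_R(\bbX^k)$ for $k<n$, and likewise $F^{-1}$. So $\theta=F|_{\mcm_{n-1}}$ is an automorphism.
\item Finally, $F\circ j^a=j^b\circ\theta$ together with $K^\bullet=\ker H^2(j^\bullet)$ gives $H^2(\theta)(K^a)=K^b$.
\end{itemize}
With this inserted your argument is complete, and it closes a step the paper leaves implicit.
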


\begin{proof}
Write $K_a = \ker H^2(\ell_a)$ and $K_b = \ker H^2(\ell_b)$,
and consider the two exact sequences
\begin{equation}
\label{dgm:bare}
\begin{tikzcd}[column sep=1.65pc]
0 \ar[r] & K_a \ar[r, "i_a"]
    & H^2(\mcm_{n-1}) \ar[rr, "H^2(\ell_a)"]
        \ar[d, "\id"']
    && \im\bigl(H^2(\ell_a)\bigr) \ar[r] & 0 \\
0 \ar[r] & K_b \ar[r, "i_b"]
    & H^2(\mcm_{n-1}) \ar[rr, "H^2(\ell_b)"]
    && \im\bigl(H^2(\ell_b)\bigr) \ar[r] & 0.
\end{tikzcd}
\end{equation}
By Lemma~\ref{lem:classify}, the $n$th step $\mcm_n^a$ in the
$1$-minimal model for $A_a$ is the Hirsch extension of $\mcm_{n-1}$
with $h$-invariant $i_a\colon K_a \hookrightarrow H^2(\mcm_{n-1})$,
and likewise $\mcm_n^b$ is the Hirsch extension with $h$-invariant
$i_b\colon K_b \hookrightarrow H^2(\mcm_{n-1})$.
Thus, two such Hirsch extensions are isomorphic (as extensions of
$\mcm_{n-1}$) if and only if there is an isomorphism
$e\colon K_a \isom K_b$ with $i_b \circ e = i_a$.

\medskip
\noindent$(\Leftarrow)$
Suppose there exists a structural morphism
$\ell_a\colon\mcm_{n-1}\to A_a$ and an isomorphism of graded $R$-algebras
$g^{\le 2}\colon \im\bigl(H^{\le 2}(\ell_a)\bigr)\to
\im\bigl(H^{\le 2}(\ell_b)\bigr)$ making \eqref{dgm:triangle-2} commute.
Since $H^1(\mcm_{n-1})$ maps isomorphically onto both images under
$H^1(\ell_a)$ and $H^1(\ell_b)$, the commutativity of the degree-$1$
part of \eqref{dgm:triangle-2} is automatic.
For degree $2$: the commutativity of \eqref{dgm:triangle-2} means
$H^2(\ell_b) = g^2 \circ H^2(\ell_a)$, so an element of
$H^2(\mcm_{n-1})$ lies in $K_a$ if and only if it lies in $K_b$.
Hence $K_a = K_b$ as submodules of $H^2(\mcm_{n-1})$. Taking
$e = \id_{K_a}$ gives $i_b \circ e = i_a$, so by 
Lemma~\ref{lem:Hirsch-iso}, $\mcm_n^a \cong \mcm_n^b$.

\medskip
\noindent$(\Rightarrow)$
Suppose the $n$th steps $\mcm_n^a$ and $\mcm_n^b$ are isomorphic.
By Lemma~\ref{lem:classify}, since $A_a$ has a $1$-minimal model
sharing $\mcm_{n-1}$ as its $(n-1)$th step, there exists a structural
morphism $\ell_a\colon \mcm_{n-1}\to A_a$.
By Lemma~\ref{lem:classify} again, this $\ell_a$ determines the
$n$th step $\mcm_n^a$ as the Hirsch extension of $\mcm_{n-1}$ with 
$h$-invariant $i_a\colon K_a\hookrightarrow H^2(\mcm_{n-1})$, and 
likewise $\ell_b$ determines $\mcm_n^b$ with $h$-invariant 
$i_b\colon K_b\hookrightarrow H^2(\mcm_{n-1})$.
Since $\mcm_n^a\cong\mcm_n^b$, there is an isomorphism
$e\colon K_a \isom K_b$ with $i_b\circ e = i_a$.
Define $g^2\colon \im H^2(\ell_a)\to \im H^2(\ell_b)$ by
$g^2(H^2(\ell_a)(x)) = H^2(\ell_b)(x)$ for $x\in H^2(\mcm_{n-1})$:
this is well-defined because $\ker H^2(\ell_a)=K_a=K_b=\ker H^2(\ell_b)$,
and it is an isomorphism with $g^2\circ H^2(\ell_a)=H^2(\ell_b)$,
so \eqref{dgm:triangle-2} commutes.
Setting $g^1 = \id$ on $H^1$ (both images coincide with
$H^1(\mcm_{n-1})$ via isomorphisms induced by $\ell_a$ and $\ell_b$),
the map $g^{\le 2}=(g^1,g^2)$ is an isomorphism of graded $R$-algebras
from $\im\bigl(H^{\le 2}(\ell_a)\bigr)$ to $\im\bigl(H^{\le 2}(\ell_b)\bigr)$.
\end{proof}

We now translate Theorem~\ref{thm:triangle} into the topological setting.

\begin{theorem}
\label{thm:trianglespaces}
With notation and assumptions as above, let $N$ be a nilpotent
group with $\Gamma_n^R(N)=1$ (equivalently, $N=\GGR{N}{n}$), and
suppose both $\GGR{G_a}{n}$ and $\GGR{G_b}{n}$ are isomorphic
to $N$.
Let $q_b \colon X_b \to K(N,1)$ be a classifying map. Then 
$\GGR{G_a}{n+1}$ and $\GGR{G_b}{n+1}$ are isomorphic
if and only if there is a classifying map 
$q_a \colon X_a \to K(N,1)$ and 
an isomorphism of graded $R$-algebras
$g^{\le 2} \colon \im\bigl(H^{\le2}(q_a^\#)\bigr) \isom 
\im\bigl(H^{\le 2}(q_b^\#)\bigr)$
such that the following diagram commutes.
\begin{equation}
\label{dgm:trianglespaces}
\begin{gathered}
\begin{tikzcd}[row sep=1.9pc]
   & H^2(K(N,1);R) \\
 		\im\bigl(H^2(q_a^\#)\bigr)    
 		\ar[rr, "g^2", "\cong" ']
 		\ar[ur,<-, "H^2(q_a^\#)" ]
 		& & \im\bigl( H^2(q_b^\#)\bigr)   
		\ar[ul, <-,"H^2(q_b^\#)" ']
\end{tikzcd}
\end{gathered}
\end{equation}
\end{theorem}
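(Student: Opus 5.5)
The plan is to deduce this from Theorem~\ref{thm:triangle}, applied with $A_a=C^*(X_a;R)$ and $A_b=C^*(X_b;R)$ and with $\mcm_{n-1}=\mcm(N)$, the $1$-minimal model of $C^*(K(N,1);R)$. Since $\Gamma_n^R(N)=1$, this model stabilizes at stage $n-1$, and $G(\mcm(N))=N$ by Remark~\ref{rem:GmcmN}.

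First, by Theorem~\ref{thm:mnequalsquotient}, the $k$th step of the $1$-minimal model of $C^*(X_a;R)$ has dual group $\GGR{G_a}{k+1}$, and similarly for $X_b$. Since $\GGR{G_a}{n}\cong N\cong\GGR{G_b}{n}$, I would argue that both models have $(n-1)$th step isomorphic to $\mcm(N)$. To see this, take $h_n\colon X_a\to K(\GGR{G_a}{n},1)\cong K(N,1)$, which is a classifying map. By Lemma~\ref{lem:correspondence} it corresponds to a structural morphism $\mcm(N)\to C^*(X_a;R)$. Lemma~\ref{lem:classify}\ref{cl:i} then shows that $\mcm(N)$, with its Hirsch filtration, is an $(n-1)$th step of a $1$-minimal model for $C^*(X_a;R)$. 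The same argument applies to $X_b$.

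Second, I would show that $\GGR{G_a}{n+1}\cong\GGR{G_b}{n+1}$ if and only if the $n$th steps $\mcm_n^a$ and $\mcm_n^b$ are isomorphic. For the forward direction, an isomorphism of nilpotent groups yields an isomorphism of their $1$-minimal models, again via Theorem~\ref{thm:mnequalsquotient} and Lemma~\ref{lem:classify} applied to $C^*(K(\GGR{G}{n+1},1);R)$. For the converse, an isomorphism $\mcm_n^a\cong\mcm_n^b$ induces an isomorphism of dual groups by Lemma~\ref{lem:dual-group}, and Theorem~\ref{thm:mnequalsquotient} identifies $G(\mcm_n^a)$ with $\GGR{G_a}{n+1}$ and $G(\mcm_n^b)$ with $\GGR{G_b}{n+1}$.

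Third, I would translate the hypotheses of Theorem~\ref{thm:triangle}. Set $\ell_b=q_b^\#\circ\psi$; by Lemma~\ref{lem:correspondence} this is a structural morphism. Classifying maps $q_a$ correspond bijectively to structural morphisms $\ell_a=q_a^\#\circ\psi$. Because $\psi\colon\mcm(N)\to C^*(K(N,1);R)$ is a $1$-quasi-isomorphism, and in fact a cohomology isomorphism in the nilpotent case by Lemma~\ref{lem:Hirsch-central}\eqref{pt:four}, we have $H^{\le2}(\ell_a)=H^{\le2}(q_a^\#)\circ H^{\le2}(\psi)$ with $H^{\le 2}(\psi)$ an isomorphism. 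Hence the images coincide,
\[
\im H^{\le2}(\ell_a)=\im H^{\le2}(q_a^\#),
\]
and similarly for $b$. Moreover, triangle \eqref{dgm:triangle-2} commutes if and only if \eqref{dgm:trianglespaces} commutes, by precomposing or cancelling the isomorphism $H^2(\psi)$. Substituting these identifications into Theorem~\ref{thm:triangle} gives the claim.

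I expect the main obstacle to be the first step: showing that the $(n-1)$th steps of the two models can be identified with the same $\mcm(N)$ compatibly with the chosen classifying maps. This requires checking that the Hirsch filtration of $\mcm(N)$ matches the model filtrations, which Lemma~\ref{lem:classify} supplies once structural morphisms exist. A second point needing care is whether $\psi$ is a genuine isomorphism on $H^2$ rather than only a monomorphism. I would first cite Lemma~\ref{lem:Hirsch-central}\eqref{pt:four} for the finite colimit $\mcm(N)=\mcm_{n-1}$.
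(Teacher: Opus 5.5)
Your overall route is the paper's: identify $\mcm(N)$ with the common $(n-1)$th step, use Theorem~\ref{thm:mnequalsquotient} to pass between $\GGR{G_a}{n+1}\cong\GGR{G_b}{n+1}$ and an isomorphism of $n$th steps, trade classifying maps for structural morphisms $\ell=q^\#\circ\psi$ via Lemma~\ref{lem:correspondence}, and use that $H^{*}(\psi)$ is an isomorphism to match images and triangles before invoking Theorem~\ref{thm:triangle}. Your Step~3 is exactly the paper's argument.

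\textbf{The gap is in Step~1, at the point you flagged.} Lemma~\ref{lem:classify}\ref{cl:i} cannot show that $\mcm(N)$ is an $(n-1)$th step for $C^*(X_a;R)$.
\begin{itemize}
\item Its hypothesis is that $(\mcm_j,\rho_j)$ is already a $1$-minimal model for $A$. Definition~\ref{def:structural-morphism} likewise presupposes that $\mcm_n$ is an $n$th step for $A$. The lemma only lets you replace one structural morphism by another on a model you already have, so your argument is circular.
\item Reading ``structural morphism'' in the looser sense of Lemma~\ref{lem:correspondence} (an $H^1$-isomorphism on $\mcm_1$) does not help, because such a map only controls degree~$1$.
\end{itemize}
Here is a counterexample to the loose reading. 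Take $R=\Z$, $n=3$, $N$ the group of Example~\ref{ex:heisenberg-euler-k} (with $[a,b]=c^k$, $k\ge 2$), and $X_a=K(\Heis(\Z),1)$, where $\Heis(\Z)=\langle x,y,z\mid [x,z]=[y,z]=1,\ [x,y]=z\rangle$. The homomorphism $x\mapsto a$, $y\mapsto b$, $z\mapsto c^k$ gives such a map. Yet $\mcm(N)$ cannot be a second step for $C^*(X_a;\Z)$: its dual group is $N$, which is not isomorphic to $\Heis(\Z)$ (compare abelianizations).

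What makes Step~1 true is that $h_n$ induces an isomorphism $\GGR{G_a}{n}\cong N$. This enters only through Theorem~\ref{thm:mnequalsquotient}, and the repair goes as follows.
\begin{itemize}
\item Theorem~\ref{thm:mnequalsquotient} gives $G(\mcm^a_{n-1})\cong\GGR{G_a}{n}\cong N$.
\item By Theorem~\ref{thm:pronilpotent-model}, the finite colimit $\mcm^a_{n-1}$ is then a $1$-minimal model for $C^*(K(N,1);R)$.
\item Theorem~\ref{thm:existence-uniqueness-1min} identifies it with $\mcm(N)$.
\end{itemize}
This is what the paper's one-line appeal to Theorem~\ref{thm:mnequalsquotient} amounts to. Only after this identification are the maps supplied by Lemma~\ref{lem:correspondence} structural morphisms in the sense required by Theorem~\ref{thm:triangle}.

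The forward half of your Step~2 needs the same repair. The steps $\mcm^a_n$ and $\mcm^b_n$ are $1$-minimal models of the isomorphic cochain algebras of $K(\GGR{G_a}{n+1},1)$ and $K(\GGR{G_b}{n+1},1)$, hence isomorphic by uniqueness. Lemma~\ref{lem:classify} plays no role there.
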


\begin{proof}
Let $\mcm(N)$ be the $1$-minimal model for $C^\ast(K(N,1);R)$.
From Theorem \ref{thm:mnequalsquotient} it follows that 
$\mcm(N) = \mcm_{n-1}(C^\ast(X_a;R))= \mcm_{n-1}(C^\ast(X_b;R))$.
Since $q_b$ is a classifying map, Lemma~\ref{lem:correspondence}
gives that $\ell_b := q_b^\#\circ \psi$ is a structural morphism
$\mcm(N)\to C^*(X_b;R)$.

\medskip
\noindent$(\Rightarrow)$
Suppose $\GGR{G_a}{n+1} \cong \GGR{G_b}{n+1}$. 
Then, by Theorem \ref{thm:mnequalsquotient}, the $n$th steps in the 
$1$-minimal models for $C^\ast(X_a;R)$ and $C^\ast(X_b;R)$
are isomorphic. 
Thus, by Theorem \ref{thm:triangle} there is a structural
morphism $\ell_a \colon \mcm(N) \to C^\ast(X_a;R)$ and an isomorphism
of graded $R$-algebras $g^{\le 2} \colon \im\bigl( H^{\le 2}(\ell_a)\bigr) \to
\im\bigl( H^{\le 2}(q_b^\# \circ \psi)\bigr)$ such that 
\begin{equation}
\label{eq:g2H2}
 g^2 \circ H^2(\ell_a) = H^2(q_b^\# \circ \psi).
 \end{equation}
By Theorem \ref{thm:strict-representability} and 
Lemma \ref{lem:correspondence}, it then follows
that there is a classifying map $r(\ell_a) \colon X_a \to K(N,1)$
with $|r(\ell_a)|^\# \circ \psi = \ell_a$. 
From equation \eqref{eq:g2H2} it then follows that the diagram
\eqref{dgm:trianglespaces} commutes with $q_a = r(\ell_a)$.

\medskip
\noindent$(\Leftarrow)$
Suppose there is a classifying map $q_a \colon X_a \to K(N,1)$
and an isomorphism of graded $R$-algebras $g^{\le 2}$ as in the
statement, making diagram~\eqref{dgm:trianglespaces} commute.
Then since $\psi \colon \mcm(N) \to K(N,1)$ induces an isomorphism
on cohomology, it follows that the diagram
\begin{equation}
\label{dgm:trianglebinomials}
\begin{gathered}
\begin{tikzcd}[row sep=1.9pc]
   & H^2(\mcm(N)) \\
 		\im\bigl(H^2(q_a^\# \circ \psi)\bigr)    
 		\ar[rr, "g^2", "\cong" ']
 		\ar[ur,<-, "H^2(q_a^\#\circ \psi)" ]
 		& & \im\bigl( H^2(q_b^\#\circ \psi)\bigr)   
		\ar[ul, <-,"H^2(q_b^\#\circ \psi)" ']
\end{tikzcd}
\end{gathered}
\end{equation}
commutes, and since 
$\mcm(N) = \mcm_{n-1}(C^\ast(X_a;R))=\mcm_{n-1}(C^\ast(X_b;R))$
it follows from Theorem \ref{thm:triangle} that the $n$th steps
in the $1$-minimal models for $C^\ast(X_a;R)$ and $C^\ast(X_b;R)$
are isomorphic.
From Theorem \ref{thm:mnequalsquotient} it follows that 
$\GGR{G_a}{n+1} \isom \GGR{G_b}{n+1}$ and the 
proof is complete. 
\end{proof}

\begin{remark}
\label{rem:triangle-spaces}
Theorem~\ref{thm:trianglespaces} applies to all pairs of connected
spaces $X_a$ and $X_b$ with finitely generated first cohomology groups
with coefficients in $R$. In particular, it contains
\cite[Thm.~6.5]{Porter-Suciu-2020} as the special case in which the
classifying maps
$f_a \colon X_a \to K(\GGR{G_a}{2},1)$ and
$f_b \colon X_b \to K(\GGR{G_b}{2},1)$ induce epimorphisms
$H^2(f_a)$ and $H^2(f_b)$.
Without that surjectivity hypothesis,
\cite[Thm.~6.5]{Porter-Suciu-2020} does not apply, whereas
Theorem~\ref{thm:trianglespaces} still does. A natural class for which
the surjectivity fails is provided by complements of links in $S^3$
with all linking numbers zero, since the cup product on $H^1$
vanishes there and the classifying map's image in $H^2$ collapses to
zero. The generalized Borromean link complements analyzed in
\S\ref{subsec:borromean} are concrete instances of this phenomenon; a
related $2$-step-equivalence distinction governed by the cokernel
invariant $\kappa_2$ was previously obtained in
\cite[\S12, Prop.~12.5]{Porter-Suciu-2023}.
\end{remark}

\begin{remark}
Note that the isomorphisms $g^{\le 2}$ in
Theorems~\ref{thm:triangle} and~\ref{thm:trianglespaces} are required
only on a subalgebra of the second cohomology, not on all of it.
The family in \S\ref{subsec:borromean} shows why this matters: there
the hypotheses of these theorems hold---so the third steps of the
$1$-minimal models are isomorphic---yet the isomorphism does not
extend to all of the second cohomology. Thus, were the condition
strengthened to require that $g^{\le 2}$ extend to all of the second
cohomology, the conclusion could fail.
\end{remark}


\subsection{The cokernel as a torsion invariant}
\label{subsec:coker-torsion}

The two halves of the structural morphism in degree~$2$ carry
complementary information about the $(n{+}1)$st graded piece of the
lower central series of $G=\pi_1(X)$: its \emph{kernel} records the
torsion-free part, identified by 
Theorem~\ref{thm:mnequalsquotient}, while the
torsion in its \emph{cokernel} records the torsion of $\gr_{n+1}(G)$. 
To make this precise we recall
from \cite[\S12]{Porter-Suciu-2023} the isomorphism invariant
\[
\kappa_n(X)=\Tors\bigl(\coker H^2(\rho_n)\bigr),
\] 
a finite abelian group that is independent of the chosen $1$-minimal 
model and depends only on the isomorphism type of $\pi_1(X)$.

\begin{theorem}
\label{thm:coker-torsion}
Let $X$ be a connected $\Delta$-complex with $H^1(X;\Z)$ and $H^2(X;\Z)$
finitely generated, let $G=\pi_1(X)$, and let
$\rho_n\colon \mcm_n\to C^\ast(X;\Z)$ be a structural morphism for the
integral $1$-minimal model. Suppose $\gr_i(G)$ is torsion-free for
$1\le i\le n$, and -- for parts~\ref{itm:ct-coker} and~\ref{itm:ct-split} --
that $H_2(N_n;\Z)$ is torsion-free, where $N_n=G/\Gamma_{n+1}(G)$. Then:
\begin{enumerate}[label=\textup{(\alph*)}, itemsep=2pt]
\item\label{itm:ct-ker}
$\ker H^2(\rho_n)\cong \gr_{n+1}^0(G)$, a free abelian group whose rank
is that of $\gr_{n+1}(G)$;
\item\label{itm:ct-coker}
$\kappa_n(X)=\Tors\bigl(\coker H^2(\rho_n)\bigr)\cong
   \Tors\bigl(\gr_{n+1}(G)\bigr)$;
\item\label{itm:ct-split}
$\gr_{n+1}(G)\cong \gr_{n+1}^0(G)\oplus \kappa_n(X)$.
\end{enumerate}
\end{theorem}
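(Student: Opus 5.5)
Since $\gr_i(G)$ is torsion-free for $i\le n$, Remark~\ref{rmk:grequal} gives $\Gamma_j(G)=\Gamma_j^0(G)$ for $j\le n+1$, so $N_n=G/\Gamma_{n+1}(G)=\GGR{G}{n+1}$. By Theorem~\ref{thm:mnequalsquotient} and Lemma~\ref{lem:classify}, I may replace $\rho_n$ by $\wht\rho_n=h_{n+1}^\sharp\circ g_n^\sharp\circ\psi_n$ up to the automorphism $f_n$ and a homotopy. This changes neither the kernel nor the cokernel of $H^2$ up to isomorphism. Then $H^2(\rho_n)$ is identified with $H^2(h_{n+1})\colon H^2(N_n;\Z)\to H^2(X;\Z)$, where $h_{n+1}\colon X\to K(N_n,1)$ classifies $G\to N_n$.

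\emph{Part (a).} This is already contained in the ladder diagram from the proof of Theorem~\ref{thm:mnequalsquotient}: $\ker H^2(\rho_n)\cong H^1(\gr^0_{n+1}(G);\Z)=\Hom(\gr^0_{n+1}(G),\Z)$, and this is free of the same rank as $\gr^0_{n+1}(G)$. Since $\Gamma_{n+1}=\Gamma^0_{n+1}$, the group $\gr^0_{n+1}(G)$ is $\gr_{n+1}(G)$ modulo its torsion (the torsion-free Stallings step kills exactly the torsion of $\Gamma_{n+1}/\Gamma_{n+2}$). Hence the ranks agree, and the dual is noncanonically isomorphic to $\gr^0_{n+1}(G)$.

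\emph{Part (b).} I would use Stallings' five-term sequence for $1\to\Gamma_{n+1}\to G\to N_n\to1$ in integral homology:
\[
H_2(G)\to H_2(N_n)\to \Gamma_{n+1}/[G,\Gamma_{n+1}]=\gr_{n+1}(G)\to H_1(G)\to H_1(N_n)\to 0 .
\]
The last map is an isomorphism because $n\ge1$, so $\gr_{n+1}(G)\cong\coker(H_2(G)\to H_2(N_n))$. Since $X\to K(G,1)$ is surjective on $H_2$ (Hopf), this equals $C:=\coker(H_2(h_{n+1})\colon H_2(X)\to H_2(N_n))$. Next I would dualize with the universal coefficient theorem. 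Because $H_1(N_n)\cong H_1(G)$ is free (as $\gr_1$ is torsion-free) and $H_2(N_n)$ is torsion-free by hypothesis, $H^2(N_n;\Z)=\Hom(H_2(N_n),\Z)$. On the $X$ side, $H^2(X)\supseteq\Ext(H_1(X),\Z)=0$, again since $H_1(X)=H_1(G)$ is free, so $H^2(X)=\Hom(H_2(X),\Z)$. Thus $H^2(\rho_n)$ is the $\Z$-dual $\phi^*$ of $\phi=H_2(h_{n+1})\colon H_2(X)\to H_2(N_n)$. Here $H_2(N_n)$ is finitely generated free ($N_n$ is f.g.\ nilpotent), and I may restrict to the f.g.\ image of $\phi$ if $H_2(X)$ is not f.g.

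The main step, and the expected obstacle, is the linear-algebra lemma: for $\phi\colon P\to F$ with $F$ f.g.\ free, $\Tors\coker(\phi^*)\cong\Tors\coker(\phi)$. I would prove it via Smith normal form of the image $\im\phi\subseteq F$. Choose a basis $e_i$ of $F$ with $\im\phi$ spanned by $d_ie_i$ for $i\le r$. Then $\coker\phi\cong\bigoplus_i\Z/d_i\oplus\Z^{m-r}$. On the dual side, $\phi^*$ factors through $\Hom(\im\phi,\Z)$, and this into $\Hom(P,\Z)$ is injective with torsion-free cokernel? That last point is not automatic: $\Hom(P,\Z)\to\Hom(\im\phi,\Z)$ need not be onto when $\ker\phi$ is not a summand. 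I would use instead that $H^2(X)$ is f.g.\ (a hypothesis) and compute $\Tors\coker$ of the map $\Hom(F,\Z)\to\Hom(P,\Z)$ through $\Ext$. The sequence $0\to\im\phi\to F\to C\to0$ gives $\coker(F^*\to(\im\phi)^*)\cong\Ext(C,\Z)\cong\Tors C$. The remaining cokernel of $(\im\phi)^*\to P^*$ is torsion-free, being a subgroup of $(\ker\phi)^*$. An extension argument then shows the torsion of $\coker\phi^*$ is $\Tors C$, which equals $\Tors\gr_{n+1}(G)$; this gives~(b).

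\emph{Part (c).} This follows because $\gr_{n+1}(G)$ is a f.g.\ abelian group (a quotient of $H_2(N_n)$), hence the direct sum of its torsion and its free quotient $\gr^0_{n+1}(G)$; combining with (b) gives the splitting.
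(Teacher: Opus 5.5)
Your proof is correct and follows essentially the same route as the paper: identify $H^2(\rho_n)$ with $H^2(h_{n+1})$, use Stallings' sequence $H_2(X)\to H_2(N_n)\to \gr_{n+1}(G)\to 0$, and dualize using that $H_2(N_n;\Z)$ is free. Your explicit universal-coefficient step (via $\mathrm{Ext}(H_1(G),\Z)=0$) and the exact sequence $0\to \mathrm{Ext}(\coker\phi,\Z)\to\coker\phi^*\to(\text{torsion-free})\to 0$ make rigorous what the paper compresses into ``transposition preserves the torsion of a cokernel''. Incidentally, your worry about $\ker\phi$ not being a summand is unfounded: $\im\phi\subseteq H_2(N_n)$ is free, hence $H_2(X)\cong\ker\phi\oplus\im\phi$.
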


\begin{proof}
By the hypothesis and Remark~\ref{rmk:grequal}, $\Gamma_j(G)=\Gamma_j^0(G)$
for $1\le j\le n+1$, so $\gr_i(G)=\gr_i^0(G)$ for $i\le n$ and the two
series first differ at level $n+2$.

\ref{itm:ct-ker} By Lemma~\ref{lem:classify}\ref{cl:iii}, $\mcm_{n+1}$ is the
Hirsch extension of $\mcm_n$ with $h$-invariant
$\ker H^2(\rho_n)\hookrightarrow H^2(\mcm_n)$; its adjoined degree-$1$
generators form a basis of $\ker H^2(\rho_n)$, which is therefore free.
By Theorem~\ref{thm:mnequalsquotient}
these generators are identified with a basis of
$\Gamma_{n+1}^0(G)/\Gamma_{n+2}^0(G)=\gr_{n+1}^0(G)$, proving~\ref{itm:ct-ker}.

\ref{itm:ct-coker} 
As noted in \cite[(2.7)]{Porter-Suciu-2020} it follows from
\cite{Stallings} (see also \cite{Dwyer-1975}) that there is an exact sequence
\begin{equation}
\label{ex:Stallings}
\begin{tikzcd}[column sep=32pt]
H_2(X;\Z) \ar[r, "(h_{n+1})_\ast"]
		& H_2(G/\Gamma_{n+1}(G);\Z) \ar[r, "\chi_{n+1}"]
		& \gr_{n+1}(G) \ar[r]
		& 0
\end{tikzcd}
\end{equation}
where $h_{n+1}$ denotes a classifying map
from $X$ to $K(G/\Gamma_{n+1}(G);1)$ in the Postnikov tower
for the lower central series quotients of $\pi_1(X)$.

Since transposition preserves the
torsion of a cokernel,
\begin{equation}
\label{eq:ct-transpose}
   \Tors\bigl(\coker H^2(h_{n+1})\bigr) \cong 
   \Tors\bigl(\coker(\,\overline{H_2(h_{n+1})}\colon
   \overline{H_2(X)}\longrightarrow 
   \overline{H_2(G/\Gamma_{n+1}(G))}\,)\bigr),
\end{equation}
where overlines denote free quotients.

From the assumption that $H_2(G/\Gamma_{n+1};\Z)$
is torsion free, it follows that
\begin{equation}
\label{eq:ct-transpose-tf}
\begin{split}
   \Tors\bigl(\coker(\,\overline{H_2(h_{n+1})}\colon &
   \overline{H_2(X)}\longrightarrow 
   \overline{H_2(G/\Gamma_{n+1}(G))}\,)\bigr)\\
&   = 
   \Tors\bigl(\coker(\,{H_2(h_{n+1})}\colon
   		{H_2(X)}\longrightarrow 
   		{H_2(G/\Gamma_{n+1}(G))}\,)\bigr)
\end{split}   		
\end{equation}
and hence
\begin{equation}
\label{eq:coker}
\Tors \bigl( \coker H^2(h_{n+1}) \bigr)
=  \Tors\bigl(\coker(\,{H_2(h_{n+1})}\colon
   		{H_2(X)}\longrightarrow 
   		{H_2(G/\Gamma_{n+1}(G))}\,)\bigr)
\end{equation}
From the exact sequence in \eqref{ex:Stallings},
\[
\coker(\,{H_2(h_{n+1})}\colon
   		{H_2(X)}\longrightarrow 
   		{H_2(G/\Gamma_{n+1}(G))}\,)
   		= \gr_{n+1}(G)
\]		
and it follows that
\begin{equation}
\label{eq:tgr}
\Tors(\coker H^2(h_{n+1})) = \Tors(\gr_{n+1}(G))
\end{equation}
From the assumption that $\gr_i(G)$ is torsion-free for
$1 \le i \le n$ it follows, as noted in Remark \ref{rmk:grequal},
 that
$\gr_i(G) = \gr_i^0(G)$ for $1 \le i \le n$. Hence,
$\Gamma_j(G) = \Gamma_j^0(G)$ for $1 \le j \le n+1$, and 
\begin{equation}
\label{eq:qtequal}
G/\Gamma_{n+1} = G/\Gamma_{n+1}^{0}(G)
\end{equation}
By equation \eqref{eq:qtequal}, the map $h_{n+1}$
is a classifying map in the Postnikov tower of the quotients
of $G$ by the subgroups $\Gamma_i^0(G)$.
Since $G(\mcm_n) = G/\Gamma_{n+1}^0(G)$, we can assume
by Theorem \ref{thm:strict-representability} and 
Lemma \ref{lem:correspondence} that
the map $h_{n+1} \colon X \to K(G/\Gamma_{n+1}^0(G),1)$
corresponds to $\rho_n$. In particular,
the following diagram commutes.
\[
\begin{tikzcd} [row sep=24pt, column sep=36pt]
		& \mcm_n \ar[dl, "\rho_n" '] \ar[d, "\psi" ]\\
 C^{\ast}(X;\Z)  
		&C^\ast \bigl(  K(G/\Gamma_{n+1}^0(G),1) ;\Z \bigr) 
			\ar[l,  "(h_{n+1})^{\#}" ] 
\end{tikzcd}
\]
By part \eqref{pt:four} of Lemma \ref{lem:Hirsch-central}, 
$\psi$ is a quasi-isomorphism. 
Hence, $H^2(\psi)$ is an isomorphism, and we have that
\[
\coker H^2(h_{n+1}) = \coker H^2(\rho_n)
\]
From equation \eqref{eq:tgr} it follows that
\begin{equation}
\kappa_n(X)= \Tors(\coker H^2(\rho_n)) = \Tors( \gr_{n+1}(G))
\end{equation}
and the proof of \ref{itm:ct-coker} is complete.

\ref{itm:ct-split} follows from~\ref{itm:ct-ker}
and the splitting $\gr_{n+1}(G)=\gr_{n+1}^0(G)\oplus\Tors\gr_{n+1}(G)$ 
of a finitely generated abelian group.
\end{proof}

\begin{remark}
\label{rem:coker-torsion-hyp}
The torsion-freeness of $H_2(N_n;\Z)$ is used in part~\ref{itm:ct-coker}
to justify the passage to free quotients in
\eqref{eq:ct-transpose}. Without it, the same argument identifies
\[
\Tors\bigl(\coker H^2(\rho_n)\bigr) \cong 
\Tors\gr_{n+1}(G)\big/\partial\bigl(\Tors H_2(N_n;\Z)\bigr),
\]
where $\partial\colon H_2(N_n)\to\gr_{n+1}(G)$ is the transgression of the
central extension of $G/\Gamma_{n+1}(G)$ by
$\gr_{n+1}(G)$; equality with $\Tors\gr_{n+1}(G)$
holds precisely when $\partial$ annihilates $\Tors H_2(N_n;\Z)$. The
hypothesis is automatic when $\gr_{\le n}(G)$ is a free Lie algebra: then
$N_n$ is free nilpotent, and in this case
$H_2(N_n;\Z)\cong\gr_{n+1}$ is a free abelian
group. In particular it holds for the generalized Borromean link
complements of \S\ref{subsec:borromean}, where $N_2=F_3/\Gamma_3$ and
$H_2(N_2;\Z)\cong\Z^8$.

Note that if $\gr_i(G)$ is torsion-free for $1 \le i\le n$, then in the exact sequence \eqref{ex:Stallings}
the quotient
$G/\Gamma_{n+1}(G)$ can be replaced by $G/\Gamma_{n+1}^0(G)$.
Thus, $H_2(G/\Gamma_{n+1}^0(G);\Z)$, the map
$(h_{n+1})_\ast$, and hence $\gr_{n+1}(G)$ can be computed
using the $n$th step, $\mcm_n$, 
in the $1$-minimal model for $X$ and a structural morphism from
$\mcm_n$ to $C^\ast(X;\Z)$.
\end{remark}

\begin{remark}
\label{rem:lattice-vs-rational}
Take $R = \Z$. The criterion of Theorem~\ref{thm:trianglespaces} is
integral, and where the kernels $\ker H^2(\rho_n)$ or the algebra
structure on the images differ it can be strictly finer than the
rational data. It is, however, a criterion about the \emph{torsion-free}
graded data (the kernels and images of the structural morphisms) and
is insensitive to the torsion of their cokernels.

The generalized Borromean link complements $Y(k)$ of
\S\ref{subsec:borromean} illustrate this complementarity. There the
kernels $\ker H^2(\rho_2{(k)})$, recording the vanishing pattern of the
triple Massey products, are independent of $k$; in low degrees the
rational $1$-minimal models agree and the torsion-free graded pieces
$\gr_i^0(\pi_1(Y(k)))$ have $k$-independent ranks. The integer Milnor invariants separating the
links is therefore not seen by the kernel/image data of
Theorem~\ref{thm:trianglespaces}, but by the torsion of the cokernel
$\coker H^2(\rho_2{(k)}) = (\Z/k\Z)^2$ --- equivalently, by torsion in the
ordinary lower central series, recorded by the invariant $\kappa_2$ of
\cite[\S12]{Porter-Suciu-2023}.
\end{remark}


\section{Examples}
\label{sec:examples}

We illustrate the results of the previous sections with two examples, 
in each of which the spaces involved have pairwise isomorphic cohomology 
rings but distinct nilpotent quotients. The first 
(\S\ref{subsec:ex-mod2}) is a pair of $2$-complexes with two-generator 
fundamental groups; working over $\F_2$, we distinguish them by 
Theorem~\ref{thm:trianglespaces}. The second (\S\ref{subsec:borromean}) 
is an infinite family of link complements with three-generator 
fundamental groups, indexed by an integer $k \ge 1$; working over $\Z$, 
we distinguish them by torsion in the integral lower central series, 
detected through Theorem~\ref{thm:mnequalsquotient} and the cokernel of 
the structural morphism.

\subsection{A pair of $2$-complexes}
\label{subsec:ex-mod2}

Let $X_0$ be the presentation $2$-complex for the group
\[
G_0 = \langle g_1, g_2 \mid [g_1, g_2^2] \rangle,
\]
and let $X_1$ be the presentation $2$-complex for the group
\[
G_1 = \langle g_1, g_2 \mid [g_1, g_2 g_1 g_2] \rangle.
\]
Both spaces have $H^1(X_i;\Z) = \Z^2$, with basis $\{u_1, u_2\}$
dual to the generators $\{g_1, g_2\}$.
The cup products satisfy $u_1\cup u_2 = 2w$ for a generator
$w\in H^2(X_i;\Z)$, so the cohomology rings $H^\ast(X_0;\Z)$
and $H^\ast(X_1;\Z)$ are isomorphic. Throughout, we write 
$G_i = \pi_1(X_i)$; since $X_i$ is the presentation $2$-complex 
of $G_i$, this is consistent with the notation above.

\medskip\noindent\textit{The $1$-minimal models over $\Z$ and $\F_2$.}
Since both relators lie in the commutator subgroup of the free group
on $g_1, g_2$, we have $\GGR{G_i}{2} = \Z^2$ for $i=0,1$.

\begin{lemma}
\label{lem:mcm1-is-full}
The first step $\mcm_1 = \T_\Z(\{x_1,x_2\})$ with $dx_i=0$ is
the full $1$-minimal model for $C^\ast(X_i;\Z)$ for $i=0,1$.
In particular, $\Gamma_n^0(G_i)/\Gamma_{n+1}^0(G_i)=0$
for $n\ge 2$ and $i=0,1$.
\end{lemma}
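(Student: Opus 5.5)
The plan is to apply Definition~\ref{def:1-min-model} directly. We show that, for the structural morphism $\rho_1\colon \mcm_1\to C^\ast(X_i;\Z)$ sending $x_j$ to a cocycle representing $u_j$, the map $H^2(\rho_1)$ is injective. Then $\ker H^2(\rho_1)=0$, so $\bbX_2=\emptyset$. By induction all later $\bbX_k$ are empty: with no new generators, $\mcm_k=\mcm_1$ and the kernel condition repeats verbatim. Hence $\mcm_1$ is the whole $1$-minimal model.

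Condition~\eqref{min1} is immediate. $H^1(\mcm_1)=Z^1(\mcm_1)$ is free on $x_1,x_2$, and $H^1(X_i;\Z)=\Z^2$ with basis $u_1,u_2$. By Lemma~\ref{lem:H1} we may choose $\rho_1(x_j)$ to be cocycles representing $u_j$, so $H^1(\rho_1)$ is an isomorphism and $H^0$ is clear.

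The substantive step is computing $H^2(\mcm_1)$ and the map to $H^2(X_i;\Z)\cong\Z$. By Lemma~\ref{lem:Hirsch-central}\eqref{pt:four}, $\psi_1$ identifies $H^\ast(\mcm_1)$ with $H^\ast(K(\Z^2,1);\Z)=H^\ast(T^2;\Z)$. So $H^2(\mcm_1)\cong\Z$, generated by the class of $x_1\otimes x_2$, whose image under $\psi_1$ is the cup product of the dual classes (it is the fundamental class of the torus). Since $\rho_1$ is multiplicative, $H^2(\rho_1)[x_1\otimes x_2]=u_1\cup u_2=2w$. This is nonzero, so $H^2(\rho_1)\colon\Z\to\Z$ is multiplication by $2$, hence injective over $\Z$. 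This is where the integrality matters: over $\F_2$ the same map is zero. The main obstacle is justifying that $H^2(\mcm_1)$ is exactly $\Z$ generated by $[x_1\otimes x_2]$. The binomial and $\cup_1$ structure creates many more degree-$2$ cochains, for example $\binom{x_1}{2}\otimes x_1$. I would avoid computing these directly and instead rely on the quasi-isomorphism $\psi_1$ of Lemma~\ref{lem:Hirsch-central}, where $G(\mcm_1)=\Z^2$ is abelian.

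For the consequence, Theorem~\ref{thm:mnequalsquotient} gives $G_i/\Gamma^0_{n+1}(G_i)\cong G(\mcm_n)=G(\mcm_1)=\Z^2$ for all $n\ge1$. Therefore $\Gamma^0_{n}(G_i)=\Gamma^0_2(G_i)$ for every $n\ge2$, and the graded pieces $\gr^0_n(G_i)$ vanish for $n\ge2$. Equivalently, $\gr^0_{n}$ has rank $|\bbX_n|=0$ by Theorem~\ref{thm:coker-torsion}\ref{itm:ct-ker}-type reasoning.
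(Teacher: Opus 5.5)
Your proposal is correct and follows essentially the same route as the paper: $H^2(\rho_1)$ is injective because $u_1\cup u_2=2w\neq 0$ integrally, so $\bbX_2=\emptyset$, the model stabilizes at $\mcm_1$, and the vanishing of $\gr^0_n(G_i)$ for $n\ge 2$ is read off from Theorem~\ref{thm:mnequalsquotient}. Your use of the quasi-isomorphism $\psi_1$ to show that $H^2(\mcm_1)\cong\Z$ is generated by $[x_1\otimes x_2]$ is exactly what the paper's appeal to $H^2(K(\Z^2,1);\Z)\to H^2(X_i;\Z)$ uses implicitly. You should drop the ``equivalently'' aside invoking Theorem~\ref{thm:coker-torsion}, since its torsion-freeness hypothesis fails here for $n\ge 2$ ($\gr_2(G_i)\cong\Z/2$); the main argument does not need it.
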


\begin{proof}
Let $\rho_1\colon\mcm_1\to C^\ast(X_i;\Z)$ be a structural morphism
with $H^1(\rho_1)(x_j)=u_j$.
Then $u_1\cup u_2([r_i])=2$, so the natural map
$H^2(K(\Z^2,1);\Z)\to H^2(X_i;\Z)$ is a monomorphism.
Hence $\ker H^2(\rho_1)=0$. By Definition~\ref{def:1-min-model}\eqref{min2},
the new generators of $\mcm_2$ are indexed by a basis of 
$\ker H^2(\rho_1)$, so $\bbX_2 = \emptyset$ and $\mcm_2 = \mcm_1$. 
Inductively, $\mcm_n = \mcm_1$ for all $n\ge 1$, and $\mcm_1$ is 
the full $1$-minimal model for $C^\ast(X_i;\Z)$.
The vanishing of the torsion-free LCS quotients for $n\ge 2$
follows immediately.
\end{proof}

Over $\F_2$, the situation is richer.
Since $u_i^2=0$ and $u_1 u_2=0$ mod~$2$ (from either relator),
the map $H^2(\rho_1)$ over $\F_2$ is the zero map, so the second
step in the $1$-minimal model over $\F_2$ is nontrivial.

\begin{lemma}
\label{lem:mcm2-mod2}
The dga
\[
\mcm_2 = \T_{\F_2}(\{x_1, x_2, x_{1,1}, x_{1,2}, x_{2,2}\}),
\]
with $dx_i=0$, $dx_{i,i}=x_i\otimes x_i$ for $i=1,2$, and
$dx_{1,2}=x_1\otimes x_2$, is the second step in the $1$-minimal
model for $C^\ast(X_i;\F_2)$ for both $i=0,1$.
\end{lemma}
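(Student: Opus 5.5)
The plan is to check the two conditions of Definition~\ref{def:1-min-model} directly, using the first step $\mcm_1=\T_{\F_2}(\{x_1,x_2\})$ with $d=0$ and a structural morphism $\rho_1\colon\mcm_1\to C^\ast(X_i;\F_2)$ sending $x_j$ to cocycles representing $u_j$ mod~$2$. Condition~\eqref{min1} holds because $H^1(X_i;\F_2)=\F_2^2$ has basis $\{u_1,u_2\}$, since both relators lie in the commutator subgroup. By Lemma~\ref{lem:classify}\ref{cl:iii}, or directly by condition~\eqref{min2}, the second step is the Hirsch extension of $\mcm_1$ whose $h$-invariant is the inclusion $\ker H^2(\rho_1)\hookrightarrow H^2(\mcm_1)$. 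So it suffices to show two things. First, $H^2(\mcm_1;\F_2)$ is free with basis the classes $[x_1\otimes x_1]$, $[x_1\otimes x_2]$, $[x_2\otimes x_2]$. Second, $H^2(\rho_1)=0$.

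For the first point, I would use Lemma~\ref{lem:Hirsch-central}\eqref{pt:four}. It gives $H^\ast(\mcm_1)\cong H^\ast(K(\F_2^2,1);\F_2)$, and $H^2$ of this is $3$-dimensional, spanned by $a_1^2, a_1a_2, a_2^2$. The cup product is graded commutative in cohomology, so $a_2a_1=a_1a_2$. Under the identification, $x_j\otimes x_k$ represents $a_ja_k$. Hence the three displayed classes form a basis, and the class of $x_2\otimes x_1$ is redundant.

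For the second point, I would compute cup products on the presentation $2$-complex using Fox calculus. For a relator $r$, the cup product $u_j\cup u_k$ evaluated on the $2$-cell equals the coefficient obtained from $\epsilon\bigl(\partial^2 r/\partial g_j\partial g_k\bigr)$, up to the usual sign and ordering convention. For $r_0=[g_1,g_2^2]$, the abelianized second Fox derivatives give the following values mod~$2$:
\begin{itemize}
\item $u_1u_2=\pm2\equiv 0$;
\item $u_2u_1\equiv 0$;
\item $u_1^2=0$;
\item $u_2^2=0$, because the exponent sum structure of $g_2^2$ yields an even coefficient.
\end{itemize}
For $r_1=[g_1,g_2g_1g_2]$, I would expand the relator and compute the same four coefficients. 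All of them should again be even: the squares come from a contribution of the form $2\binom{\cdot}{2}$-type, and the mixed terms from a contribution equal to $2$. This matches the paper's remark that $u_i^2=0$ and $u_1u_2=0$ mod~$2$. Since $H^2(X_i;\F_2)=\F_2$ is detected by evaluation on the single $2$-cell, it follows that $H^2(\rho_1)=0$.

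Given these two points, $\ker H^2(\rho_1)$ is all of $H^2(\mcm_1)$. Taking the basis cocycles $x_1\otimes x_1$, $x_1\otimes x_2$, $x_2\otimes x_2$ as the values $d x_{1,1}, dx_{1,2}, dx_{2,2}$ gives exactly the stated $\mcm_2$, via Lemma~\ref{lem:Hirsch-extension}. Lemma~\ref{lem:Hirsch-iso} then shows this is independent of the choice of representatives.

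I expect the main obstacle to be the mod-$2$ square computation for $X_1$. Over $\F_2$, the square $u^2$ depends on $\cup_1$ and binomial data rather than on commutativity, so I would verify it carefully using the Alexander--Whitney formula~\eqref{eq:cup-product} on a $\Delta$-structure of the $2$-cell. I would also check that the identification of $H^2(\mcm_1;\F_2)$ with the stated $3$-dimensional basis is correct, which relies on $\psi_1$ being a cohomology isomorphism.
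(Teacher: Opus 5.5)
Your proposal is correct and follows the paper's proof: you take the basis $\{[x_1^2],[x_1x_2],[x_2^2]\}$ of $H^2(\mcm_1;\F_2)$, observe that $H^2(\rho_1)=0$ so that $\ker H^2(\rho_1)$ is all of $H^2(\mcm_1;\F_2)$, and form the corresponding Hirsch extension. The only difference is how you show $H^2(\rho_1)=0$. The paper gets it directly by reducing the integral relations $u_j^2=0$ and $u_1u_2=2w$ mod~$2$; here integral squares vanish because $u_j^2=-u_j^2$ and $H^2(X_i;\Z)\cong\Z$ is torsion-free. So your Fox-calculus check, and your worry about $\cup_1$/binomial data for $X_1$, are unnecessary: that data only affects whether $[x_j^2]$ survives in $H^2(\mcm_1;\F_2)$, not the value of $u_j^2$ in $H^2(X_i;\F_2)$.
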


\begin{proof}
Since $H^2(\mcm_1;\F_2)$ has basis $\{[x_1^2], [x_2^2], [x_1 x_2]\}$
over $\F_2$, and each of these vanishes in $H^2(X_i;\F_2)$ (as 
$u_j^2 = 0$ and $u_1 u_2 = 2w \equiv 0 \pmod 2$), the kernel 
$\ker H^2(\rho_1)$ over $\F_2$ is the full $H^2(\mcm_1;\F_2)$, 
with basis $\{[x_1^2], [x_2^2], [x_1 x_2]\}$.
The corresponding generators $x_{1,1}, x_{2,2}, x_{1,2}$ with 
differentials as above give the Hirsch extension $\mcm_1\hookrightarrow\mcm_2$,
which is therefore the second step in the $1$-minimal model.
\end{proof}

\medskip\noindent\textit{A basis for $H^2(\mcm_2)$.}
We use the Serre spectral sequence of the central extension
$G(\mcm_1)\hookrightarrow G(\mcm_2)$ with
$E_2^{p,q}= H^p(\mcm_1)\otimes
H^q(\T_{\F_2}(\{x_{1,1},x_{1,2},x_{2,2}\}),d_{\bz})$.

\begin{lemma}
\label{lem:basis-H2}
The $\F_2$-vector space $H^2(\mcm_2)$ has dimension~$4$,
with basis
\[
\bigl\{[\wht{\gamma}_1],\;[\wht{\gamma}_2],\;
[x_1 x_{1,2}+x_{1,1}x_2],\;[x_1 x_{2,2}+x_{1,2}x_2]\bigr\},
\]
where
\begin{align*}
\gamma_1 &= x_1\cup_1 x_{1,1} + x_1\cup_1 x_1\cup_1 x_1,
  \qquad \wht{\gamma}_1 = x_1\gamma_1 + x_{1,1}^2 + \gamma_1 x_1,\\
\gamma_2 &= x_2\cup_1 x_{2,2} + x_2\cup_1 x_2\cup_1 x_2,
  \qquad \wht{\gamma}_2 = x_2\gamma_2 + x_{2,2}^2 + \gamma_2 x_2.
\end{align*}
In particular, $\wht{\gamma}_1$ involves only $x_1$,
and $\wht{\gamma}_2$ involves only $x_2$.
\end{lemma}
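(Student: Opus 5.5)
The plan is to compute $H^2(\mcm_2)$ in two steps: a dimension count from the central extension, then explicit cocycles that realize it.

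\emph{Dimension count.} By Lemma~\ref{lem:Hirsch-central}, $\psi_2$ is a cohomology isomorphism. Hence $H^*(\mcm_2)\cong H^*(G(\mcm_2);\F_2)$, where $G(\mcm_2)$ is a central extension of $G(\mcm_1)=\F_2^2$ by $\F_2^3$ whose $k$-invariant is the map $x_{i,j}\mapsto[x_ix_j]$ of Lemma~\ref{lem:Hirsch-central}\eqref{pt:three}. Since $H^*(\F_2^2;\F_2)$ is a polynomial algebra on the classes $u_1,u_2$ dual to $x_1,x_2$, and $[x_i^2]$ corresponds to the Bockstein or $u_i^2$, this extension is the universal $2$-class-two quotient $F_2/\Gamma_3^2(F_2)$. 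Its mod-$2$ Hopf--Lichtenberg/Serre spectral sequence is computed as follows.
\begin{enumerate}
\item $E_2^{p,q}=H^p(\F_2^2)\otimes H^q(\F_2^3)$.
\item The transgression $d_2\colon E_2^{0,1}\to E_2^{2,0}$ is the isomorphism $i_\chi$, because $\ker H^2(\rho_1)$ is all of $H^2(\mcm_1)$.
\item Hence $E_3^{2,0}=0$, and $E_3^{1,1}=\ker(d_2\colon H^1\otimes H^1(\F_2^3)\to H^3(\F_2^2))$. Here $H^1\otimes H^1(\F_2^3)$ has dimension $6$ and $H^3(\F_2^2)$ has dimension $4$. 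The image of $d_2$ has dimension $4$, being spanned by $u_1^3,u_1^2u_2,u_1u_2^2,u_2^3$, so the kernel has dimension $2$.
\item $E_2^{0,2}=\mathrm{Sym}^2$ of dimension $6$. Only the squares $y_{ij}^2$ survive to transgress further. The products $y\,y'$ map by $d_2$ injectively into $E_2^{2,1}$ modulo the appropriate image, and I expect this step to be fiddly. The net effect is that the two squares $y_{11}^2,y_{22}^2$ survive to $E_\infty$ (this matches the pro-$2$ structure).
\end{enumerate}
Total: $\dim H^2=2+2=4$.

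\emph{Explicit cocycles.} Here I verify directly in $\T_{\F_2}$.
\begin{enumerate}
\item Using the Leibniz rule, $d(x_1x_{1,2}+x_{1,1}x_2)=x_1x_1x_2+x_1x_1x_2=0$ mod $2$, and similarly $d(x_1x_{2,2}+x_{1,2}x_2)=0$. These represent $E_\infty^{1,1}$: they are the kernel elements $u_1\otimes y_{12}+u_2\otimes y_{11}$ and $u_1\otimes y_{22}+u_2\otimes y_{12}$.
\item For $\wht\gamma_i$, use the $\cup_1$–$d$ formula from \cite{Porter-Suciu-2023}, namely $d(a\cup_1 b)=\pm(da\cup_1 b+a\cup_1 db)+ab+ba$ together with the binomial relations. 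First compute $d\gamma_1$; it should equal $x_1x_{1,1}+x_{1,1}x_1+{}$(terms cancelling against $x_1^3$), so that $d\gamma_1=x_1x_{1,1}+x_{1,1}x_1$. Then
\[
d\wht\gamma_1=x_1d\gamma_1+d(x_{1,1}^2)+d\gamma_1x_1=0 .
\]
The $\cup_1$ correction term $x_1\cup_1x_1\cup_1x_1$ is exactly what makes $d\gamma_1$ land correctly mod $2$. This $\cup_1$ calculation is the main obstacle, and I would check it against the Hirsch identity and the $\cup_1$–$d$ formula term by term.
\item Identify $[\wht\gamma_i]$ with the $E_\infty^{0,2}$ classes $y_{ii}^2$ by restricting to the fibre, i.e.\ setting $x_1=x_2=0$. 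This leaves $x_{i,i}^2$, which shows the classes are nonzero and independent of each other. The two $(1,1)$ classes restrict to $0$ on the fibre but are independent in $E_\infty^{1,1}$.
\end{enumerate}
Linear independence of all four classes then follows from the filtration. Since the dimension is $4$, they form a basis.

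The final claim, that $\wht\gamma_i$ involves only $x_i$ and $x_{i,i}$, is immediate from the formulas.
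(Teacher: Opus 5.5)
Your route, a Lyndon--Hochschild--Serre count for the central extension $\F_2^3\to G(\mcm_2)\to\F_2^2$ followed by explicit cocycles, is the paper's route. However, your step~4 fails, and not merely for lack of detail. You rightly note that only squares are $d_2$-cycles in $E_2^{0,2}$. But there are three of them, and nothing kills $y_{12}^2$:
\begin{itemize}
\item In characteristic~$2$, $d_2(y_{12}^2)=2\,y_{12}\,d_2(y_{12})=0$.
\item The only remaining differential out of $E^{0,2}$ is $d_3\colon E_3^{0,2}\to E_3^{3,0}$. It vanishes because $E_3^{3,0}=0$ by your own step~3: $d_2\colon E_2^{1,1}\to E_2^{3,0}=H^3(\F_2^2;\F_2)$ is onto.
\end{itemize}
Equivalently, $y_{12}^2=Sq^1y_{12}$ transgresses to $Sq^1(u_1u_2)=u_1^2u_2+u_1u_2^2$, which is already a $d_2$-boundary. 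Hence $E_\infty^{2,0}=0$, $\dim E_\infty^{1,1}=2$, $\dim E_\infty^{0,2}=3$, and $\dim_{\F_2}H^2(\mcm_2)=5$, not $4$. Your assertion that only $y_{11}^2$ and $y_{22}^2$ survive is false.

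Your own identification $G(\mcm_2)\cong F/\Gamma_3^2(F)$, with $F$ free on $a,b$, confirms this independently:
\begin{itemize}
\item Inflation $H^1(G(\mcm_2);\F_2)\to H^1(F;\F_2)$ is an isomorphism, so the five-term sequence gives $H^2(G(\mcm_2);\F_2)\cong\Hom(\Gamma_3^2(F)/\Gamma_4^2(F),\F_2)$.
\item $\Gamma_3^2(F)/\Gamma_4^2(F)$ has basis $a^4,b^4,[a,b]^2,[[a,b],a],[[a,b],b]$.
\item Independence of these five elements is detected by maps to $\Z/8\times\Z/8$, to unitriangular $3\times3$ matrices over $\Z/4$, and to unitriangular $4\times4$ matrices over $\F_2$.
\end{itemize}

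So your cocycle computations and fibre-restriction argument do show that the four listed classes are linearly independent. This grants the $\cup_1$ computation of $d\gamma_1$, which the paper also only sketches. But these classes span only a hyperplane: a fifth class, lifting $x_{1,2}^2$, is missing. Your closing step, ``since the dimension is $4$,'' cannot be supplied by any argument. The paper's own proof has the same omission, since its list of $E_2$-survivors leaves out $x_{1,2}^2$. The statement itself is therefore false as written.

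The error propagates to Proposition~\ref{prop:ex1}. Since $H^2(X_i;\F_2)\cong\F_2$, both kernels there have dimension at least $4$. In fact $G_0\cong G_1$: the automorphism $g_1\mapsto g_1$, $g_2\mapsto g_2g_1$ of the free group carries $[g_1,g_2^2]$ to $[g_1,g_2g_1g_2g_1]=[g_1,g_2g_1g_2]$.
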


\begin{proof}
The $E_2$-cocycles of total degree $2$ not in the image of $d_2$
are represented by $x_{1,1}^2$, $x_{2,2}^2$,
$x_{1,1}x_2+x_{1,2}x_1$, and $x_{2,2}x_1+x_{1,2}x_2$.
We lift each to a cocycle in $\mcm_2$.

The classes $[x_1x_{1,2}+x_{1,1}x_2]$ and
$[x_1x_{2,2}+x_{1,2}x_2]$ are immediate: both are cocycles in $\mcm_2$,
representing elements of the Massey products 
$\langle x_1,x_1,x_2\rangle$ and
$\langle x_1,x_2,x_2\rangle$ respectively.

For $x_{1,1}^2$: using the Hirsch identity and $d(x_1\cup_1 x_1)=0$,
\[
d(x_1\cup_1 x_{1,1}) = x_1x_{1,1}+x_{1,1}x_1
    +d(x_1\cup_1 x_1\cup_1 x_1).
\]
Setting $\gamma_1 = x_1\cup_1 x_{1,1}+x_1\cup_1 x_1\cup_1 x_1$
gives $d\gamma_1 = x_1x_{1,1}+x_{1,1}x_1$, so
$[x_1x_{1,1}+x_{1,1}x_1]=0$ in $H^2(\mcm_2)$ and
$0\in\langle x_1,x_1,x_1\rangle$.
Then $\wht{\gamma}_1 = x_1\gamma_1+x_{1,1}^2+\gamma_1 x_1$
is a cocycle projecting to $x_{1,1}^2$ in $E_2$, and
$\wht{\gamma}_2$ is constructed analogously.

The four classes are linearly independent (they project to
distinct basis elements of $E_\infty$) and span $H^2(\mcm_2)$.
\end{proof}

\medskip\noindent\textit{Distinguishing the mod~$2$ quotients.}
We now apply Theorem~\ref{thm:triangle} to show that the mod~$2$
nilpotent quotients of $G_0$ and $G_1$ are non-isomorphic.

\begin{proposition}
\label{prop:ex1}
With notation as above:
\begin{enumerate}[label=(\roman*), itemsep=2pt, topsep=0pt]
\item\label{it:abel} 
$G_i/\Gamma_{2}^0(G_i)= \Z^2$ for $i=0,1$.
\item\label{it:tf}
    $\Gamma_n^0(G_i)/\Gamma_{n+1}^0(G_i)=0$
    for $n\ge 2$ and $i=0,1$.
\item\label{it:dim0}
    $\dim_{\F_2}\bigl(\Gamma_3^2(G_0)/\Gamma_4^2(G_0)\bigr)=4$.
\item\label{it:dim1}
    $\dim_{\F_2}\bigl(\Gamma_3^2(G_1)/\Gamma_4^2(G_1)\bigr)=3$.
\end{enumerate}
\end{proposition}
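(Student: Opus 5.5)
The plan is to prove parts \ref{it:abel} and \ref{it:tf} directly from the integral computations above, and to reduce parts \ref{it:dim0} and \ref{it:dim1} to a rank computation for $H^2(\rho_2)$ over $\F_2$.

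\textbf{Parts \ref{it:abel} and \ref{it:tf}.} Part \ref{it:abel} holds because both relators lie in the commutator subgroup. Hence $H_1(G_i;\Z)=\Z^2$ is torsion-free, so $\Gamma^0_2(G_i)=\Gamma_2(G_i)$. Part \ref{it:tf} is exactly the last assertion of Lemma~\ref{lem:mcm1-is-full}. In the language of Theorem~\ref{thm:mnequalsquotient}, the integral tower is constant from $n=1$ on.

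\textbf{Reduction of parts \ref{it:dim0} and \ref{it:dim1}.} Work over $R=\F_2$. By Theorem~\ref{thm:mnequalsquotient} and Lemma~\ref{lem:Hirsch-central}\eqref{pt:two}, the group $\gr_3^2(G_i)=\Gamma_3^2(G_i)/\Gamma_4^2(G_i)$ is the kernel of $G(\mcm_3)\to G(\mcm_2)$. This kernel is $\F_2^{\bbX_3}$, and by Definition~\ref{def:1-min-model}\eqref{min2} it has dimension $\dim\ker H^2(\rho_2)$. Lemma~\ref{lem:classify} shows that this dimension does not depend on the structural morphism chosen, since different choices give isomorphic Hirsch extensions. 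Therefore
\[
\dim_{\F_2}\gr_3^2(G_i)=4-\operatorname{rank}H^2(\rho_2\colon \mcm_2\to C^*(X_i;\F_2)),
\]
using Lemma~\ref{lem:basis-H2}. Because $X_i$ is a $2$-complex with $H_1=\Z^2$ and $H_2=\Z$, we have $H^2(X_i;\F_2)=\F_2$. The rank of $H^2(\rho_2)$ is thus $0$ or $1$, and it equals $1$ exactly when one of the four basis cocycles evaluates nontrivially on the $2$-cell $[r_i]$. So the claim comes down to two statements: all four basis classes vanish on $[r_0]$, and at least one of them is nonzero on $[r_1]$.

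\textbf{Computing the evaluations.} First I build a structural morphism $\rho_2$ concretely, via Theorem~\ref{thm:strict-representability} and Lemma~\ref{lem:rfygroup}. Send $g_1,g_2$ to elements of $G(\mcm_2)=\F_2^5$ whose $x_1,x_2$ coordinates are dual to $u_1,u_2$. The multiplication in $G(\mcm_2)$ comes from formula~\eqref{eq:product-2}. Since $\gr_2^2$ is killed by the relations, the relator $r_i$ maps to the identity in $G(\mcm_2)$. Triangulate the $2$-cell as a fan: its edges are labelled by the successive partial products of the letters of $r_i$, and its $2$-simplices are labelled by consecutive pairs of these partial products. On this fan the cochain $\rho_2(x)$ of each generator $x$ is explicit. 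Each basis cocycle can then be evaluated by summing its Alexander--Whitney value over the $2$-simplices, using the formula~\eqref{eq:cup-product} for cup products. The terms involving $\cup_1$ in $\wht\gamma_1$ and $\wht\gamma_2$ additionally use the $\cup_1$ formula on $C^*$.

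As a heuristic check, I will compare with the mod-$2$ Magnus expansion.
\begin{itemize}
\item The classes $[x_1x_{1,2}+x_{1,1}x_2]$ and $[x_1x_{2,2}+x_{1,2}x_2]$ should read off the coefficients of $X_1X_1X_2$ and $X_1X_2X_2$ in the expansion of the relator.
\item The classes $\wht\gamma_1$ and $\wht\gamma_2$ only involve the cyclic subgroups generated by $g_1$ and by $g_2$ respectively. For $r_0=[g_1,g_2^2]$ they should vanish. For $r_1$ they should be detected by the degree-$4$ terms in a single variable, as the mod-$2$ binomial structure suggests.
\end{itemize}

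\textbf{Expected obstacle.} The main difficulty is this cochain-level evaluation. Two things need care: the $\cup_1$ corrections in $\wht\gamma_j$, and the fact that the answer is a single cohomology class, which can be changed by adding coboundaries. For $r_0$ I expect a naive Magnus reading to suggest that the class $[x_1x_{2,2}+x_{1,2}x_2]$ is nonzero. In the actual computation, I expect the identity $g_2^2\mapsto$ (an element involving $x_{2,2}$) to make this cancel. I would first do the fan computation by hand for $r_0$, checking that every basis cocycle sums to zero. I would then find one cocycle, presumably $[x_1x_{1,2}+x_{1,1}x_2]$, that sums to $1$ on $r_1$.
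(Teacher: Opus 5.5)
Your reduction breaks at the formula $\dim_{\F_2}\gr_3^2(G_i)=4-\operatorname{rank}H^2(\rho_2)$, and the cancellation you are counting on for $r_0$ does not occur.

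\textbf{The count.} The $4$ comes from Lemma~\ref{lem:basis-H2}, but $H^2(\mcm_2)$ is $5$-dimensional. In that spectral sequence, $d_2\colon E_2^{1,1}\to E_2^{3,0}$ is onto: it hits $x_1^3$, $x_1^2x_2$, $x_1x_2^2$, $x_2^3$. Hence $E_3^{3,0}=0$. All three squares $x_{1,1}^2$, $x_{1,2}^2$, $x_{2,2}^2$ are $d_2$-cycles, so all three survive to $E_\infty^{0,2}$. The lemma omits the lift of $x_{1,2}^2$. Equivalently, Lemma~\ref{lem:3-term-seq} applied to $S^1\vee S^1$ gives $\dim H^2(\mcm_2)=\dim\gr_3^2(F)=5$. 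Here $F$ is free on $g_1,g_2$, and $\gr_3^2(F)$ has basis $g_1^4,g_2^4,c^2,[c,g_1],[c,g_2]$ with $c=[g_1,g_2]$. The missing class is the one detecting $c^2$.

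\textbf{The check on $r_0$.} Let $E$ be the group of the Hirsch extension of $\mcm_2$ by $x_{122}$ with $dx_{122}=x_1x_{2,2}+x_{1,2}x_2$, so that $(\ba\cdot\bb)_{122}=a_{122}+b_{122}+a_1b_{22}+a_{12}b_2$. Order coordinates as $x_1,x_2,x_{1,1},x_{1,2},x_{2,2},x_{122}$ and send $g_1,g_2$ to the first two unit vectors. Then $g_1g_2^2=(1,0,0,0,1,1)$ while $g_2^2g_1=(1,0,0,0,1,0)$, so $r_0=[g_1,g_2^2]$ maps to the nontrivial central element. Thus $G_0\to G(\mcm_2)$ does not lift to $E$. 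By Lemma~\ref{lem:Hirsch-central}\eqref{pt:three} and Lemma~\ref{lem:correspondence}, this means $H^2(\rho^{(0)})[x_1x_{2,2}+x_{1,2}x_2]\neq 0$. The naive Magnus reading is correct. With your count you would get $\dim\gr_3^2(G_0)=3$, contradicting (iii).

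\textbf{Part (iv) cannot be proved.} Since $r_i\in\Gamma_3^2(F)$, its normal closure lies in $\Gamma_3^2(F)$, and all its conjugates agree modulo $\Gamma_4^2(F)$. Hence $\gr_3^2(G_i)=\gr_3^2(F)/\F_2\,\bar r_i$, which has dimension at least $4$. Using $[x,y^2]=[x,y]\,[y,[x,y]]\,[x,y]$, one finds $\bar r_0=c^2+[c,g_2]$ and $\bar r_1=c^2+[c,g_1]+[c,g_2]$. Both are nonzero, since both map nontrivially into $E$. Therefore $\dim\gr_3^2(G_0)=\dim\gr_3^2(G_1)=4$.

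So parts (i) and (ii) are fine as you argue them. Part (iii) is true, and your approach proves it once the count is corrected to $5-1$. Part (iv) is false. Moreover, the automorphism $g_1\mapsto g_1$, $g_2\mapsto g_1g_2$ of $F$ carries $\bar r_0$ to $\bar r_1$, so $G_0/\Gamma_4^2(G_0)\cong G_1/\Gamma_4^2(G_1)$.

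\textbf{Comparison with the paper.} The paper's proof runs the same reduction on the four classes of Lemma~\ref{lem:basis-H2}. Instead of computing, it quotes $\la u_1,u_2,u_2\ra=0$ on $X_0$ and $\la u_1,u_1,u_2\ra=0$ on $X_1$. The first value is refuted by the computation above. The second is refuted by the analogous extension with $dx_{112}=x_1x_{1,2}+x_{1,1}x_2$, in which $r_1$ maps nontrivially. So the paper's proof shares the defect. Carried out correctly, your cochain-level computation would expose the problem rather than confirm the statement.
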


\begin{proof}
Parts~\ref{it:abel} and~\ref{it:tf} follow from
Lemma~\ref{lem:mcm1-is-full}.
For parts~\ref{it:dim0} and~\ref{it:dim1}, recall that
$\dim(\Gamma_3^2(G)/\Gamma_4^2(G)) = \dim\ker H^2(\rho_2)$
for any structural morphism $\rho_2\colon\mcm_2\to C^\ast(X;\F_2)$, where $\pi_1(X)=G$. 
Let $\rho^{(i)}\colon\mcm_2\to C^\ast(X_i;\F_2)$ be a structural morphism
with $x_j\mapsto c_j$, $[c_j]=u_j$. We compute $\ker H^2(\rho^{(i)})$ using 
the basis of Lemma~\ref{lem:basis-H2}.

\smallskip
\noindent\textit{Step~1: $[\wht\gamma_1]$ and $[\wht\gamma_2]$ lie 
in $\ker H^2(\rho^{(i)})$ for both $i=0,1$.}
For each $i\in\{0,1\}$ and each $j\in\{1,2\}$, the map 
$p_j\colon X_i \to S^1 = K(\Z,1)$ sending $g_j$ to the generator 
and the other generator to the basepoint is well-defined: in each 
case the relator is a commutator, so its image in $\Z$ is trivial.
By Theorem~\ref{thm:1-min-lift}, $p_j$ lifts to 
$\widehat{p}_j \colon \mcm(S^1)\to\mcm_2(X_i)$ with 
$\rho^{(i)}\circ\widehat{p}_j \simeq p_j^\#\circ\psi_{S^1}$. 
Since $\wht\gamma_j$ involves only $x_j$, it lies in 
$\im(\widehat{p}_j)$, and as $H^2(S^1;\F_2)=0$, the class 
$[\wht\gamma_j]$ lies in $\ker H^2(\rho^{(i)})$.

\smallskip
\noindent\textit{Step~2.}
By \cite[Thm.~2]{Porter}, $\langle u_1,u_1,u_2\rangle=\{0\}$ in
$H^2(X_i;\F_2)$ for both $i$.
Since $H^2(\rho^{(i)})[x_1x_{1,2}+x_{1,1}x_2]
\in\langle u_1,u_1,u_2\rangle$,
the class $[x_1x_{1,2}+x_{1,1}x_2]$ lies in
$\ker H^2(\rho^{(i)})$ for both $i=0,1$.

\smallskip
\noindent\textit{Step~3.}
By \cite[Thm.~2]{Porter}, $\langle u_1,u_2,u_2\rangle=\{0\}$ in
$H^2(X_0;\F_2)$ and $\langle u_1,u_2,u_2\rangle=\{w\}$ in
$H^2(X_1;\F_2)$.
Hence $[x_1x_{2,2}+x_{1,2}x_2]\in\ker H^2(\rho^{(0)})$ but
$H^2(\rho^{(1)})[x_1x_{2,2}+x_{1,2}x_2]=w\ne 0$.

\smallskip
Combining: $\ker H^2(\rho^{(0)})\cong\F_2^4$,
giving part~\ref{it:dim0}; and $\ker H^2(\rho^{(1)})\cong\F_2^3$,
spanned by $[\wht{\gamma}_1]$, $[\wht{\gamma}_2]$, and 
$[x_1x_{1,2}+x_{1,1}x_2]$, giving part~\ref{it:dim1}.
\end{proof}

In particular, $\Gamma_3^2(G_0)/\Gamma_4^2(G_0)
\not\cong \Gamma_3^2(G_1)/\Gamma_4^2(G_1)$,
so $G_0/\Gamma_4^2(G_0)
\not\cong G_1/\Gamma_4^2(G_1)$
by Theorem~\ref{thm:triangle}.

\subsection{Generalized Borromean rings}
\label{subsec:borromean}

In this section we distinguish an infinite family of $3$-component link
complements in $S^3$ which are otherwise hard to tell apart: their
integral cohomology rings agree and their rational $1$-minimal models
agree in low degrees, yet the integer multiplicity of their lowest Milnor
invariants is detected by the integral $1$-minimal model. 
The argument
computes the cokernel of the structural morphism $H^2(\rho_2)$; its
torsion records this multiplicity as torsion in the ordinary lower
central series, invisible to the torsion-free Stallings tower.
This example illustrates the connection between the first nonzero
Milnor invariants of a link and successive quotients in the lower
central series of the link complement.

For each $k\ge 1$, let $\beta_k = [\sigma_1^2, \sigma_2^{2k}]$ be the
pure braid on three strands depicted in Figure~\ref{borromean}, let
$C(k)\subset S^3$ be its closure, and let $Y(k) = S^3 \setminus C(k)$
denote the complement. The case $k=1$ gives the classical Borromean
rings; for general $k\ge 1$, the family $\{C(k)\}$ realizes the integer
values $\bar\mu_{123}(C(k)) = k$ of one of the lowest 
non-vanishing Milnor invariants, with all pairwise linking numbers zero.
Then for each $k$, we have that 
$H^1(Y(k);\Z)=\Z^3$ has basis $u_1, u_2, u_3$ where the $u_i$
correspond by Alexander duality to meridians of the three components
of $C(k)$, and
$H^2(Y(k);\Z)=\Z^2$ generated by Lefschetz duals
$\gamma_{i,j}$ to paths between distinct components.

\begin{lemma}
\label{lem:mcm2link}
The dga
\[
\mcm_2 = \T_{\Z}(x_1, x_2, x_3, x_{12}, x_{13}, x_{23})
\]
with $dx_i=0$ and $dx_{ij}=x_i \ot x_j$, is the second step
in the $1$-minimal model for $C^\ast(Y(k);\Z)$ for all
$k \ge 1$.
\end{lemma}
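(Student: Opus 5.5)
We verify the two conditions of Definition~\ref{def:1-min-model} for $i\le 2$, using a structural morphism $\rho_1\colon \mcm_1=\T_\Z(x_1,x_2,x_3)\to C^\ast(Y(k);\Z)$ that sends $x_i$ to a cocycle $c_i$ representing $u_i$; such a map exists by Lemma~\ref{lem:H1}. Condition~\eqref{min1} holds by construction, since $H^1(Y(k);\Z)=\Z^3$ with basis $u_1,u_2,u_3$. Next we find $\ker H^2(\rho_1)$. The degree-$2$ cohomology of the first step is $H^2(\mcm_1)\cong H^2(K(\Z^3,1);\Z)=H^2(T^3;\Z)\cong\Z^3$, with basis the classes $[x_i\otimes x_j]$ for $i<j$. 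This is the identification supplied by the quasi-isomorphism $\psi_1$ of Lemma~\ref{lem:Hirsch-central}\eqref{pt:four}. The map $H^2(\rho_1)$ sends $[x_ix_j]\mapsto u_i\cup u_j$. All pairwise linking numbers of $C(k)$ vanish, so every cup product $u_i\cup u_j$ is zero in $H^2(Y(k);\Z)$; by Alexander/Lefschetz duality the cup product pairs into the linking matrix. Hence $\ker H^2(\rho_1)=H^2(\mcm_1)$, and it is free with basis $\{[x_1x_2],[x_1x_3],[x_2x_3]\}$.

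With this kernel, the second step is the Hirsch extension adjoining one generator for each basis element of $\ker H^2(\rho_1)$. By Lemma~\ref{lem:Hirsch-extension} and Lemma~\ref{lem:Hirsch-iso}, and independently of the chosen cocycle representatives, this gives generators $x_{ij}$ with $dx_{ij}=x_i\otimes x_j$. That is exactly the $\mcm_2$ of the statement, and condition~\eqref{min2} holds at $n=1$. Lemma~\ref{lem:classify}\ref{cl:iii} confirms that this description does not depend on the choice of $\rho_1$. As a consistency check, Theorem~\ref{thm:mnequalsquotient} then gives $G(\mcm_2)\cong \pi_1(Y(k))/\Gamma^0_3$, which should be the free $2$-step nilpotent group $F_3/\Gamma_3(F_3)$. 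This matches Milnor's theorem that $\pi_1/\Gamma_3$ of a link with vanishing linking numbers is $F_3/\Gamma_3$.

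To make it a genuine second step, we still have to extend $\rho_1$ to $\rho_2\colon \mcm_2\to C^\ast(Y(k);\Z)$. For each $i<j$ we choose a cochain $c_{ij}$ with $dc_{ij}=c_i\cup c_j$, which exists because $u_i\cup u_j=0$. We then set $\rho_2(x_{ij})=c_{ij}$, and Lemma~\ref{lem:extend} and Lemma~\ref{lem:commutes} show this defines a morphism of binomial $\cup_1$-dgas. Note that Definition~\ref{def:1-min-model} does not ask for condition~\eqref{min2} at $n=2$ in order to call $(\mcm_i,\rho_i)_{i\le2}$ a second step.

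The main obstacle is the input that $u_i\cup u_j=0$ integrally, and not only rationally. I would justify it by noting that $H^2(Y(k);\Z)\cong\Z^2$ is torsion-free and that the cup product $H^1\times H^1\to H^2$ of a link complement is given by the linking numbers $\mathrm{lk}(C_i,C_j)$. Those all vanish because each pair of components of the closure of $\beta_k=[\sigma_1^2,\sigma_2^{2k}]$ has total exponent zero in the corresponding generators; each pair of strands is trivially linked since the braid is a commutator.
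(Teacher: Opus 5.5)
Your proof is correct and follows the same route as the paper. You take $\rho_1$ sending each $x_i$ to a cocycle representing $u_i$, use the vanishing of all pairwise linking numbers to conclude that $u_i\cup u_j=0$ so that $\ker H^2(\rho_1)=H^2(\mcm_1)$, and then adjoin $x_{ij}$ with $dx_{ij}=x_i\otimes x_j$. The extra details you supply (the basis $[x_ix_j]$, $i<j$, of $H^2(\mcm_1)\cong H^2(T^3;\Z)$ via $\psi_1$, the explicit extension $\rho_2$, and the vanishing of the linking numbers because $\beta_k$ is a commutator of pure braids) are left implicit in the paper and do not change the argument.
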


\begin{proof}
Given the basis above for $H^1(Y(k);\Z)$ it follows that the first
step $\mcm_1$ for the $1$-minimal model is
$\T_{\Z}(x_1, x_2, x_3)$ with $dx_i=0$ with a structural morphism
$\rho{(k)}_1 \colon \mcm_1 \to C^\ast(Y(k);\Z)$ given by a map
that sends each $x_i$ to a cocycle representative of $u_i$.

Since all the linking numbers in $C(k)$ are zero, it follows that
all cup products $[x_i] \cdot [x_j]$ vanish in $H^2(Y(k);\Z)$.
Thus, $\mcm_2$ is obtained from $\mcm_1$ by adding generators
$x_{ij}$ for $i<j$ with $dx_{ij} = x_i \ot x_j$.
\end{proof}

\begin{figure}[ht]
\centering
\begin{tikzpicture}[line width=1.2, line cap=round, scale=0.65, 
baseline=(current bounding box.center)]
\draw (0,-0.4)--(0,9.4);
\draw (1.5,-0.4)--(1.5,9.4);
\draw (3,-0.4)--(3,9.4);
\filldraw[fill=white] (-0.4,7) rectangle (1.9,8);
\node at (0.75,7.5) {$\sigma_1^{2}$};
\filldraw[fill=white] (1.1,5) rectangle (3.4,6);
\node at (2.25,5.5) {$\sigma_2^{2k}$};
\filldraw[fill=white] (-0.4,3) rectangle (1.9,4);
\node at (0.75,3.5) {$\sigma_1^{-2}$};
\filldraw[fill=white] (1.1,1) rectangle (3.4,2);
\node at (2.25,1.5) {$\sigma_2^{-2k}$};
\node[above] at (0,9.4) {\footnotesize $1$};
\node[above] at (1.5,9.4) {\footnotesize $2$};
\node[above] at (3,9.4) {\footnotesize $3$};
\node[below] at (0,-0.4) {\footnotesize $1$};
\node[below] at (1.5,-0.4) {\footnotesize $2$};
\node[below] at (3,-0.4) {\footnotesize $3$};
\node[below=14pt] at (1.5,-0.4) {\small $\beta_k$};
\end{tikzpicture}
\hspace{0.8cm}
\begin{tikzpicture}[baseline=(current bounding box.center)]
\braid[number of strands=3, height=0.73cm, width=0.7cm, gap=0.18,
       style strands={1,2,3}{line width=1.2pt}]
   (b1) s_1 s_1 s_2 s_2 s_1^{-1} s_1^{-1} s_2^{-1} s_2^{-1};
\foreach \i in {1,2,3} {
  \node[above] at (b1-\i-s) {\footnotesize $\i$};
  \node[below] at (b1-\i-e) {\footnotesize $\i$};
}
\node[below=14pt] at (b1-2-e) {\small $\beta_1$};
\end{tikzpicture}
\hspace{0.8cm}
\begin{tikzpicture}[baseline=(current bounding box.center)]
\braid[number of strands=3, height=0.49cm, width=0.7cm, gap=0.18,
       style strands={1,2,3}{line width=1.2pt}]
   (b2) s_1 s_1 s_2 s_2 s_2 s_2 s_1^{-1} s_1^{-1} s_2^{-1} s_2^{-1} s_2^{-1} s_2^{-1};
\foreach \i in {1,2,3} {
  \node[above] at (b2-\i-s) {\footnotesize $\i$};
  \node[below] at (b2-\i-e) {\footnotesize $\i$};
}
\node[below=14pt] at (b2-2-e) {\small $\beta_2$};
\end{tikzpicture}
\caption{The pure braid $\beta_k = [\sigma_1^2,\sigma_2^{2k}]$ on three
strands, shown schematically on the left and drawn explicitly for
$k=1, 2$ on the right. The link $C(k) \subset S^3$ is the closure
of $\beta_k$; for $k=1$ this recovers the classical Borromean rings.}
\label{borromean}
\end{figure}

Since all products of elements in $H^1(\mcm_2)$ are zero, it follows
that all Massey triple products in $\mcm_2$ are defined and have
zero indeterminacy.

\begin{lemma}
\label{lem:H2Mtp}
$H^2(\mcm_2)= \Z^{8}$ with basis given by the following Massey triple
products:
$ \la [x_{1}], [x_{1}], [x_{2}]\ra$,
$ \la [x_{1}], [x_{2}], [x_{2}]\ra$,
$ \la [x_{1}], [x_{1}], [x_{3}]\ra$,
$ \la [x_{1}], [x_{3}], [x_{3}]\ra$,
$ \la [x_{2}], [x_{2}], [x_{3}]\ra$,
$ \la [x_{2}], [x_{3}], [x_{3}]\ra$,
$ \la [x_{1}], [x_{2}], [x_{3}]\ra$, and
$ \la [x_{1}], [x_{3}], [x_{2}]\ra$.
\end{lemma}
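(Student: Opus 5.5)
We need $H^2(\mcm_2)\cong\Z^8$, with a basis of Massey triple products. The plan is to use Theorem~\ref{thm:pronilpotent-model} and Lemma~\ref{lem:Hirsch-central}\eqref{pt:four}, which identify $H^2(\mcm_2)$ with $H^2(G(\mcm_2);\Z)$. Here $G(\mcm_2)$ has underlying set $\Z^6$ with the product \eqref{eq:product-2}, and it is the free $2$-step nilpotent group $N=F_3/\Gamma_3(F_3)$. This identification follows from Lemma~\ref{lem:Hirsch-central}\eqref{pt:three}: the $k$-invariant sends the basis $x_{ij}$ to the basis $[x_ix_j]$ of $H^2(\Z^3)=\Lambda^2\Z^3$, which is the universal class. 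So the problem reduces to computing $H^2(N;\Z)$ and exhibiting explicit generators.

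\textbf{Rank.} I would use the five-term/Hopf exact sequence for $1\to\Gamma_2 N\to N\to\Z^3\to1$ together with Hopf's formula, which give
\[
H_2(N)\cong (\Gamma_3F_3\cap[F_3,F_3])/[F_3,\Gamma_3F_3]=\Gamma_3F_3/\Gamma_4F_3\cong\Z^8 ,
\]
by Witt's formula $(3^3-3)/3=8$. Since $H_1(N)=\Z^3$ is free, universal coefficients give $H^2(N;\Z)\cong\Hom(H_2N,\Z)\cong\Z^8$, with no Ext term. This matches Remark~\ref{rem:coker-torsion-hyp}.

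\textbf{Massey classes.} Because $d$ vanishes on $x_i$ and every product $x_ix_j$ is exact ($=dx_{ij}$), I would write, for each triple $(i,j,l)$, the defining cocycle $x_ix_{jl}+x_{ij}x_l$, adjusted by signs. When $i=j$ or $j=l$ there is no generator $x_{ii}$ available, so I would instead use the binomial element $\binom{x_i}{2}$: the Hirsch-extension calculus gives $d\binom{x_i}{2}=x_i\otimes x_i$ up to sign, exactly as in Example~\ref{ex:3-step}. For the two $(1,3,2)$-type products, $x_{32}$ should be read as $-x_{23}+x_2\cup_1x_3$ or similar. Each representative is a cocycle, and I would check via the Hirsch identity and the $\cup_1$-$d$ formula that it is one.

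\textbf{Basis.} This is the main obstacle: showing that the eight classes form a $\Z$-basis, not merely a basis rationally. I would pair the classes against $H_2(N)\cong\Gamma_3F_3/\Gamma_4F_3$, evaluating each Massey cocycle on a standard basis of basic commutators $[[g_1,g_2],g_1]$, $[[g_1,g_2],g_2]$, $\ldots$, $[[g_1,g_2],g_3]$, $[[g_1,g_3],g_2]$. Concretely, I would evaluate on the $2$-cycles in $BN$ represented by these commutators using the product formula \eqref{eq:product-2} and Lemma~\ref{lem:rfygroup}. Equivalently, I would use the classical duality, via Fenn–Sjerve or Porter, between Massey products and Magnus coefficients, $\langle u_i,u_j,u_l\rangle\leftrightarrow$ the coefficient of $X_iX_jX_l$. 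The goal is a unimodular $8\times8$ pairing matrix; after ordering it should be triangular with diagonal entries $\pm1$. The binomial cochains $\binom{x_i}{2}$ are exactly what make these entries equal to $\pm1$ rather than $\pm2$. A unimodular pairing gives a $\Z$-basis of $\Hom(H_2N,\Z)=H^2(\mcm_2)$, and the zero indeterminacy noted before the lemma means each Massey product is a single class.
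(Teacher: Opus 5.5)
Your proposal is correct, but it follows a different route from the paper.

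\textbf{The paper's route.} The paper stays inside the dga. It runs the spectral sequence of the Hirsch extension $\mcm_1\hookrightarrow\mcm_2$, whose only surviving term in total degree $2$ is $E_\infty^{1,1}=E_3^{1,1}$. This is the kernel of $d_2$ from $H^1(\mcm_1)\otimes H^1(\T_\Z(\{x_{12},x_{13},x_{23}\}),d_{\bz})\cong\Z^9$ onto $H^3(\mcm_1)\cong\Z$. The paper then checks that the eight Massey cocycles project onto a $\Z$-basis of this rank-$8$ lattice, which only requires reading off their images in $E_2^{1,1}$.

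\textbf{Your route.} You transfer the problem through $\psi_2$ to $H^2(N;\Z)$ for $N=F_3/\Gamma_3F_3$. You get rank and freeness from Hopf's formula $H_2(N)\cong\Gamma_3F_3/\Gamma_4F_3$, Witt's formula and universal coefficients. You then obtain the $\Z$-basis from a unimodular pairing against $H_2(N)$. That pairing does behave as you predict. Take the Hall basis $[[x_j,x_i],x_k]$ with $j>i$ and $k\ge i$. Each basic commutator has a nonzero degree-$3$ Magnus coefficient, equal to $\pm1$, on exactly one of the eight index words, namely the one with the same multiset of letters. On the words $123,132$ against $[[x_2,x_1],x_3],[[x_3,x_1],x_2]$ the block is $-I$, so the matrix is even diagonal.

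\textbf{What each buys.} Your approach explains conceptually both the rank and the choice of triples: they are exactly the Lyndon words of length $3$, dual to a basis of $\gr_3(F_3)$. It also ties the basis directly to Magnus coefficients, hence to the Milnor invariants that drive \S\ref{subsec:borromean}. The price is that you import Hopf's and Witt's formulas and the Massey--Magnus duality for integral Massey products with repeated indices. You also need some routine bookkeeping: pushing the binomial defining systems through the dga map $\psi_2$, and realizing classes of $H_2(N)$ by presentation $2$-complexes. The first step is harmless, since products of degree-$1$ classes vanish in $H^2(N;\Z)$, so the indeterminacy is zero there as well. The paper's spectral sequence computation is self-contained within the binomial $\cup_1$ framework and avoids evaluating cocycles on cycles altogether.
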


\begin{proof}
The result follows using the spectral sequence of the Hirsch
extension from $\mcm_1$ to $\mcm_2$.
A direct computation shows that the
 $E_{\infty}^{p,q}$ terms with $p+q=2$ are
$E_{\infty}^{1,1} = E_3^{1,1}$ and that representative
cocycles for the Massey triple products above project to a basis for
$E_{\infty}^{1,1}$.
\end{proof}

The $k$-dependence of the
link is carried by the two Massey products, $\la[x_1],[x_2],[x_3]\ra$
and $\la[x_1],[x_3],[x_2]\ra$, through the structural morphism
$\rho_2{(k)} \colon \mcm_2 \to C^\ast(Y(k);\Z)$, as recorded in the next
lemma.

\begin{lemma}
\label{lem:rho2image}
Let $\rho_2{(k)} \colon \mcm_2 \to C^\ast(Y(k);\Z)$ be a structural
morphism. With respect to the Massey-product basis of
$H^2(\mcm_2) = \Z^8$ from Lemma~\ref{lem:H2Mtp}, the induced map
$H^2(\rho_2{(k)})$ sends the six classes with a repeated index to~$0$,
and sends the two remaining classes by
\[
   \la [x_1], [x_2], [x_3] \ra \longmapsto -k\,\gamma_{1,3},
   \qquad
   \la [x_1], [x_3], [x_2] \ra \longmapsto k\,\gamma_{1,2},
\]
where $\{\gamma_{1,2}, \gamma_{1,3}\}$ is a basis for
$H^2(Y(k);\Z) = \Z^2$. Consequently:
\begin{enumerate}[label=\textup{(\alph*)}, itemsep=2pt, topsep=0pt]
\item\label{itm:rho2-ker}
$\ker H^2(\rho_2{(k)})$ is the rank-$6$ free direct summand of
$H^2(\mcm_2)$ spanned by the six repeated-index classes, independently
of~$k$.
\item\label{itm:rho2-im}
$\im H^2(\rho_2{(k)}) = \bigl\langle\, k\,\gamma_{1,2},\, k\,
\gamma_{1,3}\,\bigr\rangle = k\cdot H^2(Y(k);\Z)$.
\item\label{itm:rho2-coker}
$\coker H^2(\rho_2{(k)}) = H^2(Y(k);\Z)\big/\im H^2(\rho_2{(k)})
   \cong (\Z/k\Z)^2$.
\end{enumerate}
\end{lemma}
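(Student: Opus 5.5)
The plan is to compute $H^2(\rho_2(k))$ on the Massey-product basis of Lemma~\ref{lem:H2Mtp}, using that each basis class is represented by an explicit cocycle in $\mcm_2$ whose image is a defining-system representative of the corresponding Massey product in $Y(k)$. For $\la[x_i],[x_j],[x_l]\ra$ the cocycle is $x_{ij}\,x_l + x_i\,x_{jl}$ (with the appropriate signs and, for repeated indices, the obvious analogues using $x_{ii}$-type terms, which must be lifted via $\cup_1$ and binomial operations as in Lemma~\ref{lem:basis-H2}). Since $\rho_2(k)$ is a dga map sending $x_i$ to cocycle representatives $c_i$ of $u_i$ and $x_{ij}$ to cochains bounding $c_i\cup c_j$, the image of each cocycle lies in the corresponding Massey product $\la u_i,u_j,u_l\ra\subset H^2(Y(k);\Z)$. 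All cup products vanish in $H^*(Y(k))$, but the indeterminacy is $u_i\cup H^1+H^1\cup u_l$, which is zero. Hence each image is the unique element of the Massey product, independent of the choice of structural morphism.

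The second step evaluates these Massey products topologically. For the repeated-index classes I will use the argument of Step~1 in the proof of Proposition~\ref{prop:ex1}. The two-component sublink obtained by deleting one component of $C(k)$ is trivial, since the closure of $\beta_k$ with a strand removed is a trivial pure braid closure. So there are maps $Y(k)\to Y(\text{sublink})\simeq S^1\vee S^1$ (or to $S^1$) through which any Massey product involving only two indices factors. As $H^2(S^1\vee S^1;\Z)=0$, all six repeated-index classes map to $0$. For the two classes with distinct indices, I will invoke the classical identification of triple Massey products in link complements with Milnor's $\bar\mu$-invariants (Turaev/Porter): $\la u_1,u_2,u_3\ra$ equals $\pm\bar\mu_{123}$ times a dual class, and $\la u_1,u_3,u_2\ra$ equals $\pm\bar\mu_{132}$ times another. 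Using $\bar\mu_{123}(C(k))=k$, together with the cyclic symmetry and shuffle relations among $\bar\mu$'s, I get $\bar\mu_{132}=-k$ up to the chosen orientation conventions. Pairing with the Lefschetz duals $\gamma_{1,2},\gamma_{1,3}$ then gives the stated formulas, with the signs fixed by the choice of basis.

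The main obstacle is this step: matching the Massey products with $\bar\mu_{123}$ exactly, with the correct duals $\gamma_{1,3}$ versus $\gamma_{1,2}$, and confirming that $\bar\mu$ of the closure of $[\sigma_1^2,\sigma_2^{2k}]$ is $k$. I would first try a direct Magnus-expansion computation. The braid gives a Wirtinger/Artin presentation of $\pi_1(Y(k))$ in which the longitude of component $1$ is the commutator $[m_2,m_3^k]$ up to conjugation, and its Magnus expansion has coefficient $k$ on $X_2X_3$ and $-k$ on $X_3X_2$. Fox calculus on the two relators of the $2$-complex spine then gives the cup-$1$/Massey values on the $2$-cells, which are dual to $\gamma_{1,2}$ and $\gamma_{1,3}$.

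Once the images are determined, (a)--(c) are linear algebra. The kernel is the span of the six repeated-index basis vectors plus the kernel of the restriction to the last two. That restriction is $\Z^2\to\Z^2$ with matrix $\begin{pmatrix}0&k\\-k&0\end{pmatrix}$, which is injective for $k\ge1$. So the kernel is the rank-$6$ summand, the image is $k\,H^2(Y(k);\Z)$, and the cokernel is $(\Z/k\Z)^2$.
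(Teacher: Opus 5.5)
Your proposal follows essentially the same route as the paper: naturality of Massey products with zero indeterminacy on both sides, vanishing of the six repeated-index products because every two-component sublink of $C(k)$ is trivial, the two distinct-index values taken from Porter's Massey/$\bar\mu$ computation, and then the same linear algebra for (a)--(c). One small slip needs fixing: the complement of a two-component unlink is homotopy equivalent to $S^1\vee S^1\vee S^2$, so its $H^2$ is $\Z$, not $0$. Before invoking $H^2(S^1\vee S^1;\Z)=0$, you should therefore map $Y(k)$ to $S^1\vee S^1=K(F_2,1)$, either via the induced homomorphism onto the free fundamental group of the sublink complement or by composing with the collapse of the $S^2$ summand.
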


\begin{proof}
Since all products of elements of $H^1(\mcm_2)$ vanish, every triple
Massey product of the generators $[x_1],[x_2],[x_3]$ is defined with
zero indeterminacy, and $\rho_2{(k)}$ induces an isomorphism on $H^1$
carrying $[x_i]$ to the meridian class $u_i \in H^1(Y(k);\Z)$. By
naturality of Massey products, $H^2(\rho_2{(k)})$ sends each basis
class $\la [x_i],[x_j],[x_k]\ra$ to the corresponding product
$\la u_i, u_j, u_k\ra \in H^2(Y(k);\Z)$. The six products with a
repeated index vanish on $Y(k)$ (the link obtained by removing 
anyone of the components in $C(k)$ is the unlink), giving the 
six zero values; the two products with distinct
indices are computed in \cite[Ex.~3, p.~46]{Porter}:
$\la u_1, u_2, u_3\ra = -k\,\gamma_{1,3}$ and
$\la u_1, u_3, u_2\ra = k\,\gamma_{1,2}$. 
This establishes the displayed
values, and statement~\ref{itm:rho2-ker} follows since the six
repeated-index basis classes span a rank-$6$ free direct summand of
$H^2(\mcm_2)=\Z^8$. For~\ref{itm:rho2-im}, the pair
$\{-\gamma_{1,3},\,\gamma_{1,2}\}$ is a basis for $H^2(Y(k);\Z)=\Z^2$,
so the image lattice is exactly $k\cdot\Z^2$. 
Statement~\ref{itm:rho2-coker}
is then immediate: $\Z^2/k\Z^2 \cong (\Z/k\Z)^2$.
\end{proof}

\begin{proposition}
\label{prop:borromean}
Let $Y(k) = S^3\setminus C(k)$ be the complement of the link in 
Figure~\ref{borromean}. Then:
\begin{enumerate}
\item 
\label{itm:1}
$\pi_1(Y(k))/\Gamma_2^0\bigl(\pi_1(Y(k))\bigr) 
\cong \Z^3$ for all $k\ge 1$.
\item 
\label{itm:2}
$\Gamma_2^0\bigl(\pi_1(Y(k))\bigr)/\Gamma_3^0\bigl(\pi_1(Y(k))\bigr) \cong \Z^3$ 
for all $k\ge 1$.
\item 
\label{itm:3}
$\Gamma_3^0\bigl(\pi_1(Y(k))\bigr)/\Gamma_4^0
\bigl(\pi_1(Y(k))\bigr) \cong \Z^6$ 
for all $k\ge 1$. Moreover, for all $k,\ell\ge 1$,
\[
\pi_1(Y(k))/\Gamma_4^0\bigl(\pi_1(Y(k))\bigr) \cong 
\pi_1(Y(\ell))/\Gamma_4^0\bigl(\pi_1(Y(\ell))\bigr).
\]
\item 
\label{itm:4}
For $|k|\ne |\ell|$, the fundamental groups 
$\pi_1(Y(k))$ and $\pi_1(Y(\ell))$
are not isomorphic. 
More precisely, in the \emph{ordinary} lower central
series,
\[
\gr_3\bigl(\pi_1(Y(k))\bigr) \cong \Z^6 \oplus (\Z/k\Z)^2,
\]
so $\pi_1(Y(k))/\Gamma_4\bigl(\pi_1(Y(k))\bigr) \not\cong
\pi_1(Y(\ell))/\Gamma_4\bigl(\pi_1(Y(\ell))\bigr)$ 
already at the fourth stage,
whereas the torsion-free quotients 
$\pi_1(Y(k))/\Gamma_s^0\bigl(\pi_1(Y(k))\bigr)$ have graded
ranks independent of~$k$ for $2 \le s \le 4$.
\end{enumerate}
\end{proposition}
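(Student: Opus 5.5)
The plan is to read off each part from the $1$-minimal model of $C^\ast(Y(k);\Z)$ via Theorem~\ref{thm:mnequalsquotient}, using Lemmas~\ref{lem:mcm2link}, \ref{lem:H2Mtp} and~\ref{lem:rho2image}, and then to get the torsion statement from Theorem~\ref{thm:coker-torsion}.

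\textbf{Parts (1)--(3).} By Theorem~\ref{thm:mnequalsquotient}, $\gr_{n}^0(\pi_1(Y(k)))$ is free with basis the generators $\bbX_n$ of the $1$-minimal model. Hence:
\begin{itemize}
\item[(1)] $\bbX_1=\{x_1,x_2,x_3\}$ gives $G/\Gamma_2^0\cong\Z^3$.
\item[(2)] $\bbX_2=\{x_{12},x_{13},x_{23}\}$ gives $\gr_2^0\cong\Z^3$, by Lemma~\ref{lem:mcm2link}.
\item[(3)] By Definition~\ref{def:1-min-model} and Lemma~\ref{lem:classify}\ref{cl:iii}, $\bbX_3$ indexes a basis of $\ker H^2(\rho_2(k))$. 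This kernel is the rank-$6$ summand of Lemma~\ref{lem:rho2image}\ref{itm:rho2-ker}, so $\gr_3^0\cong\Z^6$.
\end{itemize}
For the isomorphism of fourth quotients, note that the $h$-invariant of the Hirsch extension $\mcm_2\hookrightarrow\mcm_3$ is the inclusion of a submodule of $H^2(\mcm_2)$ that does not depend on $k$. The second step $\mcm_2$ is also independent of $k$. So Lemma~\ref{lem:Hirsch-iso} (indeed, literally the same Hirsch extension) gives $\mcm_3(Y(k))\cong\mcm_3(Y(\ell))$. Theorem~\ref{thm:mnequalsquotient} then turns this into $\pi_1(Y(k))/\Gamma_4^0\cong\pi_1(Y(\ell))/\Gamma_4^0$. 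Equivalently, one can apply Theorem~\ref{thm:triangle} with $g^2$ chosen to be $\frac{\ell}{k}$-rescaling, which carries $\im H^2(\rho_2(k))$ onto $\im H^2(\rho_2(\ell))$ compatibly. The graded ranks for $2\le s\le4$ follow.

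\textbf{Part (4).} I would apply Theorem~\ref{thm:coker-torsion} with $n=2$. Its hypotheses need checking:
\begin{itemize}
\item $\gr_1(G)=H_1(Y(k))=\Z^3$ is torsion-free.
\item $\gr_2(G)$ is torsion-free. Since all linking numbers vanish, $H^2(\rho_1)=0$, and Stallings' sequence \eqref{ex:Stallings} with $n=1$ gives $\gr_2(G)\cong\coker(H_2(Y)\to H_2(\Z^3))$. The map $H_2(Y)\to H_2(\Z^3)$ is dual to the cup product, which is zero, so $\gr_2\cong\Lambda^2\Z^3=\Z^3$.
\item $H_2(N_2;\Z)$ is torsion-free, because $N_2=F_3/\Gamma_3$ is free nilpotent, as noted in Remark~\ref{rem:coker-torsion-hyp}. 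Here $G/\Gamma_3(G)\cong F_3/\Gamma_3$ since the relators are commutators of weight $\ge3$ (Milnor's presentation with vanishing linking numbers), equivalently by the isomorphism of second steps together with $\Gamma_3=\Gamma_3^0$.
\end{itemize}
Then Theorem~\ref{thm:coker-torsion}\ref{itm:ct-split} combined with Lemma~\ref{lem:rho2image}\ref{itm:rho2-coker} gives $\gr_3(G)\cong\Z^6\oplus(\Z/k\Z)^2$.

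Since $\gr_3$ is an invariant of $G/\Gamma_4(G)$, different $|k|$ give non-isomorphic fourth LCS quotients. Hence $\pi_1(Y(k))\not\cong\pi_1(Y(\ell))$ for $|k|\neq|\ell|$, because LCS quotients are isomorphism invariants.

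The main obstacle is the hypothesis check for Theorem~\ref{thm:coker-torsion}, specifically the identification $G/\Gamma_3(G)\cong F_3/\Gamma_3(F_3)$. I would handle it through the Stallings exact sequence at $n=1$ and $n=2$, together with the vanishing of all cup products, so that the map $F_3\to G$ induces isomorphisms on $H_1$ and surjections on $H_2$ modulo $\Gamma_3$.
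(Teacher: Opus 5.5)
Your proposal is correct and follows essentially the paper's own argument. Parts (1)--(3) come from Theorem~\ref{thm:mnequalsquotient} together with the $k$-independence of $\mcm_2$ and of $\ker H^2(\rho_2{(k)})$. The torsion computation in part (4) comes from Theorem~\ref{thm:coker-torsion} with $n=2$ and Lemma~\ref{lem:rho2image}.

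The differences are minor. You deduce $\pi_1(Y(k))\not\cong\pi_1(Y(\ell))$ directly from $\gr_3$, which is an invariant of $G/\Gamma_4(G)$. The paper instead cites the $\pi_1$-invariance of $\kappa_2$ from \cite{Porter-Suciu-2023}. You also check the hypotheses of Theorem~\ref{thm:coker-torsion} more carefully than the paper does: torsion-freeness of the \emph{ordinary} $\gr_2$ via Stallings' sequence and the vanishing cup products, and the identification $G/\Gamma_3(G)\cong F_3/\Gamma_3(F_3)$. The paper reads these off from parts (1)--(2), which literally concern only the $\gr^0$ pieces.
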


\begin{proof}
Part \eqref{itm:1} follows from 
$\pi_1(Y(k))/\Gamma_2^0\bigl(\pi_1(Y(k))\bigr) 
\cong H_1(Y(k);\Z) \cong H^1(Y(k);\Z) = \Z^3$.

Part \eqref{itm:2} follows since 
$\Gamma_2^0\bigl(\pi_1(Y(k))\bigr)/\Gamma_3^0
\bigl(\pi_1(Y(k))\bigr) $ is, 
by Theorem~\ref{thm:mnequalsquotient}, the abelian group 
$\ker H^2(\rho_1{(k)})$, which 
by the proof of Lemma~\ref{lem:mcm2link} equals~$\Z^3$.

For Part~\eqref{itm:3}, the rank claim follows in the same way: 
by Theorem~\ref{thm:mnequalsquotient}, 
$\Gamma_3^0/\Gamma_4^0 \cong \ker H^2(\rho_2{(k)})$,
which by Lemma~\ref{lem:rho2image}\ref{itm:rho2-ker} is free of rank~$6$.

For the isomorphism statement in Part~\eqref{itm:3}, observe that
by Lemma~\ref{lem:mcm2link} the second step $\mcm_2$ in the
$1$-minimal model for $C^\ast(Y(k);\Z)$ is the same dga for every~$k$.
The kernel
\[
K = \ker\bigl(H^2(\rho_2{(k)})\colon H^2(\mcm_2)
\longrightarrow H^2(Y(k);\Z)\bigr)
\]
is also $k$-independent: by Lemma~\ref{lem:H2Mtp} the eight basis
classes of $H^2(\mcm_2)$ are Massey triple products of 
$[x_1],[x_2],[x_3]$,
and by Lemma~\ref{lem:rho2image}\ref{itm:rho2-ker} exactly
the same six of these map to zero in $H^2(Y(k);\Z)$ for every~$k$;
namely, all those with a repeated index.  Only
$\la [x_1],[x_2],[x_3]\ra$ and $\la [x_1],[x_3],[x_2]\ra$ 
are mapped to nonzero elements in $H^2(Y(k);\Z)$
with values that scale by~$k$ but whose vanishing pattern is
$k$-independent.  By Lemma~\ref{lem:classify}\ref{cl:iii},
the third step $\mcm_3$ is the Hirsch extension of $\mcm_2$ with
invariant given by the inclusion of $K$ in
$H^2(\mcm_2)$.
Hence $\mcm_3$ is also common to all~$Y(k)$.
Theorem~\ref{thm:mnequalsquotient} then gives
$\pi_1(Y(k))/\Gamma_4^0 \cong G(\mcm_3) \cong 
\pi_1(Y(\ell))/\Gamma_4^0$.

For Part~\eqref{itm:4}, recall from \cite{Porter-Suciu-2023} the
isomorphism invariant 
$\kappa_2(Y(k)) = \Tors\bigl(\coker H^2(\rho_2{(k)})\bigr)$,
which depends only on $\pi_1(Y(k))$. By
Lemma~\ref{lem:rho2image}\ref{itm:rho2-coker}, 
$\coker H^2(\rho_2{(k)})\cong (\Z/k\Z)^2$ is finite, 
so $\kappa_2(Y(k)) = (\Z/k\Z)^2$. For $|k|\ne|\ell|$
we have $(\Z/k\Z)^2 \not\cong (\Z/\ell \Z)^2$, hence
$\kappa_2(Y(k)) \not\cong \kappa_2(Y(\ell))$, and therefore
$\pi_1(Y(k)) \not\cong \pi_1(Y(\ell))$ by the 
$\pi_1$-invariance of $\kappa_2$
\cite{Porter-Suciu-2023}.

To locate the distinction within the lower central series, we assemble
the third graded piece from its free and torsion parts. The hypotheses
of Theorem~\ref{thm:coker-torsion} hold with $n=2$: by
Parts~\eqref{itm:1}--\eqref{itm:2}, $\gr_1$ and $\gr_2$ are torsion-free,
and $N_2=\pi_1(Y(k))/\Gamma_3=F_3/\Gamma_3$ is free $2$-step 
nilpotent, so $H_2(N_2;\Z)\cong\Z^8$ is torsion-free 
(Remark~\ref{rem:coker-torsion-hyp}).
Hence $\gr_3^0(\pi_1(Y(k))) = \ker H^2(\rho_2{(k)}) \cong \Z^6$
(Theorem~\ref{thm:coker-torsion}\ref{itm:ct-ker}, recovering
Part~\eqref{itm:3}), while
$\Tors\gr_3(\pi_1(Y(k))) \cong \kappa_2(Y(k)) = (\Z/k\Z)^2$
(Theorem~\ref{thm:coker-torsion}\ref{itm:ct-coker}). A direct computation
from the Milnor presentation of $\pi_1(Y(k))$, or equivalently, from the
Magnus expansion of the relators $\beta_k(x_i)\,x_i^{-1}$, confirms
\[
   \gr_3\bigl(\pi_1(Y(k))\bigr) \cong \Z^6 \oplus (\Z/k\Z)^2.
\]
Since $\gr_3$ is a subquotient of $\pi_1(Y(k))/\Gamma_4$, distinct values
of $|k|$ yield non-isomorphic fourth nilpotent quotients. By contrast, the
torsion-free graded ranks are independent of~$k$ for $2\le s\le 4$: over
the rationals the complements $Y(k)$ are $2$-step equivalent
\cite{Porter-Suciu-2023}, and the ranks $3,3,6$ of
$\gr_1,\gr_2,\gr_3$ found above do not depend on~$k$.
(Whether the $Y(k)$ share a common rational homotopy type, which would
yield rank-independence in all degrees, is not known
\cite{Porter-Suciu-2023}.)
Up to the level $K(G/\Gamma_4^0(G),1)$, the torsion-free Stallings tower does 
not distinguish the links; the distinction is carried entirely by the torsion of the 
ordinary graded.
\end{proof}

\begin{remark}
\label{rem:borromean-mechanism}
By Theorem~\ref{thm:coker-torsion}, the torsion summand 
$(\Z/k\Z)^2$ in
$\gr_3(\pi_1(Y(k)))$ is precisely
$\kappa_2(Y(k))=\Tors\coker H^2(\rho_2{(k)})$, the group-theoretic
footprint of the Milnor invariant $\bar\mu_{123}(C(k))=k$ in the
associated graded Lie ring. This finite abelian group is invisible 
to the rational $1$-minimal model and to the torsion-free Stallings 
tower, yet is recorded integrally by the cokernel of the structural 
morphism in degree~$2$.
\end{remark}

The arguments in Propositions~\ref{prop:ex1} and~\ref{prop:borromean} 
rely on the relevant Massey products having zero indeterminacy. In the companion 
paper~\cite{Porter-Suciu-GMP} we introduce generalized Massey products, 
which have smaller indeterminacy and yield correspondingly stronger 
invariants. There we exhibit spaces that these invariants distinguish 
but the usual Massey products do not, including spaces all of whose 
ordinary Massey products contain zero yet which carry a nonzero 
generalized Massey product, thereby detecting nonformality invisible 
to the classical theory.

\appendix
\section{Background material}
\label{app:background}

This appendix collects, for the reader's convenience, two kinds of 
background material used in the proofs above.
The first part (\S\ref{subsec:dga-homotopy}) recalls the notion of 
homotopy between binomial $\cup_1$-dga maps
(\cite{Porter-Suciu-2023}) and three results invoked in the proofs of 
Lemmas~\ref{lem:classify} and~\ref{lem:Hirsch-iso} and 
Theorems~\ref{thm:group-well-defined} and~\ref{thm:mnequalsquotient}.
The second part (\S\ref{subsec:delta}) fixes notation for $\Delta$-sets 
and their cochain algebras, following 
Rourke--Sanderson~\cite{Rourke-Sanderson}, 
Hatcher~\cite{Hatcher}, and Friedman~\cite{Friedman}.

\subsection{Homotopic maps and homotopy liftings}
\label{subsec:dga-homotopy}
Let $I = [0,1]$ viewed as a simplicial complex, and let
$C = C^*(I;R)$ be its cochain algebra over $R$.
Then $C^0 \cong R \oplus R$ with generators $t_0, t_1$ corresponding
to the endpoints $0$ and $1$, and $C^1 \cong R$ with generator $u$.
The differential $d\colon C^0 \to C^1$ is given by
$dt_0 = -u$ and $dt_1 = u$, the multiplication by
$t_i t_j = \delta_{ij} t_i$, $t_0 u = u t_1 = u$,
and $u t_0 = t_1 u = 0$; the unit is $t_0 + t_1$ and
$H^*(C) = R$.
Let $\eta_i \colon C^*(I;R) \to R$ be evaluation at the $i$-th
endpoint: $\eta_i(t_j) = \delta_{ij}$, $\eta_i(u) = 0$.

\begin{definition}[{\cite{Porter-Suciu-2023}}]
\label{def:homotopy}
Two dga maps $\varphi_0, \varphi_1 \colon A \to B$ are
\emph{homotopic} (written $\varphi_0 \simeq \varphi_1$) if
there exists a dga map $\Phi \colon A \to B \otimes_R C^*(I;R)$
such that $(\id_B \otimes \eta_i) \circ \Phi = \varphi_i$
for $i = 0, 1$.
\end{definition}

The following two lemmas are used in the proof of
Lemma~\ref{lem:classify}.

\begin{lemma}[{\cite[Lem. 7.12]{Porter-Suciu-2023}}]
\label{lem:H1-homotopy}
Let $(A, d_A)$ be an $R$-binomial $\cup_1$-dga over $R = \Z$ or $\F_p$
with $H^0(A) = R$ and $H^1(A)$ a finitely generated free $R$-module.
If $\varphi_0, \varphi_1 \colon (\T_R(\bbX), d_{\bz}) \to (A, d_A)$
are morphisms of $R$-binomial $\cup_1$-dgas with
$H^1(\varphi_0) = H^1(\varphi_1)$,
then $\varphi_0 \simeq \varphi_1$.
\end{lemma}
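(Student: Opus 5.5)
The plan is to build the homotopy $\Phi$ one generator at a time, using the universal property of the free algebra with zero differential recorded in Lemma~\ref{lem:H1}. Set $B = A\otimes_R C^*(I;R)$.

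\emph{Target structure.} I take from \cite{Porter-Suciu-2023} that $B$ is again an $R$-binomial $\cup_1$-dga, and that the evaluations $\id_A\otimes\eta_i\colon B\to A$ are morphisms of such. This is implicit in Definition~\ref{def:homotopy}. Since $H^*(C^*(I;R))=R$ is free, the Künneth formula gives $H^*(B)\cong H^*(A)$; in particular $H^0(B)=R$. Lemma~\ref{lem:H1} therefore applies to $B$: a dga map $\T_R(\bbX)\to B$ is the same thing as a set map $\bbX\to Z^1(B)$.

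\emph{Choice of cocycles.} For $x\in\bbX$, the classes $[\varphi_0(x)]$ and $[\varphi_1(x)]$ agree because $H^1(\varphi_0)=H^1(\varphi_1)$. So I choose $a_x\in A^0$ with $d_A a_x=\varphi_1(x)-\varphi_0(x)$, and set
\[
z_x=\varphi_0(x)\otimes t_0+\varphi_1(x)\otimes t_1+a_x\otimes u \in B^1 .
\]
Using $d(b\otimes c)=db\otimes c+(-1)^{|b|}b\otimes dc$, together with $dt_0=-u$, $dt_1=u$ and $du=0$, the terms are:
\begin{itemize}
\item the first contributes $\varphi_0(x)\otimes u$;
\item the second contributes $-\varphi_1(x)\otimes u$;
\item the third contributes $(\varphi_1(x)-\varphi_0(x))\otimes u$.
\end{itemize}
These sum to zero, so $z_x\in Z^1(B)$. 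Lemma~\ref{lem:H1} then yields a unique dga map $\Phi\colon\T_R(\bbX)\to B$ with $\Phi(x)=z_x$.

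\emph{Endpoints.} Since $\eta_i(u)=0$ and $\eta_i(t_j)=\delta_{ij}$, we get $(\id\otimes\eta_i)(z_x)=\varphi_i(x)$. Thus $(\id\otimes\eta_i)\circ\Phi$ and $\varphi_i$ are dga maps $\T_R(\bbX)\to A$ that agree on $\bbX$. By the uniqueness part of Lemma~\ref{lem:H1} (or Lemma~\ref{lem:extend}) they coincide, so $\varphi_0\simeq\varphi_1$.

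\emph{Main obstacle.} The cochain computation is routine. The step needing care is the structural input: that $B$ really carries a compatible binomial $\cup_1$ structure and that the $\eta_i$ respect it. Only then do Lemma~\ref{lem:H1} and the uniqueness argument apply. I would take this directly from the construction of the path object in \cite{Porter-Suciu-2023}, rather than re-verify the Hirsch identity and the binomial axioms on the tensor product.
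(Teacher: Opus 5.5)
Your proof is correct. The paper gives no proof of this lemma; it simply quotes \cite[Lem.~7.12]{Porter-Suciu-2023}.

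Your argument is a sound self-contained proof. You take a primitive $a_x$ of $\varphi_1(x)-\varphi_0(x)$, verify that $z_x=\varphi_0(x)\otimes t_0+\varphi_1(x)\otimes t_1+a_x\otimes u$ is a $1$-cocycle, extend it to $\Phi$ via Lemma~\ref{lem:H1}, and identify the endpoints by the uniqueness in Lemma~\ref{lem:extend}. This is the natural way to build the homotopy. Your sign computation for $dz_x$ checks out.

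The one input you take on trust is that $A\otimes_R C^*(I;R)$ is a binomial $\cup_1$-dga and that $\id\otimes\eta_i$ are morphisms of such. Definition~\ref{def:homotopy} already presupposes this, and you are right to flag it as the only external ingredient.

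The condition $H^0=R$ for the tensor product can also be checked by hand instead of via K\"unneth. Since $d(a\otimes t_0+b\otimes t_1)=da\otimes t_0+db\otimes t_1+(b-a)\otimes u$, the degree-$0$ cocycles are exactly $Z^0(A)\otimes(t_0+t_1)\cong R$.
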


\begin{lemma}[Homotopy Lifting Lemma, {\cite[Lem.~10.2]{Porter-Suciu-2023}}]
\label{lem:homotopy-lift}
Let $(A, d_A)$ and $(A', d_{A'})$ be $R$-binomial $\cup_1$-dgas
over $R = \Z$ or $\F_p$ with $H^0 = R$ and $H^1$
a finitely generated free $R$-module.
Let $\rho \colon \mcm(A) \to A$ and
$\rho' \colon \mcm(A') \to A'$ be $1$-minimal models,
with Hirsch-extension inclusions
$j_n\colon\mcm_n\inj\mcm_{n+1}$ and $j_n'\colon\mcm_n'\inj\mcm_{n+1}'$,
and let $\varphi \colon A \to A'$ be a morphism.
Suppose for some $n \ge 1$ there exist a morphism
$f_n \colon \mcm_n \to \mcm_n'$ and a homotopy
$\Phi_n \colon \mcm_n \to A' \otimes_R C^*(I;R)$
between $\varphi \circ \rho_n$ and $\rho_n' \circ f_n$.
Then:
\begin{enumerate}[label=(\roman*), itemsep=2pt]
\item \label{hl1}
There is a unique morphism
$f_{n+1} \colon \mcm_{n+1} \to \mcm_{n+1}'$
with $f_{n+1} \circ j_n = j_n' \circ f_n$, and a homotopy
$\Phi_{n+1} \colon \mcm_{n+1} \to A' \otimes_R C^*(I;R)$
between $\varphi \circ \rho_{n+1}$ and $\rho_{n+1}' \circ f_{n+1}$
extending $\Phi_n$.
\item \label{hl2}
If $f_n$ is an isomorphism and $H^2(\varphi)$ is a monomorphism,
then $f_{n+1}$ is also an isomorphism.
\end{enumerate}
\end{lemma}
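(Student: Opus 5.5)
The plan is to extend $f_n$ and $\Phi_n$ one generator at a time over the new generators $\bbX_{n+1}$ of the Hirsch extension $j_n\colon\mcm_n\inj\mcm_{n+1}$. Extensions to all of $\mcm_{n+1}$ will come from the universal properties Lemma~\ref{lem:extend} and Lemma~\ref{lem:commutes}. Throughout I assume that $A'\otimes_R C^*(I;R)$ carries the binomial $\cup_1$-dga structure used in Definition~\ref{def:homotopy}, so that these lemmas apply with this target; this is where the binomial and $\cup_1$ structure has to be handled.

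\textbf{Step 1: constructing $f_{n+1}$.} For $y\in\bbX_{n+1}$ write $z_y=d_{n+1}y\in Z^2(\mcm_n)$. By Definition~\ref{def:1-min-model}, the classes $[z_y]$ form a basis of $\ker H^2(\rho_n)$. Since $\Phi_n$ is a homotopy, $\rho_n'\circ f_n$ and $\varphi\circ\rho_n$ induce the same map on cohomology. Hence
\[
H^2(\rho_n')[f_n z_y]=H^2(\varphi)H^2(\rho_n)[z_y]=0 ,
\]
so $[f_n z_y]\in\ker H^2(\rho_n')$, which has basis $\{[d'y']\}_{y'\in\bbX'_{n+1}}$. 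We can therefore write $f_n(z_y)=d'\bigl(\sum_{y'}c_{y,y'}\,y'+w_y\bigr)$ with $w_y\in\mcm_n'^{\,1}$. Set provisionally $f_{n+1}(y)=\sum c_{y,y'}y'+w_y$ and $f_{n+1}|_{\bbX^n}=f_n$. Lemma~\ref{lem:extend} extends this to a graded-algebra map, and Lemma~\ref{lem:commutes} shows it commutes with differentials. This gives $f_{n+1}\circ j_n=j_n'\circ f_n$.

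\textbf{Step 2: constructing the homotopy.} I need elements $\Phi_{n+1}(y)\in(A'\otimes C)^1$ satisfying two conditions:
\begin{itemize}
\item $d\,\Phi_{n+1}(y)=\Phi_n(z_y)$;
\item the endpoint evaluations $(\id\otimes\eta_0)\Phi_{n+1}(y)=\varphi\rho_{n+1}(y)$ and $(\id\otimes\eta_1)\Phi_{n+1}(y)=\rho'_{n+1}f_{n+1}(y)$ hold.
\end{itemize}
Start from $b_y=\varphi\rho_{n+1}(y)\otimes t_0+\rho'_{n+1}f_{n+1}(y)\otimes t_1$. Then $\Phi_n(z_y)-d b_y$ is a $2$-cocycle, and it lies in the kernel $K$ of $(\id\otimes\eta_0,\id\otimes\eta_1)\colon A'\otimes C\to A'\oplus A'$. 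Since $C$ is contractible, $K\simeq A'\otimes u$ is a desuspension of $A'$, so $H^2(K)\cong H^1(A')$. The class of $\Phi_n(z_y)-db_y$ is therefore an obstruction $o_y\in H^1(A')$.

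Because $H^1(\rho_1')$ is an isomorphism, I can change $f_{n+1}(y)$ by a cocycle $v_y\in Z^1(\mcm_1')$, namely $w_y\mapsto w_y+v_y$. This changes the obstruction by $\pm H^1(\rho'_1)[v_y]$, and a suitable choice kills $o_y$. Then $\Phi_n(z_y)-db_y=d\kappa_y$ with $\kappa_y\in K^1$, and I set $\Phi_{n+1}(y)=b_y+\kappa_y$. Lemmas~\ref{lem:extend} and~\ref{lem:commutes} extend this to a dga map $\Phi_{n+1}$ extending $\Phi_n$.

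For uniqueness: once the homotopy is required, $[v_y]$ is forced. Any other choice of $f_{n+1}(y)$ differs by a degree-$1$ cocycle of $\mcm'_{n+1}$ whose class must vanish. Since $H^1$ is detected on $\mcm_1'$ and there are no nonzero degree-$1$ coboundaries ($d=0$ on $\mcm'^{\,0}=R$), $f_{n+1}$ is unique.

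\textbf{Step 3: part~\ref{hl2}.} Assume $f_n$ is an isomorphism and $H^2(\varphi)$ is injective. Then for $x\in H^2(\mcm_n)$ we have $x\in\ker H^2(\rho_n)$ if and only if $H^2(f_n)x\in\ker H^2(\rho_n')$. Hence $H^2(f_n)$ maps $\ker H^2(\rho_n)$ isomorphically onto $\ker H^2(\rho'_n)$, which means the matrix $(c_{y,y'})$ is invertible over $R$. Now filter by weight in the new generators, exactly as in the proof of Lemma~\ref{lem:Hirsch-iso}. The associated graded map of $f_{n+1}$ is $f_n$ on $\bbX^n$ and $(c_{y,y'})$ on the new generators, so it is an isomorphism. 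Hence $f_{n+1}$ is an isomorphism.

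The main obstacle is Step 2. Two things need checking there. First, $H^*(K)$ must really be computed as a shift of $H^*(A')$ compatibly with the binomial $\cup_1$ structure, so that the obstruction is exactly the class in $H^1(A')$. Second, the resulting correction must still satisfy the binomial and $\cup_1$ compatibility needed to apply Lemma~\ref{lem:extend} to $A'\otimes C$. I would first verify the cohomology computation directly from the explicit multiplication table for $C$ given above.
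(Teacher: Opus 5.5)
The paper does not prove this lemma; it quotes it from \cite[Lem.~10.2]{Porter-Suciu-2023}, so there is no in-paper argument to compare against. Your Steps~1 and~2 are the standard obstruction-theoretic proof of part~\ref{hl1}, uniqueness included, and they are correct. They do rely on something Definition~\ref{def:homotopy} already presupposes: $A'\otimes_R C^*(I;R)$ must be a binomial $\cup_1$-dga on which $\id\otimes\eta_0$ and $\id\otimes\eta_1$ are morphisms. That is what makes endpoint identities checked on generators hold on all of $\mcm_{n+1}$.

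The two worries you list at the end are not real obstacles. Since $(\eta_0,\eta_1)$ identifies $C^0$ with $R^2$ and kills $C^1$, one has $K^m=A'^{m-1}\otimes u$ and $d(a\otimes u)=da\otimes u$. Hence $H^2(K)\cong H^1(A')$ purely at the level of complexes, with no $\cup_1$ or binomial structure involved; contractibility of $C$ is not the reason. Also, Lemma~\ref{lem:extend} accepts arbitrary values on generators, so $\Phi_{n+1}(y)=b_y+\kappa_y$ needs no compatibility check.

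Uniqueness is cleanest as follows. If $(f'_{n+1},\Phi'_{n+1})$ is a second solution, then $\theta=\Phi'_{n+1}(y)-\Phi_{n+1}(y)$ is a $1$-cocycle with $(\id\otimes\eta_0)\theta=0$ and $(\id\otimes\eta_1)\theta=\rho'_{n+1}(v)$, where $v=f'_{n+1}(y)-f_{n+1}(y)\in Z^1(\mcm'_{n+1})$. The two evaluations agree on cohomology, so $H^1(\rho'_{n+1})[v]=0$. Injectivity of $H^1(\rho'_{n+1})$ and $B^1(\mcm'_{n+1})=0$ then give $v=0$.

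The weak point is Step~3. In degree~$1$ the weight filtration of Lemma~\ref{lem:Hirsch-iso} has only two steps, $\T^1_R(\bbX^n)\subset\T^1_R(\bbX^{n+1})$. So ``isomorphism on the associated graded'' in degree~$1$ is equivalent, by the five lemma, to bijectivity of $f_{n+1}$ on $\T^1$, which is exactly what must be shown. Moreover, the induced map on the top piece is not just the one given by $(c_{y,y'})$. For instance, $f_{n+1}(y_1\cup_1 y_2)$ contains the weight-one term $\bigl(\sum_{y'}c_{y_1,y'}y'\bigr)\cup_1 w_{y_2}$, and $\zeta_2$ produces similar cross terms.

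The fix is an explicit inverse. Put $g_n=f_n^{-1}$ and $D=(c_{y,y'})^{-1}$, and define $g_{n+1}$ on generators by $g_{n+1}|_{\bbX'^n}=g_n$ and
\[
g_{n+1}(y')=\sum_{y}D_{y',y}\bigl(y-g_n(w_y)\bigr).
\]
Using $d'w_y=f_n(z_y)-\sum_{y''}c_{y,y''}\,d'y''$ (with $w_y$ already corrected by $v_y$), one gets $d\,g_{n+1}(y')=g_n(d'y')$. So $g_{n+1}$ is a dga morphism by Lemma~\ref{lem:commutes}. A direct check shows $f_{n+1}\circ g_{n+1}$ and $g_{n+1}\circ f_{n+1}$ are the identity on generators, hence the identity by the uniqueness in Lemma~\ref{lem:extend}. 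This proves part~\ref{hl2} without any filtration.
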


The following theorem is used in the proofs of
Theorems~\ref{thm:group-well-defined} and~\ref{thm:mnequalsquotient}.

\begin{theorem}[Homotopy Lifting Theorem,
  {\cite[Thm.~10.3]{Porter-Suciu-2023}}]
\label{thm:1-min-lift}
Let $(A, d_A)$ and $(A', d_{A'})$ be $R$-binomial $\cup_1$-dgas 
with $H^0 = R$ and $H^1$ a finitely generated free $R$-module,
and let $\rho \colon \mcm(A) \to A$,
$\rho' \colon \mcm(A') \to A'$ be $1$-minimal models. Then:
\begin{enumerate}[label=(\roman*), itemsep=2pt]
\item For any morphism $\varphi \colon A \to A'$ there exists 
a morphism $\widehat\varphi \colon \mcm(A) \to \mcm(A')$, unique 
up to homotopy, such that the square
\[
\begin{tikzcd}
\mcm(A) \ar[r, dashed, "\widehat\varphi"] \ar[d, "\rho"'] &
\mcm(A') \ar[d, "\rho'"] \\
A \ar[r, "\varphi"'] & A'
\end{tikzcd}
\]
commutes up to homotopy.
\item If $\varphi$ is a $1$-quasi-isomorphism, then 
$\widehat\varphi$ is an isomorphism.
\end{enumerate}
\end{theorem}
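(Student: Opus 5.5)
The last statement is Theorem~\ref{thm:B}'s detailed form, but the final statement in the excerpt is Theorem~\ref{thm:1-min-lift} (Homotopy Lifting Theorem), which is quoted from \cite{Porter-Suciu-2023}. I would prove it by induction up the Hirsch filtration of $\mcm(A)$, using the Homotopy Lifting Lemma~\ref{lem:homotopy-lift} at each stage and the base-case homotopy Lemma~\ref{lem:H1-homotopy}.

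For part (i), start at stage~$1$. Since $H^1(\rho_1)$ and $H^1(\rho_1')$ are isomorphisms, there is a unique $R$-linear map $e\colon H^1(\mcm_1)\to H^1(\mcm_1')$ with $H^1(\rho_1')\circ e = H^1(\varphi)\circ H^1(\rho_1)$. Because $\mcm_1=\T_R(\bbX_1)$ has zero differential on generators, Lemma~\ref{lem:H1} turns $e$, read as a set map $\bbX_1\to Z^1(\mcm_1')$, into a dga morphism $f_1$. Then $\varphi\circ\rho_1$ and $\rho_1'\circ f_1$ agree on $H^1$, so Lemma~\ref{lem:H1-homotopy} gives a homotopy $\Phi_1$.

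Applying Lemma~\ref{lem:homotopy-lift}\ref{hl1} repeatedly produces compatible $f_n$ and homotopies $\Phi_n$, each extending the previous one. Passing to the colimit gives $\widehat\varphi=\varinjlim f_n$ and a homotopy $\Phi=\varinjlim\Phi_n$. This is legitimate because a dga map out of a colimit of Hirsch extensions is determined by compatible maps on the stages, as Lemma~\ref{lem:extend} and Lemma~\ref{lem:commutes} are checked generator by generator.

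The main obstacle is uniqueness up to homotopy. Given two lifts $\widehat\varphi,\widehat\varphi'$, both $\rho'\circ\widehat\varphi$ and $\rho'\circ\widehat\varphi'$ are homotopic to $\varphi\circ\rho$. I would build a homotopy between $\widehat\varphi$ and $\widehat\varphi'$ stage by stage. On $\mcm_1$ the two maps agree on $H^1$, since $H^1(\rho')$ is injective, so Lemma~\ref{lem:H1-homotopy} applies. For the inductive step, the homotopy is extended to the new generators $y\in\bbX_{n+1}$. The obstruction is a class in $H^1$ measuring the failure of the two homotopies to agree; it dies because $H^2(\rho')$ is injective on the relevant kernel, the same mechanism used in Lemma~\ref{lem:homotopy-lift}. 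Concretely, I would rerun the proof of the lifting lemma with $A'$ replaced by $\mcm(A')\otimes C^*(I;R)$ as target, using that $\rho'$ is a $1$-quasi-isomorphism.

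For part (ii), if $\varphi$ is a $1$-quasi-isomorphism then $H^1(\varphi)$ is an isomorphism, so $e$ and hence $f_1$ are isomorphisms. Moreover $H^2(\varphi)$ is a monomorphism, so Lemma~\ref{lem:homotopy-lift}\ref{hl2} makes every $f_n$ an isomorphism. Their colimit $\widehat\varphi$ is therefore an isomorphism, with inverse the colimit of the $f_n^{-1}$.
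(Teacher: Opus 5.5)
The paper does not reprove this theorem; it is quoted from \cite{Porter-Suciu-2023}. So I am judging your sketch on its own terms. The existence half of (i) and part (ii) are fine. Your base case is exactly the one in the proof of Lemma~\ref{lem:classify}. Iterating Lemma~\ref{lem:homotopy-lift} and passing to the colimit gives $\widehat\varphi$ together with a homotopy $\varphi\rho\simeq\rho'\widehat\varphi$, and (ii) follows from Lemma~\ref{lem:homotopy-lift}\ref{hl2}.

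\textbf{The gap is in the uniqueness step.} Suppose $H_n\colon\mcm_n\to\mcm(A')\otimes C^*(I;R)$ is a homotopy from $\widehat\varphi$ to $\widehat\varphi'$, and $y\in\bbX_{n+1}$ has $dy=z$. Because $\mcm(A')^0=R$, every degree-$1$ element with the required endpoints has the form $w=\widehat\varphi(y)\otimes t_0+\widehat\varphi'(y)\otimes t_1+r\otimes u$ with $r\in R$. Since $d(r\otimes u)=0$, the differential
\[
dw=\widehat\varphi(z)\otimes t_0+\widehat\varphi'(z)\otimes t_1+\bigl(\widehat\varphi(y)-\widehat\varphi'(y)\bigr)\otimes u
\]
is forced. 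Hence $H_n$ extends over $y$ if and only if the $u$-component of $H_n(z)$ equals $\widehat\varphi(y)-\widehat\varphi'(y)$ exactly. This is an identity in $Z^1(\mcm(A'))=H^1(\mcm(A'))$. No choice made at stage $n+1$ can repair it; it depends on how $H_n$ was chosen earlier.

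An arbitrary earlier choice really does fail. Take $A=A'=\mcm=\T_\Z(x_1,x_2,y)$ with $dy=x_1x_2$, $\rho=\rho'=\varphi=\id$, and $g(x_i)=x_i$, $g(y)=y+x_1$. Then
\[
H(x_1)=x_1\otimes 1,\quad H(x_2)=x_2\otimes 1-1\otimes u,\quad H(y)=y\otimes t_0+(y+x_1)\otimes t_1
\]
is a homotopy $\id\simeq g$, so $\id$ and $g$ are both lifts. They agree on $\mcm_1$, so the constant homotopy there is a legitimate output of Lemma~\ref{lem:H1-homotopy}. Starting from it, the extension over $y$ does not exist.

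\textbf{The missing idea is coherence with the homotopies you are given.} Write $\Phi\colon\varphi\rho\simeq\rho'\widehat\varphi$ and $\Phi'\colon\varphi\rho\simeq\rho'\widehat\varphi'$. Note that $\widehat\varphi=\widehat\varphi'$ on $\mcm_1$, because $Z^1(\mcm(A'))=H^1(\mcm(A'))$ and $H^1(\rho')$ is injective. Consequently, on each $x\in\bbX_1$, $\Phi(x)$ and $\Phi'(x)$ differ by $c_x\otimes u$ with $c_x\in H^0(A')=R$, and $H_1$ must be built from these constants (with the appropriate sign).

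At later stages the inductive hypothesis has to carry something like a second-order homotopy between $\Phi'_n$ and the concatenation of $\Phi_n$ with $(\rho'\otimes\id)\circ H_n$, and this too must be extended at each step. Only with such data is $\rho'$ of the obstruction a coboundary in $A'$. The obstruction lives in $H^2\bigl(\mcm(A')\otimes C^*(I,\partial I;R)\bigr)\cong H^1(\mcm(A'))$. What kills it is injectivity of $H^1(\rho')$ together with $B^1(\mcm(A'))=0$, not injectivity of $H^2(\rho')$ as you state.

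Nor does ``rerunning Lemma~\ref{lem:homotopy-lift} with target $\mcm(A')\otimes C^*(I;R)$'' supply this. That lemma lifts maps into $A'$ to maps into its $1$-minimal model. What you need instead is a relative lifting statement for homotopies with both ends prescribed.
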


\subsection{$\Delta$-sets, complexes, and cochains} 
\label{subsec:delta} 
A \emph{$\Delta$-set} is a sequence of sets
$X = \{X_n\}_{n \ge 0}$ together with face maps
$d_i \colon X_n \to X_{n-1}$ for $0 \le i \le n$,
satisfying $d_i d_j = d_{j-1} d_i$ for $i < j$.
A \emph{map of $\Delta$-sets} $t \colon X \to Y$ is a collection
of set maps $\{t_n \colon X_n \to Y_n\}$ satisfying
$t_n \circ d_i = d_i \circ t_{n+1}$.
The geometric realization $|X|$ is a CW-complex
\cite{Rourke-Sanderson}, and the assignment $X \leadsto |X|$
is functorial.

The cochain complex $C^*(X; R)$ is the dual of the simplicial
chain complex of $|X|$: the chain group $C_n(X; \Z)$ is free
abelian on $X_n$, with boundary
$\partial_n = \sum_{i=0}^n (-1)^i d_i$, and
$C^n(X; R) = \Hom(C_n(X;\Z), R)$.
The coboundary $d\colon C^n(X;R)\to C^{n+1}(X;R)$ is the dual of
$\partial_{n+1}$: for $\alpha\in C^n(X;R)$ and $\sigma\in X_{n+1}$,
\begin{equation}
\label{eq:coboundary}
(d\alpha)(\sigma) = \sum_{i=0}^{n+1} (-1)^i\, \alpha(d_i \sigma).
\end{equation}
In particular, for $u\in C^1(\Delta^2;R)$ evaluated on the standard
$2$-simplex $(0,1,2)$,
\[
(du)(0,1,2) = u(1,2) - u(0,2) + u(0,1).
\]

Recall from~\cite{Porter-Suciu-2023} that the
\emph{bar construction} $B(G)$ on a group $G$ is the $\Delta$-set
with $B(G)_0 = \{v\}$ and $B(G)_n$ the set of $n$-tuples
$[g_1 | g_2 | \cdots | g_n]$ for $n \ge 1$,
with face operators
\begin{equation}
\label{eq:faceBG}
\begin{split}
d_0([g_1 | \cdots | g_n]) &= [g_2 | \cdots | g_n],\\
d_i([g_1 | \cdots | g_n])
  &= [g_1 | \cdots | g_{i-1} | g_i g_{i+1} | g_{i+2}
     | \cdots | g_n], \quad 1 \le i \le n-1,\\
d_n([g_1 | \cdots | g_n]) &= [g_1 | \cdots | g_{n-1}].
\end{split}
\end{equation}
The faces of the standard $n$-simplex $(0, \ldots, n)$ are
\begin{equation}
\label{eq:faceDelta}
\begin{split}
d_0(0, \ldots, n) &= (1, \ldots, n),\\
d_i(0, \ldots, n) &= (0, \ldots, i-1, i+1, \ldots, n),
  \quad 1 \le i \le n-1,\\
d_n(0, \ldots, n) &= (0, \ldots, n-1).
\end{split}
\end{equation}

The cup product on $C^*(X;R)$ is defined by the
Alexander--Whitney formula: for $\alpha \in C^p(X;R)$, 
$\beta \in C^q(X;R)$, and a $(p+q)$-simplex 
$\sigma = (v_0, \ldots, v_{p+q})$,
\begin{equation}
\label{eq:cup-product}
(\alpha \cup \beta)(\sigma) = 
\alpha(v_0, \ldots, v_p) \cdot \beta(v_p, \ldots, v_{p+q}),
\end{equation}
where the dot denotes multiplication in $R$. In particular, 
for $\alpha, \beta \in C^1(\Delta^2;R)$,
\[
(\alpha \cup \beta)(0,1,2) = \alpha(0,1) \cdot \beta(1,2);
\]
this is the only nontrivial pairing of two $1$-cochains on $\Delta^2$.
The formula~\eqref{eq:cup-product} extends $R$-bilinearly to the full
cochain complex, and together with the coboundary~\eqref{eq:coboundary}
makes $C^*(X;R)$ a differential graded $R$-algebra.

The cochain algebra $C^*(X;R)$ carries additional structure: it is
in fact a $\cup_1$-dga in the sense used throughout the paper.
Steenrod's cup-one product~\cite{Steenrod} is defined, 
for $u \in C^p(X;R)$, $v \in C^q(X;R)$, on the standard 
$(p+q-1)$-simplex $[0, 1, \ldots, p+q-1]$ by
\begin{equation}
\label{eq:cup1-steenrod}
\begin{split}
(u \cup_1 v)([0,\ldots,p+q-1]) &= 
\sum_{j=0}^{p-1} (-1)^{(p-j)(q+1)}\\[-4pt]
& \hspace{1em} u([0,\ldots,j,j+q,\ldots,p+q-1]) \cdot 
v([j,\ldots,j+q]).
\end{split}
\end{equation}
In particular, $u \cup_1 v = 0$ whenever $u$ or $v$ has degree $0$,
and for $u, v \in C^1(X;R)$ and a $1$-simplex $e$,
\[
(u \cup_1 v)(e) = u(e) \cdot v(e).
\]
With these operations, $C^*(X;R)$ is a $\cup_1$-dga 
\cite[Thm.~4.4]{Porter-Suciu-2021}; this is the geometric origin 
of the algebraic structure axiomatized in 
\S\ref{subsec:binomial-cup1}. Moreover, the construction is 
functorial: a map $f \colon X \to Y$ of $\Delta$-complexes induces 
a cochain map $f^\sharp \colon C^*(Y;R) \to C^*(X;R)$ commuting 
with both $\cup$ and $\cup_1$~\cite[Thm.~3.1]{Steenrod}.


\end{document}